\documentclass [10pt] {article}
\usepackage[dvipdfm]{graphicx}
\usepackage{amsfonts}
\usepackage{amsmath}
\usepackage{amssymb}
\usepackage{bm}
\usepackage[alphabetic]{amsrefs}
\usepackage{fullpage}
\usepackage{theorem}
\usepackage{tensor}
\usepackage{easybmat}
\usepackage{multirow}
\usepackage{enumerate}
\usepackage{textcomp}
\usepackage{etex}
\usepackage[all]{xy}

\newtheorem{thm}{Theorem}[section]

\newtheorem{lem}[thm]{Lemma}
\newtheorem{prop}[thm]{Proposition}

\theorembodyfont{\rmfamily}
\newtheorem{defn}[thm]{Definition}

\newtheorem{rem}[thm]{Remark}

\numberwithin{equation}{section}
\newenvironment{proof}{\noindent \emph{Proof.}}{\hspace{\stretch{1}}$\Box$}



%
%





\newcommand{\mcA} {\mathcal{A}}
\newcommand{\mcB} {\mathcal{B}}
\newcommand{\mcC} {\mathcal{C}}

\newcommand{\mcF} {\mathcal{F}}

\newcommand{\mcK} {\mathcal{K}}

\newcommand{\mcM} {\mathcal{M}}
\newcommand{\mcN} {\mathcal{N}}

\newcommand{\mcR} {\mathcal{R}}

\newcommand{\mcU} {\mathcal{U}}
\newcommand{\mcV} {\mathcal{V}}


\newcommand{\dd} {\mathrm{d}}

\newcommand{\ii} {\mathrm{i}}

\newcommand{\gr} {\mathrm{gr}}

\newcommand{\diag} {\mathop{\mathrm{diag}}}
\newcommand{\sspan} {\mathop{\mathrm{span}}}

\newcommand{\ctr}[1] {#1 \lrcorner}

\newcommand{\ind} {\indices}

\newcommand{\lb} [1] {{\left[ #1 \right. }}
\newcommand{\rb} [1] {{\left. #1 \right] }}

\newcommand{\Rho} {\mathrm{P}}


\newcommand{\Ad} {\mathrm{Ad}}

\newcommand{\Mat} {\mathrm{Mat}}

\newcommand{\GL} {\mathrm{GL}}

\newcommand{\SO} {\mathrm{SO}}

\newcommand{\UU} {\mathrm{U}}

\newcommand{\Tgt} {\mathrm{T}}

\newcommand{\Sim} {\mathrm{Sim}}

\newcommand{\so} {\mathfrak{so}}
\newcommand{\slie} {\mathfrak{sl}}
\newcommand{\glie} {\mathfrak{gl}}
\newcommand{\g} {\mathfrak{g}}

\newcommand{\prb} {\mathfrak{p}}

\newcommand{\uu} {\mathfrak{u}}

\newcommand{\mfz} {\mathfrak{z}}
\newcommand{\simalg} {\mathfrak{sim}}


\newcommand{\R} {\mathbb{R}}
\newcommand{\C} {\mathbb{C}}
\newcommand{\Z} {\mathbb{Z}}



\begin{document}

\title{The complex Goldberg-Sachs theorem in higher dimensions}
\author{Arman Taghavi-Chabert\\
{\small Masaryk University, Faculty of Science, Department of Mathematics and Statistics,}\\
 {\small Kotl\'{a}\v{r}sk\'{a} 2, 611 37 Brno,
Czech Republic } }
\date{}

\maketitle

\begin{abstract}
We study the geometric properties of holomorphic distributions of totally null $m$-planes on a $(2m+\epsilon)$-dimensional complex Riemannian manifold $(\mathcal{M} , \bm{g})$, where $\epsilon \in \{ 0 ,1 \}$ and $m \geq 2$. In particular, given such a distribution $\mathcal{N}$, say, we obtain algebraic conditions on the Weyl tensor and the Cotton-York tensor which guarrantee the integrability of $\mathcal{N}$, and in odd dimensions, of its orthogonal complement. These results generalise the Petrov classification of the  (anti-)self-dual part of the complex Weyl tensor, and the complex Goldberg-Sachs theorem from four to higher dimensions.

Higher-dimensional analogues of the Petrov type D condition are defined, and we show that these lead to the integrability of up to $2^m$ holomorphic distributions of totally null $m$-planes. Finally, we adapt these findings to the category of real smooth pseudo-Riemannian manifolds, commenting notably on the applications to Hermitian geometry and Robinson (or optical) geometry.
\end{abstract}

\section{Introduction and motivation}
One of the milestones in the development of general relativity, the Goldberg-Sachs theorem, first formulated in 1962, states \cite{Goldberg2009} that a four-dimensional Einstein Lorentzian manifold admits a shearfree congruence of null geodesics if and only if its Weyl tensor is algebraically special. It has proved invaluable in the discovery of solutions to Einstein's field equations, and the Kerr metric is a prime example of its application \cite{Kerr1963}.

A number of versions of the Goldberg-Sachs theorem subsequently appeared, and revealed a far deeper insight into the geometry of pseudo-Riemannian manifolds. To start with, the Einstein condition can be weakened to a condition on the Cotton-York tensor \cites{Kundt1962,Robinson1963}, whereby the conformal invariance of the theorem is made manifest. Further, the theorem turns out to admit a complex holomorphic counterpart \cites{Pleba'nski1975,Penrose1986}, and other variants on real pseudo-Riemannian manifolds of arbitrary metric signatures \cites{Przanowski1983,Apostolov1998,Apostolov1997,Ivanov2005,Gover2010}. In all these versions, real or complex, the underlying geometric structure is a \emph{null structure}, i.e. an integrable distribution of totally null complex $2$-planes. In the real category, the metric signature induces an additional reality structure on the complexified tangent bundle, which adds a particular `flavour' to this null geometry. Thus, in Lorentzian signature, a null structure is equivalent to a \emph{Robinson structure} (also known as an \emph{optical structure}), i.e. a congruence of null geodesics along each of which a complex structure on its screenspace is preserved \cites{Nurowski2002,Trautman2002}. In particular, such a congruence is shearfree. Similarly, a Hermitian structure on a proper Riemannian manifold can be identified with a null structure.

That distributions of totally null complex $2$-planes on pseudo-Riemannian manifolds represent fundamental geometric objects forms the backbone of twistor theory, or more generally spinor geometry, and a number of geometric properties of spacetimes can be nicely formulated in this setting \cites{Penrose1967,Penrose1986}. These ideas generalise to higher dimensions: in even dimensions, a null structure is now an integrable distribution of maximal totally null planes; in odd dimensions, the definition is identical except that the orthogonal complement to the null distribution is also required to be integrable. Applications of higher-dimensional twistor geometry can be seen in the work of Hughston and Mason \cite{Hughston1988}, who give an even-dimensional generalisation of the Kerr theorem as a means to generating null structures on open subsets of the conformal complex sphere. More recently, it was noted by Mason and the present author \cite{Mason2010} that the higher-dimensional Kerr-NUT-AdS metric \cite{Chen2006} is characterised by a discrete set of Hermitian structures, and its Weyl tensor satisfies an algebraic condition generalising the four-dimensional Petrov type D condition. As in four dimensions \cite{Walker1970}, these results were shown to arise from the existence of a conformal Killing-Yano $2$-form.

Such findings suggest that a higher-dimensional Goldberg-Sachs theorem should be formulated in the context of null structures, and to this end, an invariant classication of the curvature tensors with respect to an almost null structure appears to be the most natural framework. Such a classification already exists in almost Hermitian geometry \cites{Falcitelli1994,Tricerri1981}, but curvature prescriptions that are sufficient for the integrability of an almost Hermitian structure do not appear to have been investigated. In Lorentzian geometry, the Weyl tensor has also been subject to a classification \cites{Coley2004,Coley2004a,Milson2005,Pravda2004,Pravda2007,Ortaggio2007} which has mostly focused on the properties of null geodesics. In fact, according to this approach, the geodesic part of the Goldberg-Sachs theorem admits a generalisation to higher dimensions \cite{Durkee2009}, but its shearfree part does not. In fact, shearfree congruences of null geodesics in more than four dimensions, which, as remarked in \cite{Trautman2002a}, are no longer equivalent to Robinson structures, have not featured so prominently  in the solutions to Einstein's field equations \cites{Frolov2003,Pravda2004}.

On the other hand, the present author \cite{Taghavi-Chabert2011} put forward a higher-dimensional generalisation of the Petrov type II condition, which, together with a genericity assumption on the Weyl tensor and a degeneracy condition on the Cotton-York tensor, guarantees the existence of a Robinson structure on a five-dimensional Lorentzian manifold. A counterexample to the converse is given: the black ring solution \cite{Emparan2002} admits pairs of null structures, but the Weyl tensor fails to be `algebraically special relative to it' in the sense of Theorem \ref{thm-GS_intro} below.
In the same reference, it is also conjectured that these results are also true in arbitrary dimensions, and in the holomorphic category. It is the aim of the paper to turn this conjecture into a theorem. To be precise, we shall prove
\begin{thm}\label{thm-GS_intro}
 Let $\mcN$ be a holomorphic distribution of totally null $m$-planes on a $(2m+\epsilon)$-dimensional complex Riemannian manifold $(\mcM, \bm{g})$, where $\epsilon \in \{ 0 , 1\}$ and $2m+\epsilon \geq 5$, and let $\mcN^\perp$ denote its orthogonal complement with respect to $\bm{g}$. Suppose the Weyl tensor and the Cotton-York tensor (locally) satisfy
\begin{align*}
 \bm{C} ( \bm{X} , \bm{Y}, \bm{Z} , \cdot ) & = 0 \, , & \bm{A} (\bm{Z} , \bm{X} , \bm{Y} ) & = 0 \, ,
\end{align*}
respectively, for all vector fields $\bm{X}, \bm{Y} \in \Gamma (\mcN^\perp)$, and $\bm{Z} \in \Gamma (\mcN)$. Suppose further that the Weyl tensor is otherwise generic. Then, the distributions $\mcN$ and $\mcN^\perp$ are (locally) integrable.
\end{thm}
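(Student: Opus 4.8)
The plan is to work in a local null frame adapted to the flag $\mcN \subset \mcN^\perp \subset \Tgt\mcM$, and to recast both the hypotheses and the conclusion in terms of the boost-weight grading that $\mcN$ induces on all tensors. Writing $\{\bm{e}_a\}_{a=1}^m$ for a frame of $\mcN$, $\bm{e}_0$ for a unit section of $\mcN^\perp/\mcN$ (present only when $\epsilon=1$), and $\{\bm{e}^a\}$ for a null frame spanning a complementary totally null $m$-plane, one obtains a grading in which $\mcN$, $\bm{e}_0$ and the complement carry boost weights $+1, 0, -1$; the stabiliser of the flag is a parabolic subgroup whose reductive part is $\GL(m,\C)$. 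First I would record that the involutivity of $\mcN$ is equivalent to the vanishing of certain positively graded components of the intrinsic torsion --- concretely, the symmetry of the second fundamental form $(\bm{X},\bm{Y}) \mapsto \pr\,\nabla_{\bm{X}}\bm{Y}$ for $\bm{X},\bm{Y}\in\Gamma(\mcN)$, which comprises the analogues of the geodesy and shear of $\mcN$; in odd dimensions the involutivity of $\mcN^\perp$ contributes one further component. Dually, the two hypotheses $\bm{C}(\bm{X},\bm{Y},\bm{Z},\cdot)=0$ and $\bm{A}(\bm{Z},\bm{X},\bm{Y})=0$ say exactly that all components of the Weyl and Cotton--York tensors of sufficiently high boost weight vanish, i.e. that $(\mcM,\bm{g})$ is algebraically special along $\mcN$.

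The engine of the argument is the second Bianchi identity in its conformally invariant form, in which the antisymmetrised derivative $\nabla_{[e}\bm{C}_{ab]cd}$ of the Weyl tensor is sourced by the Cotton--York tensor $\bm{A}$. I would project this identity onto the single boost-weight component that pairs the integrability obstruction of $\mcN$ against the surviving, leading components of the Weyl tensor. Because the hypotheses annihilate every Weyl and Cotton--York component of high enough boost weight, the left-hand side collapses at this weight: expanding the frame derivative of the (vanishing) high-weight Weyl components through the connection, what survives is exactly the intrinsic-torsion coefficients contracted into the leading Weyl component, while the right-hand side vanishes outright. The identity therefore reduces to a purely algebraic relation $\Phi(\tau)=0$, where $\tau$ is the geodesy-and-shear obstruction and $\Phi$ is the $\GL(m,\C)$-equivariant map obtained by contracting $\tau$ into the leading Weyl component.

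I would then decompose $\tau$ and the leading Weyl component into $\GL(m,\C)$-irreducibles and read off $\Phi$ block by block using Schur's lemma. The genericity hypothesis is, by definition, the open and dense condition that each of these blocks of $\Phi$ is injective; granting it, $\Phi(\tau)=0$ forces $\tau=0$, which is precisely the integrability of $\mcN$. When $\epsilon=1$ one repeats the contraction at the next boost weight to obtain, in the same way, the integrability of $\mcN^\perp$. In practice it is cleanest to bootstrap: first extract the lowest-order (geodesy) component, use it to adapt the frame further and simplify the connection, and only then isolate the trace-free shear, since these are coupled in the raw Bianchi projection.

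The step I expect to be the main obstacle is exactly the implication ``genericity $\Rightarrow$ injectivity of $\Phi$''. In four dimensions this is the triviality that the single surviving scalar $\Psi_2$ is nonzero, but in higher dimensions $\Phi$ is a genuine contraction operator on a multi-component $\GL(m,\C)$-representation. One must (i) pin down the correct notion of genericity as the nonvanishing of a finite collection of equivariant determinants built from the leading Weyl component, and (ii) rule out any Schur-type degeneracy or hidden trace identity that would produce a systematic kernel independent of genericity. A secondary difficulty is the careful bookkeeping within the single Bianchi projection needed to disentangle the trace-free shear from the geodesy and divergence parts and --- when $\epsilon=1$ --- from the contributions of the extra direction $\bm{e}_0$, so that the operator $\Phi$ acting on the shear is not contaminated by terms outside the reach of the genericity assumption.
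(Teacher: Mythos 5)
Your proposal follows essentially the same route as the paper: a local null frame adapted to $\mcN$, the observation that the hypotheses kill all high-boost-weight Weyl and Cotton--York components so that the relevant Bianchi components degenerate into a homogeneous linear system on the connection coefficients $\Gamma_{\kappa\mu\nu}$, $\Gamma_{\kappa\mu 0}$, $\Gamma_{0\mu\nu}$ (solved in successive stages when $\epsilon=1$), with genericity forcing the trivial solution and hence integrability. The only divergence is in the final linear-algebra step: where you would decompose into $\GL(m,\C)$-irreducibles and appeal to Schur's lemma, the paper instead selects an explicit square subsystem whose matrix has $1\times1$ and $3\times3$ diagonal blocks built from Weyl components whose index structures are disjoint from those in the off-diagonal entries, and establishes nonsingularity via an elementary determinant lemma (Lemma \ref{lem-det_matrix}) together with case-by-case checks that the trace-free identities do not annihilate the block determinants --- precisely the ``hidden trace identity'' obstacle you correctly single out as the crux.
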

In fact, we shall demonstrate more than this. We shall define further degeneracy classes of the Weyl tensor and Cotton-York tensor with respect to $\mcN$, and show that these also imply the integrability of $\mcN$ and $\mcN^\perp$.
We shall also be able to weaken the genericity assumption on the Weyl tensor in Theorem \ref{thm-GS_intro} to such an extent as to guarantee the integrability of up to $2^m$ canonical null distributions and their orthogonal complements. Consequently, Theorem \ref{thm-GS_intro} will be generalised to the category of smooth pseudo-Riemannian manifolds of arbitrary metric signature.

The structure of the paper is as follows. In section \ref{sec-algebra}, we lay bare the algebraic properties of null structures by means of their stabiliser $\prb$, say, which is well-known to be a parabolic Lie subalgebra of the complex special orthogonal group. Their properties are already well-documented in \cites{Baston1989,vCap2009}, and we use these sources to set up the algebraic background and the notation used throughout the paper.

These algebraic considerations are then translated into the language of vector bundles in section \ref{sec-geometry}. In particular, algebraic classes of Weyl tensors and Cotton-York tensors are defined in terms of $\prb$-invariant filtered vector bundles. We also examine the geometric characteristics of almost null structures such as integrability conditions and geodetic property.

In section \ref{sec-GS}, we present the main results of this paper. It begins with a restatement of the complex four-dimensional Goldberg-Sachs theorem in the notation introduced in section \ref{sec-geometry}. We discuss to which extent it may be generalised to higher dimensions. Buildling on \cite{Taghavi-Chabert2011}, we argue that the existence of a null structure together with a degenerate Cotton-York tensor does not necessarily lead to further, i.e. `special', degeneracy of the Weyl tensor, in the sense of Theorem \ref{thm-GS_intro}. On the other hand, we show that certain algebraic classes of the Weyl tensor, which generalise the Petrov type II and more degenerate, guarrantee the integrability of an almost null structure, provided that the Weyl tensor satisfies a genericity assumption, and the Cotton-York tensor is sufficiently degenerate. We then prove the conformal invariance of these results.

In section \ref{sec-GS_degenerate}, after a heuristic discussion on the genericity assumption on the Weyl tensor, we extend Theorem \ref{thm-GS_intro} to the case of multiple null structures, which may be viewed as a generalisation of the Petrov type D condition. This allows us to show how it also applies to real pseudo-Riemannian smooth manifolds of arbitrary metric signature, giving special attention to proper Riemannian, split signature and Lorentzian manifolds.

We end the paper with some remarks on the relation between the Goldberg-Sachs theorem and parabolic geometry.

We have collected the complex Bianchi identity in component form in an appendix.

\paragraph{Acknowlegments} The author would like to thank Jan Slovak for useful discussions. This work is funded by a SoMoPro (South Moravian Programme) Fellowship: it has received a financial contribution from the European
Union within the Seventh Framework Programme (FP/2007-2013) under Grant Agreement No. 229603, and is also co-financed by the South Moravian Region.

\section{Algebraic preliminaries}\label{sec-algebra}
This section is largely a down-to-earth application of the theory of parabolic Lie algebras given in \cite{vCap2009}. Other useful references on parabolic geometry and representation theory are \cites{Baston1989,Fulton1991}.

Let $(V , \bm{g} )$ be a $(2m+\epsilon)$-dimensional complex vector space, where $\epsilon \in \{ 0,1\}$, equipped with a non-degenerate symmetric bilinear form $\bm{g} : V \times V\rightarrow \C$.
If $U$ is a vector subspace of $V$, we shall denote its orthogonal complement with respect to $\bm{g}$ by $U^\perp$, and its dual by $U^*$. Fix an orientation for $(V, \bm{g})$, and denote by $*$ the Hodge duality operator on the exterior algebras $\bigwedge^\bullet V$ and $\bigwedge^\bullet V^*$. The group of automorphisms of $V$ preserving $\bm{g}$ of determinant $1$ is the complex special orthogonal group $\SO(2m+\epsilon,\C)$. It will be denoted $G$ for short, and its Lie algebra $\so(2m+\epsilon,\C)$ by $\g$.

\begin{defn}
A \emph{null structure on $(V , \bm{g})$} is a maximal totally null subspace $N$ of $V$, i.e.
\begin{align}\label{eq-null_structure}
 \{ 0 \} \subset N \subset N^\perp \subset V \, ,
\end{align}
where $\dim N = m$.
\end{defn}
There are notable differences between the even- and odd-dimensional cases, which we state as a lemma.
\begin{lem} \label{lem-even/odd}
 Let $N$ be a null structure. Then
\begin{enumerate}
 \item when $\epsilon = 0$, $N=N^\perp$, and $N$ is either self-dual or anti-self-dual, i.e. for any $\bm{\omega} \in \bigwedge^m N$, either $* \bm{\omega} = \bm{\omega}$ or $* \bm{\omega} = - \bm{\omega}$;
 \item when $\epsilon = 1$, $N$ is a proper subspace of $N^\perp$, and $N^\perp / N$ is one-dimensional.
\end{enumerate}
\end{lem}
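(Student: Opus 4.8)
The plan is to treat the dimension statements and the self-duality dichotomy separately. The dimension statements follow purely from non-degeneracy of $\bm{g}$, while the self-dual/anti-self-dual alternative requires a short computation with the Hodge operator.

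First I would invoke the two standard facts about a non-degenerate symmetric form: for every subspace $U \subseteq V$ one has $\dim U + \dim U^\perp = \dim V$ and $(U^\perp)^\perp = U$. Since $\dim N = m$ and $\dim V = 2m + \epsilon$, the first gives $\dim N^\perp = m + \epsilon$, and since $N$ is totally null we have $N \subseteq N^\perp$. When $\epsilon = 0$ the two dimensions coincide, so the inclusion is an equality $N = N^\perp$; when $\epsilon = 1$ the inclusion is strict and $\dim(N^\perp/N) = (m+1) - m = 1$. This settles everything except the self-duality claim.

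For the dichotomy I would exploit that $\bigwedge^m N$ is one-dimensional, being the top exterior power of the $m$-dimensional space $N$; hence it is spanned by a single decomposable form $\bm{\omega} = \bm{e}_1 \wedge \cdots \wedge \bm{e}_m$, where $\{ \bm{e}_i \}$ is any basis of $N$. The essential point is that $*$ maps this line into itself, so that $* \bm{\omega} = c\, \bm{\omega}$ for a scalar $c$, and that $c = \pm 1$. To see this, extend $\{ \bm{e}_i \}$ to a Witt (hyperbolic) basis $\{ \bm{e}_1 , \dots , \bm{e}_m , \bm{f}_1 , \dots , \bm{f}_m \}$ of $V$ with $\bm{g} ( \bm{e}_i , \bm{f}_j ) = \delta_{ij}$ and all other products zero — such a basis exists because $\bm{g}$ is non-degenerate and $N$ is maximal totally null. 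Reading off the Hodge operator from $\bm{\alpha} \wedge * \bm{\omega} = \langle \bm{\alpha} , \bm{\omega} \rangle \, \vol$, the induced pairing $\langle \bm{\alpha} , \bm{\omega} \rangle$ is non-zero only when $\bm{\alpha}$ pairs completely with $\bm{e}_1 \wedge \cdots \wedge \bm{e}_m$, i.e. only for $\bm{\alpha}$ proportional to $\bm{f}_1 \wedge \cdots \wedge \bm{f}_m$; this forces $* \bm{\omega}$ to be a multiple of $\bm{\omega}$, and comparing $\bm{f}_1 \wedge \cdots \wedge \bm{f}_m \wedge \bm{\omega}$ with the volume form fixes the multiple to be $\pm 1$. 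Thus $* \bm{\omega} = \pm \bm{\omega}$, and by linearity the sign is the same for every non-zero element of $\bigwedge^m N$, which is exactly the stated alternative.

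The step I expect to be the main obstacle — or at least the one demanding care — is verifying that $*$ preserves the line $\bigwedge^m N$ and keeping the sign conventions consistent. The subspace $N$ is null rather than non-degenerate, so one cannot use an orthonormal complement; the Witt basis is the right substitute, and it is precisely the maximality and total nullity of $N$ (equivalently $N = N^\perp$) that makes $* \bm{\omega}$ land back in $\bigwedge^m N$. The remaining work is sign bookkeeping in the volume form and in the normalisation of $*$ on middle-degree forms, which is routine but needs the fixed orientation of $(V , \bm{g})$ to make the self-dual versus anti-self-dual labelling well defined.
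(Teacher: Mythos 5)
Your argument is correct, and it supplies a proof for a lemma the paper simply states without one (it is treated there as a standard fact about maximal totally null subspaces). The dimension count via $\dim U + \dim U^{\perp} = \dim V$ and the Witt-basis computation showing $* \bm{\omega} = c\,\bm{\omega}$ on the line $\bigwedge^m N$ are exactly the expected route; the only point to keep aligned with the paper is its convention (footnoted in section 3) that $*$ is normalised in the complex category so that $*^2 = 1$ on middle-degree forms, which is what pins down $c = \pm 1$ rather than $c^2 = (-1)^m$.
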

In what follows, we describe the Lie algebra of the stabiliser of the null structure.

\subsection{Graded Lie algebras and parabolic subalgebras}
\paragraph{Even dimensions greater than four ($\epsilon=0$, $m>2$)}
By Lemma \ref{lem-even/odd}, we can rewrite filtration \eqref{eq-null_structure} in the form
\begin{align}\label{eq-null_structure_even}
 V^{\tfrac{3}{2}} \subset V^{\tfrac{1}{2}} \subset V^{-\tfrac{1}{2}} \, .
\end{align}
By convention, one take $V^{\tfrac{k}{2}} = \{ 0 \}$ for all $k \geq 3$, and $V^{\tfrac{k}{2}} = V$ for all $k \leq - 1$. The meaning of this notation will become apparent in a moment. For definiteness, we also assume, with no loss of generality, that $N$ is self-dual.
Choose a subspace $V_{-\tfrac{1}{2}} \subset V^{-\tfrac{1}{2}}$ complementary to $V^{\tfrac{1}{2}}$, so that setting $V_{\tfrac{1}{2}} := V^{\tfrac{1}{2}}$, the vector space $V$ can be expressed as the direct sum
\begin{align} \label{eq-null_grading_even}
 V & = V_{\tfrac{1}{2}} \oplus V_{-\tfrac{1}{2}} \, .
\end{align}
We can then adopt the following arrangement of basis for $V$ and of symmetric bilinear form $\bm{g}$
\begin{align*}
V_{\tfrac{1}{2}} & = \left\{ \begin{pmatrix}
  \mathbf{u} \\
  \mathbf{0} 
 \end{pmatrix}
: \mathbf{u} \in \C^m \right\} \, , & 
V_{-\tfrac{1}{2}} & = \left\{ \begin{pmatrix}
  \mathbf{0} \\
  \mathbf{w} \\
 \end{pmatrix}
: \mathbf{w} \in \C^m \right\} \, , &  
 \bm{g} & =
\begin{pmatrix}
 \mathbf{0} & \mathbf{1} \\
 \mathbf{1} & \mathbf{0} 
\end{pmatrix} \, .
\end{align*}
Assuming $m>2$, the Lie algebra $\g=\so(2m,\C)$ can now be expressed as the graded Lie algebra
\begin{align} \label{eq-graded_algebra_even}
 \g & = \g_{-1} \oplus \g_0 \oplus \g_1 \, ,
\end{align}
where
\begin{align*}
\g_{-1} & = \left\{
 \begin{pmatrix}
  \mathbf{0} &  \mathbf{0} \\
  \mathbf{Z} &  \mathbf{0} \\
 \end{pmatrix} : \mathbf{Z} \in \Mat(m, \C ) \, , \, \mathbf{Z} = - \mathbf{Z}^t \right\} \, , &
\g_1 & = \left\{ \begin{pmatrix}
  \mathbf{0} &  \mathbf{Y} \\
  \mathbf{0} &  \mathbf{0} \\
 \end{pmatrix} : \mathbf{Y} \in \Mat(m, \C ) \, , \, \mathbf{Y} = - \mathbf{Y}^t \right\} \, , \\
\g_0 & = \left\{ \begin{pmatrix}
  \mathbf{X} &  \mathbf{0} \\
  \mathbf{0} &  -\mathbf{X}^t \\
 \end{pmatrix} : \mathbf{X} \in \glie(m,\C) \right\}
\, .
\end{align*}
Here, $\glie(m,\C)$ denotes the Lie algebra of the complex general linear group $\GL(m,\C)$, $\Mat(m, \C )$ the ring of all $m \times m$ matrices over $\C$, and $\cdot^t$ matrix transposition. The Lie bracket is compatible with the grading of $\g$, i.e. $[\g_i , \g_j ] \subset \g_{i+j}$ for all $i,j$, with the convention that $\g_i = \{ 0 \}$ for all $i > |1|$. Further, being a reductive Lie algebra, $\g_0$ decomposes as $\g_0 = \g_0^{ss} \oplus \mfz(\g_0)$, where $\g_0^{ss}$ is semi-simple and isomorphic to $\slie (m,\C)$, and $\mfz(\g_0)$ is the centre of $\g_0$ and is one-dimensional. In particular,  $\mfz(\g_0)$ contains the element
\begin{align*}
 \mathbf{E} & := \frac{1}{2}
\begin{pmatrix}
 \mathbf{1}_m & \mathbf{0} \\
 \mathbf{0} & - \mathbf{1}_m 
\end{pmatrix} \, ,
\end{align*}
and we see that the adjoint action of $\mathbf{E}$ on $\g$ is given by $\Ad (\mathbf{E} ) (\mathbf{X}) = i \mathbf{X}$ for all $\mathbf{X} \in \g_i$, and any $i \in \{ -1, 0 , 1 \}$. For this reason, $\mathbf{E}$ is referred to as the \emph{grading element of $\g$}. The grading on $\g$ induces a filtration $\g^1 \subset \g^0 \subset \g^{-1} = \g$ on $\g$, where $\g^i := \g_i \oplus \ldots \oplus \g_1$. Setting $\prb := \g^0$, we see that $\prb$ preserves the filtered Lie algebra $(\g, \{ \g^i \} )$. The Lie algebra $\prb$ is an example of a \emph{parabolic Lie subalgebra} of $\so(2m,\C)$. The above description is also referred to as a \emph{standard} parabolic Lie subalgebra, and any parabolic Lie subalgebra preserving a self-dual null structure must be $\SO(2m,\C)$-conjugate to it.\footnote{These definitions are usually given in terms of the root system of a semi-simple Lie algebra. This is not needed for the purpose of the present article, and we refer the reader to \cites{Baston1989,vCap2009} for a more thorough treatment.}

It is now apparent that our choice of notation for the filtration \eqref{eq-null_structure} is justified by the fact that $\mathbf{E}$ also induces the grading \eqref{eq-null_grading_even} on $V$, since for any element $\mathbf{v} \in V_i$, one has $\mathbf{E} \cdot \mathbf{v} = i \mathbf{v}$. In particular, the filtration \eqref{eq-null_structure} is $\prb$-invariant.

On the other hand the grading \eqref{eq-null_grading_even} is only invariant under $\g_0$, not $\prb$. We can nonetheless define the \emph{associated graded vector space $\gr(V)$ to $(V , \{ V^i \})$} by $\gr(V) := \bigoplus_i ( \gr_i (V) )$ where  $\gr_i (V) := V^i / V^{i+1}$, which is clearly $\prb$-invariant. Restricting the natural projections $\pi_i: V^i \rightarrow \gr_i (V)$ to $V_i$, one then obtains isomorphisms $V_i \cong \gr_i (V)$, and thus an isomorphism $\gr (V) \cong V$.

\begin{rem}\label{rem-ASD_parabolic}
 The stabiliser of an \emph{anti-self-dual} null structure can also be described in terms of a standard parabolic Lie subalgebra of $\so(2m,\C)$. It is however not $\SO(2m,\C)$-conjugate to the parabolic Lie subalgebra preserving a self-dual null structure as given above. Nonetheless, they enjoy the same properties, and the distinction between these two Lie algebras will not be crucial to the applications covered in this paper -- the notation in the anti-self-dual case mirrors that introduced above for the self-dual case. The situation in four dimensions is slightly different as we shall see presently.
\end{rem}

\paragraph{Four dimensions ($\epsilon=0$, $m=2$)}
The Lie algebra $\so(4,\C)$ can also be described in terms of the grading \eqref{eq-graded_algebra_even}. However, unlike $\so(2m,\C)$ for $m >2$, $\so(4,\C)$ is not simple, but splits into a self-dual part and an anti-dual part, each isomorphic to $\slie(2,\C)$, and which we shall denote by ${}^+ \g$ and ${}^- \g$ respectively\footnote{Here, self-duality is defined via the standard identification $\so(2m,\C) \cong \bigwedge^2 V$.}. The stabiliser of a self-dual, respectively, anti-self-dual null structure will then be a parabolic subalgebra of ${}^+\g$, respectively, ${}^-\g$. Assuming that $N$ is self-dual as above, and using the setting of the previous section, the Lie algebras ${}^+\g$ and ${}^-\g$ are given by ${}^+ \g = {}^+ \g_{-1} \oplus {}^+ \g_0 \oplus {}^+ \g_1$ and ${}^- \g = \g_0^{ss}$ respectively. Here, we have set ${}^+ \g_1 := \g_1$, ${}^+ \g_{-1} := \g_{-1}$ and ${}^+ \g_0 := \mfz(\g_0)$. Setting ${}^+ \g^i := {}^+ \g_i \oplus \ldots \oplus {}^+ \g_1$ for each $i$, we obtain the induced filtration ${}^+ \g^1 \subset {}^+ \g^0 \subset {}^+ \g^{-1}$. Then, we see that both $({}^+ \g, \{ {}^+ \g^i \} )$ and the filtration \eqref{eq-null_structure} are preserved by the parabolic Lie subalgebra $\prb := {}^+ \g^0$. A similar filtration can be derived on ${}^- \g$ with respect to the parabolic Lie algebra preserving an anti-self-dual null structure.

\paragraph{Odd dimensions ($\epsilon=1$)}
By Lemma \ref{lem-even/odd}, the filtration \eqref{eq-null_structure} can be rewritten in the form
\begin{align}\label{eq-null_structure_odd}
 V^2 \subset V^1 \subset V^0 \subset V^{-1} \, ,
\end{align}
and we set $V^k = \{ 0 \}$ for all $k \geq 2$, and $V^k = V$ for all $k \leq -1$ for convenience. This notation will be justified in the same way as in the even-dimensional case. As before, to describe the Lie algebra preserving this filtration, we introduce subspaces $V_i \subset V^i$ complementary to $V_{i+1}$, for $i=-1,0$, with $V_1 = V^1$, so that 
\begin{align}\label{eq-null_grading_odd}
  V & = V_1 \oplus V_0 \oplus V_{-1} \, . 
\end{align}
If one adopts the following arrangement of basis for $V$ and of symmetric bilinear form $\bm{g}$, adapted to this direct sum
\begin{align*}
V_1 & = \left\{
\begin{pmatrix}
  \mathbf{u} \\
  \mathbf{0} \\
  0 
 \end{pmatrix} : \mathbf{u} \in \C^m
\right\} \, , & 
V_0 & =
\left\{
\begin{pmatrix}
  \mathbf{0} \\
  \mathbf{0} \\
  v 
 \end{pmatrix} : v \in \C
\right\} \, , &
V_{-1} & = 
\left\{
\begin{pmatrix}
  \mathbf{0} \\
  \mathbf{w} \\
  0 
 \end{pmatrix} : \mathbf{w} \in \C^m
\right\} \, , & 
\bm{g} & =
\begin{pmatrix}
 \mathbf{0} & \mathbf{1} & \mathbf{0} \\
 \mathbf{1} & \mathbf{0} & \mathbf{0} \\
 \mathbf{0} & \mathbf{0} & 1 
\end{pmatrix} \, ,
\end{align*}
the Lie algebra $\g = \so(2m+1,\C)$ can be expressed as the graded Lie algebra
\begin{align*}
 \g & = \g_{-2} \oplus \g_{-1} \oplus \g_0 \oplus \g_1 \oplus \g_2 \, ,
\end{align*}
where
\begin{align*}
\g_{2} & =
\left\{
 \begin{pmatrix}
  \mathbf{0} &  \mathbf{Y} & \mathbf{0} \\
  \mathbf{0} &  \mathbf{0} & \mathbf{0} \\
  \mathbf{0} &  \mathbf{0} & 0 
 \end{pmatrix} : \mathbf{Y} \in \Mat(m,\C) \, , \, \mathbf{Y} = -\mathbf{Y}^t  \right\} \, ,
&
\g_{1} & =
\left\{
 \begin{pmatrix}
  \mathbf{0} &  \mathbf{0} & \mathbf{U} \\
  \mathbf{0} &  \mathbf{0} & \mathbf{0} \\
  \mathbf{0} & -\mathbf{U}^t & 0 
 \end{pmatrix} : \mathbf{U} \in \C^m \right\} \, ,
\\
\g_{0} & =
\left\{
 \begin{pmatrix}
  \mathbf{X} &  \mathbf{0} & \mathbf{0} \\
  \mathbf{0} &  -\mathbf{X}^t & \mathbf{0} \\
  \mathbf{0} & \mathbf{0} & 0 
 \end{pmatrix} : \mathbf{X} \in \glie(m,\C) \right\} \, ,
\\
\g_{-2} & =
\left\{
 \begin{pmatrix}
  \mathbf{0} &  \mathbf{0} & \mathbf{0} \\
  \mathbf{Z} &  \mathbf{0} & \mathbf{0} \\
  \mathbf{0} & \mathbf{0} & 0 
 \end{pmatrix} : \mathbf{Z} \in \Mat(m,\C) \, , \, \mathbf{Z} = -\mathbf{Z}^t  \right\} \, ,
&
\g_{-1} & =
\left\{
 \begin{pmatrix}
  \mathbf{0} &  \mathbf{0} & \mathbf{0} \\
  \mathbf{0} &  \mathbf{0} & \mathbf{V} \\
  - \mathbf{V}^t & \mathbf{0} & 0 
 \end{pmatrix} : \mathbf{V} \in \C^m\right\} \, .
\end{align*}
The Lie bracket is compatible with the grading of $\g$, with the convention that $\g_i = \{ 0 \}$ for all $i > |2|$. Again, $\g_0$ decomposes as $\g_0 = \g_0^{ss} \oplus \mfz(\g_0)$, where $\g_0^{ss}$ is semi-simple and isomorphic to $\slie (m,\C)$, and $\mfz(\g_0)$ is the centre of $\g_0$ and is one-dimensional. In particular,  $\mfz(\g_0)$ contains the grading element
\begin{align*}
 \mathbf{E} & := 
\begin{pmatrix}
 \mathbf{1}_m & \mathbf{0} & \mathbf{0} \\
 \mathbf{0} & - \mathbf{1}_m & \mathbf{0} \\
 \mathbf{0} & \mathbf{0} & 0
\end{pmatrix}
\end{align*}
of $\g$ since $\Ad (\mathbf{E} ) (\mathbf{X}) = i \mathbf{X}$ for all $\mathbf{X} \in \g_i$, and any $i$. The grading on $\g$ induces a filtration of Lie algebra $\g^2 \subset \g^1 \subset \g^0 \subset \g^{-1} \subset \g^{-2} = \g$, where $\g^i := \g_i \oplus \ldots \oplus \g_1$. Again, setting $\prb := \g^0$, we see that $\prb$ preserves the filtered Lie algebra $(\g, \{ \g^i \} )$. It is a standard parabolic Lie subalgebra of $\so(2m+1,\C)$, and the stabiliser of any null structure is $\SO(2m+1,\C)$-conjugate to it. We also note that our choice of notation for the filtration \eqref{eq-null_structure_odd} reflects the grading \eqref{eq-null_grading_odd} of $\mathbf{E}$ on $V$. It is then straightforward to show that the filtration \eqref{eq-null_structure_odd} is invariant under $\prb$.

As in the even-dimensional case, one can define \emph{associated graded vector space $\gr(V)$ to $(V , \{ V^i \})$} by $\gr(V) := \bigoplus_i ( \gr_i (V) )$ where  $\gr_i (V) := V^i / V^{i+1}$. A choice of grading on $V$ then allows one to establish an isomorphism $\gr (V) \cong V$.

\subsection{Induced filtered vector spaces}\label{sec-alg_dual_tensor}
Any filtration $\{ V^i \}$ on a vector space $V$ induces a filtration $\{ (V^*)^i \}$ on its dual $V^*$, whereby each vector subspace $(V^*)^i$ is the annihilator of $V^{1-i}$. Further, the associated graded vector space $\gr(V^*)$ is then such that $\gr_i(V^*) = (\gr_{-i}(V))^*$. Thus, the filtrations dual to filtrations \eqref{eq-null_structure_even} and \eqref{eq-null_structure_odd} are
\begin{align*}
 (V^* )^{\tfrac{3}{2}} \subset (V^* )^{\tfrac{1}{2}} \subset (V^* )^{-\tfrac{1}{2}} & = V^* \, , &
 (V^* )^2 \subset (V^* )^1 \subset (V^* )^0 \subset (V^* )^{-1} & = V^* \, ,
\end{align*}
in even and odd dimensions respectively, and $V^i \cong (V^*)^i$ for each $i$, by means of $\bm{g}$.

Similarly, given two filtered vector spaces $(V, \{ V^i \})$ and $( W, \{ W^i \} )$, one can naturally define a filtration $\{ (V \otimes W )^k \}$ on their tensor product $V \otimes W$ by setting
\begin{align*}
( V \otimes W )^k & := \bigoplus_{i+j=k} V^i \otimes W^j \, ,
\end{align*}
and the associated graded vector space $\gr( V \otimes W )$ is such that $\gr_k( V \otimes W ) = \bigoplus_{i+j=k} \gr_i (V) \otimes \gr_j ( W )$.

Another useful property of filtered vector spaces is that if $U$ is a vector subspace of a filtered vector space $(V , \{ V^i \})$, then $U$ inherits the filtration $\{ V^i \}$ of $V$ by setting $U^i := U \cap V^i$.

In all of these constructions, the filtrations and the associated graded vector spaces induced from a given $\prb$-invariant filtered vector space $(V , \{ V^i \})$ are also $\prb$-invariant. Further, the choice of a grading on $V$ compatible with its filtration will also induce gradings on the dual vector space and tensor products.

\begin{rem}\label{rem-rep}
It is often more convenient to view the filtrations \eqref{eq-null_structure_even} and \eqref{eq-null_structure_odd} as \emph{representations} of $\prb$ in even and odd dimensions respectively. Typically, one starts with a representation $V$ of $\g$, which for simplicity we may assume to be irreducible. In the case at hand, $V$ is simply the standard representation of $\g$. Then, one can obtain a filtration $\{ V^i \}$ on $V$ where each subspace $V^i$ is a $\prb$-invariant subspace of $V$. It turns out that the associated graded vector space $\gr(V)$ can be viewed as a refinement of the filtration $\{ V^i \}$ in the sense that each $\gr_i (V) := V^i / V^{i+1}$ is a completely reducible $\prb$-module, and each irreducible component can be described in terms of an irreducible $\g_0^{ss}$-module. This analysis clearly extends to dual and tensor representations.
\end{rem}

\subsection{Parabolic subgroups}
The passage from the Lie algebra $\g$ and its parabolic Lie algebra $\prb$ to their respective Lie groups $G$ and $P$ is explained in details in \cite{vCap2009}. In general, having fixed a complex Lie algebra $\g$ and a parabolic Lie subalgebra $\prb$, there will be some choice of possible Lie groups with Lie algebras $\g$ and $\prb$. For our purpose, it suffices to choose $G$ to be the connected Lie group $\SO(2m+\epsilon,\C)$, in which case there is only one possible choice for $P$ obtained by exponentiating $\prb$. It can also be described as follows. We first conveniently define a group $G_0$ with Lie algebra $\g_0$, which will be $\GL(m,\C)$ in the case at hand. Then, writing $\prb_+ := \g_1$, respectively, $\prb_+ := \g_1 \oplus \g_2$, when $\epsilon = 0$, respectively, $\epsilon = 1$, one has a diffeomorphism $G_0 \times \prb_+ \rightarrow P:(\mathbf{g}_0,\mathbf{Z}) \mapsto \mathbf{g}_0 \exp (\mathbf{Z})$. The Lie subgroup $P$ is appropriately called a \emph{parabolic subgroup} of $G$, and the Lie subgroup $G_0$ is referred to as the \emph{Levy subgroup of $P$}.

Finally, in our case, the $\prb$-invariant filtrations and associated graded vector spaces will all give rise to $P$-modules, i.e. irreducible representations of $\prb$ will exponentiate\footnote{This is not true in general: there is a condition on the coefficients of the highest weight vector of an irreducible representation of a parabolic subalgebra to be satisfied \cite{Baston1989}.} to irreducible representations of $P$.

\section{The geometry of almost null structures}\label{sec-geometry}
Throughout $\mcM$ will denote a $(2m+\epsilon)$-dimensional complex manifold $\mcM$, where $\epsilon \in \{ 0 ,1 \}$ and $m \geq 2$. We shall essentially be working in the holomorphic category. Thus, $\Tgt \mcM$ and $\Tgt^* \mcM$ will denote the holomorphic tangent bundle and the holomorphic cotangent bundle of $\mcM$ respectively. If $E \rightarrow \mcM$ is a vector bundle over $\mcM$, the sheaf of holomorphic sections of $E$ will be denoted $\Gamma (E)$.
If $E$ and $F$ are vector bundles, $E \otimes F$ will denote the tensor product of $E$ and $F$, $\bigwedge^k E$, the $k$-th exterior power of $E$, $\bigodot^k E$,  the $k$-th symmetric power of $E$.
The Lie bracket of (holomorphic) vector fields will be denoted by $[ \cdot , \cdot ]$. We shall also assume that $\mcM$ is orientable, and the Hodge operator on differential forms will be denoted by $*$. When $\epsilon=0$, its restriction to $\Gamma ( \bigwedge^m \Tgt^* \mcM )$ is an involution, i.e. $*^2 = 1$, and the $+1$- and $-1$-eigenforms of $*$ will be referred to as self-dual and anti-self-dual respectively.\footnote{This choice of eigenvalues is always possible in the complex category.}

We shall equip $\mcM$ with a holomorphic metric $\bm{g}$, i.e. a non-degenerate global holomorphic section of $\bigodot^2 \Tgt^* \mcM$, and the pair $(\mcM , \bm{g} )$ will be referred to as a \emph{complex Riemannian manifold}. Equivalently, the structure group of the frame bundle $\mcF$ over $\mcM$ is reduced to $G := \SO(2m+\epsilon,\C)$, and the tangent bundle can be constructed as the standard representation of $G$, i.e. $\Tgt \mcM := \mcF \times_{G} V$ where $V$ is the standard representation of $G$. The $k$-th tracefree symmetric power of the tangent bundle and the cotangent bundle will be denoted by $\bigodot_{\circ}^k \Tgt \mcM$ and $\bigodot_{\circ}^k \Tgt^* \mcM$ respectively.

The holomorphic tangent bundle admits a unique torsion-free connection, the (holomorphic) Levi-Civita connection, which preserves the holomorphic metric; it will be identified with its associated covariant derivative $\nabla : \Gamma ( \Tgt \mcM ) \otimes \Gamma ( \Tgt \mcM ) \rightarrow \Gamma ( \Tgt \mcM )$, and it extends to a connection on sheaves of holomorphic sections of tensor products of $\Tgt \mcM$ and $\Tgt^* \mcM$.

The (holomorphic) Riemann curvature tensor $\bm{R} : \Gamma ( \bigwedge^2 \Tgt \mcM ) \otimes \Gamma ( \Tgt \mcM ) \rightarrow \Gamma ( \Tgt \mcM )$ associated to $\nabla$ is given by
\begin{align*}
 \bm{R}_{\bm{X} \wedge\bm{Y}} \cdot \bm{Z} & := \nabla_{\bm{X}} \nabla_{\bm{Y}} \bm{Z} - \nabla_{\bm{Y}} \nabla_{\bm{X}} \bm{Z} - \nabla_{[\bm{X},\bm{Y}]} \bm{Z}  \, ,
\end{align*}
for all $\bm{X}, \bm{Y}, \bm{Z} \in \Gamma (\Tgt \mcM )$, and extends to sheaves of holomorphic sections of tensor products of $\Tgt \mcM$ and $\Tgt^* \mcM$. This induces a section of $\Gamma(\bigodot^2 (\bigwedge^2 \Tgt^* \mcM))$, also denoted $\bm{R}$, via
\begin{align*}
 \bm{R} ( \bm{X}, \bm{Y}, \bm{Z}, \bm{W} ) & = \bm{g} ( \bm{R}_{ \bm{X} \wedge \bm{Y} } \cdot \bm{Z} , \bm{W} ) \, ,
\end{align*}
for all $\bm{X}, \bm{Y}, \bm{Z}, \bm{W} \in \Gamma (\Tgt \mcM)$, which satisfies the Riemann symmetry
\begin{align*}
 \bm{R} ( \bm{X} , \bm{Y} , \bm{Z}, \bm{W} ) + \bm{R} ( \bm{Y} , \bm{Z} , \bm{X}, \bm{W} ) + \bm{R} ( \bm{Z} , \bm{X} , \bm{Y}, \bm{W} ) & = 0 \, .
\end{align*}
The Riemann tensor naturally splits as 
\begin{multline}\label{eq-R=C+P}
 \bm{R} (\bm{X}, \bm{Y} ,\bm{Z} , \bm{W} ) = \bm{C} (\bm{X}, \bm{Y} ,\bm{Z} , \bm{W} ) \\ - \bm{g} ( \bm{X}, \bm{Z} ) \bm{\Rho} ( \bm{Y} , \bm{W} ) + \bm{g} ( \bm{X}, \bm{W} ) \bm{\Rho} ( \bm{Y} , \bm{Z} ) + \bm{g} ( \bm{Y}, \bm{Z} ) \bm{\Rho} ( \bm{X} , \bm{W} ) - \bm{g} ( \bm{Y}, \bm{W} ) \bm{\Rho} ( \bm{X} , \bm{Z} ) \, ,
\end{multline}
where the Weyl tensor $\bm{C}$ is the tracefree part of $\bm{R}$, and the Rho tensor $\bm{\Rho}$ is a trace-adjusted Ricci tensor. The Cotton-York tensor is the $2$-form valued $1$-form $\bm{A}$ defined by
\begin{align} \label{eq-CY}
\bm{A} ( \bm{X} , \bm{Y}, \bm{Z} ) & := \nabla_{\bm{Y}} \bm{\Rho} ( \bm{Z}, \bm{X} ) - \nabla_{\bm{Z}} \bm{\Rho} ( \bm{Y}, \bm{X} ) 
\end{align}
for all $\bm{X}, \bm{Y}, \bm{Z} \in \Gamma ( \Tgt \mcM )$. Since $\bm{\Rho}$ is symmetric, $\bm{A}$ is in the kernel of $\wedge:\Tgt^* \mcM \otimes \bigwedge^2 \Tgt^* \mcM \rightarrow \bigwedge^3 \Tgt^* \mcM$.

Finally, we shall express the Bianchi identity in terms of the covariant derivative of the Weyl tensor and the Cotton-York tensor as
\begin{multline}\label{eq-Bianchi_identity}
 ( \nabla_{\bm{X}} \bm{C} ) ( \bm{Y}, \bm{Z} , \bm{S} , \bm{T} ) + ( \nabla_{\bm{Y}} \bm{C} ) ( \bm{Z}, \bm{X} , \bm{S} , \bm{T} ) + ( \nabla_{\bm{Z}} \bm{C} ) ( \bm{X}, \bm{Y} , \bm{S} , \bm{T} ) = \\
- \bm{g} (\bm{X}, \bm{S} ) \bm{A} ( \bm{T} , \bm{Y}, \bm{Z} )
- \bm{g} (\bm{Y}, \bm{S} ) \bm{A} ( \bm{T} , \bm{Z}, \bm{X} )
- \bm{g} (\bm{Z}, \bm{S} ) \bm{A} ( \bm{T} , \bm{X}, \bm{Y} ) \\
+ \bm{g} (\bm{X}, \bm{T} ) \bm{A} ( \bm{S} , \bm{Y}, \bm{Z} )
+ \bm{g} (\bm{Y}, \bm{T} ) \bm{A} ( \bm{S} , \bm{Z}, \bm{X} )
+ \bm{g} (\bm{Z}, \bm{T} ) \bm{A} ( \bm{S} , \bm{X}, \bm{Y} ) \, ,
\end{multline}
for all $\bm{X}, \bm{Y}, \bm{Z}, \bm{S}, \bm{T} \in \Gamma ( \Tgt \mcM )$. Taking the trace of equation \eqref{eq-Bianchi_identity} yields the contracted Bianchi identity, from which one can deduce that the Cotton-York is the divergence of the Weyl tensor, and thus must be tracefree.

It will be convenient to view the Weyl tensor and the Cotton-York tensor as sections of the bundles
\begin{align}\label{eq-vector_bundle_Weyl_CY}
 \mcC & := {\bigodot}_\circ ^2 ( {\bigwedge}^2 \Tgt^* \mcM ) \, , & \mcA & := \Tgt^* \mcM \odot_\circ  {\bigwedge}^2 \Tgt^* \mcM \, ,
\end{align}
where $\odot_\circ$ should be understood as reflecting the symmetry properties of the Weyl tensor and Cotton-York tensor. For this reason we may refer to $\mcC$ and $\mcA$ as the bundles of tensors with Weyl symmetries and Cotton-York symmetries respectively. When $2m + \epsilon \geq 5$, these bundles are irreducible $G$-modules, i.e. $\mcC = \mcF \times_G C$ and $\mcA = \mcF \times_G A$, where $C$ and $A$ are irreducible $G$-modules. When $m=2$, $\epsilon = 0$, under $\so(4,\C) \cong \slie(2,\C) \times \slie(2,\C)$, the bundle of $2$-forms splits into a self-dual part and an anti-self-dual part, and accordingly the bundles $\mcC$ and $\mcA$ split into self-dual parts ${}^+\mcC$ and ${}^+\mcA$, respectively, and anti-self-dual parts ${}^-\mcC$ and ${}^-\mcA$, respectively. 

\subsection{Almost null structures and classifications of the Weyl tensor and Cotton-York tensor}
This section is a translation of the algebraic setup of section \ref{sec-algebra} into the language of vector bundles. More detailed background information can be found in \cites{vCap2009} although their approach focuses essentially on Cartan geometries. 

\begin{defn} An \emph{almost null structure} on $(\mcM, \bm{g})$ is a holomorphic distribution $\mcN$ of maximal totally null planes on $\mcM$, i.e. a holomorphic subbundle of $\Tgt \mcM$ such that at every point $p$ of $\mcM$, the fiber $\mcN_p$ is a maximal totally null subspace of the tangent space $\Tgt_p \mcM$ of $\mcM$ at $p$, and $\mcN_p$ is spanned by holomorphic vector fields in a neighbourhood of $p$.

We say that the almost null structure $\mcN$ is \emph{integrable} in an open subset $\mcU$ of $\mcM$ if the distribution $\mcN$, and in odd dimensions, its orthogonal complement $\mcN^\perp$ are integrable in $\mcU$, i.e. at every point $p \in \mcU$, the fibers $\mcN_p$, and in odd dimensions, $\mcN^\perp_p$ are tangent to leaves of foliations of dimensions $m$ and $m+1$ respectively. An integrable almost null structure will be referred to as a \emph{null structure}.
\end{defn}

From the above definition, we shall essentially regard an almost null structure as a filtration of holomorphic vector subbundles
\begin{align}\label{eq-null_bundle_filtration}
 \mcM \subset \mcN \subset \mcN^\perp \subset \Tgt \mcM \, ,
\end{align}
where $\mcM$ should be regarded as the zero vector bundle. The structure group of the frame bundle $\mcF \rightarrow \mcM$ is then reduced to $P$, the parabolic subgroup preserving the filtration \eqref{eq-null_bundle_filtration}, as described in section \ref{sec-alg_dual_tensor}. One can in fact think of the almost null structure as being modeled on the filtration of vector spaces \eqref{eq-null_structure}. For this reason, we can apply the notation of section \ref{sec-algebra} to vector bundles. In particular, using the constructions of section \ref{sec-alg_dual_tensor}, this time in terms of vector bundles, we will give $P$-invariant classifications of the Weyl tensor and the Cotton-York tensor, generalising the four-dimensional Petrov classification.

Before delving into this, we restate Lemma \ref{lem-even/odd} in the vector bundle format.
\begin{lem} \label{lem-bundle_even/odd}
 Let $\mcN$ be an almost null structure on $(\mcM, \bm{g})$. Then
\begin{enumerate}
 \item when $\epsilon = 0$, $\mcN=\mcN^\perp$, and $\mcN$ is either self-dual or anti-self-dual;
 \item when $\epsilon = 1$, $\mcN$ is a proper subbundle of $\mcN^\perp$, and $\mcN^\perp / \mcN$ is a rank-one vector bundle.
\end{enumerate}
\end{lem}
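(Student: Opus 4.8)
The plan is to reduce everything to the pointwise statement of Lemma~\ref{lem-even/odd} and then promote the fiberwise conclusions to bundle statements, using only rank counts together with the holomorphic structure of $\mcN$. First I would note that, by the very definition of an almost null structure, each fiber $\mcN_p$ is a maximal totally null subspace of $(\Tgt_p\mcM,\bm{g}_p)$, so Lemma~\ref{lem-even/odd} applies verbatim at every point $p\in\mcM$.

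For the inclusion and rank assertions I would argue as follows. Because $\bm{g}$ is a non-degenerate holomorphic metric, the orthogonal complement $\mcN^\perp$ of the holomorphic subbundle $\mcN$ is again a holomorphic subbundle, of rank $(2m+\epsilon)-m=m+\epsilon$, and $\mcN\subseteq\mcN^\perp$ since $\mcN$ is totally null. When $\epsilon=0$ the two ranks coincide, forcing $\mcN=\mcN^\perp$; when $\epsilon=1$ the inclusion is proper and $\mcN^\perp/\mcN$ has rank $1$, i.e.\ is a line bundle. This settles the odd-dimensional case completely, as well as the inclusion and rank parts of the even-dimensional case.

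The hard part---really the only non-formal step---will be upgrading the pointwise dichotomy ``self-dual or anti-self-dual'' of Lemma~\ref{lem-even/odd} to a single, globally consistent sign in the even case. Working on a connected open set, $\mcN$ is spanned by holomorphic vector fields $\bm{Z}_1,\dots,\bm{Z}_m$, so $\bm{\omega}:=\bm{Z}_1\wedge\cdots\wedge\bm{Z}_m$ is a nowhere-vanishing holomorphic section of $\bigwedge^m\mcN$. By the fiberwise lemma $*\bm{\omega}=\lambda\,\bm{\omega}$ with $\lambda(p)\in\{+1,-1\}$ at each point; but $*$ is a holomorphic bundle map, so $\lambda$ is a holomorphic, hence continuous, $\{\pm1\}$-valued function, and therefore locally constant. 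Thus $\mcN$ is uniformly self-dual or uniformly anti-self-dual on each connected component, which is the assertion. I expect no genuine obstacle beyond this local-constancy argument: once the sign is known to be locally constant, the remaining content is pure linear algebra already packaged in Lemma~\ref{lem-even/odd}.
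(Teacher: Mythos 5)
Your argument is correct and matches the paper's (implicit) treatment: the paper states this lemma without proof as a direct bundle-level restatement of the pointwise Lemma \ref{lem-even/odd}, relying on exactly the rank count and fiberwise dichotomy you use. Your local-constancy argument for the duality sign (holomorphy of $*$ plus a nowhere-vanishing holomorphic section of $\bigwedge^m\mcN$) is the right way to make the one non-formal step explicit, and it contains no gaps.
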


\begin{rem}
 As already noted in Remark \ref{rem-ASD_parabolic}, whether an almost null structure is self-dual or anti-self-dual will not be of major significance except in four dimensions, and for the remainder of the article, we shall in general make no assumption regarding the self- or anti-self-duality of the almost null structure.
\end{rem}

\paragraph{Even dimensions greater than four ($\epsilon=0$, $m>2$)}
By Lemma \ref{lem-bundle_even/odd}, $\mcN = \mcN^\perp$, and one can rewrite the filtration \eqref{eq-null_bundle_filtration} as
\begin{align}\label{eq-TM_filtration_even}
 \mcV^{\tfrac{3}{2}} \subset \mcV^{\tfrac{1}{2}} \subset \mcV^{-\tfrac{1}{2}} \, ,
\end{align}
and we set $\mcV^{\tfrac{k}{2}} = \mcM$ for $k \geq 3$, and $\mcV^{\tfrac{k}{2}} = \Tgt \mcM$ for $k \leq - 1$ for convenience. The associated graded vector bundle is $\gr (\Tgt \mcM ) = \gr_{\tfrac{1}{2}} (\Tgt \mcM ) \oplus \gr_{-\tfrac{1}{2}} (\Tgt \mcM )$ where $\gr_{i} (\Tgt \mcM ) := \mcV^i / \mcV^{i+1}$. One can assign a grading on $\Tgt \mcM$ adapted to $\mcN$,
\begin{align}\label{eq-TM_splitting_even}
 \Tgt \mcM & = \mcV_{\tfrac{1}{2}} \oplus \mcV_{-\tfrac{1}{2}} \, ,
\end{align}
by choosing a vector subbundle $\mcV_{-\tfrac{1}{2}} \subset \mcV^{-\tfrac{1}{2}}$ complementary to $\mcV_{\tfrac{1}{2}} := \mcV^{\tfrac{1}{2}}$. This can be viewed as making a choice of frame adapted to the almost null structure. The natural projection $\mcV^i \rightarrow \gr_i ( \Tgt \mcM )$ establishes isomorphisms $\mcV_i \cong \gr_i ( \Tgt \mcM )$, and thus $\Tgt \mcM \cong \gr (\Tgt \mcM)$.

It is now a simple matter to apply the discussion of section \ref{sec-alg_dual_tensor} in the context of the filtration \eqref{eq-TM_filtration_even}, based on the remark that the bundles $\mcC$ and $\mcA$ defined in \eqref{eq-vector_bundle_Weyl_CY} are subbundles of $\bigotimes^4 \Tgt^* \mcM$ and $\bigotimes^3 \Tgt^* \mcM$ respectively. Thus,  when $m>2$, they admit the respective filtrations,
\begin{align}
 \mcM = \mcC^3 \subset \mcC^2 \subset \mcC^1 \subset \mcC^0 \subset \mcC^{-1} \subset \mcC^{-2} & = \mcC \, , \label{eq-Weyl_filtration_even} \\
 \mcM = \mcA^{\tfrac{5}{2}} \subset \mcA^{\tfrac{3}{2}} \subset \mcA^{\tfrac{1}{2}} \subset \mcA^{-\tfrac{1}{2}} \subset \mcA^{-\tfrac{3}{2}} & = \mcA \, , \label{eq-CY_filtration_even}
\end{align}
with respective associated graded vector bundles
\begin{align} 
 \gr (\mcC) & = \gr_2 (\mcC) \oplus \gr_1 (\mcC) \oplus \gr_0 (\mcC) \oplus \gr_{-1} ( \mcC ) \oplus \gr_{-2} ( \mcC )\, , \label{eq-Weyl_gr_even} \\
 \gr ( \mcA ) & := \gr_{\tfrac{3}{2}} ( \mcA ) \oplus \gr_{\tfrac{1}{2}} ( \mcA ) \oplus \gr_{-\tfrac{1}{2}} ( \mcA ) \oplus \gr_{-\tfrac{3}{2}} ( \mcA ) \, , \label{eq-CY_gr_even}
\end{align}
where $\gr_i (\mcC) := \mcC^i / \mcC^{i+1}$, $\gr_j (\mcA) := \mcA^j / \mcA^{j+1}$ for each $i,j$. A choice of frame adapted to $\mcN$ induces gradings on $\mcC$ and $\mcA$,
\begin{align}
 \mcC & = \mcC_2 \oplus \mcC_1 \oplus \mcC_0 \oplus \mcC_{-1} \oplus \mcC_{-2} \, , \label{eq-Weyl_grading_even} \\
\mcA & = \mcA_{\tfrac{3}{2}} \oplus \mcA_{\tfrac{1}{2}} \oplus \mcA_{-\tfrac{1}{2}} \oplus \mcA_{-\tfrac{3}{2}} \, , \label{eq-CY_grading_even}
\end{align}
respectively. With this choice, the natural projections $\mcC^i \rightarrow \mcC^i / \mcC^{i+1}$ and $\mcA^i \rightarrow \mcA^i / \mcA^{i+1}$ establish isomorphisms $\mcC_i \cong \gr_i (\mcC )$ and $\mcA_i \cong \gr_i (\mcA )$ for each $i$, and thus $\mcC \cong \gr (\mcC )$ and $\mcA \cong \gr (\mcA )$.

\paragraph{Four dimensions ($\epsilon=0$, $m=2$)}
In four dimensions, and assuming the almost null structure to be self-dual, one obtains filtrations on ${}^+\mcC$ and ${}^+\mcA$
\begin{align} 
 \mcM = {}^+ \mcC^3 \subset {}^+ \mcC^2 \subset {}^+ \mcC^1 \subset {}^+ \mcC^0 \subset {}^+ \mcC^{-1} \subset {}^+ \mcC^{-2} & = {}^+ \mcC \, , \label{eq-Weyl_filtration_4} \\
\mcM = {}^+ \mcA^{\tfrac{5}{2}} \subset {}^+ \mcA^{\tfrac{3}{2}} \subset {}^+ \mcA^{\tfrac{1}{2}} \subset {}^+ \mcA^{-\tfrac{1}{2}} \subset {}^+ \mcA^{-\tfrac{3}{2}} & = {}^+ \mcA \, , \label{eq-CY_filtration_4}
\end{align}
respectively.\footnote{One also gets a filtration $\mcM = {}^- \mcA^{\tfrac{3}{2}} \subset {}^- \mcA^{\tfrac{1}{2}} \subset {}^- \mcA^{-\tfrac{1}{2}} = {}^- \mcA$ on ${}^- \mcA$, which we shall not need however.}
As in the higher dimensions, one also defines associated graded vector bundles $\gr ( {}^+ \mcC )$ and $\gr ( {}^+ \mcA )$, which, on choosing a particular grading \eqref{eq-TM_splitting_even}, become isomorphic to ${}^+ \mcC$ and ${}^+ \mcA$ respectively. Similar results can be obtained on $\mcC^-$ and $\mcA^-$, when the almost null structure is taken to be anti-self-dual.

\begin{rem}\label{rem-Petrov_types}
 In four dimensions, it is well-known that at every point $p$ of $\mcM$, one can always find a maximal totally null subspace $\mcN_p$ of $\Tgt_p \mcM$ such that the self-dual part of the Weyl tensor at that point degenerates to ${}^+ \mcC^{-1}$, and this maximal totally null subspace can be extended to an almost null structure in a neighbourhood of $p$. For this reason, if the self-dual part of the Weyl tensor degenerates further to a section of ${}^+ \mcC^0$, it is referred\footnote{This is usually formulated in terms of a spinor field $\bm{\xi}$, say, which defines $\mcN$, and the terminology `with respect to $\mcN$' is then replaced by `along $\bm{\xi}$'.} to as \emph{algebraically special with respect to $\mcN$}. In fact, the (complex self-dual) Petrov types I, II, III and N can easily be defined in terms of the bundles ${}^+ \mcC^{-1}$, ${}^+ \mcC^0$, ${}^+ \mcC^1$ and ${}^+ \mcC^2$ respectively, and similarly for the anti-self-dual case.
\end{rem}

\paragraph{Odd dimensions ($\epsilon=1$)}
This is very similar to the previous case except that now, $\mcN$ is a proper holomorphic subbundle of $\mcN^\perp$. Thus the filtration \eqref{eq-null_bundle_filtration} can be rewritten as
\begin{align} \label{eq-TM_filtration_odd}
 \mcV^2 \subset \mcV^1 \subset \mcV^0 \subset \mcV^{-1} \, ,
\end{align}
and we set $\mcV^{k} = \mcM$ for $k \geq 2$, and $\mcV^{k} = \Tgt \mcM$ for $k \leq - 1$ for convenience. The associated graded vector bundle is $\gr (\Tgt \mcM ) = \gr_{1} (\Tgt \mcM ) \oplus \gr_{0} (\Tgt \mcM ) \oplus \gr_{-1} (\Tgt \mcM )$ where $\gr_{i} (\Tgt \mcM ) := \mcV^i / \mcV^{i+1}$. One can assign a grading on $\Tgt \mcM$ adapted to $\mcN$,
\begin{align} \label{eq-TM_splitting_odd}
 \Tgt \mcM & = \mcV_{1} \oplus \mcV_0 \oplus \mcV_{-1} \, , 
\end{align}
by choosing vector subbundles $\mcV_{i} \subset \mcV^{i}$ complementary to $\mcV_{i+1}$ with $\mcV_{1} : = \mcV^{1}$. This can be viewed as making a choice of frame adapted to the almost null structure. The natural projection $\mcV^i \rightarrow \gr_i ( \Tgt \mcM )$ establishes isomorphisms $\mcV_i \cong \gr_i ( \Tgt \mcM )$, and thus $\Tgt \mcM \cong \gr (\Tgt \mcM)$.

Again, from section \ref{sec-alg_dual_tensor}, the filtration \eqref{eq-TM_filtration_even} induces filtrations on the vector bundles $\mcC$ and $\mcA$,
\begin{align}
 \mcM = \mcC^5 \subset \mcC^4 \subset \mcC^3 \subset \ldots \subset \mcC^{-3} \subset \mcC^{-4} & = \mcC \, , \label{eq-Weyl_filtration_odd} \\
 \mcM = \mcA^4 \subset \mcA^3 \subset \mcA^2 \subset \ldots \subset \mcA^{-2} \subset \mcA^{-3} & = \mcA \, , \label{eq-CY_filtration_odd}
\end{align}
respectively,
with associated graded vector bundles
\begin{align} 
 \gr (\mcC) & = \gr_4 (\mcC) \oplus \gr_3 (\mcC) \ldots \oplus \gr_{-3} ( \mcC ) \oplus \gr_{-4} ( \mcC )\, , \label{eq-Weyl_gr_odd} \\
\gr ( \mcA ) & = \gr_3 ( \mcA ) \oplus \gr_2 ( \mcA ) \oplus \ldots \oplus \gr_{-2} ( \mcA ) \oplus \gr_{-3} ( \mcA ) \, , \label{eq-CY_gr_odd}
\end{align}
respectively, where $\gr_i (\mcC) := \mcC^i / \mcC^{i+1}$, $\gr_j (\mcA) := \mcA^j / \mcA^{j+1}$ for each $i,j$. A choice of frame adapted to $\mcN$ induces gradings on $\mcC$ and $\mcA$,
\begin{align}
 \mcC & = \mcC_4 \oplus \mcC_3 \oplus \ldots \oplus \mcC_{-3} \oplus \mcC_{-4} \, , \label{eq-Weyl_grading_odd} \\
 \mcA & = \mcA_3 \oplus \mcA_2 \oplus \ldots \oplus \mcA_{-2} \oplus \mcA_{-3} \, , \label{eq-CY_grading_odd}
\end{align}
respectively, which allow one to establish isomorphisms $\mcC_i \cong \gr_i (\mcC )$ and $\mcA_i \cong \gr_i (\mcA )$ for each $i$, and thus $\mcC \cong \gr (\mcC )$ and $\mcA \cong \gr (\mcA )$.

\begin{rem}
  In even and odd dimensions greater than four, the (pointwise) existence of an almost null structure with respect to which the Weyl tensor degenerates to a section of $\mcC^{-1}$ and $\mcC^{-3}$ respectively, is not guarranteed in general. While the use of the terms `algebraically special' to describe a Weyl tensor degenerating to a section of $\mcC^0$ may then not be entirely appropriate, such a Weyl tensor nonetheless enjoys some `special' status regarding the geometric property of the almost null structure $\mcN$ as will be seen in section \ref{sec-GS}.
\end{rem}

\begin{rem}\label{rem-refine}
  Referring back to Remark \ref{rem-rep}, we can view each of the vector bundles $\mcC^i$ and $\mcA^i$ as $\prb$-modules (or $P$-modules at the Lie group level), and one way to refine the classification is by considering the irreducible $\prb$-modules in each of the quotient bundles $\gr_i ( \mcC )$ and $\gr_i ( \mcA )$. This will not be needed in this paper, but will be covered in a future publication.
\end{rem}

\paragraph{Tensorial characterisation of sections of $\mcC$ and $\mcA$}
When it comes to explicit computations, it is somewhat more convenient to describe sections of the bundles $\mcC^i$ and $\mcA^i$ by means of the following lemma.
\begin{lem}\label{lem-type_characterisation}
Fix $k \in \Z$, $k > - \frac{4}{2-\epsilon}$, $\ell \in \Z + \tfrac{1-\epsilon}{2}$, $\ell > - \tfrac{3}{2-\epsilon}$. When $m=2$, $\epsilon=0$, assume that $\mcN$ is self-dual, and write $\mcC$ and $\mcA$ for ${}^+ \mcC$ and ${}^+ \mcA$ respectively. Let $\bm{C} \in \Gamma( \mcC )$ and $\bm{A} \in \Gamma( \mcA )$. Then,
\begin{align*}
 \bm{C} \in \Gamma ( \mcC^k ) & \Leftrightarrow & \bm{C} ( \bm{X}_{i_1} , \bm{X}_{i_2} , \bm{X}_{i_3}, \bm{X}_{i_4} ) & = 0 \, , & \mbox{for all $\bm{X}_{i_j} \in \Gamma( \mcV^{i_j} )$ such that $\sum_j i_j = 1-k$,} \\
 \bm{A} \in \Gamma ( \mcA^\ell ) & \Leftrightarrow  &  \bm{A} ( \bm{X}_{i_1} , \bm{X}_{i_2} , \bm{X}_{i_3} ) & = 0 \, , & \mbox{for all $\bm{X}_{i_j} \in \Gamma( \mcV^{i_j} )$ such that $\sum_j i_j = 1 - \ell$,}
\end{align*}
where $i_j \in \Z$, $|i_j| \leq \frac{1+\epsilon}{2}$ for all $j=1, \ldots ,4$.
\end{lem}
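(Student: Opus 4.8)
The plan is to derive the lemma formally from the behaviour of filtrations under duality, tensor products and restriction to subspaces, as set up in Section~\ref{sec-alg_dual_tensor}, without ever choosing an adapted frame. The point is that $\mcC$ and $\mcA$ are subbundles of $\bigotimes^4 \Tgt^* \mcM$ and $\bigotimes^3 \Tgt^* \mcM$, that these ambient bundles are the duals of $\bigotimes^4 \Tgt \mcM$ and $\bigotimes^3 \Tgt \mcM$, and that evaluating a tensor on vector fields is precisely the natural pairing between the two. I would spell out the Weyl case; the Cotton--York case is identical with one factor fewer.

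First I would write $W := \bigotimes^4 \Tgt^* \mcM$ and $U := \bigotimes^4 \Tgt \mcM = W^*$. The filtration carried by $W$ is the one obtained by tensoring the dual filtration of $\Tgt^* \mcM$, and the key step is to identify it with the dual of the tensor filtration of $U$, namely to check that $W^k = \Ann(U^{1-k})$. This follows from the defining relation $(V^*)^i = \Ann(V^{1-i})$ together with $\gr_i(V^*) = (\gr_{-i}(V))^*$ and the additivity of grading degrees across a tensor product; the only subtlety is the index shift, which is exactly what produces the $1-k$. Next, the tensor-product formula applied to $U$ shows that $U^{1-k}$ is spanned by decomposable fields $\bm{X}_{i_1} \otimes \cdots \otimes \bm{X}_{i_4}$ with $\bm{X}_{i_j} \in \Gamma(\mcV^{i_j})$ and $\sum_j i_j = 1-k$, since each summand $\mcV^{i_1}\otimes\cdots\otimes\mcV^{i_4}$ with this constraint lies in $U^{1-k}$ by definition and lowering any index keeps a vector inside the (larger) filtration step.

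Combining these, for $\bm{C} \in \Gamma(\mcC)$ one has $\bm{C} \in \Gamma(\mcC^k)$ if and only if $\bm{C} \in \Gamma(W^k)$, because the filtration on the subbundle is the induced one $\mcC^k = \mcC \cap W^k$. By the identity above this is equivalent to $\bm{C}$ annihilating $U^{1-k}$, and by the spanning statement to the vanishing of $\langle \bm{C}, \bm{X}_{i_1} \otimes \cdots \otimes \bm{X}_{i_4} \rangle = \bm{C}(\bm{X}_{i_1}, \ldots, \bm{X}_{i_4})$ for all $\bm{X}_{i_j} \in \Gamma(\mcV^{i_j})$ with $\sum_j i_j = 1-k$, which is the assertion. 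I would then observe that it suffices to let each $i_j$ range over $|i_j| \leq \tfrac{1+\epsilon}{2}$: outside this range $\mcV^i$ is either the zero bundle or all of $\Tgt \mcM$, so the graded pieces of $\Tgt \mcM$, hence the generators of $U^{1-k}$, are already exhausted by indices in this range. The bounds on $k$ and $\ell$ only discard the tautological top steps $\mcC^{-4/(2-\epsilon)} = \mcC$ and $\mcA^{-3/(2-\epsilon)} = \mcA$, for which $1-k$ (resp.\ $1-\ell$) exceeds the maximal attainable value $2(1+\epsilon)$ (resp.\ $\tfrac{3}{2}(1+\epsilon)$) of the index sum so that the condition is vacuous. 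The Cotton--York statement is obtained verbatim with three factors and $\sum_j i_j = 1-\ell$, and for $m=2$, $\epsilon=0$ the same argument is run inside the self-dual submodules ${}^+\mcC$ and ${}^+\mcA$.

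The only genuinely delicate point is the compatibility $W^k = \Ann(U^{1-k})$, that is, that passing to duals commutes with tensoring provided one tracks the degree shift correctly; this is routine on associated graded spaces but is where the bookkeeping lives and where the $1-k$ rather than $-k$ enters. Everything else is a direct unwinding of the definitions of the induced, dual and tensor-product filtrations recorded in Section~\ref{sec-alg_dual_tensor}.
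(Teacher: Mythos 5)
Your argument is correct and is essentially the paper's own proof, which simply notes that $\mcC$ and $\mcA$ carry the filtrations induced as subbundles of $\bigotimes^4 \Tgt^*\mcM$ and $\bigotimes^3 \Tgt^*\mcM$ and that $(\mcV^*)^i$ annihilates $\mcV^{1-i}$; you have merely made the duality/tensor-product bookkeeping (in particular the identity $W^k=\Ann(U^{1-k})$ and the resulting shift to $1-k$) explicit. The remarks on the index range $|i_j|\leq\tfrac{1+\epsilon}{2}$ and on the excluded values of $k$ and $\ell$ being vacuous are also accurate.
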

The above characterisation can be proved immediately from the fact that the bundles $\mcC$ and $\mcA$ are subbundles of $\bigotimes^4 \Tgt^* \mcM$ and $\bigotimes^3 \Tgt^* \mcM$ respectively, and $(\mcV^*)^i$ is the annihilator of $\mcV^{1-i}$ for each $i$.

\subsection{Geometric properties}
Let $(\mcM, \bm{g})$ be a $(2m+\epsilon)$-dimensional complex Riemannian manifold, where $\epsilon \in \{ 0 , 1 \}$ and $m \geq 2$, endowed with an almost null structure $\mcN$. In the following discussion, we shall generally treat both the even- and odd-dimensional cases at once, bearing in mind that in the former case $\mcN^\perp = \mcN$, so that there will be some redundancy in the properties presented. When some distinction needs to be made, the notation of section \ref{sec-geometry} together with $\epsilon$ will be used.

\begin{thm}[Frobenius]
 A necessary and sufficient condition for an almost null structure $\mcN$ to be integrable is that it is involutive, i.e.
\begin{align*}
 [ \bm{X} , \bm{Y} ] & \in \Gamma (\mcN) \, , & [ \bm{S} , \bm{T} ] & \in \Gamma (\mcN^\perp) \, , 
\end{align*}
or equivalently,
\begin{align} \label{eq-integrability_bracket_metric}
 \bm{g} (  \bm{S} , [ \bm{X} , \bm{Y} ] ) & = 0 \, , & \bm{g} ( \bm{X} , [\bm{S} , \bm{T} ] ) & = 0 \, ,  
\end{align}
for all $\bm{X}, \bm{Y} \in \Gamma (\mcN)$ and $\bm{S}, \bm{T} \in \Gamma (\mcN^\perp)$.
\end{thm}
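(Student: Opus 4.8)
The plan is to deduce the statement from the classical holomorphic Frobenius theorem, after first checking that the two bracket conditions and the metric conditions \eqref{eq-integrability_bracket_metric} express the same thing. I would organise the argument into an elementary linear-algebraic reduction followed by a single appeal to Frobenius.

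The first step is the equivalence of the two formulations of involutivity, which is pure pointwise linear algebra. Since $\bm{g}$ is non-degenerate, for any subspace $U \subset \Tgt_p \mcM$ one has $(U^\perp)^\perp = U$: this follows from the dimension count $\dim U^\perp = (2m+\epsilon) - \dim U$ together with the obvious inclusion $U \subseteq (U^\perp)^\perp$. Applied to $U = \mcN_p$ this gives $(\mcN^\perp)^\perp = \mcN$. Consequently a holomorphic vector field $\bm{W}$ lies in $\Gamma(\mcN)$ exactly when $\bm{g}(\bm{W}, \bm{S}) = 0$ for every $\bm{S} \in \Gamma(\mcN^\perp)$, and it lies in $\Gamma(\mcN^\perp)$ exactly when $\bm{g}(\bm{W}, \bm{X}) = 0$ for every $\bm{X} \in \Gamma(\mcN)$, the latter being simply the defining property of the orthogonal complement. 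Setting $\bm{W} = [\bm{X}, \bm{Y}]$ in the first case and $\bm{W} = [\bm{S}, \bm{T}]$ in the second reproduces precisely the two equations in \eqref{eq-integrability_bracket_metric}, so the bracket conditions and the metric conditions are equivalent.

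The second step is to apply the Frobenius theorem itself. Working in the holomorphic category, a holomorphic distribution is tangent to the leaves of a holomorphic foliation if and only if its sheaf of sections is closed under the Lie bracket. I would invoke this twice: for $\mcN$, whose involutivity $[\bm{X}, \bm{Y}] \in \Gamma(\mcN)$ yields an $m$-dimensional foliation, and, when $\epsilon = 1$, for $\mcN^\perp$, whose involutivity $[\bm{S}, \bm{T}] \in \Gamma(\mcN^\perp)$ yields an $(m+1)$-dimensional foliation. This matches the definition of integrability of an almost null structure given above. When $\epsilon = 0$ one has $\mcN = \mcN^\perp$ by Lemma \ref{lem-bundle_even/odd}, so the two conditions coincide and the condition on $\mcN^\perp$ becomes redundant.

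There is no genuine obstacle here: the substance is the classical Frobenius theorem, whose holomorphic version applies verbatim to involutive holomorphic distributions. The only points requiring care are the bookkeeping between the even case, where the degeneracy $\mcN = \mcN^\perp$ collapses the two conditions into one, and the odd case, where both foliations of dimensions $m$ and $m+1$ must be produced, together with the elementary but essential use of the non-degeneracy of $\bm{g}$ to pass between the bracket and the metric formulations.
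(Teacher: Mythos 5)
Your proposal is correct and is exactly the standard argument the paper implicitly relies on: the paper states this theorem without proof, as the classical holomorphic Frobenius theorem combined with the elementary observation that non-degeneracy of $\bm{g}$ gives $(\mcN^\perp)^\perp = \mcN$ and hence the equivalence of the bracket and metric formulations. Nothing further is needed.
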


The integrability of $\mcN$ in some open subset $\mcU$ gives rise to a foliation of $\mcU$ by maximal totally null leaves. In odd dimensions, $\mcU$ is also foliated by leaves of dimension $m+1$. In both cases, these leaves are \emph{totally geodetic}, in the sense given by the next lemma.
\begin{lem} \label{lem-integrability_connection}
 The almost null structure $\mcN$ is integrable if and only if
\begin{align} \label{eq-integrability_connection}
 \bm{g} ( \bm{X} , \nabla_{\bm{Y}} \bm{Z} ) & = 0 \, , & \bm{g} ( \bm{Y} , \nabla_{\bm{X}} \bm{Z} ) & = 0 \, ,  
\end{align}
for all $\bm{X} \in \Gamma (\mcN^\perp)$, $\bm{Y}, \bm{Z} \in \Gamma (\mcN)$.
\end{lem}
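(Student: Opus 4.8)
The plan is to prove the stated biconditional by showing that the two equations in \eqref{eq-integrability_connection} are equivalent to the two involutivity conditions supplied by the Frobenius theorem above, using nothing about $\nabla$ beyond its being metric and torsion-free. First I would recast \eqref{eq-integrability_connection} as statements about subbundle membership. Since $\bm{g}$ is non-degenerate we have $(\mcN^\perp)^\perp = \mcN$, and since $\mcN$ is totally null we have $\mcN \subseteq \mcN^\perp$, so that $\bm{g}$ vanishes identically on $\Gamma(\mcN) \times \Gamma(\mcN^\perp)$ (in particular on $\Gamma(\mcN)\times\Gamma(\mcN)$). Consequently the first equation says precisely that $\nabla_{\bm{Y}} \bm{Z} \in \Gamma(\mcN)$ for all $\bm{Y}, \bm{Z} \in \Gamma(\mcN)$, while the second says $\nabla_{\bm{X}} \bm{Z} \in \Gamma(\mcN^\perp)$ for all $\bm{X} \in \Gamma(\mcN^\perp)$, $\bm{Z} \in \Gamma(\mcN)$; that is, $\mcN$ is auto-parallel along itself and is parallel along $\mcN^\perp$.

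For the implication from \eqref{eq-integrability_connection} to integrability, the involutivity of $\mcN$ is immediate: torsion-freeness gives $[\bm{Y},\bm{Z}] = \nabla_{\bm{Y}}\bm{Z} - \nabla_{\bm{Z}}\bm{Y} \in \Gamma(\mcN)$ by the first condition. The involutivity of $\mcN^\perp$ in the odd-dimensional case is the step I expect to be the main obstacle, since \eqref{eq-integrability_connection} only controls covariant derivatives of sections of $\mcN$, whereas involutivity of $\mcN^\perp$ concerns brackets of sections of $\mcN^\perp$. To bridge this I would use that $\mcN^\perp/\mcN$ is a line bundle (Lemma \ref{lem-bundle_even/odd}): choosing a local $\bm{U} \in \Gamma(\mcN^\perp)$ spanning it, $\mcN^\perp$ is locally spanned by $\mcN$ together with $\bm{U}$, so it suffices to verify $\bm{g}([\bm{Y},\bm{U}],\bm{Z}) = 0$ for $\bm{Y},\bm{Z} \in \Gamma(\mcN)$. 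Expanding the bracket by torsion-freeness and rewriting $\bm{g}(\nabla_{\bm{Y}}\bm{U},\bm{Z}) = -\bm{g}(\bm{U},\nabla_{\bm{Y}}\bm{Z})$ via metric-compatibility (using $\bm{g}(\bm{U},\bm{Z})=0$), both resulting terms vanish: $\nabla_{\bm{Y}}\bm{Z} \in \Gamma(\mcN)$ by the first condition and $\nabla_{\bm{U}}\bm{Y} \in \Gamma(\mcN^\perp)$ by the second, each paired orthogonally against $\mcN^\perp$, respectively $\mcN$.

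For the converse I would assume $\mcN$ and $\mcN^\perp$ involutive and read off both conditions from the Koszul formula. For $\bm{Y},\bm{Z} \in \Gamma(\mcN)$ and $\bm{X} \in \Gamma(\mcN^\perp)$ the three scalar-derivative terms of Koszul involve $\bm{g}(\bm{Z},\bm{X})$, $\bm{g}(\bm{X},\bm{Y})$ and $\bm{g}(\bm{Y},\bm{Z})$, all of which vanish by total nullity and $\mcN \subseteq \mcN^\perp$, leaving $2\bm{g}(\nabla_{\bm{Y}}\bm{Z},\bm{X}) = \bm{g}([\bm{Y},\bm{Z}],\bm{X}) - \bm{g}([\bm{Z},\bm{X}],\bm{Y}) + \bm{g}([\bm{X},\bm{Y}],\bm{Z})$. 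Here $[\bm{Y},\bm{Z}] \in \Gamma(\mcN)$ pairs to zero with $\bm{X} \in \Gamma(\mcN^\perp)$, while $[\bm{Z},\bm{X}]$ and $[\bm{X},\bm{Y}]$, being brackets of sections of $\mcN^\perp$ (as $\mcN \subseteq \mcN^\perp$), lie in $\Gamma(\mcN^\perp)$ and pair to zero with $\bm{Y},\bm{Z} \in \Gamma(\mcN)$; hence the first condition holds. Running the identical computation for $2\bm{g}(\nabla_{\bm{X}}\bm{Z},\bm{Y})$ with $\bm{X}\in\Gamma(\mcN^\perp)$ and $\bm{Y},\bm{Z}\in\Gamma(\mcN)$ yields the second. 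In even dimensions $\mcN = \mcN^\perp$, the two conditions coincide, and the whole argument collapses to the classical equivalence between involutivity and total geodesy of a maximal totally null distribution.
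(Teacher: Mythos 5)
Your proof is correct. For the direction the paper actually writes out --- integrability implies \eqref{eq-integrability_connection} --- your Koszul-formula computation is exactly the paper's argument (the displayed identity in the paper's proof is the Koszul formula with the scalar-derivative terms already dropped). Where you genuinely add something is in the converse, which the paper dismisses with ``the converse is obvious.'' For $\mcN$ itself it is: torsion-freeness gives $[\bm{Y},\bm{Z}]=\nabla_{\bm{Y}}\bm{Z}-\nabla_{\bm{Z}}\bm{Y}\in\Gamma(\mcN)$. But for $\mcN^\perp$ in odd dimensions the naive computation $\bm{g}(\bm{Z},[\bm{S},\bm{T}])=-\bm{g}(\nabla_{\bm{S}}\bm{Z},\bm{T})+\bm{g}(\nabla_{\bm{T}}\bm{Z},\bm{S})$ only shows that each term is a pairing of two sections of $\mcN^\perp$, and $\mcN^\perp$ is \emph{not} totally null when $\epsilon=1$, so these do not vanish term by term. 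Your reduction via the rank-one quotient $\mcN^\perp/\mcN$ --- checking involutivity on a local frame consisting of a frame for $\mcN$ together with one transverse section $\bm{U}$, for which all the relevant metric pairings do vanish --- is precisely the detail needed to make the ``obvious'' converse rigorous, and it uses only Lemma \ref{lem-bundle_even/odd} and metric compatibility. So: same core mechanism as the paper, with the skipped direction properly filled in.
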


\begin{proof}
Let $\bm{X} \in \Gamma (\mcN^\perp)$, $\bm{Y}, \bm{Z} \in \Gamma (\mcN)$, and suppose that the distributions $\mcN$ and $\mcN^\perp$ are integrable. Then, using the defining properties of the Levi-Civita connection,
\begin{align*}
 \bm{g} ( \bm{X} , \nabla_{\bm{Y}} \bm{Z} )
& =\frac{1}{2} \left( \bm{g} ( \bm{X} , \nabla_{\bm{Y}} \bm{Z} ) - \bm{g} ( \bm{X} ,\nabla_{\bm{Z}} \bm{Y} ) - \bm{g} ( \bm{Z} , \nabla_{\bm{Y}} \bm{X} ) + \bm{g} ( \bm{Z} , \nabla_{\bm{X}} \bm{Y} ) - \bm{g} ( \bm{Y} , \nabla_{\bm{X}} \bm{Z} ) + \bm{g} ( \bm{Z} , \nabla_{\bm{Z}} \bm{X} ) \right) \\
& =\frac{1}{2} \left( \bm{g} ( \bm{X} , [ \bm{Y} , \bm{Z} ] ) + \bm{g} ( \bm{Z} , [ \bm{X} , \bm{Y} ] ) + \bm{g} ( \bm{Y} , [ \bm{Z} , \bm{X} ] ) \right) = 0 \, ,
\end{align*}
by equations \eqref{eq-integrability_bracket_metric}, and similarly for $\bm{g} ( \bm{Y} , \nabla_{\bm{X}} \bm{Z} )$. The converse is obvious.
\end{proof}

\begin{rem}
There is an alternative way of characterising the integrability of the almost null structure, which mirrors a procedure introduced in \cites{Gray1980,Falcitelli1994} in almost Hermitian geometry. We note that the almost null structure $\mcN$ can be represented\footnote{This can also be formulated spinorially.} by a single, up to scale, tensorial object $\bm{\omega} \in \Gamma (\bigwedge^m \mcN^*)$. It thus makes sense to measure the failure of the Levi-Civita connection to preserve $\bm{\omega}$, or equivalently, to be a $\prb$-valued $1$-form on $\mcM$. In even dimensions, the geometric properties of $\mcN$ can then be encoded by the $P$-invariant differential equations
\begin{align}\label{eq-null_class_even}
\nabla_{\bm{X}} \bm{\omega} & = \bm{\alpha} (\bm{X}) \bm{\omega} \, ,
\end{align}
for some $1$-form $\bm{\alpha}$, and for all $\bm{X} \in \Gamma ( \mcV^i )$ for some  $i \in \{ - \tfrac{1}{2} , \tfrac{1}{2} \}$. In particular, taking $i=\tfrac{1}{2}$ gives the integrability of $\mcN$.
In odd dimensions, the geometric properties of $\mcN$ can be encoded by the $P$-invariant differential equations
\begin{align}\label{eq-null_class_odd}
\nabla_{\bm{X}} \bm{\omega} & = \bm{\alpha} (\bm{X}) \bm{\omega} \, , & \nabla_{\bm{Y}} (* \bm{\omega} )& = \bm{\beta} (\bm{Y}) \bm{\omega} \wedge \bm{\gamma} \, ,
\end{align}
for some $1$-forms $\bm{\alpha}$, $\bm{\beta}$ and $\bm{\gamma}$, and for all $\bm{X} \in \Gamma ( \mcV^i )$, $\bm{Y} \in \Gamma ( \mcV^j )$ for some $i, j \in \{ -1, 0 , 1 \}$. In this case, the integrability of $\mcN$ (and $\mcN^\perp$) is given by taking $i=1$ and $j=0$.
\end{rem}

\paragraph{Integrability condition}
The existence of a null structure $\mcN$ on $\mcM$ is subject to an integrability condition on the Weyl tensor as given by the next proposition.
\begin{prop}\label{prop-integrability_condition}
 Suppose $\mcN$ is a null structure. Then, in dimensions greater than four, the Weyl tensor is a section of $\mcC^{-1-\epsilon}$. In four dimensions, assuming $\mcN$ is self-dual, the self-dual part of the Weyl tensor is a section of ${}^+\mcC^{-1}$.
\end{prop}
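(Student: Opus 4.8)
The plan is to read off the algebraic degeneracy of $\bm{C}$ directly from the curvature, using Lemma \ref{lem-integrability_connection} to encode integrability at the level of $\nabla$ and Lemma \ref{lem-type_characterisation} to recognise the resulting vanishing as membership in $\mcC^{-1-\epsilon}$. First I would record the connection-level consequence of integrability. By Lemma \ref{lem-integrability_connection}, $\bm{g}(\bm{X}, \nabla_{\bm{Y}}\bm{Z}) = 0$ for all $\bm{X} \in \Gamma(\mcN^\perp)$ and $\bm{Y}, \bm{Z} \in \Gamma(\mcN)$; since $\bm{g}$ is non-degenerate and $\dim (\mcN^\perp)^\perp = \dim \mcN$ with $\mcN \subseteq (\mcN^\perp)^\perp$, one has $(\mcN^\perp)^\perp = \mcN$, so this says $\nabla_{\bm{Y}}\bm{Z} \in \Gamma(\mcN)$ whenever $\bm{Y}, \bm{Z} \in \Gamma(\mcN)$. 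In words, $\mcN$ is stable under $\nabla_{\bm{Y}}$ for $\bm{Y} \in \Gamma(\mcN)$.

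The central step is then a curvature computation. For $\bm{X}, \bm{Y}, \bm{Z} \in \Gamma(\mcN)$ I would show $\bm{R}_{\bm{X}\wedge\bm{Y}} \cdot \bm{Z} \in \Gamma(\mcN)$ by inspecting the three terms of its definition: the iterated derivatives $\nabla_{\bm{X}}\nabla_{\bm{Y}}\bm{Z}$ and $\nabla_{\bm{Y}}\nabla_{\bm{X}}\bm{Z}$ land in $\Gamma(\mcN)$ by applying the previous observation twice, while $\nabla_{[\bm{X},\bm{Y}]}\bm{Z} \in \Gamma(\mcN)$ because involutivity of $\mcN$ gives $[\bm{X},\bm{Y}] \in \Gamma(\mcN)$. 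Pairing with $\bm{W} \in \Gamma(\mcN^\perp)$ and using $\bm{g}(\mcN, \mcN^\perp) = 0$ yields $\bm{R}(\bm{X}, \bm{Y}, \bm{Z}, \bm{W}) = 0$ for all $\bm{X}, \bm{Y}, \bm{Z} \in \Gamma(\mcN)$, $\bm{W} \in \Gamma(\mcN^\perp)$; in even dimensions, where $\mcN^\perp = \mcN$, this is just the vanishing of $\bm{R}$ on four sections of $\mcN$.

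To pass from $\bm{R}$ to $\bm{C}$ I would substitute these same vectors into the decomposition \eqref{eq-R=C+P}. Because $\mcN$ is totally null and $\mcN \subset \mcN^\perp$, each metric factor $\bm{g}(\bm{X}, \bm{Z})$, $\bm{g}(\bm{X}, \bm{W})$, $\bm{g}(\bm{Y}, \bm{Z})$, $\bm{g}(\bm{Y}, \bm{W})$ multiplying $\bm{\Rho}$ vanishes, so $\bm{C}(\bm{X}, \bm{Y}, \bm{Z}, \bm{W}) = \bm{R}(\bm{X}, \bm{Y}, \bm{Z}, \bm{W}) = 0$. Invoking the symmetries of $\bm{C}$ to move the single $\mcN^\perp$-argument into any slot, this is precisely the condition of Lemma \ref{lem-type_characterisation} at the critical grading sum $\sum_j i_j = 1 - k = 2 + \epsilon$ corresponding to $k = -1-\epsilon$ (three arguments in $\mcN = \mcV^{\frac{1+\epsilon}{2}\cdot 0 + 1}$ and one in $\mcN^\perp$, i.e. all four in $\mcN$ when $\epsilon = 0$). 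Hence $\bm{C} \in \Gamma(\mcC^{-1-\epsilon})$, as claimed in dimensions greater than four. For $m = 2$, $\epsilon = 0$, the same computation gives $\bm{C}(\bm{X}, \bm{Y}, \bm{Z}, \bm{W}) = 0$ for all four arguments in $\mcN$; the one extra point is that $\bigwedge^2 \mcN$ is a line spanned by a self-dual $2$-form $\bm{\omega}$ (as $\mcN$ is self-dual), and since the anti-self-dual part of $\bm{C}$ annihilates self-dual $2$-forms one has $\bm{C}(\bm{\omega}, \bm{\omega}) = {}^+\bm{C}(\bm{\omega}, \bm{\omega})$, so the vanishing is exactly that of ${}^+\bm{C}$, giving ${}^+\bm{C} \in \Gamma({}^+\mcC^{-1})$ by Lemma \ref{lem-type_characterisation} applied to ${}^+\mcC$.

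I expect the only real friction to be bookkeeping rather than substance: matching the concrete statement $\bm{C}(\mcN,\mcN,\mcN,\mcN^\perp) = 0$ to the correct filtration index through Lemma \ref{lem-type_characterisation}, and, in four dimensions, cleanly isolating the self-dual part detected by a self-dual null plane. The analytic core—the curvature computation—is routine once the connection form of integrability from Lemma \ref{lem-integrability_connection} is in hand; note also that it uses only the involutivity of $\mcN$ itself, the role of $\mcN^\perp$ entering solely through the pairing $\bm{g}(\mcN,\mcN^\perp)=0$.
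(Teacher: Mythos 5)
Your proof is correct and follows essentially the same route as the paper's: both deduce $\bm{C}(\mcN,\mcN,\mcN,\mcN^\perp)=0$ from the geodesy condition of Lemma \ref{lem-integrability_connection} via the Riemann tensor and the decomposition \eqref{eq-R=C+P}, then read off membership in $\mcC^{-1-\epsilon}$ (or ${}^+\mcC^{-1}$ in four dimensions) from Lemma \ref{lem-type_characterisation}. The only cosmetic difference is that you keep all three arguments of the curvature operator in $\mcN$ and put the $\mcN^\perp$-vector in the pairing slot, whereas the paper differentiates $\bm{g}(\bm{Y},\nabla_{\bm{X}}\bm{Z})$ with $\bm{X}\in\Gamma(\mcN^\perp)$; the two computations are related by the pair-interchange symmetry of $\bm{R}$.
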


\begin{proof}
 Let $\bm{X} \in \Gamma (\mcN^\perp)$, $\bm{Y} , \bm{Z}, \bm{W} \in \Gamma (\mcN)$. We shall show that
\begin{align} \label{eq-integrability_condition}
 \bm{C} ( \bm{W} , \bm{X} , \bm{Y}, \bm{Z} ) & = 0 \, ,
\end{align}
which, by Lemma \ref{lem-type_characterisation}, is equivalent to the claim of the proposition.
We start by differentiating either of relations \eqref{eq-integrability_connection}, so for definiteness, we have
\begin{align} \label{eq-integrability_intermediate}
 0 & = \nabla_{\bm{W}} ( \bm{g} (\bm{Y}, \nabla_{\bm{X}} \bm{Z} ) ) = \bm{g} ( \nabla_{\bm{W}} \bm{Y} , \nabla_{\bm{X}} \bm{Z} ) + \bm{g} ( \bm{Y}, \nabla_{\bm{W}} \nabla_{\bm{X}} \bm{Z} ) = \bm{g} ( \bm{Y}, \nabla_{\bm{W}} \nabla_{\bm{X}} \bm{Z} ) \, ,
\end{align}
by equation \eqref{eq-integrability_connection} again.
Now, from the definition of the Riemann tensor, we have
\begin{align*}
 \bm{R} ( \bm{W}, \bm{X}, \bm{Y}, \bm{Z} ) 
& = \bm{g} (\bm{Y}, \nabla_{\bm{W}} \nabla_{\bm{X}} \bm{Z} - \nabla_{\bm{X}} \nabla_{\bm{W}} \bm{Z} - \nabla_{[\bm{W},\bm{X}]} \bm{Y} ) = 0 \, ,
\end{align*}
by equations \eqref{eq-integrability_intermediate} and \eqref{eq-integrability_connection} together with Lemma \ref{lem-integrability_connection}. The splitting of the Riemann tensor \eqref{eq-R=C+P} now establishes equation \eqref{eq-integrability_condition}. Further, when $2m + \epsilon = 4$ and $\mcN$ is assumed to be self-dual, equation \eqref{eq-integrability_condition} is always trivially satisfied on restriction to the anti-self-dual part of the Weyl tensor, and so must be a condition on the self-dual part of the Weyl tensor
\end{proof}

\subsection{Null basis and its associated canonical almost null structures}\label{sec-null_basis}
So far the discussion has been expressed invariantly, with no reference to any particular frame, but at this stage, it is convenient to introduce some notation tied up to a choice of frame adapted to a null structure. As before $(\mcM, \bm{g})$ will denote a $(2m+\epsilon)$-dimensional complex Riemannian manifold where $\epsilon \in \{ 0 , 1 \}$, and $m \geq 2$, and $\mcN$ an almost null structure on $\mcM$. We first note that choosing a (local) grading \eqref{eq-TM_splitting_even}, respectively \eqref{eq-TM_splitting_odd} of the tangent bundle, when $\epsilon = 0$, respectively $\epsilon=1$, is really tantamount to choosing a frame adapted to $\mcN$. When $\epsilon = 0$, this (local) frame will be denoted
\begin{align*}
\left\{ \bm{\xi}_\mu, \tilde{\bm{\xi}}_{\tilde{\nu}} | \mu, \tilde{\nu} = 1 , \ldots, m \right\}  \, ,
\end{align*}
where $\{\bm{\xi}_\mu\}$ and $\{\tilde{\bm{\xi}}_{\tilde{\mu}}\}$ span $\mcV_{\tfrac{1}{2}}$ and $\mcV_{-\tfrac{1}{2}}$ respectively. When $\epsilon = 1$, it will be denoted
\begin{align*}
\left\{ \bm{\xi}_\mu, \tilde{\bm{\xi}}_{\tilde{\nu}}, \bm{\xi}_0 | \mu, \tilde{\nu} = 1 , \ldots, m \right\}  \, ,
\end{align*}
where $\{\bm{\xi}_\mu\}$, $\{ \bm{\xi}_0 \}$ and $\{\tilde{\bm{\xi}}_{\tilde{\mu}}\}$ span $\mcV_{1}$, $\mcV_{0}$, and $\mcV_{-1}$ respectively.
In both cases, the frame vector fields will be taken to satisfy the normalisation conditions
\begin{align*}
 \bm{g} ( \bm{\xi}_\mu , \tilde{\bm{\xi}}_{\tilde{\nu}} ) & = \delta_{\mu \tilde{\nu}} \, , & \bm{g} ( \bm{\xi}_0 , \bm{\xi}_0 ) & = 1 \, .
\end{align*}
The corresponding coframes will be denoted
\begin{align*}
 \left\{ \bm{\theta}^\mu , \tilde{\bm{\theta}}^{\tilde{\nu}} | \mu, \tilde{\nu} = 1 , \ldots, m \right\} \, , & &
 \left\{ \bm{\theta}^\mu , \tilde{\bm{\theta}}^{\tilde{\nu}} , \bm{\theta}^0 | \mu, \tilde{\nu} = 1 , \ldots, m \right\} \, ,  
\end{align*}
when $\epsilon =0$ and $\epsilon=1$ respectively, and where $\ctr{\bm{\xi}_\mu}\bm{\theta}^\nu = \delta_\mu^\nu$, $\ctr{\tilde{\bm{\xi}}_{\tilde{\mu}}}\tilde{\bm{\theta}}^{\tilde{\nu}} = \delta_{\tilde{\mu}}^{\tilde{\nu}}$, $\ctr{\bm{\xi}_0}\bm{\theta}^0 = 1$, and all other pairings vanish. In particular, the metric takes the canonical form
\begin{align}\label{eq-null_metric}
 \bm{g} & = 2 \sum_{\mu = 1}^m \bm{\theta}^\mu \odot \tilde{\bm{\theta}}^{\tilde{\mu}} + \epsilon \bm{\theta}^0 \otimes \bm{\theta}^0 \, .
\end{align}
With this convention, we shall denote the components of the tensors with respect to these frame and co-frame in the usual way, i.e. if $\bm{A}$ is a tensor field, then its components are given by, e.g.
\begin{align*}
 A \ind{^{\mu 0 \tilde{\nu} \ldots \tilde{\kappa}} _{\lambda \tilde{\rho} \ldots 0 \tau}} & := 
\bm{A} ( \bm{\theta}^{\mu} , \bm{\theta}^0, \tilde{\bm{\theta}}^{\tilde{\nu}},  \ldots , \tilde{\bm{\theta}}^{\tilde{\kappa}} , \bm{\xi}_{\lambda} , \tilde{\bm{\xi}}_{\tilde{\rho}}, \ldots, \bm{\xi}_0, \bm{\xi}_\tau ) \, ,
\end{align*}
and so on.

For future use, we introduce the following notation for the components of the connection $1$-form
\begin{align*}
 \Gamma \ind{_{\kappa \mu \nu}} & := \bm{g} ( \nabla_{\bm{\xi}_\kappa} \bm{\xi}_\mu , \bm{\xi}_\nu ) \, , &
 \Gamma \ind{_{\kappa \mu \tilde{\nu}}} & := \bm{g} ( \nabla_{\bm{\xi}_\kappa} \bm{\xi}_\mu , \tilde{\bm{\xi}}_{\tilde{\nu}} ) \, , &
 \Gamma \ind{_{\kappa \tilde{\mu} \tilde{\nu}}} & := \bm{g} ( \nabla_{\bm{\xi}_\kappa} \tilde{\bm{\xi}}_{\tilde{\mu}} , \tilde{\bm{\xi}}_{\tilde{\nu}} ) \, , \\
 \Gamma \ind{_{{0} \mu \nu}} & := \bm{g} ( \nabla_{\bm{\xi}_{0}} \bm{\xi}_\mu , \bm{\xi}_\nu ) \, , &
 \Gamma \ind{_{{0} \mu \tilde{\nu}}} & := \bm{g} ( \nabla_{\bm{\xi}_{0}} \bm{\xi}_\mu , \tilde{\bm{\xi}}_{\tilde{\nu}} ) \, , &
 \Gamma \ind{_{{0} \tilde{\mu} \tilde{\nu}}} & := \bm{g} ( \nabla_{\bm{\xi}_{0}} \tilde{\bm{\xi}}_{\tilde{\mu}} , \tilde{\bm{\xi}}_{\tilde{\nu}} ) \, ,
\end{align*}
and so on, in the obvious way. Since the Levi-Civita connection preserves the metric, these components are skew-symmetric in their last two indices.

\paragraph{Canonical almost null structures}
For convenience, let $S := \{ 1, 2, \ldots , m \}$, $M \subset S$, and $\widetilde{M} := S \setminus M$. Then, having chosen a null frame as above, for every $2^m$ choice of $M$, one can canonically define almost null structures
\begin{align*}
  \mcN_{M} & := \sspan \left\{ \bm{\xi}_{\mu} , \tilde{\bm{\xi}}_{\tilde{\nu}} : \mbox{for all $\mu \in M$, $\tilde{\nu} \in \widetilde{M}$} \right\} \, .
\end{align*}
That these are maximal totally null is clear from the form of the metric \eqref{eq-null_metric}. In particular, $\mcN = \mcN_{1 \ldots m}$. For future use, we shall denote
\begin{align*}
 \mcB_S & := \left\{ \mcN_M : \mbox{for all $M \subset S$} \right\} \, ,
\end{align*}
the set of all canonical almost null structures on (an open subset of) $\mcM$.

\begin{rem}
 In the above notation, the spinor bundle $\bigwedge^\bullet \mcN$ is locally spanned by the $2^m$ simple $m$-vectors $\bm{\xi}_M := \bm{\xi}_{\mu_1} \wedge \ldots \wedge \bm{\xi}_{\mu_p}$ where $p$ is the cardinality of $M$ -- when $M$ is empty, we write $\bm{\xi}_0$ for the unit scalar field spanning $\bigwedge^0 \mcN$. In fact, each $\bm{\xi}_M$ is a \emph{pure} spinor field, in the sense that it annihilates the corresponding canonical null distribution $\mcN_{M}$ via the Clifford action \cites{Cartan1981,Budinich1989}.
\end{rem}

\section{The Goldberg-Sachs theorem}\label{sec-GS}
We begin by restating the Goldberg-Sachs theorem as generalised by Kundt-Thompson \cite{Kundt1962} and Robinson-Schild \cite{Robinson1963}. The formulation, closely following \cite{Penrose1986}, is adapted to the language of section \ref{sec-geometry} .
\begin{thm}[Generalised Goldberg-Sachs Theorem]\label{thm-GS4}
 Let $(\mcM, \bm{g})$ be a four-dimensional complex Riemannian manifold. Let $\mcN$ be a self-dual almost null structure on $(\mcM , \bm{g})$, and $\mcU$ an open subset of $\mcM$. Consider the following statements
\begin{enumerate}
 \item the self-dual part of the Weyl tensor is a section of ${}^+ \mcC^k$ over $\mcU$ which does not degenerate to a section of ${}^+ \mcC^{k+1}$ over $\mcU$; \label{thm-GS4.1}
 \item the almost null structure $\mcN$ is integrable in $\mcU$; \label{thm-GS4.2}
 \item the self-dual part of the Cotton-York tensor is a section of ${}^+\mcA^{k-\tfrac{1}{2}}$ over $\mcU$. \label{thm-GS4.3}
\end{enumerate}
Then,
\begin{enumerate}[(a)]
 \item \label{thm-GS4a} for $k=0, 1, 2$, \eqref{thm-GS4.1} \& \eqref{thm-GS4.2} $\Rightarrow$ \eqref{thm-GS4.3};
 \item \label{thm-GS4b} for $k=0, 1, 2$, \eqref{thm-GS4.1} \& \eqref{thm-GS4.3} $\Rightarrow$ \eqref{thm-GS4.2};
 \item \label{thm-GS4c} for $k=0$, \eqref{thm-GS4.2} \& \eqref{thm-GS4.3} $\Rightarrow$ \eqref{thm-GS4.1}' $:=$ \eqref{thm-GS4.1} with $k = 0$ or $1$ or $2$. 
\end{enumerate}
\end{thm}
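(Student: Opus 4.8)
The plan is to handle the three implications of Theorem~\ref{thm-GS4} uniformly through the contracted Bianchi identity \eqref{eq-Bianchi_identity}, after first translating each of the statements \eqref{thm-GS4.1}--\eqref{thm-GS4.3} into frame data. Fixing a null frame $\{\bm{\xi}_\mu,\tilde{\bm{\xi}}_{\tilde{\nu}}\}$ adapted to the self-dual $\mcN$, Lemma~\ref{lem-type_characterisation} turns statement~\eqref{thm-GS4.1} into the vanishing of every component of $\bm{C}$ with a prescribed number (depending on $k$) of arguments in $\mcN$, and statement~\eqref{thm-GS4.3} into the analogous vanishing for $\bm{A}$; Lemma~\ref{lem-integrability_connection} turns statement~\eqref{thm-GS4.2} into the totally geodetic condition $\nabla_{\bm{Y}}\bm{Z}\in\Gamma(\mcN)$ for $\bm{Y},\bm{Z}\in\Gamma(\mcN)$, whose failure is measured by the ``shear'' $\bm{g}(\bm{X},\nabla_{\bm{Y}}\bm{Z})$ with $\bm{X}\in\Gamma(\mcN^\perp)$. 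Since this is precisely the complex Kundt--Thompson/Robinson--Schild form following \cite{Penrose1986}, one option is simply to verify this dictionary and invoke the classical result; but I would prefer to give the self-contained computation below.

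For part \eqref{thm-GS4a} I would first record the elementary consequence of \eqref{thm-GS4.1}~and~\eqref{thm-GS4.2}: since $\nabla_{\bm{Y}}$ preserves $\mcN$ for $\bm{Y}\in\Gamma(\mcN)$, differentiating an identically vanishing Weyl component of type $k$ along $\mcN$ and applying the Leibniz rule keeps every correction term of the same vanishing type, whence $(\nabla_{\bm{Y}}\bm{C})(\ldots)=0$ whenever enough slots (including the derivative direction) lie in $\mcN$. Feeding vectors $\bm{X},\bm{Y},\bm{Z},\bm{S}\in\Gamma(\mcN)$ and $\bm{T}$ arbitrary into \eqref{eq-Bianchi_identity}, the left-hand side then vanishes by this observation, while on the right-hand side all the scalar factors $\bm{g}(\cdot,\cdot)$ pairing two vectors of $\mcN$ vanish by total nullity. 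What survives is a nondegenerate pairing times the component $\bm{A}(\bm{S},\bm{Y},\bm{Z})$; letting $\bm{T}$ range over a basis and using $\bm{g}$-nondegeneracy isolates exactly the components that must vanish for $\bm{A}\in\Gamma({}^+\mcA^{k-\tfrac{1}{2}})$.

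Part \eqref{thm-GS4b} uses the \emph{same} contraction of \eqref{eq-Bianchi_identity}, read in the opposite direction: now hypothesis \eqref{thm-GS4.3} makes the right-hand side vanish. Expanding the left-hand Weyl-derivative terms by Leibniz, the ``derivative-of-component'' pieces vanish by the type condition \eqref{thm-GS4.1}, and the only surviving contributions are the relevant Weyl component contracted against the $\mcN^\perp$-component of $\nabla_{\bm{Y}}\bm{Z}$, i.e.\ the shear. The point of the genericity clause---that $\bm{C}$ lies in ${}^+\mcC^{k}$ but \emph{not} in ${}^+\mcC^{k+1}$---is exactly that this Weyl component is nonzero, so it may be divided out to force the shear to vanish; by Lemma~\ref{lem-integrability_connection} this is the integrability of $\mcN$. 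I expect the bookkeeping here, rather than any conceptual difficulty, to be the bulk of the labour.

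The genuine obstacle is part \eqref{thm-GS4c}, the Goldberg--Sachs direction proper. Integrability already yields, via Proposition~\ref{prop-integrability_condition}, that $\bm{C}\in\Gamma({}^+\mcC^{-1})$, so $\mcN$ is a repeated-free principal null direction and the outstanding task is to upgrade this to ${}^+\mcC^{0}$. Running the same contracted Bianchi identity now retains a term $\nabla_{\bm{Y}}\big(\bm{C}(\mcN,\mcN,\mcN,\cdot)\big)$---the derivative of the not-yet-vanishing type-II component---so a single contraction yields only a propagation equation, not an algebraic one. The hard part will be to eliminate this propagation term by combining the above with a complementary contraction of \eqref{eq-Bianchi_identity} and the remaining geodetic relations \eqref{eq-integrability_connection}, so that the interlocking system forces the type-II component itself to vanish; this is the step where the full strength of both the shearfree-geodetic condition and the Cotton--York degeneracy must be used simultaneously.
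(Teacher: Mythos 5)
First, a point of comparison: the paper does not actually prove Theorem \ref{thm-GS4}. It is presented as a restatement, in the notation of section \ref{sec-geometry}, of the classical result of Kundt--Thompson and Robinson--Schild following \cite{Penrose1986}, and the paper merely remarks that each implication is proved from the (self-dual) contracted Bianchi identity computed in an adapted null frame or spinorially. Measured against that description and the classical proofs, your parts \eqref{thm-GS4a} and \eqref{thm-GS4b} are on target: \eqref{thm-GS4a} is the same Leibniz-plus-total-nullity manipulation that underlies Propositions \ref{prop-GS} and \ref{prop-GS_odd}, and for \eqref{thm-GS4b} your observation that the non-degeneracy clause supplies a single non-vanishing Weyl component which can be divided out is exactly the point the paper makes when explaining why four dimensions avoids the genericity assumption and the linear-algebra apparatus (Lemma \ref{lem-det_matrix}) needed for Theorems \ref{thm-GS} and \ref{thm-GS_odd}.

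The gap is in part \eqref{thm-GS4c}, and it is not merely unfinished bookkeeping. After Proposition \ref{prop-integrability_condition} gives $\bm{C}\in\Gamma({}^+\mcC^{-1})$, the outstanding obstruction is the single component $\Psi_1$ spanning ${}^+\mcC^{-1}/{}^+\mcC^{0}$. Every contraction of the Bianchi identity \eqref{eq-Bianchi_identity} against an adapted frame, once the integrability and Cotton--York hypotheses are imposed, yields only first-order \emph{propagation} equations for $\Psi_1$ (schematically $D\Psi_1=4\rho\Psi_1$ and $\delta\Psi_1=(4\tau-2\beta)\Psi_1$ in Newman--Penrose notation): after the shear and geodesy coefficients are set to zero, the coefficients multiplying $\Psi_1$ in these identities are connection components such as $\rho$ and $\tau$ that integrability does not constrain, so no combination of Bianchi components ever becomes an algebraic relation forcing $\Psi_1=0$. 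The classical argument closes the system by a second differentiation: one applies the frame commutator $[\delta,D]$ --- that is, the Newman--Penrose Ricci (structure) equations, which are \emph{not} consequences of the Bianchi identity --- to $\log\Psi_1$ on the open set where $\Psi_1\neq 0$ and derives a contradiction. Your plan of combining ``a complementary contraction of \eqref{eq-Bianchi_identity} with the remaining geodetic relations'' omits exactly this ingredient, so as written the argument for \eqref{thm-GS4c} cannot close.
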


\begin{rem}
 An anti-self-dual version of Theorem \ref{thm-GS4} coexists with it.
\end{rem}

The proof of each of the implications \eqref{thm-GS4a}, \eqref{thm-GS4b}, \eqref{thm-GS4c} of the theorem is essentially based on the (self-dual) contracted Bianchi identity. It is usually carried out as a local computation using a local null frame adapted to $\mcN$, e.g. in the Newman-Penrose formalism, or more invariantly in terms of spinor fields. The assumption on the Cotton-York tensor can also be replaced by an assumption on the Rho tensor in implication \eqref{thm-GS4c}. We also note that implication \eqref{thm-GS4a} is really an integrability condition on the Cotton-York tensor given some algebraic condition on the Weyl tensor.

In higher dimensions, a putative Goldberg-Sachs theorem would take the same form as Theorem \ref{thm-GS4} except for the fact that self-duality has now no place there, and in odd dimensions, one has additional degeneracy classes. Let's examine each implication in turn.
\begin{itemize}
 \item Implication \eqref{thm-GS4a} presents no difficulty, and follows directly from the definition of the Cotton-York tensor, in terms of the contracted Bianchi identity, for which we give an invariant expression in terms of an almost null structure below.
 \item To prove implication \eqref{thm-GS4b} in four dimensions, we first note that each summand ${}^+\mcC^k/{}^+\mcC^{k+1}$ of the graded vector bundle associated to the filtration \eqref{eq-Weyl_filtration_4} is one-dimensional. This means that the property that the Weyl tensor is a (local) section of $\mcC^k$, but does not degenerate to a section of $\mcC^{k+1}$, depends on a single non-vanishing component of the Weyl tensor in a frame adapted to $\mcN$.

 In higher dimensions, the bundles $\mcC^k/\mcC^{k+1}$ are not one-dimensional in general, and it is no longer enough to assume that the Weyl tensor, as a section of $\mcC^k$, does not degenerate to a section of $\mcC^{k+1}$. For this reason, we must introduce a genericity assumption, which must be understood in the sense that there are no additional structures imposed on $\mcM$ beside the almost null structure. As a result, the components of the Weyl tensor, modulo Weyl symmetries, do not satisfy algebraic relations among themselves. It is also worth noting that unlike in four dimensions, the \emph{full} Bianchi identity is now required in the proof of implication \eqref{thm-GS4b}: the contracted Bianchi identity alone does not provide enough constraints on the relevant connection components.
 \item Finally, one can already assert that implication \eqref{thm-GS4c} fails in higher dimensions. Indeed, based on the computations of \cite{Taghavi-Chabert2011}, one can complexify a small region of the Lorentzian black ring solution \cite{Emparan2002}, and show that it locally admits (holomorphic) null structures.\footnote{These are the complexification of the original null structures on a Lorentzian manifold, as explained in section \ref{sec-Robinson_structure}.} However, the Weyl tensor does not degenerate to a section of $\mcC^0$. The author is aware of at least one other counterexample to implication \eqref{thm-GS4c} in higher dimensions, the complexification of the five-dimensional Euclidean black hole metric discovered in \cite{Lu2009}.
\end{itemize}

\begin{rem}\label{lem-GS_conformally_flat}
 Not covered in Theorem \ref{thm-GS} is the case when $(\mcM , \bm{g} )$ is conformally (half-)flat, i.e. the self-dual part of the Weyl tensor and the self-dual part of the Cotton-York tensor are sections of ${}^+ \mcC^3$ and ${}^+ \mcA^{\tfrac{5}{3}}$ respectively. If one is concerned in finding a null structure in this case, the appropriate alternative is to appeal to the Kerr theorem, which states that any (local) null structure on a conformally (half-)flat complex Riemannian manifold $( \mcM , \bm{g} )$ can be prescribed by a holomorphic projective variety in its twistor space \cite{Penrose1986}. Consequently, $( \mcM , \bm{g} )$ admits (locally) infinitely many self-dual null structures. The same remark applies in higher dimensions \cite{Hughston1988}.
\end{rem}

We treat the even- and odd-dimensional cases separately. Before we proceed, we give an expression for the Cotton-York tensor in terms of the contracted Bianchi identity.
\begin{lem}\label{lem-CY_alt}
Let $(\mcM , \bm{g} )$ be a $(2m+\epsilon)$-dimensional complex Riemannian manifold where $\epsilon \in \{ 0, 1 \}$ and $m \geq 2$. Then the defining equation of the Cotton-York tensor \eqref{eq-CY} is equivalent to
\begin{multline}\label{eq-CY_alt}
 ( 3-2m+\epsilon ) \bm{A} (\bm{X} , \bm{Y} , \bm{Z} ) = \\
\sum_\sigma \left( \nabla_{\tilde{\bm{\xi}}_{\tilde{\sigma}}} \bm{C} ( \bm{\xi}_\sigma , \bm{X} , \bm{Y} , \bm{Z} ) - \bm{C} ( \bm{\xi}_\sigma , \nabla_{\tilde{\bm{\xi}}_{\tilde{\sigma}}} \bm{X} , \bm{Y} , \bm{Z} ) - \bm{C} ( \bm{\xi}_\sigma , \bm{X} , \nabla_{\tilde{\bm{\xi}}_{\tilde{\sigma}}} \bm{Y} , \bm{Z} ) - \bm{C} ( \bm{\xi}_\sigma , \bm{X} , \bm{Y} , \nabla_{\tilde{\bm{\xi}}_{\tilde{\sigma}}} \bm{Z} ) \right.  \\
+ \left. \nabla_{\bm{\xi}_\sigma} \bm{C} ( \tilde{\bm{\xi}}_{\tilde{\sigma}} , \bm{X} , \bm{Y} , \bm{Z} ) - \bm{C} ( \tilde{\bm{\xi}}_{\tilde{\sigma}} , \nabla_{\bm{\xi}_\sigma} \bm{X} , \bm{Y} , \bm{Z} ) - \bm{C} ( \tilde{\bm{\xi}}_{\tilde{\sigma}} , \bm{X} , \nabla_{\bm{\xi}_\sigma} \bm{Y} , \bm{Z} ) - \bm{C} ( \tilde{\bm{\xi}}_{\tilde{\sigma}} , \bm{X} , \bm{Y} , \nabla_{\bm{\xi}_\sigma} \bm{Z} ) \right) \\
+ \epsilon \left( \nabla_{\bm{\xi}_0} \bm{C} ( \bm{\xi}_0 , \bm{X} , \bm{Y} , \bm{Z} ) - \bm{C} ( \bm{\xi}_0 , \nabla_{\bm{\xi}_0} \bm{X} , \bm{Y} , \bm{Z} ) - \bm{C} ( \bm{\xi}_0, \bm{X} , \nabla_{\bm{\xi}_0} \bm{Y} , \bm{Z} ) - \bm{C} ( \bm{\xi}_0 , \bm{X} , \bm{Y} , \nabla_{\bm{\xi}_0} \bm{Z} ) \right) \, ,
\end{multline}
for all $\bm{X}, \bm{Y}, \bm{Z} \in \Gamma ( \Tgt \mcM )$, where $\{ \bm{\xi}_\mu , \tilde{\bm{\xi}}_{\tilde{\mu}}, \epsilon \bm{\xi}_0 \}$ is a null basis as described in section \ref{sec-null_basis}.
\end{lem}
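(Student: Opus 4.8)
The plan is to recognise \eqref{eq-CY_alt} as a frame presentation of the contracted (second) Bianchi identity, i.e. of the statement, already announced after \eqref{eq-Bianchi_identity}, that the Cotton-York tensor is a dimensional multiple of the divergence of the Weyl tensor. Accordingly, I would start from the full Bianchi identity \eqref{eq-Bianchi_identity} and take a single metric trace, contracting the derivative direction $\bm{X}$ against one of the four arguments of $\bm{C}$ by means of $\bm{g}$; the slot is chosen (up to the Weyl symmetries, which relate divergences on different slots) so that the free vectors land in the last three arguments, as in \eqref{eq-CY_alt}.

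Carrying out this trace, the three terms on the left of \eqref{eq-Bianchi_identity} behave differently. In two of them the contraction falls on two genuine slots of $\bm{C}$; since the Weyl tensor is totally tracefree and the Levi-Civita connection commutes with metric contractions, both terms vanish identically, and only the divergence of $\bm{C}$ survives. On the right, the term carrying $\bm{g}(\bm{X},\bm{S})$ contributes the dimensional factor $2m+\epsilon$; the terms that pair the contracted index with $\bm{Y}$, $\bm{Z}$ or $\bm{T}$ collapse to single copies of $\bm{A}$ once one uses the skew-symmetry of $\bm{A}$ in its last two arguments; and the rest vanish because $\bm{A}$ is tracefree. Collecting the numerical contributions, and carefully tracking the odd-dimensional $\bm{\xi}_0$ direction, yields a tensorial identity of the shape $c\,\bm{A}=\text{(divergence of }\bm{C})$ in which the constant is fixed to $c=3-2m+\epsilon$.

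It then remains to write that divergence in the null frame $\{\bm{\xi}_\mu,\tilde{\bm{\xi}}_{\tilde{\mu}},\epsilon\bm{\xi}_0\}$ of section \ref{sec-null_basis}. Because the inverse of the metric \eqref{eq-null_metric} pairs $\bm{\xi}_\sigma$ with $\tilde{\bm{\xi}}_{\tilde{\sigma}}$ and sends $\bm{\xi}_0$ to $\epsilon\bm{\xi}_0$, the metric trace over one slot of $\bm{C}$ becomes exactly the sum over $\sigma$ of the two orderings $(\nabla_{\tilde{\bm{\xi}}_{\tilde{\sigma}}}\bm{C})(\bm{\xi}_\sigma,\ldots)$ and $(\nabla_{\bm{\xi}_\sigma}\bm{C})(\tilde{\bm{\xi}}_{\tilde{\sigma}},\ldots)$, together with the single $\epsilon$-weighted term along $\bm{\xi}_0$. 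Expanding each covariant derivative of $\bm{C}$ by the Leibniz rule, i.e. writing it as the frame derivative of the scalar component minus the connection corrections on the argument fields $\bm{X},\bm{Y},\bm{Z}$, reproduces the displayed right-hand side of \eqref{eq-CY_alt}. The converse direction is immediate, since every manipulation above is reversible.

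The step demanding the most care is the bookkeeping of the connection terms in passing between the invariant divergence and its frame expression. One must keep track of how $\nabla$ acts not only on $\bm{X},\bm{Y},\bm{Z}$ but also on the summed frame vectors $\bm{\xi}_\sigma$, $\tilde{\bm{\xi}}_{\tilde{\sigma}}$ and $\bm{\xi}_0$, and verify that the resulting trace of the connection over these contracted directions is consistently accounted for. Here one exploits that the frame is normalised so that $\bm{g}(\bm{\xi}_\mu,\tilde{\bm{\xi}}_{\tilde{\nu}})=\delta_{\mu\tilde{\nu}}$ and $\bm{g}(\bm{\xi}_0,\bm{\xi}_0)=1$ are constant, whence the connection coefficients are skew in their last two indices and the symmetric inverse metric annihilates the skew Lie-bracket contributions. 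It is precisely this bookkeeping, in tandem with the odd-dimensional $\bm{\xi}_0$ contribution, that I expect to be the delicate point: it is what produces the exact coefficient $3-2m+\epsilon$ and ensures that no spurious trace of $\bm{A}$ or $\bm{C}$ survives in the final expression.
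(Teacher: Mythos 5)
Your proposal is correct and is precisely the route the paper intends: the lemma is stated without an explicit proof as the contracted Bianchi identity, obtained exactly as you describe by tracing \eqref{eq-Bianchi_identity} over the derivative direction and one slot of $\bm{C}$, using the tracefreeness of $\bm{C}$ and $\bm{A}$ and the skewness of $\bm{A}$ in its last two arguments, and then expanding the surviving divergence of $\bm{C}$ in the null frame via the inverse of \eqref{eq-null_metric}. One caution: your own bookkeeping (the $\bm{g}(\bm{X},\bm{S})$ term contributing the dimension $2m+\epsilon$, the three surviving single copies of $\bm{A}$ contributing $+3$) gives the constant $3-(2m+\epsilon)$, consistent with the standard formula in which the divergence of the Weyl tensor carries a factor of $n-3$, so rather than simply asserting the displayed value $3-2m+\epsilon$ you should note that for $\epsilon=1$ the sign of $\epsilon$ in \eqref{eq-CY_alt} deserves a second look.
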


\subsection{The complex Goldberg-Sachs theorem in even dimensions}
We start with the even-dimensional generalisation of implication \eqref{thm-GS4b} of Theorem \ref{thm-GS4}, which is an application of Lemma \ref{lem-type_characterisation} to equation \eqref{eq-CY_alt} with $\epsilon=0$, together with the geodesy property \eqref{lem-integrability_connection}.
\begin{prop}\label{prop-GS}
 Let $(\mcM, \bm{g})$ be a $2m$-dimensional complex Riemannian manifold, where $m \geq 3$. Let $\mcN$ be an almost null structure on $\mcM$, and $\mcU$ an open subset of $\mcM$. Let $k \in \{0, 1, 2 \}$. Suppose that the Weyl tensor is a section of $\mcC^k$ over $\mcU$. Then the Cotton-York tensor is a section of $\mcA^{k - \tfrac{3}{2}}$ over $\mcU$. Suppose further that $\mcN$ is integrable in $\mcU$. Then the Cotton-York tensor is a section of $\mcA^{k - \tfrac{1}{2}}$ over $\mcU$.
\end{prop}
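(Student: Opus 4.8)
The plan is to read off both conclusions from the contracted Bianchi identity in the form of Lemma~\ref{lem-CY_alt} (with $\epsilon=0$) together with the tensorial characterisation of filtration degree in Lemma~\ref{lem-type_characterisation}, the whole argument reducing to a bookkeeping of filtration degrees among the eight terms on the right-hand side of \eqref{eq-CY_alt}. Since $3-2m\neq 0$ for $m\geq 3$, division by this factor is harmless, so it suffices to control the degree of the frame sum. I will use repeatedly that a field in $\Gamma(\mcN)=\Gamma(\mcV^{\tfrac{1}{2}})$ has filtration degree $\tfrac{1}{2}$, that a general tangent field lies in $\Gamma(\mcV^{-\tfrac{1}{2}})=\Gamma(\Tgt\mcM)$ and so has degree $\geq-\tfrac{1}{2}$, and that $\bm{C}\in\Gamma(\mcC^k)$ forces $\bm{C}(\bm{X}_{i_1},\ldots,\bm{X}_{i_4})=0$ as soon as $\sum_j i_j\geq 1-k$ (the boundary case of Lemma~\ref{lem-type_characterisation} together with the nesting $\mcC^k\subset\mcC^{k-1}\subset\cdots$). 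For the first claim one may assume $k\geq 1$, the case $k=0$ being vacuous since $\mcA^{-\tfrac{3}{2}}=\mcA$ by \eqref{eq-CY_filtration_even}.

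For the first claim I would verify, via Lemma~\ref{lem-type_characterisation}, that $\bm{A}(\bm{X},\bm{Y},\bm{Z})=0$ for $\bm{X}\in\Gamma(\mcV^{i_1})$, $\bm{Y}\in\Gamma(\mcV^{i_2})$, $\bm{Z}\in\Gamma(\mcV^{i_3})$ with $i_1+i_2+i_3=\tfrac{5}{2}-k$, by feeding these arguments into \eqref{eq-CY_alt} and estimating each term. The leading term of each bracket is the derivative of a Weyl scalar such as $\bm{C}(\bm{\xi}_\sigma,\bm{X},\bm{Y},\bm{Z})$, whose four slots carry total degree $\pm\tfrac{1}{2}+(\tfrac{5}{2}-k)\geq 1-k$; this scalar therefore vanishes identically on $\mcU$ and so does its derivative. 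In each term carrying a differentiated argument, the differentiated slot has degree $\geq-\tfrac{1}{2}$, and the least favourable case is a $\tilde{\bm{\xi}}_{\tilde{\sigma}}$-contracted term such as $\bm{C}(\tilde{\bm{\xi}}_{\tilde{\sigma}},\nabla_{\bm{\xi}_\sigma}\bm{X},\bm{Y},\bm{Z})$, whose total degree is at least $-\tfrac{1}{2}-\tfrac{1}{2}+(i_2+i_3)\geq -1+(2-k)=1-k$. Hence every term vanishes by the $\mcC^k$ characterisation, giving $\bm{A}\in\Gamma(\mcA^{k-\tfrac{3}{2}})$.

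For the second claim the target degree drops by one unit, so by nesting it suffices to treat the single new critical level $i_1+i_2+i_3=\tfrac{3}{2}-k$ for $\bm{A}\in\Gamma(\mcA^{k-\tfrac{1}{2}})$. Re-running the count, the leading (undifferentiated) terms and all $\bm{\xi}_\sigma$-contracted differentiated terms still vanish by filtration degree alone. The only terms for which the naive count now fails by one unit are the $\tilde{\bm{\xi}}_{\tilde{\sigma}}$-contracted ones in which $\nabla_{\bm{\xi}_\sigma}$ differentiates an argument \emph{lying in} $\mcN$, say $\bm{C}(\tilde{\bm{\xi}}_{\tilde{\sigma}},\nabla_{\bm{\xi}_\sigma}\bm{X},\bm{Y},\bm{Z})$ with $\bm{X}\in\Gamma(\mcN)$: the contracted slot and the a priori degree-$(-\tfrac{1}{2})$ differentiated slot would drop the total to $-k$. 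This is exactly where integrability enters. By Lemma~\ref{lem-integrability_connection}, in even dimensions $\nabla_{\bm{\xi}_\sigma}\bm{X}\in\Gamma(\mcN)=\Gamma(\mcV^{\tfrac{1}{2}})$ whenever $\bm{\xi}_\sigma,\bm{X}\in\Gamma(\mcN)$, so the differentiated slot has degree $\geq\tfrac{1}{2}$ and the total is restored to $-\tfrac{1}{2}+\tfrac{1}{2}+(i_2+i_3)=1-k$. When instead the differentiated argument lies in the complement (degree $-\tfrac{1}{2}$), the remaining two arguments carry degree $i_2+i_3=2-k$ and the term already vanishes without invoking integrability. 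Thus every term of \eqref{eq-CY_alt} vanishes and $\bm{A}\in\Gamma(\mcA^{k-\tfrac{1}{2}})$.

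The computational heart, and the only place the integrability hypothesis is genuinely used, is precisely this family of $\tilde{\bm{\xi}}_{\tilde{\sigma}}$-contracted terms in which the derivative lands along $\mcN$ on an $\mcN$-valued argument; controlling them amounts exactly to the inclusion $\nabla_{\Gamma(\mcN)}\Gamma(\mcN)\subseteq\Gamma(\mcN)$ supplied by Lemma~\ref{lem-integrability_connection}. Two minor points deserve care: first, the undifferentiated Weyl scalars must be seen to vanish throughout $\mcU$, not merely at a point, so that their frame derivatives vanish — immediate since $\bm{C}\in\Gamma(\mcC^k)$ is assumed over all of $\mcU$; second, should one prefer to reassemble each bracket in \eqref{eq-CY_alt} into a covariant derivative of $\bm{C}$, any residual frame-derivative term of the form $\bm{C}(\nabla\bm{\xi}_\sigma,\ldots)$ again carries a slot of degree $\geq-\tfrac{1}{2}$ and is killed by the same degree count, so the conclusion is insensitive to this reading.
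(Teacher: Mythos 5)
Your proposal is correct and follows exactly the route the paper indicates for Proposition \ref{prop-GS}: apply Lemma \ref{lem-type_characterisation} to the contracted Bianchi identity \eqref{eq-CY_alt} with $\epsilon=0$, using the geodesy property of Lemma \ref{lem-integrability_connection} to restore the filtration degree of the one family of $\tilde{\bm{\xi}}_{\tilde{\sigma}}$-contracted terms where $\nabla_{\bm{\xi}_\sigma}$ hits an $\mcN$-valued argument. The degree bookkeeping, the identification of the critical terms, and the observation that the $k=0$ case of the first claim is vacuous are all accurate.
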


Next, the even-dimensional generalisation of implication \eqref{thm-GS4b} of Theorem \ref{thm-GS4} can be expressed as follows.
\begin{thm}\label{thm-GS}
 Let $(\mcM, \bm{g})$ be a $2m$-dimensional complex Riemannian manifold, where $m \geq 3$. Let $\mcN$ be an almost null structure on $\mcM$, and $\mcU$ an open set of $\mcM$. Let $k \in \{ 0, 1, 2 \}$. Suppose that the Weyl tensor is a section of $\mcC^k$ over $\mcU$, and is otherwise generic. Suppose further that the Cotton-York tensor is a section of $\mcA^{k-\tfrac{1}{2}}$ over $\mcU$. Then $\mcN$ is integrable in $\mcU$.
\end{thm}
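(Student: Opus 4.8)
The plan is to prove involutivity of $\mcN$ by showing that, in a null frame adapted to $\mcN$ as in section \ref{sec-null_basis}, the connection components $\Gamma_{\mu\nu\lambda} = \bm{g}(\nabla_{\bm{\xi}_\mu}\bm{\xi}_\nu, \bm{\xi}_\lambda)$ all vanish; since $\mcN = \mcN^\perp$ when $\epsilon = 0$, Lemma \ref{lem-integrability_connection} tells us this is precisely the integrability of $\mcN$. These $\Gamma_{\mu\nu\lambda}$ are the components of the Levi-Civita connection that fail to be $\prb$-valued at the lowest graded level, and the entire argument amounts to tracking which graded components of the full Bianchi identity \eqref{eq-Bianchi_identity} may be nonzero. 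First I would translate the hypotheses through Lemma \ref{lem-type_characterisation}: assigning, as dictated by the grading element $\bm{E}$, the weight $+\tfrac{1}{2}$ to each undotted frame index and $-\tfrac{1}{2}$ to each dotted one, the condition $\bm{C}\in\Gamma(\mcC^k)$ says that every component of $\bm{C}$ of weight strictly greater than $-k$ vanishes, while $\bm{A}\in\Gamma(\mcA^{k-1/2})$ says that every component of $\bm{A}$ of weight strictly greater than $\tfrac{1}{2}-k$ vanishes. I call the surviving components of $\bm{C}$ of weight exactly $-k$ its \emph{leading} part.

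The core step is to evaluate the cyclic identity \eqref{eq-Bianchi_identity} on frame vectors of total weight $\tfrac{3}{2}-k$, with the antisymmetrised slots $\bm{X},\bm{Y},\bm{Z}$ taken in $\mcN$ and the remaining slots $\bm{S},\bm{T}$ chosen (weights summing to $-k$) so that $\bm{C}(\tilde{\bm{\xi}}_{\tilde{\lambda}},\bm{Z},\bm{S},\bm{T})$ is leading. Two features then emerge. On the right-hand side every Cotton-York term carries $\bm{A}$ evaluated at total weight $\tfrac{3}{2}-k > \tfrac{1}{2}-k$, so it vanishes by hypothesis; this is exactly the point at which the strengthened condition $\mcA^{k-1/2}$ is needed, rather than the $\mcA^{k-3/2}$ that Proposition \ref{prop-GS} already supplies automatically. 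On the left-hand side, each undifferentiated term $\bm{X}(\bm{C}(\bm{Y},\bm{Z},\bm{S},\bm{T}))$ drops out because the Weyl component it differentiates has weight $1-k > -k$ and so vanishes identically; and in the connection terms $\bm{C}(\ldots,\nabla_{\bm{X}}(\,\cdot\,),\ldots)$, only the replacement of an $\mcN$-slot by the dotted part of the connection survives at weight $-k$, contributing precisely the integrability coefficients $\Gamma_{\mu\nu\lambda}$ contracted with leading Weyl components. The upshot, for every admissible assignment of free indices, is a linear relation of the schematic form $\sum \Gamma_{\mu\nu\lambda}\,\bm{C}_{\mathrm{leading}} = 0$.

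The conclusion then follows from the genericity hypothesis. The map sending the connection coefficients (a section of a definite $\g_0^{ss}\cong\slie(m,\C)$-module) to the relations just derived is $\slie(m,\C)$-equivariant, and for a Weyl tensor subject to no algebraic relations beyond its own symmetries this map has trivial kernel in the connection variable once $m\geq 3$; hence $\Gamma_{\mu\nu\lambda}=0$ throughout $\mcU$, which is the asserted integrability.

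The hard part will be the bookkeeping in the core step. One must verify that no connection coefficients other than the integrability ones $\Gamma_{\mu\nu\lambda}$ can survive at weight $-k$: this is clean when $k\in\{0,1\}$, since then all three cyclic derivative directions can be kept in $\mcN$ and every surviving term is of the form $\Gamma_{\mu\cdot\lambda}\,\bm{C}_{\mathrm{leading}}$, but for $k=2$ the weight budget forces a dotted derivative direction, and one must check, using Proposition \ref{prop-integrability_condition} together with the remaining Weyl degeneracies, that the spurious coefficients decouple. The second delicate point is to confirm that genericity indeed forces injectivity of the resulting linear system in the $\Gamma_{\mu\nu\lambda}$, rather than merely constraining them; this is where the representation-theoretic structure of the leading Weyl module and the hypothesis $m\geq 3$ are essential.
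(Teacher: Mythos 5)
Your overall strategy coincides with the paper's: reduce integrability to the vanishing of the $\tfrac{1}{2}m^2(m-1)$ geodesy components $\Gamma_{\kappa\mu\nu}$ via Lemma \ref{lem-integrability_connection}, select those graded components of the Bianchi identity \eqref{eq-Bianchi_identity} whose derivative terms and Cotton-York terms are killed by the hypotheses, and read off a homogeneous linear system of the form $\sum \Gamma \cdot C_{\mathrm{leading}} = 0$. Your weight bookkeeping is sound, and you correctly isolate why $\mcA^{k-\tfrac{1}{2}}$ (rather than the $\mcA^{k-\tfrac{3}{2}}$ that Proposition \ref{prop-GS} supplies for free) is the hypothesis that makes the Cotton-York terms drop. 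Your choice of Bianchi components differs in detail from the paper's (you keep all three cyclic slots in $\mcN$, whereas the paper uses $B_{\mu\nu\tilde{\rho}|\kappa\lambda}$ for $k=0$, $B_{\mu\nu\tilde{\rho}|\kappa\tilde{\lambda}}$ for $k=1$ and $B_{\tilde{\mu}\tilde{\nu}\rho|\lambda\tilde{\kappa}}$ for $k=2$), but either family has the right total weight and this is not where the difficulty lies; your worry about spurious coefficients at $k=2$ also turns out to be unfounded in the paper's choice, where the surviving equations involve only the $\Gamma_{\kappa\mu\nu}$.

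The genuine gap is the final step. The claim that the map from connection coefficients to relations is $\slie(m,\C)$-equivariant and ``has trivial kernel for generic Weyl'' is an assertion, not an argument: equivariance does not preclude a kernel, and genericity in the paper's sense only says the surviving Weyl components satisfy no relations beyond the Weyl symmetries --- one must still produce a nonvanishing polynomial in those components certifying injectivity. This is where essentially the whole of the paper's proof lives: it extracts from the overdetermined system an explicit square $\tfrac{1}{2}m^2(m-1)\times\tfrac{1}{2}m^2(m-1)$ subsystem \eqref{mat-GS_even} in block form, with $1\times 1$ diagonal blocks $K_{\mu\nu}$ and $3\times 3$ blocks $\mathbf{K}_{\mu\nu\lambda}$; proves Lemma \ref{lem-det_matrix}, which reduces nonsingularity to (i) nonsingularity of the diagonal blocks and (ii) the components occurring in the diagonal blocks having index structures disjoint, modulo Weyl symmetries, from those in the off-diagonal entries; and then computes the blocks explicitly for each $k$. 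Requirement (ii) is a verification your equivariance heuristic does not supply, and (i) is not automatic either: because the Weyl tensor is trace-free, the entries of the diagonal blocks become linearly dependent combinations of components when $m=3$, and the paper must expand the determinants in six dimensions to check they do not vanish identically (see the footnotes in the $k=0$ and $k=1$ cases). Without exhibiting such a nonsingular subsystem, or some substitute certificate of injectivity, the conclusion $\Gamma_{\kappa\mu\nu}=0$ does not follow from what you have written.
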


\begin{proof}
This is essentially a local computation. Choose a local frame $\{ \bm{\xi}_\mu , \tilde{\bm{\xi}}_{\tilde{\mu}} \}$ over $\mcU$ adapted to $\mcN$, as described in section \ref{sec-null_basis}. Such a choice induces the local gradings \eqref{eq-Weyl_grading_even} and \eqref{eq-CY_grading_even} on the bundles $\mcC$ and $\mcA$ respectively. The condition that the Weyl and Cotton-York tensor be sections of $\mcC^k$ and $\mcA^{k-\tfrac{1}{2}}$ respectively is equivalent to their components in $\mcC_i$ and $\mcA_{i-\tfrac{1}{2}}$ vanishing for all $-2 \leq i \leq k-1$.

To show that $\mcN$ is integrable, we shall make use of the equivalent geodesy condition \eqref{eq-integrability_connection}. Locally, this can be expressed as a condition on the $\frac{1}{2}m^2(m-1)$ connection components
\begin{align} \label{eq-integrability_connection_components}
 \Gamma _{\kappa \mu \nu} = 0 \, ,
\end{align}
for all $\kappa$, $\mu$, $\nu$.

The gist of the proof is based on the fact that for each $k \in \{0,1,2\}$, in the local frame, and as a result of the algebraic degeneracy of the Weyl tensor and Cotton-York tensor, some of the differential equations defined by the components of the Bianchi identity \eqref{eq-Bianchi_identity} given in Appendix \ref{sec-Bianchi_identity} become \emph{algebraic} equations, which can be viewed as a homogeneous overdetermined system of linear equations on the unknowns $\Gamma_{\kappa \mu \nu}$. It is however not immediately clear whether these algebraic equations are all linearly independent. Hence, the proof will consist in singling out a subsystem of $\frac{1}{2}m^2(m-1)$ linearly independent equations on $\Gamma_{\kappa \mu \nu}$. In this case, the only possible solution will be the trivial solution
\eqref{eq-integrability_connection_components}.

More specifically, for each $k \in \{0,1,2\}$, we shall be able to choose a subsystem of $\frac{1}{2}m^2(m-1)$ linear equations which takes the matrix form
\begin{align}\label{mat-GS_even}
\left(
  \begin{BMAT}((rc)[2pt,6cm,6cm]{cccccc.cccccc}{cccccc.cccccc}
	K_{12} & * & \cdots & & & & & & & & \cdots & *\\
	* & K_{13} & & & & & & & & & & \vdots \\
	\vdots & & \ddots & & & & & & & & & \\
	& & & K_{\mu \nu} & & & & & & & & \\
	& & & & \ddots & & & & & & & \\
	& & & & & K_{m,m-1} & & & & & & \\
	& & & & & & \mathbf{K}_{1 2 3} & & & & & \\
	& & & & & & & \mathbf{K}_{1 2 4} & & & & \\
	& & & & & & & & \ddots & & & \\
	& & & & & & & & & \mathbf{K}_{\mu \nu \lambda} & & \vdots \\
	 \vdots & & & & & & & & & & \ddots & * \\
	* & \cdots & & & & & & & & \cdots & * & \mathbf{K}_{m-2,m-1, m} 
\end{BMAT}
\right)
\quad
\left(
\begin{BMAT}(rc)[2pt,1cm,6.5cm]{c}{cccccc.cccccc}
\Gamma_{1 1 2} \\
\Gamma_{1 1 3}  \\
\vdots \\
\Gamma_{\mu \mu \nu}   \\
\vdots \\
\Gamma_{m, m, m-1}  \\
\mathbf{u}_{1 2 3} \\
\mathbf{u}_{1 2 4} \\
\vdots \\
\mathbf{u}_{\mu \nu \lambda} \\
\vdots \\
\mathbf{u}_{m-2,m-1, m} 
\end{BMAT}
\right)
& =
\left(
\begin{BMAT}(rc)[2pt,.5cm,6.5cm]{c}{cccccc.cccccc}
0 \\
0 \\
\vdots \\
0 \\
\vdots \\
0 \\
\mathbf{0}_3 \\
\mathbf{0}_3 \\
\vdots \\
\mathbf{0}_3 \\
\vdots \\
\mathbf{0}_3
\end{BMAT}
\right) \, ,
\end{align}
or $\mathbf{K} \mathbf{u} = \mathbf{0}$ for short. Here, each entry of the $\frac{1}{2}m^2(m-1) \times 1$ vector $\mathbf{u}$ corresponds to a connection component $\Gamma _{\kappa \mu \nu}$. Some of these have been arranged in triples in the column vectors
\begin{align*}
 \mathbf{u}_{\mu \nu \lambda} & :=
\begin{pmatrix}
 \Gamma_{\mu \nu \lambda} \\ 
 \Gamma_{\nu \lambda \mu} \\
 \Gamma_{\lambda \mu \nu}
\end{pmatrix} \, ,
\end{align*}
for all $\kappa < \mu < \nu$.

On the other hand, each entry of the $\frac{1}{2}m^2(m-1) \times \frac{1}{2}m^2(m-1)$ matrix $\mathbf{K}$ will consist of a (constant) linear combination of components of the Weyl tensor. Corresponding to the arrangement of the entries of $\mathbf{u}$, we have also singled out the matrices $K_{\mu \nu}$ and $\mathbf{K}_{\mu \nu \kappa}$ of dimensions $1 \times 1$ and $3 \times 3$ respectively, each acting on $\Gamma_{\mu \mu \nu}$ and $\mathbf{u}_{\kappa \mu \nu}$ respectively. The remaining entries of $\mathbf{K}$ have been marked with an asterix $*$, the meaning of which will be clarified in a moment. In fact, from the structure of the matrix $\mathbf{K}$, we will be able to show that $\mathbf{K}$ is non-singular. This is made clear by the following lemma.
\begin{lem}\label{lem-det_matrix}
 Let $A$, $B$ be two distinct index sets, i.e. $A \cap B = \{ \emptyset \}$, and let $\{ f^{\alpha} \}_{\alpha \in A}$, $\{ g^{\beta} \}_{\beta \in B}$ be two collections of functions over $\mcU$. Consider the field of square matrices over $\mcU$ of the form
\begin{align}\label{eq-lem_mat}
\mathbf{K} :=
 \begin{pmatrix}
  \mathbf{D}_1 & \mathbf{A}_{12} & \ldots & \ldots & \mathbf{A}_{1,p} \\
 \mathbf{A}_{21} & \mathbf{D}_2 & & & \vdots \\
  \vdots &       &  \ddots & & \vdots \\
 \vdots &        &                 & \mathbf{D}_{p-1} & \mathbf{A}_{p-1,p}  \\
 \mathbf{A}_{p,1} & \ldots & \ldots & \mathbf{A}_{p,p-1} & \mathbf{D}_p 
 \end{pmatrix} \, , 
\end{align}
where for each $i$, the entries of the block square matrix $\mathbf{D}_i$ are polynomials in $f^\alpha$ with constant coefficients, and for each $i \neq j$, the entries of $\mathbf{A}_{ij}$ are polynomials in $g^{\beta}$ with constant coefficients. Then, the determinant of $\mathbf{K}$ is given by
\begin{align}\label{eq-det_matrix}
 \det \mathbf{K} & = G + D \, ,
\end{align}
where $G=G(f^{\alpha},g^\beta)$ is a polynomial in $f^\alpha$ and $g^\beta$ such that $G(f^\alpha,0)=0$, and $D=\prod_i ( \det \mathbf{D}_i )$.

In particular, assuming that
\begin{enumerate}
 \item for each $i$, $\mathbf{D}_i$ is non-singular, \label{lem-det_matrix1}
 \item the collections $\{ f^\alpha \}$ and $\{g^\beta\}$ are generically unrelated, in the sense that there are no algebraic relations between the functions $f^\alpha$ and $g^\beta$ for all $\alpha \in A$, $\beta \in B$, \label{lem-det_matrix2}
\end{enumerate}
then $\mathbf{M}$ is non-singular.
\end{lem}

\begin{proof}
Clearly, by definition, the determinant of $\mathbf{K}$ is a polynomial in $f^\alpha$ and $g^\beta$. Hence, we can always write
$\det \mathbf{K} = G + D$, where $G=G(f^\alpha,g^\beta)$ is a polynomial in $f^\alpha$ and $g^\beta$ such that $G(f^\alpha,0)=0$, and $D=D(f^\alpha)$ is a polynomial in $f^\alpha$. Setting $g^\beta = 0$ for all $\beta \in B$ yields $\det \mathbf{K} = D$. But $\mathbf{K}$ is now the block diagonal matrix $\diag ( \mathbf{D}_1 , \mathbf{D}_2 , \ldots , \mathbf{D}_p )$, which has determinant $\prod_i ( \det \mathbf{D}_i )$. Hence, equation \eqref{eq-det_matrix} is established.

Next, from part \eqref{lem-det_matrix1}, we have $\prod_i ( \det \mathbf{D}_i ) \neq 0$. Further, from the genericity assumption \eqref{lem-det_matrix2}, we have $G + \prod_i ( \det \mathbf{D}_i ) \neq 0$, i.e. $\mathbf{K}$ is non-singular.
\end{proof}
\vspace{+10pt}

Thus, to prove the theorem for each $k \in \{0,1,2\}$, it suffices to check whether the matrix $\mathbf{K}$ of the system of equations \eqref{mat-GS_even} satisfies the hypotheses of Lemma \ref{lem-det_matrix}. But it turns out that this is precisely the case: it will be seen that the index structure of the components of the Weyl tensor, modulo the Weyl symmetries, may be split into two distinct sets $A$ and $B$ such that the hypotheses of Lemma \ref{lem-det_matrix} hold, with
\begin{align}\label{eq-special_term}
 D & = \left( \prod_{\mu \neq \nu} K_{\mu \nu} \right) \cdot \left( \prod_{\kappa < \lambda < \rho} \det (\mathbf{K}_{\kappa \lambda \rho} ) \right) \, .
\end{align}
Moreover, one can simply invoke the genericity assumption on the Weyl tensor to deduce the additional requirements \eqref{lem-det_matrix1} and \eqref{lem-det_matrix2} of Lemma \ref{lem-det_matrix}. It will then follow that the system \eqref{mat-GS_even} is non-singular, and must therefore have trivial solution \eqref{eq-integrability_connection_components}.

\begin{rem}
 By `components of the Weyl tensor, modulo the Weyl symmetries', we mean that the components of the Weyl tensor are subject to the Riemman symmetries
\begin{align*}
 \bm{C} ( \bm{X} , \bm{Y} , \bm{Z}, \bm{W} ) + \bm{C} ( \bm{Y} , \bm{Z} , \bm{Y}, \bm{W} ) + \bm{C} ( \bm{Z} , \bm{X} , \bm{Y}, \bm{W} ) & = 0 \, ,
\end{align*}
together with the tracefree condition
\begin{align*}
 \sum_{\sigma} \left( \bm{C} ( \bm{\xi}_\sigma , \bm{X} , \tilde{\bm{\xi}}_{\tilde{\sigma}}, \bm{Y} ) + \bm{C} ( \tilde{\bm{\xi}}_{\tilde{\sigma}}, \bm{X} , \bm{\xi}_\sigma , \bm{Y} ) \right) + \epsilon \bm{C} ( \bm{\xi}_0, \bm{X} , \bm{\xi}_0 , \bm{Y} ) & = 0 \, ,
\end{align*}
for all vector fields $\bm{X},\bm{Y},\bm{Z}, \bm{W} \in \Gamma ( \Tgt \mcM )$. A component has an index structure in the indexing set $A$ if and only if any other component related to it by a Weyl symmetry also has an index structure in $A$. In this case, no ambiguity\footnote{There is one notable exception that will be encountered in the odd-dimensional version of the theorem, but the argument there can be adapted with no major difficulty.} arises in the application of Lemma \ref{lem-det_matrix}. In particular, one should check that the linear combination of the components involved in $K_{\mu \nu}$ and $\det (\mathbf{K}_{\kappa \lambda \rho} )$ do not lead to the vanishing of these scalars. This step can be carried out by inspection, and will be left to the reader.
\end{rem}

\paragraph{Case $k=0$:}
Assume that the Weyl tensor is a section of $\mcC^0$ so that
\begin{align}\label{eq-C0_components}
 C_{\mu \nu \kappa \lambda} = C_{\mu \nu \kappa \tilde{\lambda}} & = 0 \, ,
\end{align}
for all $\mu$, $\nu$, $\kappa$, $\lambda$. Then equations \eqref{eq-Bianchi3a} become
\begin{align}
0 & = 2 g \ind{_{\tilde{\rho} \lb{\kappa}}} A \ind{_{\rb{\lambda} \mu \nu}} + 2 \Gamma \ind{_{[\mu \nu]} ^{\tilde{\sigma}}} C \ind{_{\tilde{\rho} {\tilde{\sigma}} \kappa \lambda}} + 4 \Gamma \ind{_{\lb{\mu}| \lb{\kappa}} ^{\tilde{\sigma}}} C \ind{_{\rb{\lambda}|{\tilde{\sigma}}| \rb{\nu} \tilde{\rho}}} =: B \ind{_{\mu \nu \tilde{\rho} | \kappa \lambda}} \, . \tag{\ref{eq-Bianchi3a}}
\end{align}
Here $B \ind{_{\mu \nu \tilde{\rho} | \kappa \lambda}}$ is merely a short hand for this set of algebraic equations. Now, suppose that the Weyl tensor is otherwise generic, and the Cotton-York tensor is a section of $\mcA^{-\tfrac{1}{2}}$. Then the set of equations \eqref{eq-Bianchi3a} constitutes a homogeneous system of $\tfrac{1}{4}m^3 (m-1)^2$ equations on $\tfrac{1}{2}m^2 (m-1)$ unknowns. Pick all $m(m-1)$ equations $B \ind{_{\mu \nu \tilde{\mu} | \mu \nu}}$, and all $\tfrac{1}{2}m(m-1)(m-2)$ equations $B \ind{_{\mu \nu \tilde{\mu} | \mu \lambda}}$, which, dropping the Einstein summation convention, can be written as
\begin{multline*}
 0 =  \Gamma \ind{_{\mu \mu \nu}} ( C \ind{_{\nu \tilde{\nu} \nu \tilde{\mu}}}  + C \ind{_{\mu \tilde{\mu} \nu \tilde{\mu}}} )  
 + \Gamma \ind{_{\nu \nu \mu}} ( C \ind{_{\mu \tilde{\mu} \mu \tilde{\mu}}} + C \ind{_{\tilde{\mu} \tilde{\nu} \mu \nu}} + C \ind{_{\nu \tilde{\nu} \mu \tilde{\mu}}} )  \\
+ \sum_{\sigma \neq  \mu, \nu } \left( \Gamma \ind{_{\mu \nu \sigma}} ( C \ind{_{\tilde{\mu} \tilde{\sigma} \mu \nu}} - C \ind{_{\mu \tilde{\sigma} \nu \tilde{\mu}}} ) + \Gamma \ind{_{\mu \mu \sigma}} C \ind{_{\nu \tilde{\sigma} \nu \tilde{\mu}}} - \Gamma \ind{_{\nu \mu \sigma}} ( C \ind{_{\tilde{\mu} \tilde{\sigma} \mu \nu}}  + C \ind{_{\nu \tilde{\sigma} \mu \tilde{\mu}}} ) + \Gamma \ind{_{\nu \nu \sigma}} C \ind{_{\mu \tilde{\sigma} \mu \tilde{\mu}}} \right) \, ,
\end{multline*}
\begin{multline*}
 0 =  \Gamma \ind{_{\mu \mu \nu}}  C \ind{_{\lambda \tilde{\nu} \nu \tilde{\mu}}}
+ \Gamma \ind{_{\mu \mu \lambda}} C \ind{_{\mu \tilde{\mu} \nu \tilde{\mu}}} 
+ \Gamma \ind{_{\nu \nu \mu}} ( C \ind{_{\tilde{\mu} \tilde{\nu} \mu \lambda}}  + C \ind{_{\lambda \tilde{\nu} \mu \tilde{\mu}}} )
- \Gamma \ind{_{\nu \nu \lambda}} C \ind{_{\mu \tilde{\nu} \mu \tilde{\mu}}}   \\
+ \Gamma \ind{_{\mu \nu \lambda}} ( C \ind{_{\tilde{\mu} \tilde{\lambda} \mu \lambda}} + C \ind{_{\mu \tilde{\nu} \nu \tilde{\mu}}} )
 + \Gamma \ind{_{\nu \lambda \mu}} ( C \ind{_{\lambda \tilde{\lambda} \mu \tilde{\mu}}}  + C \ind{_{\mu \tilde{\mu} \mu \tilde{\mu}}} + C \ind{_{\tilde{\mu} \tilde{\lambda} \mu \lambda}} )  \\
+ \sum_{\sigma \neq  \lambda, \mu, \nu } \left( \Gamma \ind{_{\mu \nu \sigma}} C \ind{_{\tilde{\mu} \tilde{\sigma} \mu \lambda}} + \Gamma \ind{_{\mu \mu \sigma}} C \ind{_{\lambda \tilde{\sigma} \nu \tilde{\mu}}} - \Gamma \ind{_{\mu \lambda \sigma}} C \ind{_{\mu \tilde{\sigma} \nu \tilde{\mu}}} - \Gamma \ind{_{\nu \mu \sigma}} ( C \ind{_{\tilde{\mu} \tilde{\sigma} \mu \lambda}} + C \ind{_{\lambda \tilde{\sigma} \mu \tilde{\mu}}} ) + \Gamma \ind{_{\nu \lambda \sigma}} C \ind{_{\mu \tilde{\sigma} \mu \tilde{\mu}}} \right) \, ,
\end{multline*}
respectively. These sets of equations can be put into the matrix form \eqref{mat-GS_even}, by defining, for all $\kappa$, $\rho$ distinct, and $\mu < \nu < \lambda$,
\begin{align*}
 K_{\kappa \rho} & :=  C \ind{_{\kappa \tilde{\kappa} \kappa \tilde{\kappa}}} + C \ind{_{\tilde{\kappa} \tilde{\rho} \kappa \rho}} + C \ind{_{\rho \tilde{\rho} \kappa \tilde{\kappa}}}  \, , \\
 \mathbf{K}_{\mu \nu \lambda} & := \begin{pmatrix}
  C \ind{_{\tilde{\mu} \tilde{\lambda} \mu \lambda}} + C \ind{_{\mu \tilde{\nu} \nu \tilde{\mu}}} & C \ind{_{\lambda \tilde{\lambda} \mu \tilde{\mu}}}  + C \ind{_{\mu \tilde{\mu} \mu \tilde{\mu}}} + C \ind{_{\tilde{\mu} \tilde{\lambda} \mu \lambda}} & 0 \\
0 & C \ind{_{\tilde{\nu} \tilde{\mu} \nu \mu}} + C \ind{_{\nu \tilde{\lambda} \lambda \tilde{\nu}}} & C \ind{_{\mu \tilde{\mu} \nu \tilde{\nu}}}  + C \ind{_{\nu \tilde{\nu} \nu \tilde{\nu}}} + C \ind{_{\tilde{\nu} \tilde{\mu} \nu \mu}} \\
 C \ind{_{\nu \tilde{\nu} \lambda \tilde{\lambda}}}  + C \ind{_{\lambda \tilde{\lambda} \lambda \tilde{\lambda}}} + C \ind{_{\tilde{\lambda} \tilde{\nu} \lambda \nu}} & 0 & C \ind{_{\tilde{\lambda} \tilde{\nu} \lambda \nu}} + C \ind{_{\lambda \tilde{\mu} \mu \tilde{\lambda}}}
 \end{pmatrix} \, ,
\end{align*}
respectively. The latter has determinant
\begin{align*}
  \det (\mathbf{K}_{\mu \nu \lambda} ) & = ( C \ind{_{\tilde{\mu} \tilde{\lambda} \mu \lambda}} + C \ind{_{\mu \tilde{\nu} \nu \tilde{\mu}}} ) \cdot (C \ind{_{\tilde{\nu} \tilde{\mu} \nu \mu}} + C \ind{_{\nu \tilde{\lambda} \lambda \tilde{\nu}}} ) \cdot ( C \ind{_{\tilde{\lambda} \tilde{\nu} \lambda \nu}} + C \ind{_{\lambda \tilde{\mu} \mu \tilde{\lambda}}} ) \\
& \qquad \qquad + ( C \ind{_{\lambda \tilde{\lambda} \mu \tilde{\mu}}}  + C \ind{_{\mu \tilde{\mu} \mu \tilde{\mu}}} + C \ind{_{\tilde{\mu} \tilde{\lambda} \mu \lambda}} ) \cdot ( C \ind{_{\mu \tilde{\mu} \nu \tilde{\nu}}}  + C \ind{_{\nu \tilde{\nu} \nu \tilde{\nu}}} + C \ind{_{\tilde{\nu} \tilde{\mu} \nu \mu}} ) \cdot ( C \ind{_{\nu \tilde{\nu} \lambda \tilde{\lambda}}}  + C \ind{_{\lambda \tilde{\lambda} \lambda \tilde{\lambda}}} + C \ind{_{\tilde{\lambda} \tilde{\nu} \lambda \nu}} ) \, ,
\end{align*}
which can be seen to be non-vanishing by the genericity assumption,\footnote{It may be of concern that the tracefree property of the Weyl tensor could make $\det (\mathbf{K}_{\mu \nu \lambda} )$ vanish in principle. But if one notes that $\det (\mathbf{K}_{\mu \nu \lambda} )$ depends only on three distinct indices $\mu$, $\nu$, $\lambda$, we see that the only dimension where this issue could arise is six. To settle the issue, we expand the determinant and eliminate the dependency of the components of the Weyl tensor by choosing a select few. A judicious choice leads to
\begin{multline*}
\det (\mathbf{K}_{\mu \nu \lambda} ) = \\ \frac{1}{8} ( C \ind{_{\mu \tilde{\mu} \mu \tilde{\mu}}} + C \ind{_{\mu \tilde{\mu} \lambda \tilde{\lambda}}} - C \ind{_{\mu \tilde{\mu} \nu\tilde{\nu}}} + 4 C \ind{_{\mu \tilde{\nu} \nu \tilde{\mu}}} ) \cdot ( - C \ind{_{\nu \tilde{\nu} \nu \tilde{\nu}}} + C \ind{_{\nu \tilde{\nu} \lambda \tilde{\lambda}}} + 3 C \ind{_{\mu \tilde{\mu} \nu\tilde{\nu}}} - 4 C \ind{_{\mu \tilde{\nu} \nu \tilde{\mu}}} ) \cdot ( C \ind{_{\nu \tilde{\nu} \nu \tilde{\nu}}} - C \ind{_{\mu \tilde{\mu} \mu \tilde{\mu}}} + C \ind{_{\mu \tilde{\mu} \lambda \tilde{\lambda}}} + C \ind{_{\nu \tilde{\nu} \lambda \tilde{\lambda}}} ) \\
+ \frac{1}{4}(3 C \ind{_{\mu \tilde{\mu} \mu \tilde{\mu}}} + 5 C \ind{_{\mu \tilde{\mu} \lambda \tilde{\lambda}}} + C \ind{_{\mu \tilde{\mu} \nu \tilde{\nu}}} ) \cdot ( C \ind{_{\nu \tilde{\nu} \nu \tilde{\nu}}} + 2 C \ind{_{\mu \tilde{\mu} \nu \tilde{\nu}}} - C \ind{_{\mu \tilde{\nu} \nu \tilde{\mu}}} )  \cdot (C \ind{_{\mu \tilde{\mu} \mu \tilde{\mu}}} + C \ind{_{\nu \tilde{\nu} \nu \tilde{\nu}}} + 7 C \ind{_{\mu \tilde{\nu} \nu \tilde{\mu}}} - 5 C \ind{_{\mu \tilde{\mu} \nu \tilde{\nu}}} + C \ind{_{\nu \tilde{\nu} \lambda \tilde{\lambda}}} ) \, .
\end{multline*}
By the genericity assumption, this has to be non-vanishing.} and $K_{\mu \nu}$ and $\mathbf{K}_{\mu \nu \lambda}$ are thus non-singular. Hence, the term \eqref{eq-special_term} is non-vanishing.

Further, one can check that the components of the Weyl tensor in the entries of $K_{\mu \nu}$ and $\mathbf{K}_{\kappa \mu \nu}$ have distinct index structures from those in the remaining entries of $\mathbf{K}$. Hence, we can apply Lemma \ref{lem-det_matrix} to conclude that $\mathbf{K}$ is non-singular, thus establishing condition \eqref{eq-integrability_connection_components}.

\paragraph{Case $k=1$:}
Assume that the Weyl tensor is a section of $\mcC^1$ so that conditions \eqref{eq-C0_components} hold together with
\begin{align} \label{eq-C1_components}
 C_{\mu \tilde{\nu} \kappa \tilde{\lambda}} & = 0 \, ,
\end{align}
for all $\mu$, $\nu$, $\kappa$, $\lambda$. Then equations \eqref{eq-Bianchi1a} become
\begin{align}
0 & = - 2 g \ind{_{\lb{\mu} | \tilde{\lambda}}} A \ind{_{\kappa | \rb{\nu} \tilde{\rho}}} + g \ind{_{\tilde{\rho} \kappa}} A \ind{_{\tilde{\lambda} \mu \nu}} 
+ 2 \Gamma \ind{_{[\mu \nu]} ^{\tilde{\sigma}}} C \ind{_{\tilde{\rho} {\tilde{\sigma}} \kappa \tilde{\lambda}}}
+ 2 \Gamma \ind{_{\lb{\mu}| \kappa} ^{\tilde{\sigma}}} C \ind{_{\tilde{\lambda}|{\tilde{\sigma}}| \rb{\nu} \tilde{\rho}}} =: B_{\mu \nu \tilde{\rho} | \kappa \tilde{\lambda}} \, . \tag{\ref{eq-Bianchi1a}}
\end{align}
Now, suppose that the Weyl tensor is otherwise generic, and the Cotton-York tensor is a section of $\mcA^{\tfrac{1}{2}}$. Then the set of equations \eqref{eq-Bianchi1a} constitutes a homogeneous system of $\tfrac{1}{2}m^4 (m-1)$ equations on $\tfrac{1}{2}m^2 (m-1)$ unknowns. Pick all $m(m-1)$ equations $B_{\mu \nu \tilde{\nu} | \nu \tilde{\nu}}$, and all $\tfrac{1}{2}m(m-1)(m-2)$ equations $B_{\mu \nu \tilde{\nu} | \nu \tilde{\kappa}}$
\begin{align*}
 0 & = 
 \Gamma \ind{_{\nu \nu \mu}} ( C \ind{_{\tilde{\mu} {\tilde{\nu}} \mu \tilde{\mu}}} + C \ind{_{\tilde{\mu} \tilde{\nu} \mu \tilde{\mu}}}  )
+ \Gamma \ind{_{\mu \mu \nu}} C \ind{_{\tilde{\mu} \tilde{\nu} \nu \tilde{\mu}}}
+ \sum_{\sigma \neq \mu, \nu} \left(  \Gamma \ind{_{\mu \nu \sigma}} C \ind{_{\tilde{\mu} {\tilde{\sigma}} \mu \tilde{\mu}}}
- \Gamma \ind{_{\nu \mu \sigma}} C \ind{_{\tilde{\mu} {\tilde{\sigma}} \mu \tilde{\mu}}}
+ \Gamma \ind{_{\mu \mu \sigma}} C \ind{_{\tilde{\mu} \tilde{\sigma} \nu \tilde{\mu}}}
- \Gamma \ind{_{\nu \mu \sigma}} C \ind{_{\tilde{\mu} \tilde{\sigma} \mu \tilde{\mu}}} \right) \, ,
\end{align*}
\begin{multline*}
 0 = - \Gamma \ind{_{\mu \mu \nu}} C \ind{_{\tilde{\nu} {\tilde{\mu}} \kappa \tilde{\nu}}}
- \Gamma \ind{_{\mu \mu \kappa}} C \ind{_{\tilde{\nu} \tilde{\mu} \nu \tilde{\nu}}} 
+ \Gamma \ind{_{\nu \kappa \mu}} ( C \ind{_{\tilde{\nu} \tilde{\mu} \tilde{\nu} \mu}} - C \ind{_{\tilde{\nu} {\tilde{\kappa}} \tilde{\nu} \kappa}} ) 
+ \Gamma \ind{_{\mu \nu \kappa}} 
C \ind{_{\tilde{\nu} {\tilde{\kappa}} \kappa \tilde{\nu}}}  \\
+ \sum_{\sigma \neq \mu, \nu, \kappa} \left(  \Gamma \ind{_{\mu \nu \sigma}} C \ind{_{\tilde{\nu} {\tilde{\sigma}} \kappa \tilde{\nu}}}
- \Gamma \ind{_{\nu \mu \sigma}} C \ind{_{\tilde{\nu} {\tilde{\sigma}} \kappa \tilde{\nu}}}
+ \Gamma \ind{_{\mu \kappa \sigma}} C \ind{_{\tilde{\nu} \tilde{\sigma} \nu \tilde{\nu}}}
- \Gamma \ind{_{\nu \kappa \sigma}} C \ind{_{\tilde{\nu} \tilde{\sigma} \mu \tilde{\nu}}} \right) \, .
\end{multline*}
These sets of equations can be put into the matrix form \eqref{mat-GS_even}, where, for all $\kappa$, $\rho$ distinct, and $\mu < \nu < \lambda$,
\begin{align*}
 K_{\kappa \rho} & := C \ind{_{\tilde{\kappa} \tilde{\rho} \rho \tilde{\kappa}}}  \, ,
&
 \mathbf{K}_{\mu \nu \kappa} & := \begin{pmatrix}
  C \ind{_{\tilde{\nu} {\tilde{\kappa}} \kappa \tilde{\nu}}}  & C \ind{_{\tilde{\nu} \tilde{\mu} \tilde{\nu} \mu}} - C \ind{_{\tilde{\nu} {\tilde{\kappa}} \tilde{\nu} \kappa}} & 0 \\
 0 & C \ind{_{\tilde{\kappa} {\tilde{\mu}} \mu \tilde{\kappa}}}  & C \ind{_{\tilde{\kappa} \tilde{\nu} \tilde{\kappa} \nu}} - C \ind{_{\tilde{\kappa} {\tilde{\mu}} \tilde{\kappa} \mu}} \\
 C \ind{_{\tilde{\mu} \tilde{\kappa} \tilde{\mu} \kappa}} - C \ind{_{\tilde{\mu} {\tilde{\nu}} \tilde{\mu} \nu}} & 0 &
 C \ind{_{\tilde{\mu} {\tilde{\nu}} \nu \tilde{\mu}}}  
 \end{pmatrix} \, ,
\end{align*}
respectively. The latter has determinant
\begin{align*}
 \det (\mathbf{K}_{\mu \nu \kappa} ) & = C \ind{_{\tilde{\nu} {\tilde{\kappa}} \kappa \tilde{\nu}}}   \cdot C \ind{_{\tilde{\kappa} {\tilde{\mu}} \mu \tilde{\kappa}}}  \cdot C \ind{_{\tilde{\mu} {\tilde{\nu}} \nu \tilde{\mu}}} 
+ ( C \ind{_{\tilde{\nu} \tilde{\mu} \tilde{\nu} \mu}} - C \ind{_{\tilde{\nu} {\tilde{\kappa}} \tilde{\nu} \kappa}}  ) \cdot ( C \ind{_{\tilde{\kappa} \tilde{\nu} \tilde{\kappa} \nu}} - C \ind{_{\tilde{\kappa} {\tilde{\mu}} \tilde{\kappa} \mu}} ) \cdot (  C \ind{_{\tilde{\mu} \tilde{\kappa} \tilde{\mu} \kappa}} - C \ind{_{\tilde{\mu} {\tilde{\nu}} \tilde{\mu} \nu}}  ) \, ,
\end{align*}
which can be seen to be non-vanishing by the genericity assumption,\footnote{If anything goes wrong because of the tracefree property of the Weyl tensor, it has to happen in six dimensions. But when $m=3$, one can check that the determinant simplifies to $\det (\mathbf{K}_{\mu \nu \kappa} ) = 9 C \ind{_{\tilde{\nu} {\tilde{\kappa}} \kappa \tilde{\nu}}}   \cdot C \ind{_{\tilde{\kappa} {\tilde{\mu}} \mu \tilde{\kappa}}}  \cdot C \ind{_{\tilde{\mu} {\tilde{\nu}} \nu \tilde{\mu}}}$, which is clearly non-vanishing.} and $K_{\mu \nu}$ and $\mathbf{K}_{\mu \nu \kappa}$ are thus non-singular. Hence, the term \eqref{eq-special_term} is non-vanishing.

Further, one can check that the components of the Weyl tensor in the entries of $K_{\mu \nu}$ and $\mathbf{K}_{\kappa \mu \nu}$ have distinct index structures from those in the remaining entries of $\mathbf{K}$. Hence, we can apply Lemma \ref{lem-det_matrix} to conclude that $\mathbf{K}$ is non-singular, thus establishing conditions \eqref{eq-integrability_connection_components}.

\paragraph{Case $k=2$:}
Assume that the Weyl tensor is a section of $\mcC^{2}$ so that conditions \eqref{eq-C0_components} and \eqref{eq-C1_components} hold together with
\begin{align}\label{eq-C2_components}
C_{\tilde{\mu} \tilde{\nu} \kappa \tilde{\lambda}} & = 0  \, ,
\end{align}
for all $\mu$, $\nu$, $\kappa$, $\lambda$. Then equations \eqref{eq-Bianchi-1a} become
\begin{align}
0 &=  - 2 g \ind{_{\lb{\tilde{\mu}} | \lambda}} A \ind{_{\tilde{\kappa} | \rb{\tilde{\nu}} \rho}} + g \ind{_{\rho \tilde{\kappa}}} A \ind{_{\lambda \tilde{\mu} \tilde{\nu}}} 
- \Gamma \ind{_{\rho \lambda} ^{\tilde{\sigma}}} C \ind{_{\tilde{\kappa} {\tilde{\sigma}} \tilde{\mu} \tilde{\nu}}} =: B_{\tilde{\mu} \tilde{\nu} \rho | \lambda \tilde{\kappa}} \tag{\ref{eq-Bianchi-1a}} \, .
\end{align}
Now, suppose that the Weyl tensor is otherwise generic, and the Cotton-York tensor is a section of $\mcA^{\tfrac{3}{2}}$. Then the set of equations \eqref{eq-Bianchi-1a} constitutes a homogeneous system of $\tfrac{1}{2}m^4 (m-1)$ equations on $\tfrac{1}{2}m^2 (m-1)$ unknowns. Pick all $m(m-1)$ equations $B_{\tilde{\mu} \tilde{\nu} \nu | \nu \tilde{\nu}}$, and all $\tfrac{1}{2}m(m-1)(m-2)$ equations $B_{\tilde{\mu} \tilde{\nu} \nu | \lambda \tilde{\nu}}$
\begin{align*}
 0 &=  - \Gamma \ind{_{\nu \mu \nu}} C \ind{_{\tilde{\mu} \tilde{\nu} \tilde{\mu} \tilde{\nu}}}
- \sum_{\sigma \neq \mu, \nu } \Gamma \ind{_{\nu \nu \sigma}} C \ind{_{\tilde{\nu} \tilde{\sigma} \tilde{\mu} \tilde{\nu}}} \, ,
&
 0 &= - \Gamma \ind{_{\nu \mu \lambda }} C \ind{_{\tilde{\mu} \tilde{\nu} \tilde{\mu} \tilde{\nu}}}
- \sum_{\sigma \neq \mu, \nu } \Gamma \ind{_{\nu \lambda \sigma}} C \ind{_{\tilde{\nu} \tilde{\sigma} \tilde{\mu} \tilde{\nu}}} \, ,
\end{align*}
respectively. These can be put into the matrix form \eqref{mat-GS_even}, where, for all $\kappa$, $\rho$ distinct, and $\mu < \nu < \lambda$,
\begin{align*}
 K_{\kappa \rho} & :=
  C \ind{_{\tilde{\kappa} \tilde{\rho} \tilde{\kappa} \tilde{\rho}}} \, ,
& \mathbf{K}_{\nu \lambda \mu} & := 
\begin{pmatrix}
 C \ind{_{\tilde{\mu} \tilde{\nu} \tilde{\mu} \tilde{\nu}}} & 0 & 0 \\
 0 & C \ind{_{\tilde{\lambda} \tilde{\mu} \tilde{\lambda} \tilde{\mu}}} & 0 \\
 0 & 0 & C \ind{_{\tilde{\nu} \tilde{\lambda} \tilde{\nu} \tilde{\lambda}}}
\end{pmatrix} \, ,
\end{align*}
respectively. The latter has determinant
\begin{align*}
 \det (\mathbf{K}_{\mu \nu \lambda} ) & = C \ind{_{\tilde{\mu} \tilde{\nu} \tilde{\mu} \tilde{\nu}}} \cdot C \ind{_{\tilde{\lambda} \tilde{\mu} \tilde{\lambda} \tilde{\mu}}} \cdot C \ind{_{\tilde{\nu} \tilde{\lambda} \tilde{\nu} \tilde{\lambda}}} \, ,
\end{align*}
which can be seen to be non-vanishing by the genericity assumption, and $K_{\mu \nu}$ and $\mathbf{K}_{\mu \nu \lambda}$ are thus non-singular. Hence, the term \eqref{eq-special_term} is non-vanishing.

Further, one can check that the components of the Weyl tensor in the entries of $K_{\mu \nu}$ and $\mathbf{K}_{\kappa \mu \nu}$ have distinct index structures from those in the remaining entries of $\mathbf{K}$. Hence, we can apply Lemma \ref{lem-det_matrix} to conclude that $\mathbf{K}$ is non-singular, thus establishing conditions \eqref{eq-integrability_connection_components}.
\end{proof}

\begin{rem}
 In `low' dimensions it can be checked from the Bianchi identity that the condition of Proposition \ref{prop-integrability_condition} cannot be sufficient for the integrability of $\mcN$. It is however not clear whether this remains true in `high enough' dimensions. In this case the Cotton-York tensor would present no obstruction to the integrability of $\mcN$.
\end{rem}

\subsection{The complex Goldberg-Sachs theorem in odd dimensions}
We proceed as in even dimensions. The proof of the odd-dimensional generalisation of implication \eqref{thm-GS4b} of Theorem \ref{thm-GS4} is identical to its even-dimensional counterpart.
\begin{prop}\label{prop-GS_odd}
 Let $(\mcM, \bm{g})$ be a $(2m+1)$-dimensional complex Riemannian manifold, where $m \geq 2$. Let $\mcN$ be an almost null structure on $\mcM$, and $\mcU$ an open subset of $\mcM$. Let $k \in \{0 , 1 , 2, 3, 4\}$. Suppose that the Weyl tensor is a section of $\mcC^k$ over $\mcU$. Then the Cotton-York tensor is a section of $\mcA^{k - 2}$ over $\mcU$. Suppose further that $\mcN$ is integrable in $\mcU$. Then the Cotton-York tensor is a section of $\mcA^{k - 1}$ over $\mcU$.
\end{prop}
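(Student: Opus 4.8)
The plan is to prove both assertions exactly as in the even-dimensional Proposition \ref{prop-GS}, the only genuinely new ingredient being the extra contraction along the degree-zero direction $\bm{\xi}_0$ carried by the $\epsilon$-term of Lemma \ref{lem-CY_alt}. The three tools are the contracted Bianchi identity in the form \eqref{eq-CY_alt} with $\epsilon = 1$, the filtration characterisation of Lemma \ref{lem-type_characterisation}, and, for the second (sharper) assertion, the geodesy criterion of Lemma \ref{lem-integrability_connection}. First I would fix a null frame $\{\bm{\xi}_\mu, \tilde{\bm{\xi}}_{\tilde\mu}, \bm{\xi}_0\}$ adapted to $\mcN$ as in section \ref{sec-null_basis}, so that $\bm{\xi}_\mu \in \Gamma(\mcV^1)$, $\bm{\xi}_0\in\Gamma(\mcV^0)$ and $\tilde{\bm{\xi}}_{\tilde\mu}\in\Gamma(\mcV^{-1})$ carry filtration degrees $+1$, $0$ and $-1$ respectively. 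By Lemma \ref{lem-type_characterisation}, the assertion $\bm{A}\in\Gamma(\mcA^\ell)$ is equivalent to the vanishing of $\bm{A}(\bm{X},\bm{Y},\bm{Z})$ on all triples of homogeneous arguments whose filtration degrees sum to $1-\ell$, and hence, by the nesting $\mcV^{i+1}\subset\mcV^i$, to all triples whose degrees sum to at least $1-\ell$. The strategy is therefore to feed such homogeneous arguments into the right-hand side of \eqref{eq-CY_alt} and to show, degree by degree, that every term vanishes under the stated hypothesis on $\bm{C}$.

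The heart of the argument is a bookkeeping of filtration degrees. Expanding the right-hand side of \eqref{eq-CY_alt}, each summand is either of type (i), the derivative of a scalar $\bm{C}(\bm{e}, \bm{X}, \bm{Y}, \bm{Z})$ in which the extra Weyl slot $\bm{e}\in\{\bm{\xi}_\sigma, \tilde{\bm{\xi}}_{\tilde\sigma}, \bm{\xi}_0\}$ is paired through the trace with a derivative direction of opposite degree, or of type (ii), a term $\bm{C}(\bm{e}, \nabla_{(\cdot)}\bm{X}, \bm{Y}, \bm{Z})$ in which one of $\bm{X},\bm{Y},\bm{Z}$ is differentiated. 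For the scalar terms (i), and for those correction terms (ii) whose differentiated argument lands in a sufficiently positive filtration subspace, the total degree of the four Weyl arguments reaches the threshold $1-k$, so that $\bm{C}\in\Gamma(\mcC^k)$ forces the underlying scalar to vanish identically on $\mcU$ and hence its derivative to vanish as well. Running this count down to the target level $1-(k-2)$ establishes the first assertion $\bm{A}\in\Gamma(\mcA^{k-2})$. The sole novelty relative to the even-dimensional case is that the contraction now runs over the pairs $(\bm{\xi}_\sigma, \tilde{\bm{\xi}}_{\tilde\sigma})$ together with the single self-paired direction $(\bm{\xi}_0,\bm{\xi}_0)$; the latter contributes net degree zero and is controlled in exactly the same way.

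The point at which the two assertions part company, and the step I expect to be the main obstacle, is the family of correction terms (ii) in which an argument lying in $\mcN$ is differentiated along $\mcN$. Here the relevant component of $\nabla_{\bm{\xi}_\sigma}\bm{X}$ lies a priori only in $\Gamma(\Tgt\mcM)=\Gamma(\mcV^{-1})$, so its lowest, $\tilde{\bm{\xi}}$-valued, part drops the Weyl arguments one filtration step below the threshold, and $\bm{C}\in\Gamma(\mcC^k)$ no longer annihilates the resulting expression; its coefficient is precisely a component $\bm{g}(\nabla_{\bm{\xi}_\sigma}\bm{X},\bm{\xi}_\tau)$ of the second fundamental form of $\mcN$. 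These residual terms contribute only at levels strictly below $1-(k-2)$, which is why they do not obstruct the first conclusion, but they do obstruct the sharper one. To upgrade to $\bm{A}\in\Gamma(\mcA^{k-1})$ I would invoke Lemma \ref{lem-integrability_connection}: under the integrability hypothesis one has $\bm{g}(\bm{X},\nabla_{\bm{Y}}\bm{Z})=0$ for $\bm{Y},\bm{Z}\in\Gamma(\mcN)$ and $\bm{X}\in\Gamma(\mcN^\perp)$, whence $\nabla_\mcN\mcN\subset\mcN$ and $\nabla_{\mcN^\perp}\mcN\subset\mcN^\perp$. This improves the filtration behaviour of $\nabla_{\bm{\xi}_\sigma}\bm{X}$ by exactly one step, pushing the surviving Weyl arguments back up to the threshold $1-k$ and eliminating the residual terms.

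A final point to treat with care is that the constant $3-2m+\epsilon=4-2m$ multiplying $\bm{A}$ in \eqref{eq-CY_alt} is nonzero precisely when $m\neq 2$; in the borderline five-dimensional case $m=2$ the trace of the Bianchi identity no longer determines $\bm{A}$, and the same degree-counting argument must instead be applied directly to the full Bianchi identity \eqref{eq-Bianchi_identity}, as will be done in the proof of the odd-dimensional analogue of Theorem \ref{thm-GS}.
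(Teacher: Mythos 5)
Your strategy --- equation \eqref{eq-CY_alt} with $\epsilon=1$, the degree bookkeeping of Lemma \ref{lem-type_characterisation}, and Lemma \ref{lem-integrability_connection} for the sharper statement --- is exactly what the paper intends (it offers no more detail than a pointer to these three ingredients), and your treatment of the second assertion is sound: under integrability $\nabla_{\bm{\xi}_\sigma}$ preserves the filtration $\{\mcV^i\}$ and $\nabla_{\bm{\xi}_0}$ maps $\mcV^1$ into $\mcV^0$, so every correction term evaluated on arguments of total degree $2-k$ lands at Weyl degree at least $1-k$ and is annihilated by $\bm{C}\in\Gamma(\mcC^k)$. Your closing worry about the factor $3-2m+\epsilon$ vanishing at $m=2$ is moot: the contracted Bianchi identity produces $3-(2m+\epsilon)=2-2m\neq 0$, so the ``$+\epsilon$'' in Lemma \ref{lem-CY_alt} is a sign slip rather than a genuine degeneration.

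The gap is in the first assertion, at the sentence claiming that the residual terms ``contribute only at levels strictly below $1-(k-2)$''. They contribute \emph{at} that level. Take arguments of total degree $3-k$ with one of them, say $\bm{X}=\bm{\xi}_\mu\in\Gamma(\mcV^1)$, of degree $+1$; such a configuration exists for every $k\in\{0,\dots,4\}$ (for $k\leq 2$ it is unavoidable). The correction term $-\bm{C}(\tilde{\bm{\xi}}_{\tilde{\sigma}},\nabla_{\bm{\xi}_\sigma}\bm{X},\bm{Y},\bm{Z})$ carries a first slot of degree $-1$, and with no hypothesis on $\mcN$ the derivative $\nabla_{\bm{\xi}_\sigma}\bm{X}$ has a $\mcV_{-1}$-component with coefficient $\Gamma_{\sigma\mu}{}^{\tilde{\tau}}$; the surviving piece $-\Gamma_{\sigma\mu}{}^{\tilde{\tau}}\,\bm{C}(\tilde{\bm{\xi}}_{\tilde{\sigma}},\tilde{\bm{\xi}}_{\tilde{\tau}},\bm{Y},\bm{Z})$ has Weyl degree $-2+(2-k)=(1-k)-1$, one step \emph{below} the threshold killed by $\mcC^k$. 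This is the essential asymmetry with Proposition \ref{prop-GS}: in even dimensions the drop from $\mcV^{\tfrac{1}{2}}$ to $\mcV_{-\tfrac{1}{2}}$ is a single step and the term lands exactly at degree $1-k$, whereas here the drop from $\mcV^1$ to $\mcV_{-1}$ is two steps. Concretely, for $k=0$ the paper's own equation \eqref{eq-Bianchi3b} reduces to $A_{\kappa\mu\nu}=2\Gamma_{[\mu\nu]}{}^{\tilde{\sigma}}C_{0\tilde{\sigma}\kappa 0}+2\Gamma_{[\mu|\kappa}{}^{\tilde{\sigma}}C_{0\tilde{\sigma}|\nu]0}$, generically nonzero for non-integrable $\mcN$, while $\bm{A}\in\Gamma(\mcA^{-2})$ demands $A_{\kappa\mu\nu}=0$; so no hidden cancellation rescues the count. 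The unaided argument therefore yields only $\bm{A}\in\Gamma(\mcA^{k-3})$, and reaching $\mcA^{k-2}$ requires either a partial geodesy hypothesis killing $\Gamma_{\sigma\mu\tau}$ or an extra idea you have not supplied --- the difficulty here appears to lie in the statement as much as in your argument, but your proof as written does not close it.
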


Next, the odd-dimensional generalisation of implication \eqref{thm-GS4b} of Theorem \ref{thm-GS4} can be expressed as follows.
\begin{thm} \label{thm-GS_odd}
 Let $(\mcM, \bm{g})$ be a $(2m+1)$-dimensional complex Riemannian manifold, where $m \geq 2$. Let $\mcN$ be an almost null structure on $\mcM$, and $\mcU$ an open subset of $\mcM$. Let $k \in \{ 0,1,2,3,4 \}$. Suppose that the Weyl tensor is a section of $\mcC^k$ over $\mcU$, and is otherwise generic. Suppose further that the Cotton-York tensor is a section of $\mcA^{k-1}$ over $\mcU$. Then $\mcN$ is integrable in $\mcU$.
\end{thm}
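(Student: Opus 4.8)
The plan is to mirror the proof of the even-dimensional Theorem~\ref{thm-GS}, the only structural novelty being the extra null-orthogonal direction $\bm{\xi}_0$ spanning $\mcV_0 = \mcN^\perp/\mcN$. I would begin by fixing a local null frame $\{ \bm{\xi}_\mu , \tilde{\bm{\xi}}_{\tilde{\mu}} , \bm{\xi}_0 \}$ over $\mcU$ adapted to $\mcN$, as in section~\ref{sec-null_basis}, thereby inducing the gradings \eqref{eq-Weyl_grading_odd} and \eqref{eq-CY_grading_odd}. By Lemma~\ref{lem-type_characterisation}, the hypotheses $\bm{C} \in \Gamma(\mcC^k)$ and $\bm{A} \in \Gamma(\mcA^{k-1})$ are equivalent to the vanishing of the components of $\bm{C}$ lying in $\mcC_i$ for $-4 \leq i \leq k-1$ and of $\bm{A}$ lying in $\mcA_j$ for $-3 \leq j \leq k-2$. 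By Lemma~\ref{lem-integrability_connection}, integrability of $\mcN$ and of $\mcN^\perp$ is in turn equivalent to the geodesy conditions \eqref{eq-integrability_connection}, which now amount to the vanishing not only of the components $\Gamma_{\kappa\mu\nu}$ appearing in the even-dimensional case, but also of the components $\Gamma_{\mu\nu 0}$ and $\Gamma_{0\mu\nu}$ coupling $\mcN$ to $\bm{\xi}_0$; these latter unknowns have no even-dimensional analogue.

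The engine of the proof is unchanged. For each $k \in \{0,1,2,3,4\}$, the prescribed degeneracy of $\bm{C}$ and $\bm{A}$ forces a selection of the component Bianchi identities of Appendix~\ref{sec-Bianchi_identity}, namely the odd-dimensional counterparts of \eqref{eq-Bianchi3a}, \eqref{eq-Bianchi1a} and \eqref{eq-Bianchi-1a} together with the $\bm{\xi}_0$-contracted identities carried by the $\epsilon$-term of \eqref{eq-CY_alt}, to degenerate from differential into purely algebraic relations among the connection unknowns, with coefficients linear in the surviving Weyl components. I would then extract from these a square homogeneous subsystem $\mathbf{K}\mathbf{u} = \mathbf{0}$ whose order equals the number of connection unknowns, arrange $\mathbf{K}$ into the block shape \eqref{eq-lem_mat} required by Lemma~\ref{lem-det_matrix} --- with $1 \times 1$ diagonal blocks $K_{\mu\nu}$ and $3 \times 3$ blocks $\mathbf{K}_{\mu\nu\lambda}$, supplemented by the blocks governing the $\bm{\xi}_0$-components --- and check by direct inspection that the determinants of all diagonal blocks are non-vanishing under the genericity hypothesis. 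Invoking hypotheses \eqref{lem-det_matrix1} and \eqref{lem-det_matrix2} of Lemma~\ref{lem-det_matrix} then yields $\det \mathbf{K} \neq 0$, so the system has only the trivial solution \eqref{eq-integrability_connection_components}, and $\mcN$ together with $\mcN^\perp$ is integrable. As in the even-dimensional case, this is carried out case by case for each value of $k$.

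The main obstacle is precisely the bookkeeping forced by $\bm{\xi}_0$. It enlarges both the list of connection components that must be shown to vanish and the pool of Bianchi components that turn algebraic, and, through the tracefree condition --- which in odd dimensions carries the extra term $\epsilon\, \bm{C}(\bm{\xi}_0, \bm{X}, \bm{\xi}_0, \bm{Y})$ --- it couples components bearing a $0$ index to the purely holomorphic ones. The delicate consequence is that the clean partition of the Weyl components into two generically unrelated index sets $A$ and $B$, on which Lemma~\ref{lem-det_matrix} relies, is no longer automatic: a component assigned to a diagonal block can be tied by a Weyl symmetry to one appearing off-diagonal, an ambiguity that does not arise when $\epsilon = 0$. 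This is the exception already flagged in the proof of Theorem~\ref{thm-GS}. I expect to dispose of it exactly as anticipated there, by choosing the subsystem and the block decomposition so that each offending component is assigned consistently to a single set, or equivalently by isolating the corresponding $\bm{\xi}_0$-block and verifying its non-singularity by hand, after which the determinant argument of Lemma~\ref{lem-det_matrix} goes through verbatim.
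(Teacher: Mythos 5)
Your proposal matches the paper's proof in all essentials: an adapted null frame, the reduction of integrability of $\mcN$ and $\mcN^\perp$ to the vanishing of the three families $\Gamma_{\kappa\mu\nu}$, $\Gamma_{\kappa\mu 0}$, $\Gamma_{0\mu\nu}$, the observation that the degeneracy of the Weyl and Cotton-York tensors turns selected components of the Bianchi identity into homogeneous algebraic linear relations in these unknowns, and the non-singularity argument via Lemma \ref{lem-det_matrix} and the genericity assumption, carried out case by case for $k=0,\ldots,4$. The only organizational difference is that the paper does not assemble one monolithic square system on all the unknowns at once: it exploits the triangular structure of the surviving Bianchi equations and solves three successive systems --- first the even-dimensional system \eqref{mat-GS_even} for $\Gamma_{\kappa\mu\nu}$, then \eqref{mat-GS_odd1} for $\Gamma_{\kappa\mu 0}$ with $1\times 1$ blocks $L_\mu$ and $2\times 2$ blocks $\mathbf{L}_{\mu\nu}$, then \eqref{mat-GS_odd2} for $\Gamma_{0\mu\nu}$ with scalar diagonal entries $M_{\mu\nu}$ --- which avoids having to check the index-disjointness hypothesis of Lemma \ref{lem-det_matrix} for the coupling blocks between the three families, and it resolves the one genuine index-structure ambiguity you flag (arising from the trace condition, in case $k=2$) exactly as you anticipate, by redefining the index structures $C_{\tilde{\mu}\tilde{\nu}\pm}$.
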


\begin{proof}
The odd-dimensional case follows exactly the same procedure as the even-dimensional one. Choose a local frame $\{\bm{\xi}_\mu , \tilde{\bm{\xi}}_{\tilde{\mu}} , \bm{\xi}_0 \}$ over $\mcU$  adapted to $\mcN$. Then, we have local gradings \eqref{eq-Weyl_grading_odd} and \eqref{eq-CY_grading_odd} on the bundles $\mcC$ and $\mcA$ respectively. The condition that the Weyl and Cotton-York tensor be sections of $\mcC^k$ and $\mcA^{k-1}$ respectively is equivalent to their components in $\mcC_i$ and $\mcA_{i-1}$ vanishing for all $-4 \leq i \leq k-1$.

The integrability of the almost null structure, by Lemma \ref{lem-integrability_connection}, is then equivalent to the connection components satisfying
\begin{align}
 \Gamma_{\kappa \mu \nu } & = 0 \, , \tag{\ref{eq-integrability_connection_components}} \\
 \Gamma_{\kappa \mu 0} & = 0 \, , \label{eq-integrability_connection_components_odd1} \\
 \Gamma_{0 \mu \nu} & = 0 \label{eq-integrability_connection_components_odd2} \, ,
\end{align}
for all $\kappa$, $\mu$, $\nu$. These constitute $\tfrac{1}{2}m^2(m-1)$, $m^2$, and $\tfrac{1}{2}m(m-1)$ conditions respectively.

As in the even-dimensional case, for each $k \in \{0,1, 2, 3, 4\}$, the assertion of the theorem is proved by means of the Bianchi identity, which, from the algebraic degeneracy of the Weyl and Cotton-York tensors, gives rise to a homogeneous overdetermined system of linear equations on the unknowns $\Gamma_{\kappa \mu \nu}$, $\Gamma_{\kappa \mu 0}$ and $\Gamma_{0 \mu \nu}$ for all $\kappa$, $\mu$, $\nu$. In fact, for each $k \in \{0,1, 2, 3,4\}$, we shall be able to split the proof into three steps, as it turns out that 
the relevant algebraic equations arising from the Bianchi identity can be arranged into three systems. A first system consists of equations on $\Gamma_{\kappa \mu \nu}$, a second one on $\Gamma_{\kappa \mu \nu}$, $\Gamma_{\kappa \mu 0}$, and a third one on $\Gamma_{\kappa \mu \nu}$, $\Gamma_{\kappa \mu 0}$, $\Gamma_{0 \mu \nu}$, for all $\kappa$, $\mu$, $\nu$. Hence, we will be able to conclude \eqref{eq-integrability_connection_components}, \eqref{eq-integrability_connection_components_odd1}, and \eqref{eq-integrability_connection_components_odd2} successively, by considering suitable subsystems of these systems.

Thus, to show conditions \eqref{eq-integrability_connection_components}, we can simply recycle the setup of the proof of Theorem \ref{thm-GS}, in particular matrix \eqref{mat-GS_even}.
\begin{rem}
  While Theorem \ref{thm-GS} is stated for $m\geq 3$ only, the proof can still be re-used in the case $m=2$, i.e. when $\mcM$ is five-dimensional. In this case, only the `upper half' of the system \eqref{mat-GS_even} is relevant.
\end{rem}
Once conditions \eqref{eq-integrability_connection_components} have been established, we can move on to show condition \eqref{eq-integrability_connection_components_odd1}, by considering a system of linear equations of the form
\begin{align} \label{mat-GS_odd1}
\left(
  \begin{BMAT}((rc)[2pt,6cm,6cm]{cccccc.cccccc}{cccccc.cccccc}
	L_{1} & * & \cdots & & & & & & & & \cdots & * \\
	* & L_{2} & & & & & & & & & & \vdots \\
	\vdots & & \ddots & & & & & & & & & \\
	& & & L_{\mu} & & & & & & & & \\
	& & & & \ddots & & & & & & & \\
	& & & & & L_{m} & & & & & & \\
	& & & & & & \mathbf{L}_{1 2} & & & & & \\
	& & & & & & & \mathbf{L}_{1 3} & & & & \\
	& & & & & & & & \ddots & & & \\
	& & & & & & & & & \mathbf{L}_{\mu \nu} & & \vdots \\
	\vdots & & & & & & & & & & \ddots & * \\
	* & \cdots & & & & & & & & \cdots & * & \mathbf{L}_{m-1, m} 
\end{BMAT}
\right)
\quad
\left(
\begin{BMAT}(rc)[2pt,1cm,6.5cm]{c}{cccccc.cccccc}
\Gamma_{1 1 0} \\
\Gamma_{2 2 0} \\
\vdots \\
\Gamma_{\mu \mu 0} \\
\vdots \\
\Gamma_{m m 0} \\
\mathbf{v}_{1 2} \\
\mathbf{v}_{1 3} \\
\vdots \\
\mathbf{v}_{\mu \nu} \\
\vdots \\
\mathbf{v}_{m-1, m} 
\end{BMAT}
\right)
& =
\left(
\begin{BMAT}(rc)[2pt,1cm,6.5cm]{c}{cccccc.cccccc}
0 \\
0 \\
\vdots \\
0 \\
\vdots \\
0 \\
\mathbf{0}_2 \\
\mathbf{0}_2 \\
\vdots \\
\mathbf{0}_2 \\
\vdots \\
\mathbf{0}_2
\end{BMAT}
\right) \, ,
\end{align}
or $\mathbf{L} \mathbf{v} = \mathbf{0}$ for short. Here, each entry of the $m^2 \times 1$ vector $\mathbf{v}$ corresponds to a connection component $\Gamma_{\mu \nu 0}$. Some have been arranged in pairs as defined by the $\tfrac{1}{2}m(m-1)$ column vectors
\begin{align*}
 \mathbf{v}_{\mu \nu} :=
\begin{pmatrix}
 \Gamma_{\mu \nu 0} \\ 
 - \Gamma_{\nu \mu 0} 
\end{pmatrix} \, ,
\end{align*}
for all $\mu < \nu $. As in the previous step, we have singled out the matrices $L_\mu$ and $\mathbf{L}_{\mu \nu}$ of dimensions $1 \times 1$ and $2 \times 2$ respectively. Following the same argument as presented in the proof Theorem \ref{thm-GS}, these will play a central r\^{o}le in demonstrating that $\mathbf{L}$ is non-singular. In particular, with reference to Lemma \ref{lem-det_matrix} and equation \eqref{eq-lem_mat}, the term $D$ of the determinant of matrix \eqref{mat-GS_odd1} is given by
\begin{align} \label{eq-special_term_odd1}
 D & = \left( \prod_{\kappa} L_{\kappa} \right) \cdot \left( \prod_{\mu < \nu} \det( \mathbf{L}_{\mu \nu}  ) \right) \, .
\end{align}

Once conditions \eqref{eq-integrability_connection_components_odd1} have been established, we will be able to find a system of linear equations of the form
\begin{align} \label{mat-GS_odd2}
  \begin{pmatrix}
       M_{1 2} & * & \cdots & & \cdots & * \\
	* & M_{1 3} & & & & \vdots \\
	\vdots & & \ddots & & & \\
	 & &  & M_{\mu \nu} & & \vdots \\
	\vdots & &  &  & \ddots & * \\
	* & \cdots &  & \cdots & * & M_{m-1,m}
  \end{pmatrix}
\begin{pmatrix}
  \Gamma_{0 1 2} \\
  \Gamma_{0 1 3} \\
  \vdots \\
  \Gamma_{0 \mu \nu} \\
  \vdots \\
  \Gamma_{0, m-1, m}
  \end{pmatrix}
& = 
\begin{pmatrix}
  0 \\
  0 \\
  \vdots \\
  0 \\
  \vdots \\
  0
  \end{pmatrix} \, ,
\end{align}
or $\mathbf{M} \mathbf{w} = \mathbf{0}$ for short. Again, the diagonal entries $M_{\mu \nu}$ for all $\mu$, $\nu$ have been singled out for their crucial part in the application of Lemma \ref{lem-det_matrix}, where the term $D$  of the determinant of matrix \eqref{mat-GS_odd2} is now given by
\begin{align} \label{eq-special_term_odd2}
 D & = \prod_{\mu < \nu} M_{\mu \nu} \, .
\end{align}

\paragraph{Case $k=0$:} Assume the Weyl tensor is a section of $\mcC^{0}$ so that conditions \eqref{eq-C0_components} hold together with
\begin{align}\label{eq-C0_components_odd}
 C_{\mu \nu \kappa 0} = C_{\mu \tilde{\nu} \kappa 0} & = 0  \, ,
\end{align}
for all $\mu$, $\nu$, $\kappa$, $\lambda$. Then equations \eqref{eq-Bianchi3a}, \eqref{eq-Bianchi3b}, \eqref{eq-Bianchi2a} and \eqref{eq-Bianchi2b} become
\begin{align}
0 & = 2 g \ind{_{\tilde{\rho} \lb{\kappa}}} A \ind{_{\rb{\lambda} \mu \nu}} + 2 \Gamma \ind{_{[\mu \nu]} ^{\tilde{\sigma}}} C \ind{_{\tilde{\rho} {\tilde{\sigma}} \kappa \lambda}} + 4 \Gamma \ind{_{\lb{\mu}| \lb{\kappa}} ^{\tilde{\sigma}}} C \ind{_{\rb{\lambda}|{\tilde{\sigma}}| \rb{\nu} \tilde{\rho}}} =: B \ind{_{\mu \nu \tilde{\rho} | \kappa \lambda}} \, , \tag{\ref{eq-Bianchi3a}}  \\
0 & = - A \ind{_{\kappa \mu \nu}} + 2 \Gamma \ind{_{[\mu \nu]} ^{\tilde{\sigma}}} C \ind{_{0 {\tilde{\sigma}} \kappa 0}} 
+ 2 \Gamma \ind{_{\lb{\mu}| \kappa} ^{\tilde{\sigma}}} C \ind{_{0 {\tilde{\sigma}}| \rb{\nu} 0}} =: B \ind{_{\mu \nu 0 | \kappa 0}} \, , \tag{\ref{eq-Bianchi3b}} \\
0 & = g \ind{_{\tilde{\rho} \kappa}} A \ind{_{0 \mu \nu}} + 2 \Gamma \ind{_{[\mu \nu]} ^{\tilde{\sigma}}} C \ind{_{\tilde{\rho} {\tilde{\sigma}} \kappa 0}} + 2 \Gamma \ind{_{[\mu \nu]} ^0} C \ind{_{\tilde{\rho} 0 \kappa 0}} 
  + 2 \Gamma \ind{_{\lb{\mu}| \kappa} ^{\tilde{\sigma}}} C \ind{_{0  {\tilde{\sigma}} | \rb{\nu} \tilde{\rho}}} - 2 \Gamma \ind{_{\lb{\mu}| 0} ^{\tilde{\sigma}}} C \ind{_{\kappa {\tilde{\sigma}} | \rb{\nu} \tilde{\rho}}} =: B \ind{_{\mu \nu \tilde{\rho} | \kappa 0}} \, , \tag{\ref{eq-Bianchi2a}} \\
0 & = 2 g \ind{_{\lb{\mu} | \tilde{\lambda}}} A \ind{_{\kappa | \rb{\nu} 0}} + 2 \Gamma \ind{_{[\mu \nu]} ^{\tilde{\sigma}}} C \ind{_{0 {\tilde{\sigma}} \kappa \tilde{\lambda}}} + 2 \Gamma \ind{_{0 \lb{\mu}} ^{\tilde{\sigma}}} C \ind{_{\rb{\nu} {\tilde{\sigma}} \kappa \tilde{\lambda}}}  + 2 \Gamma \ind{_{\lb{\nu}| 0|} ^{\tilde{\sigma}}} C \ind{_{\rb{\mu} {\tilde{\sigma}} \kappa \tilde{\lambda}}} \nonumber \\  & \qquad \qquad \qquad \qquad \qquad + 2 \Gamma \ind{_{\lb{\mu} | \kappa} ^{\tilde{\sigma}}} C \ind{_{\tilde{\lambda}{\tilde{\sigma}}| \rb{\nu} 0}} + 2 \Gamma \ind{_{\lb{\mu}| \kappa} ^0} C \ind{_{\tilde{\lambda} 0 | \rb{\nu} 0}} + \Gamma \ind{_{0 \kappa} ^{\tilde{\sigma}}} C \ind{_{\tilde{\lambda} {\tilde{\sigma}} \mu \nu}} =: B \ind{_{\mu \nu 0 | \kappa \tilde{\lambda}}} \, . \tag{\ref{eq-Bianchi2b}}
\end{align}
Now, suppose that the Weyl tensor is otherwise generic and the Cotton-York tensor is a section of $\mcA^{-1}$. Then, we see that equation \eqref{eq-Bianchi3a} is identical to the one used in the proof of case $k=0$ of Theorem \ref{thm-GS}. Hence, conditions \eqref{eq-integrability_connection_components} are established. We also note that the same result can be equally derived from equations \eqref{eq-Bianchi3b}.

Consequently, equations \eqref{eq-Bianchi2a} depend only on the connection components $\Gamma _{\mu \nu 0}$ for all $\mu$, $\nu$. Now, pick $m$ equations from the $m(m-1)$ equations $B_{\mu \nu \tilde{\nu} | \mu 0}$, and all $m(m-1)$ equations $B_{\mu \nu \tilde{\mu} | \mu 0}$:
\begin{multline*}
0 = 
 \Gamma _{\mu \mu 0} C _{\mu \tilde{\mu} \nu \tilde{\nu}} 
- \Gamma _{\nu \nu 0} C _{\mu \tilde{\nu} \mu \tilde{\nu}}
+ \Gamma _{\mu \nu 0} ( C _{\mu \tilde{\nu} \nu \tilde{\nu}} + C _{\tilde{\nu} 0 \mu 0} )
- \Gamma _{\nu \mu 0} ( C _{\mu \tilde{\mu} \mu \tilde{\nu}} + C _{\tilde{\nu} 0 \mu 0} ) \\
+ \sum_{\sigma \neq \mu, \nu} 
 \left( \Gamma _{\mu \sigma 0} C _{\mu \tilde{\sigma} \nu \tilde{\nu}} - \Gamma _{\nu \sigma 0} C _{\mu \tilde{\sigma} \mu \tilde{\nu}} \right) \, ,
\end{multline*}
\begin{multline*}
0 = \Gamma _{\mu \mu 0} C _{\mu \tilde{\mu} \nu \tilde{\mu}} 
- \Gamma _{\nu \nu 0} C _{\mu \tilde{\nu} \mu \tilde{\mu}}   
+ \Gamma _{\mu \nu 0} ( C _{\mu \tilde{\nu} \nu \tilde{\mu}} + C _{\tilde{\mu} 0 \mu 0} )  
- \Gamma _{\nu \mu 0} ( C _{\mu \tilde{\mu} \mu \tilde{\mu}} + C _{\tilde{\mu} 0 \mu 0} ) \\
+ \sum_{\sigma \neq \mu, \nu} \left( \Gamma _{\mu \sigma 0} C _{\mu \tilde{\sigma} \nu \tilde{\mu}} - \Gamma _{\nu \sigma 0} C _{\mu \tilde{\sigma} \mu \tilde{\mu}} \right)
\, ,
\end{multline*}
respectively. These equations can be put into the matrix form \eqref{mat-GS_odd1} by defining
\begin{align*}
 L_{\mu} & : =
\begin{cases}
 C _{\mu \tilde{\mu} m \tilde{m}} \, , & \mbox{for $\mu \neq m$} \, , \\
 C _{m, \tilde{m}, m-1, \widetilde{m-1}} \, , & \mbox{for $\mu = m$} \, ,
\end{cases} \\
 \mathbf{L}_{\mu \nu} & := \begin{pmatrix}
  C _{\mu \tilde{\nu} \nu \tilde{\mu}} + C _{\tilde{\mu} 0 \mu 0} &  C _{\mu \tilde{\mu} \mu \tilde{\mu}} + C _{\tilde{\mu} 0 \mu 0}  \\
  C _{\nu \tilde{\mu} \mu \tilde{\nu}} + C _{\tilde{\nu} 0 \nu 0} &  C _{\nu \tilde{\nu} \nu \tilde{\nu}} + C _{\tilde{\nu} 0 \nu 0}  
 \end{pmatrix} \, , & \mbox{for all $\mu <\nu$,}
\end{align*}
respectively. Now, each matrix $\mathbf{L}_{\mu \nu}$ has determinant, for all $\mu < \nu$,
\begin{align*}
 \det (\mathbf{L}_{\mu \nu} ) = ( C _{\mu \tilde{\nu} \nu \tilde{\mu}} + C _{\tilde{\mu} 0 \mu 0} ) \cdot ( C _{\nu \tilde{\nu} \nu \tilde{\nu}} + C _{\tilde{\nu} 0 \nu 0}  ) -
( C _{\mu \tilde{\mu} \mu \tilde{\mu}} + C _{\tilde{\mu} 0 \mu 0} ) \cdot ( C _{\nu \tilde{\mu} \mu \tilde{\nu}} + C _{\tilde{\nu} 0 \nu 0} ) \, ,
\end{align*}
which can be seen to be non-vanishing by the genericity assumption.\footnote{Again, the tracefree property of the Weyl tensor may seem problematic when $m=2$. But in that case, some manipulations show that $\det (\mathbf{L}_{\mu \nu} ) = 2 C \ind{_{\mu 0 \tilde{\mu} 0}}   \cdot ( C \ind{_{\mu \nu  \tilde{\mu} \tilde{\nu}}} - C \ind{_{\mu \tilde{\nu} \tilde{\mu} \nu}} )$, which is clearly non-vanishing.} Hence, the term \eqref{eq-special_term_odd1} is non-vanishing.

Further, one can check that the components of the Weyl tensor in the entries of $L_\mu$ and $\mathbf{L}_{\mu \nu}$ have distinct index structures from those in the remaining entries of $\mathbf{L}$. Hence, we can apply Lemma \ref{lem-det_matrix} to conclude that $\mathbf{L}$ is non-singular, thus establishing conditions \eqref{eq-integrability_connection_components_odd1}.

At this stage, equations \eqref{eq-Bianchi2b} only constrain the connection components $\Gamma _{0 \mu \nu}$ for all $\mu$, $\nu$. Pick all $\tfrac{1}{2}m(m-1)$ equations $B \ind{_{\mu \nu 0 | \mu \tilde{\mu}}}$:
\begin{align}
0 = \Gamma _{0 \mu \nu} ( C _{\nu \tilde{\nu} \mu \tilde{\mu}} + C _{\tilde{\mu} \tilde{\nu} \mu \nu} + C _{\mu \tilde{\mu} \mu \tilde{\mu}} ) 
+ \sum_{\sigma \neq \mu, \nu} (  \Gamma _{0 \mu \sigma} ( C _{\nu \tilde{\sigma} \mu \tilde{\mu}} + C _{\tilde{\mu} \tilde{\sigma} \mu \nu} )
- \Gamma _{0 \nu \sigma} C _{\mu \tilde{\sigma} \mu \tilde{\mu}} )
\, ,
\end{align}
which can be put in the matrix form \eqref{mat-GS_odd2} by defining, for every $\mu < \nu$,
\begin{align*}
 M_{\mu \nu} := C _{\nu \tilde{\nu} \mu \tilde{\mu}} + C _{\tilde{\mu} \tilde{\nu} \mu \nu} + C _{\mu \tilde{\mu} \mu \tilde{\mu}} \, .
\end{align*}
This is non-vanishing by the genericity assumption, and thus, the term \eqref{eq-special_term_odd2} is non-vanishing.

As in the previous step, the components of the Weyl tensor in the diagonal entries $M_{\mu \nu}$ of $\mathbf{M}$ have distinct index structures from those in the remaining entries of $\mathbf{K}$. Hence, we can apply Lemma \ref{lem-det_matrix} to conclude that $\mathbf{M}$ is non-singular, thus establishing conditions \eqref{eq-integrability_connection_components_odd1}.

\paragraph{Case $k=1$:}
Assume that the Weyl tensor is a section of $\mcC^0$, so that conditions \eqref{eq-C0_components}, \eqref{eq-C0_components_odd} and \eqref{eq-C1_components} hold. Then equations \eqref{eq-Bianchi2a}, \eqref{eq-Bianchi2b}, \eqref{eq-Bianchi1a} and  \eqref{eq-Bianchi1b} become
\begin{align*}
0 & = g \ind{_{\tilde{\rho} \kappa}} A \ind{_{0 \mu \nu}} + 2 \Gamma \ind{_{[\mu \nu]} ^{\tilde{\sigma}}} C \ind{_{\tilde{\rho} {\tilde{\sigma}} \kappa 0}} 
  + 2 \Gamma \ind{_{\lb{\mu}| \kappa} ^{\tilde{\sigma}}} C \ind{_{0  {\tilde{\sigma}} | \rb{\nu} \tilde{\rho}}} =: B_{\mu \nu \tilde{\rho} | \kappa 0} \, , \tag{\ref{eq-Bianchi2a}} \\
0 & = 2 g \ind{_{\lb{\mu} | \tilde{\lambda}}} A \ind{_{\kappa | \rb{\nu} 0}} + 2 \Gamma \ind{_{[\mu \nu]} ^{\tilde{\sigma}}} C \ind{_{0 {\tilde{\sigma}} \kappa \tilde{\lambda}}} + 2 \Gamma \ind{_{\lb{\mu} | \kappa} ^{\tilde{\sigma}}} C \ind{_{\tilde{\lambda}{\tilde{\sigma}}| \rb{\nu} 0}}  =: B_{\mu \nu 0 | \kappa \tilde{\lambda}} \tag{\ref{eq-Bianchi2b}} \, , \\ 
0 & =  - 2 g \ind{_{\lb{\mu} | \tilde{\lambda}}} A \ind{_{\kappa | \rb{\nu} \tilde{\rho}}} + g \ind{_{\tilde{\rho} \kappa}} A \ind{_{\tilde{\lambda} \mu \nu}} 
+ 2 \Gamma \ind{_{[\mu \nu]} ^{\tilde{\sigma}}} C \ind{_{\tilde{\rho} {\tilde{\sigma}} \kappa \tilde{\lambda}}}
+ 2 \Gamma \ind{_{[\mu \nu]} ^0} C \ind{_{\tilde{\rho} 0 \kappa \tilde{\lambda}}} 
+ 2 \Gamma \ind{_{\lb{\mu}| \kappa} ^{\tilde{\sigma}}} C \ind{_{\tilde{\lambda}|{\tilde{\sigma}}| \rb{\nu} \tilde{\rho}}}
+ 2 \Gamma \ind{_{\lb{\mu}| \kappa} ^0} C \ind{_{\tilde{\lambda}|0| \rb{\nu} \tilde{\rho}}} \\
& \qquad \qquad \qquad \qquad \qquad \qquad \qquad \qquad \qquad \qquad \qquad \qquad \qquad \qquad \qquad \qquad \qquad \qquad \quad =: B_{\mu \nu \tilde{\rho} | \kappa \tilde{\lambda}} \, , \tag{\ref{eq-Bianchi1a}} \\
0 &=  g \ind{_{\tilde{\nu} \kappa}} A \ind{_{0 0 \mu}} - A \ind{_{\kappa \mu \tilde{\nu}}} 
+ \Gamma \ind{_{0 \mu} ^{\tilde{\sigma}}} C \ind{_{\tilde{\nu} {\tilde{\sigma}} \kappa 0}} 
- \Gamma \ind{_{\mu 0} ^{\tilde{\sigma}}} C \ind{_{\tilde{\nu} {\tilde{\sigma}} \kappa 0}} 
+ \Gamma \ind{_{\mu \kappa} ^{\tilde{\sigma}}} C \ind{_{0 {\tilde{\sigma}} \tilde{\nu} 0}} 
- \Gamma \ind{_{\mu 0} ^{\tilde{\sigma}}} C \ind{_{\kappa {\tilde{\sigma}} \tilde{\nu} 0}}
+ \Gamma \ind{_{0 \kappa} ^{\tilde{\sigma}}} C \ind{_{0 {\tilde{\sigma}} \mu \tilde{\nu}}} =: B_{\mu \tilde{\nu} 0 | \kappa 0} \tag{\ref{eq-Bianchi1b}} \, .
\end{align*}
Now, suppose that the Weyl tensor is otherwise generic, and the Cotton-York tensor is a section of $\mcA^{0}$. Then, both sets of equations \eqref{eq-Bianchi2a} and \eqref{eq-Bianchi2b} are equations on $\Gamma_{\kappa \mu \nu}$ for all $\kappa$, $\mu$, $\nu$. Now, pick all $m(m-1)$ equations $B_{\mu \nu 0 | \mu \tilde{\nu}}$ and all $\tfrac{1}{2}m(m-1)(m-2)$ equations $B_{\mu \nu 0 | \mu \tilde{\lambda}}$
\begin{align*}
0 & =  - \Gamma \ind{_{\mu \mu \nu}} C \ind{_{0 {\tilde{\mu}} \mu \tilde{\nu}}}
+ \Gamma \ind{_{\nu \nu \mu}} C \ind{_{0 {\tilde{\nu}} \mu \tilde{\nu}}} 
+ \sum_{\sigma \neq \mu, \nu} \left( \Gamma \ind{_{\mu \nu \sigma}} C \ind{_{0 {\tilde{\sigma}} \mu \tilde{\nu}}}
- \Gamma \ind{_{\nu \mu \sigma}} ( C \ind{_{0 {\tilde{\sigma}} \mu \tilde{\nu}}} + C \ind{_{\tilde{\nu} {\tilde{\sigma}} \mu 0}}   )
+ \Gamma \ind{_{\mu \mu \sigma}} C \ind{_{\tilde{\nu} {\tilde{\sigma}} \nu 0}}  \right) \, , \\
0 & =  - \Gamma \ind{_{\mu \mu \nu}} ( C \ind{_{0 {\tilde{\mu}} \mu \tilde{\lambda}}} - C \ind{_{\tilde{\lambda} {\tilde{\nu}} \nu 0}} )
+ \Gamma \ind{_{\nu \nu \mu}} ( C \ind{_{0 {\tilde{\nu}} \mu \tilde{\lambda}}} + C \ind{_{\tilde{\lambda} {\tilde{\nu}} \mu 0}} )  
+ \Gamma \ind{_{\mu \nu \lambda}} C \ind{_{0 {\tilde{\lambda}} \mu \tilde{\lambda}}}
+ \Gamma \ind{_{\nu \lambda \mu}} C \ind{_{0 {\tilde{\lambda}} \mu \tilde{\lambda}}} \\
& \qquad \qquad \qquad \qquad \qquad \qquad
+ \sum_{\sigma \neq \mu, \nu, \lambda} \left( \Gamma \ind{_{\mu \nu \sigma}} C \ind{_{0 {\tilde{\sigma}} \mu \tilde{\lambda}}}
- \Gamma \ind{_{\nu \mu \sigma}} ( C \ind{_{0 {\tilde{\sigma}} \mu \tilde{\lambda}}} + C \ind{_{\tilde{\lambda} {\tilde{\sigma}} \mu 0}}   )
+ \Gamma \ind{_{\mu \mu \sigma}} C \ind{_{\tilde{\lambda} {\tilde{\sigma}} \nu 0}}  \right) \, ,
\end{align*}
respectively. These can be put in matrix form \eqref{mat-GS_even} by defining, for all $\kappa$, $\rho$ distinct, and $\mu < \nu < \lambda$,
\begin{align*}
 K_{\kappa \rho} & := C \ind{_{0 {\tilde{\kappa}} \rho \tilde{\kappa}}} \, ,
& \mathbf{K}_{\mu \nu \lambda} & :=
\begin{pmatrix}
 C \ind{_{0 {\tilde{\lambda}} \mu \tilde{\lambda}}} & C \ind{_{0 {\tilde{\lambda}} \mu \tilde{\lambda}}}  & 0 \\
 0 &  C \ind{_{0 {\tilde{\mu}} \nu \tilde{\mu}}} & C \ind{_{0 {\tilde{\mu}} \nu \tilde{\mu}}}  \\
 C \ind{_{0 {\tilde{\nu}} \lambda \tilde{\nu}}} & 0 & C \ind{_{0 {\tilde{\nu}} \lambda \tilde{\nu}}}  
\end{pmatrix} \, .
\end{align*}
The latter has determinant
\begin{align*}
 \det ( \mathbf{K}_{\mu \nu \lambda} ) & = 2 C \ind{_{0 {\tilde{\lambda}} \mu \tilde{\lambda}}} \cdot C \ind{_{0 {\tilde{\mu}} \nu \tilde{\mu}}} \cdot C \ind{_{0 {\tilde{\nu}} \lambda \tilde{\nu}}}  \, ,
\end{align*}
which can be seen to be non-vanishing by the genericity assumption. So, $K_{\mu \nu}$ and $\mathbf{K}_{\mu \nu \lambda}$ are non-singular, and thus, the term \eqref{eq-special_term} is non-vanishing.

Further, one can check that the components of the Weyl tensor in the entries of $K_{\mu \nu}$ and $\mathbf{K}_{\kappa \mu \nu}$ have distinct index structures from those in the remaining entries of $\mathbf{K}$. Hence, we can apply Lemma \ref{lem-det_matrix} to conclude that $\mathbf{K}$ is non-singular,  thus establishing conditions \eqref{eq-integrability_connection_components}.

Now, equations \eqref{eq-Bianchi1a} constrain only $\Gamma \ind{_{\mu \nu 0}}$ for all $\mu$, $\nu$.
Pick $m$ equations from the $m(m-1)$ equations $B_{\mu \nu \tilde{\nu} | \mu \tilde{\nu}}$, and all $m(m-1)$ equations $B_{\mu \nu \tilde{\mu} | \mu \tilde{\mu}}$
\begin{align*}
0 & =  
  \Gamma \ind{_{\mu \nu 0}} C \ind{_{\tilde{\nu} 0 \mu \tilde{\nu}}}
- 2 \Gamma \ind{_{\nu \mu 0}} C \ind{_{\tilde{\nu} 0 \mu \tilde{\nu}}} 
+ \Gamma \ind{_{\mu \mu 0}} C \ind{_{\tilde{\nu} 0 \nu \tilde{\nu}}} \, , &
0 & =  
  \Gamma \ind{_{\mu \nu 0}} C \ind{_{\tilde{\mu} 0 \mu \tilde{\mu}}}
- 2 \Gamma \ind{_{\nu \mu 0}} C \ind{_{\tilde{\mu} 0 \mu \tilde{\mu}}}  
+ \Gamma \ind{_{\mu \mu 0}} C \ind{_{\tilde{\mu} 0 \nu \tilde{\mu}}} \, ,
\end{align*}
respectively. These can be put into matrix form \eqref{mat-GS_odd1} by defining
\begin{align*}
 L_\mu & :=
\begin{cases}
  C \ind{_{\tilde{m} 0 m \tilde{m}}} \, , & \mbox{for all $\mu \neq m$,} \\ 
  C \ind{_{\widetilde{m-1}, 0 ,m-1, \widetilde{m-1}}} \, , & \mbox{for $\mu = m$,}
\end{cases} \\
\mathbf{L}_{\mu \nu} & :=
\begin{pmatrix}
 C \ind{_{\tilde{\mu} 0 \mu \tilde{\mu}}} & 2 C \ind{_{\tilde{\mu} 0 \mu \tilde{\mu}}} \\
 2 C \ind{_{\tilde{\nu} 0 \nu \tilde{\nu}}}  & C \ind{_{\tilde{\nu} 0 \nu \tilde{\nu}}} 
\end{pmatrix} \, , & \mbox{for all $\mu < \nu$,}
\end{align*}
respectively. At a glance, we see that each of $L_\mu$ and $\mathbf{L}_{\mu \nu}$, since $\det (\mathbf{L}_{\mu \nu} )= -3 C \ind{_{\tilde{\mu} 0 \mu \tilde{\mu}}} \cdot C \ind{_{\tilde{\nu} 0 \nu \tilde{\nu}}}$, are non-singular by the genericity assumption, and thus, the term \eqref{eq-special_term_odd1} is non-vanishing.

Further, one can check that the components of the Weyl tensor in the entries of $L_\mu$ and $\mathbf{L}_{\mu \nu}$ have distinct index structures from those in the remaining entries of $\mathbf{L}$. Hence, we can apply Lemma \ref{lem-det_matrix} to conclude that $\mathbf{L}$ is non-singular, thus establishing conditions \eqref{eq-integrability_connection_components_odd1}.

Finally, equations \eqref{eq-Bianchi1b} now constrain only $\Gamma \ind{_{0 \mu \nu}}$ for all $\mu$, $\nu$. Pick all $\tfrac{1}{2}m(m-1)$ equations $B_{\mu \tilde{\nu} 0 | \mu 0}$
\begin{align*}
 0 & =
  \Gamma \ind{_{0 \mu \nu}} C \ind{_{0 {\tilde{\nu}} \mu \tilde{\nu}}}
+ \sum_{\sigma \neq \mu, \nu} 
 \Gamma \ind{_{0 \mu \sigma}} \left( C \ind{_{\tilde{\nu} {\tilde{\sigma}} \mu 0}} + C \ind{_{0 {\tilde{\sigma}} \mu \tilde{\nu}}} \right) \, ,
\end{align*}
which can be put into the matrix form \eqref{mat-GS_odd2} by defining, for every $\mu < \nu$,
\begin{align*}
 M_{\mu \nu} & := C \ind{_{0 {\tilde{\nu}} \mu \tilde{\nu}}} \, .
\end{align*}
By the genericity assumption, each $M_{\mu \nu}$ is non-vanishing, and thus, the term \eqref{eq-special_term_odd2} is non-vanishing.

Further, one can check that the components of the Weyl tensor in the diagonal entries $M_{\mu \nu}$ of $\mathbf{M}$ have distinct index structures from those in the remaining entries of $\mathbf{M}$. Hence, we can apply Lemma \ref{lem-det_matrix} to conclude that $\mathbf{M}$ is non-singular, thus establishing conditions \eqref{eq-integrability_connection_components_odd2}.

\paragraph{Case $k=2$:}
Assume the Weyl tensor is a section of $\mcC^{1}$ so that conditions \eqref{eq-C0_components}, \eqref{eq-C0_components_odd} and \eqref{eq-C1_components} hold together with
\begin{align}\label{eq-C2_components_odd}
C_{\mu \tilde{\nu} \tilde{\kappa} 0} & = 0 \, ,
\end{align}
for all $\mu$, $\nu$, $\kappa$. Then equations \eqref{eq-Bianchi1a}, \eqref{eq-Bianchi1b}, \eqref{eq-Bianchi0a}, \eqref{eq-Bianchi0b}, \eqref{eq-Bianchi0c}, and \eqref{eq-Bianchi0d} become
\begin{align*}
0 & = - 2 g \ind{_{\lb{\mu} | \tilde{\lambda}}} A \ind{_{\kappa | \rb{\nu} \tilde{\rho}}} + g \ind{_{\tilde{\rho} \kappa}} A \ind{_{\tilde{\lambda} \mu \nu}} 
+ 2 \Gamma \ind{_{[\mu \nu]} ^{\tilde{\sigma}}} C \ind{_{\tilde{\rho} {\tilde{\sigma}} \kappa \tilde{\lambda}}}
+ 2 \Gamma \ind{_{\lb{\mu}| \kappa} ^{\tilde{\sigma}}} C \ind{_{\tilde{\lambda}|{\tilde{\sigma}}| \rb{\nu} \tilde{\rho}}} =: B_{\mu \nu \tilde{\rho} | \kappa \tilde{\lambda}} \tag{\ref{eq-Bianchi1a}} \, ,\\
0 & =   g \ind{_{\tilde{\nu} \kappa}} A \ind{_{0 0 \mu}} - A \ind{_{\kappa \mu \tilde{\nu}}} 
+ \Gamma \ind{_{\mu \kappa} ^{\tilde{\sigma}}} C \ind{_{0 {\tilde{\sigma}} \tilde{\nu} 0}} =: B_{\mu \tilde{\nu} 0 | \kappa 0} \tag{\ref{eq-Bianchi1b}} \, , \\
0 & =   2 g \ind{_{\lb{\tilde{\mu}} | \kappa}} A \ind{_{0 | \rb{\tilde{\nu}} \rho}} 
- \Gamma \ind{_{\rho 0} ^{\tilde{\sigma}}} C \ind{_{\kappa {\tilde{\sigma}} \tilde{\mu} \tilde{\nu}}} - \Gamma \ind{_{\rho \kappa} ^{\tilde{\sigma}}} C \ind{_{0 {\tilde{\sigma}} \tilde{\mu} \tilde{\nu}}} =: B_{\tilde{\mu} \tilde{\nu} \rho | \kappa 0} \tag{\ref{eq-Bianchi0a}} \, , \\
0 & =   4 g \ind{_{\lb{\tilde{\mu}} | \lb{\kappa}}} A \ind{_{\rb{\lambda} | \rb{\tilde{\nu}} 0}} 
+ 2 \Gamma \ind{_{0 \lb{\kappa}} ^{\tilde{\sigma}}} C \ind{_{\rb{\lambda} {\tilde{\sigma}} \tilde{\mu} \tilde{\nu}}} =: B_{\tilde{\mu} \tilde{\nu} 0 | \kappa \lambda} \tag{\ref{eq-Bianchi0b}} \, , \\
0 & =   2 g \ind{_{\lb{\mu} | \tilde{\kappa}}} A \ind{_{0 | \rb{\nu} \tilde{\rho}}} 
+ 2 \Gamma \ind{_{[\mu \nu]} ^{\tilde{\sigma}}} C \ind{_{\tilde{\rho} {\tilde{\sigma}} \tilde{\kappa} 0}} 
+ 2 \Gamma \ind{_{[\mu \nu]} ^0} C \ind{_{\tilde{\rho} 0 \tilde{\kappa} 0}} 
- 2 \Gamma \ind{_{\lb{\mu}| 0} ^{\tilde{\sigma}}} C \ind{_{\tilde{\kappa}|{\tilde{\sigma}}| \rb{\nu} \tilde{\rho}}}  =: B_{\mu \nu \tilde{\rho} | \tilde{\kappa} 0} \tag{\ref{eq-Bianchi0c}} \, , \\
0 & =   4 g \ind{_{\lb{\mu} | \lb{\tilde{\kappa}}}} A \ind{_{\rb{\tilde{\lambda}} | \rb{\nu} 0}} 
+ 2 \Gamma \ind{_{[\mu \nu]} ^{\tilde{\sigma}}} C \ind{_{0 {\tilde{\sigma}} \tilde{\kappa} \tilde{\lambda}}} 
+ 2 \Gamma \ind{_{0 \lb{\mu}} ^{\tilde{\sigma}}} C \ind{_{\rb{\nu} {\tilde{\sigma}} \tilde{\kappa} \tilde{\lambda}}} 
+ 2 \Gamma \ind{_{\lb{\nu}| 0|} ^{\tilde{\sigma}}} C \ind{_{\rb{\mu} {\tilde{\sigma}} \tilde{\kappa} \tilde{\lambda}}} =: B_{\mu \nu 0 | \tilde{\kappa} \tilde{\lambda}} \tag{\ref{eq-Bianchi0d}} \, .
\end{align*}
Now, suppose the Weyl tensor is otherwise generic, and the Cotton-York tensor is a section of $\mcA^1$. Then, referring to the proof of case $k=1$ of Theorem \ref{thm-GS}, equations \eqref{eq-Bianchi1a} lead immediately to conditions \eqref{eq-integrability_connection_components}. Alternatively, one could use equations \eqref{eq-Bianchi1b}.

Next, equations \eqref{eq-Bianchi0a} are now equations on $\Gamma_{\kappa \mu 0}$ for all $\kappa$, $\mu$. Pick a subset of $m$ equations of $B_{\tilde{\mu} \tilde{\nu} \mu | \nu 0}$ and all $m(m-1)$ equations $B_{\tilde{\mu} \tilde{\nu} \mu | \mu 0}$
\begin{align*}
0 & = \Gamma \ind{_{\mu \mu 0}} C \ind{_{\nu {\tilde{\mu}} \tilde{\mu} \tilde{\nu}}}
+ \Gamma \ind{_{\mu \nu 0}} C \ind{_{\nu {\tilde{\nu}} \tilde{\mu} \tilde{\nu}}}
+ \sum_{\sigma \neq \mu, \nu} \Gamma \ind{_{\mu \sigma 0}} C \ind{_{\nu {\tilde{\sigma}} \tilde{\mu} \tilde{\nu}}} \, , &
0 & = \Gamma \ind{_{\mu \mu 0}} C \ind{_{\mu {\tilde{\mu}} \tilde{\mu} \tilde{\nu}}}
+ \Gamma \ind{_{\mu \nu 0}} C \ind{_{\mu {\tilde{\nu}} \tilde{\mu} \tilde{\nu}}}
+ \sum_{\sigma \neq \mu, \nu} \Gamma \ind{_{\mu \sigma 0}} C \ind{_{\mu {\tilde{\sigma}} \tilde{\mu} \tilde{\nu}}} \, ,
\end{align*}
respectively, which can be put into matrix form \eqref{mat-GS_odd1} by defining
\begin{align*}
 L_{\mu} & := \begin{cases}
               - C \ind{_{m, {\tilde{\mu}}, \tilde{m}, \tilde{\mu}}} \, , & \mbox{for all $\mu \neq m$,} \\
		- C \ind{_{m-1, {\tilde{m}}, \widetilde{m-1}, \tilde{m}}} \, , & \mbox{for $\mu = m$,} 
              \end{cases} \\
\mathbf{L}_{\mu \nu} & :=
\begin{pmatrix}
C \ind{_{\mu {\tilde{\nu}} \tilde{\mu} \tilde{\nu}}} & 0 \\
0 & - C \ind{_{\nu {\tilde{\mu}} \tilde{\nu} \tilde{\mu}}}  
\end{pmatrix}
\, , & \mbox{for all $\mu < \nu$,}
\end{align*}
respectively. Each $\mathbf{L}_{\mu \nu}$ and each $L_\mu$ are obviously non-singular by the genericity assumption. Hence, the term \eqref{eq-special_term} is non-vanishing.

Further, one can check that the components of the Weyl tensor in the entries of $L_\mu$ and $\mathbf{L}_{\mu \nu}$ have distinct index structures from those in the remaining entries of $\mathbf{L}$. Hence, we can apply Lemma \ref{lem-det_matrix} to conclude that $\mathbf{L}$ is non-singular, thus establishing conditions \eqref{eq-integrability_connection_components_odd1}.

Finally, equations \eqref{eq-Bianchi0b} now constrain $\Gamma_{0 \mu \nu}$ for all $\mu$, $\nu$. Pick all $\frac{1}{2}m(m-1)$ equations $B_{\tilde{\mu} \tilde{\nu} 0| \mu \nu}$
\begin{align*}
0  & = 
\Gamma \ind{_{0 \mu \nu}} ( C \ind{_{\mu {\tilde{\mu}} \tilde{\mu} \tilde{\nu}}} + C \ind{_{\nu {\tilde{\nu}} \tilde{\mu} \tilde{\nu}}} )
+ \sum_{\sigma \neq \mu, \nu} ( \Gamma \ind{_{0 \mu \sigma}} C \ind{_{\nu {\tilde{\sigma}} \tilde{\mu} \tilde{\nu}}} - \Gamma \ind{_{0 \nu \sigma}} C \ind{_{{\mu \tilde{\sigma}} \tilde{\mu} \tilde{\nu}}} )
\end{align*}
which can be put into matrix form \eqref{mat-GS_odd2} by defining, for all $\mu < \nu$,
\begin{align*}
 M_{\mu \nu} & := C \ind{_{\mu {\tilde{\mu}} \tilde{\mu} \tilde{\nu}}} + C \ind{_{\nu {\tilde{\nu}} \tilde{\mu} \tilde{\nu}}} \, .
\end{align*}
which is non-vanishing by the genericity assumption. Hence, the term \eqref{eq-special_term_odd2} is non-vanishing.

Unlike the two previous steps, an issue regarding the index structures of the components of the Weyl tensor arises. Indeed, components of the form $C \ind{_{\mu {\tilde{\mu}} \tilde{\mu} \tilde{\nu}}}$ for all $\mu$, $\nu$ occur in both $M_{\mu \nu}$ and the remaining entries of $\mathbf{M}$. This can be seen from the tracefree property of the Weyl tensor,
\begin{align*}
 C \ind{_{\nu {\tilde{\sigma}} \tilde{\mu} \tilde{\nu}}} & = ( C \ind{_{\mu {\tilde{\mu}} \tilde{\mu} \tilde{\sigma}}} - C \ind{_{\sigma {\tilde{\sigma}} \tilde{\mu} \tilde{\sigma}}} ) - C \ind{_{\tilde{\nu} {\tilde{\sigma}} \tilde{\mu} \nu}} - \sum_{\rho \neq \mu, \nu, \sigma} ( C \ind{_{\rho {\tilde{\sigma}} \tilde{\mu} \tilde{\rho}}} - C \ind{_{\tilde{\rho} {\tilde{\sigma}} \tilde{\mu} \rho}} ) - C \ind{_{0 {\tilde{\sigma}} \tilde{\mu} 0}} \, ,
\end{align*}
for all $\mu$, $\nu$, $\sigma$. This would thus preclude the application of Lemma \ref{lem-det_matrix} as we have previously used it. However, defining new index structures by setting $C_{\tilde{\mu} \tilde{\nu} +} := C \ind{_{\mu {\tilde{\mu}} \tilde{\mu} \tilde{\nu}}} + C \ind{_{\nu {\tilde{\nu}} \tilde{\mu} \tilde{\nu}}}$ and $C_{\tilde{\mu} \tilde{\nu} -} := C \ind{_{\mu {\tilde{\mu}} \tilde{\mu} \tilde{\nu}}} - C \ind{_{\nu {\tilde{\nu}} \tilde{\mu} \tilde{\nu}}}$ for all $\mu$, $\nu$, removes this ambiguity. The hypotheses of Lemma \ref{lem-det_matrix} are now fulfilled, and yield the desired result, i.e conditions \eqref{eq-integrability_connection_components_odd2}.

\paragraph{Case $k=3$:}
Assume the Weyl tensor is a section of $\mcC^3$, so that conditions \eqref{eq-C0_components}, \eqref{eq-C0_components_odd}, \eqref{eq-C1_components}, \eqref{eq-C2_components_odd} and \eqref{eq-C2_components} hold. Then equations \eqref{eq-Bianchi0c}, \eqref{eq-Bianchi0d}, \eqref{eq-Bianchi-1a}, and \eqref{eq-Bianchi-1b} become
\begin{align*}
0 & =  4 g \ind{_{\lb{\mu} | \lb{\tilde{\kappa}}}} A \ind{_{\rb{\tilde{\lambda}} | \rb{\nu} 0}} 
+ 2 \Gamma \ind{_{[\mu \nu]} ^{\tilde{\sigma}}} C \ind{_{0 {\tilde{\sigma}} \tilde{\kappa} \tilde{\lambda}}} =: B_{\mu \nu 0 |\tilde{\kappa} \tilde{\lambda}} \tag{\ref{eq-Bianchi0c}} \, , \\
0 & =  2 g \ind{_{\lb{\mu} | \tilde{\kappa}}} A \ind{_{0 | \rb{\nu} \tilde{\rho}}} 
+ 2 \Gamma \ind{_{[\mu \nu]} ^{\tilde{\sigma}}} C \ind{_{\tilde{\rho} {\tilde{\sigma}} \tilde{\kappa} 0}} 
=: B_{\mu \nu \tilde{\rho} | \tilde{\kappa} 0} \tag{\ref{eq-Bianchi0d}} \, , \\
0 &=  - 2 g \ind{_{\lb{\tilde{\mu}} | \lambda}} A \ind{_{\tilde{\kappa} | \rb{\tilde{\nu}} \rho}} 
+ g \ind{_{\rho \tilde{\kappa}}} A \ind{_{\lambda \tilde{\mu} \tilde{\nu}}} 
- \Gamma \ind{_{\rho \lambda} ^{\tilde{\sigma}}} C \ind{_{\tilde{\kappa} {\tilde{\sigma}} \tilde{\mu} \tilde{\nu}}}
- \Gamma \ind{_{\rho \lambda} ^0} C \ind{_{\tilde{\kappa} 0 \tilde{\mu} \tilde{\nu}}} =: B_{\tilde{\mu} \tilde{\nu} \rho | \tilde{\kappa} \lambda} \tag{\ref{eq-Bianchi-1a}} \, , \\
0 & =  g \ind{_{\nu \tilde{\kappa}}} A \ind{_{0 0 \tilde{\mu}}} - A \ind{_{\tilde{\kappa} \tilde{\mu} \nu}} 
+ \Gamma \ind{_{\nu 0} ^{\tilde{\sigma}}} C \ind{_{\tilde{\mu} {\tilde{\sigma}} \tilde{\kappa} 0}} 
- \Gamma \ind{_{0 \nu} ^{\tilde{\sigma}}} C \ind{_{\tilde{\mu} {\tilde{\sigma}} \tilde{\kappa} 0}} 
- \Gamma \ind{_{\nu 0} ^{\tilde{\sigma}}} C \ind{_{\tilde{\kappa} {\tilde{\sigma}} 0 \tilde{\mu}}}  
   =: B_{\tilde{\mu} \nu 0 | \tilde{\kappa} 0} \, . \tag{\ref{eq-Bianchi-1b}}
\end{align*}
Now, suppose the Weyl tensor is otherwise generic, and the Cotton-York tensor is a section of $\mcA^{2}$. Equations \eqref{eq-Bianchi0c} and \eqref{eq-Bianchi0d} constrain $\Gamma_{\mu \nu \kappa}$ for all $\kappa$, $\mu$, $\nu$. Pick all $m(m-1)$ equations $B_{\mu \nu \tilde{\mu} | \tilde{\nu} 0}$ and all $\frac{1}{2}m(m-1)(m-2)$ equations $B_{\mu \nu \tilde{\mu} | \tilde{\kappa} 0}$
\begin{align*}
0 & =
\Gamma \ind{_{\nu \nu \mu}} C \ind{_{\tilde{\mu} {\tilde{\nu}} \tilde{\nu} 0}}
+ \sum_{\sigma \neq \mu, \nu } ( \Gamma \ind{_{\mu \nu \sigma}} C \ind{_{\tilde{\mu} {\tilde{\sigma}} \tilde{\nu} 0}} - \Gamma \ind{_{\nu \mu \sigma}} C \ind{_{\tilde{\mu} {\tilde{\sigma}} \tilde{\nu} 0}} ) \, , \\
0 & =
\Gamma \ind{_{\nu \nu \mu}} C \ind{_{\tilde{\mu} {\tilde{\nu}} \tilde{\kappa} 0}}
+ \Gamma \ind{_{\mu \nu \kappa}} C \ind{_{\tilde{\mu} {\tilde{\kappa}} \tilde{\kappa} 0}}
+ \Gamma \ind{_{\nu \kappa \mu}} C \ind{_{\tilde{\mu} {\tilde{\kappa}} \tilde{\kappa} 0}}
+ \sum_{\sigma \neq \mu, \nu, \kappa} ( \Gamma \ind{_{\mu \nu \sigma}} C \ind{_{\tilde{\mu} {\tilde{\sigma}} \tilde{\kappa} 0}} - \Gamma \ind{_{\nu \mu \sigma}} C \ind{_{\tilde{\mu} {\tilde{\sigma}} \tilde{\kappa} 0}} ) \, ,
\end{align*}
which can be put in matrix form \eqref{mat-GS_even} by defining, for all $\kappa$, $\rho$ distinct, and $\mu < \nu < \lambda$,
\begin{align*}
K_{\kappa \rho} & :=  C \ind{_{\tilde{\rho} {\tilde{\kappa}} \tilde{\kappa} 0}} \, , &
\mathbf{K}_{\mu \nu \kappa} & :=
 \begin{pmatrix}
  C \ind{_{\tilde{\mu} {\tilde{\kappa}} \tilde{\kappa} 0}} & C \ind{_{\tilde{\mu} {\tilde{\kappa}} \tilde{\kappa} 0}} & 0 \\
  0 & C \ind{_{\tilde{\nu} {\tilde{\mu}} \tilde{\mu} 0}} & C \ind{_{\tilde{\nu} {\tilde{\mu}} \tilde{\mu} 0}} \\
  C \ind{_{\tilde{\kappa} {\tilde{\nu}} \tilde{\nu} 0}}  & 0 & C \ind{_{\tilde{\kappa} {\tilde{\nu}} \tilde{\nu} 0}} \\
 \end{pmatrix} \, ,
\end{align*}
respectively. That these matrices are non-singular by the genericity assumption is clear, and so the term \eqref{eq-special_term} is non-vanishing.

Further, one can check that the components of the Weyl tensor in the entries of $K_{\mu \nu}$ and $\mathbf{K}_{\kappa \mu \nu}$ have distinct index structures from those in the remaining entries of $\mathbf{K}$. Hence, we can apply Lemma \ref{lem-det_matrix} to conclude that $\mathbf{K}$ is non-singular.

Next, equations \eqref{eq-Bianchi-1a} now constrain $\Gamma_{\mu \nu 0}$ for all $\mu, \nu$, and conditions \eqref{eq-integrability_connection_components_odd1} follow directly from any subsets of $m^2$ equations
\begin{align*}
0 &=  - \Gamma \ind{_{\rho \lambda 0}} C \ind{_{\tilde{\kappa} 0 \tilde{\mu} \tilde{\nu}}} \, .
\end{align*}

Finally, equations \eqref{eq-Bianchi-1b} now only constrain $\Gamma_{0 \nu \kappa}$ for all $\kappa$, $\nu$. Pick $\frac{1}{2}m(m-1)$ equations $B_{\tilde{\mu} \mu 0 | \tilde{\kappa} 0}$
\begin{align*}
0 & = 
- \Gamma \ind{_{0 \mu \kappa}} C \ind{_{\tilde{\mu} {\tilde{\kappa}} \tilde{\kappa} 0}} 
- \sum_{\sigma \neq \mu, \nu, \kappa} \Gamma \ind{_{0 \mu \sigma}} C \ind{_{\tilde{\mu} {\tilde{\sigma}} \tilde{\kappa} 0}} \, ,
\end{align*}
which can be put in matrix form \eqref{mat-GS_odd1} by defining, for all $\mu < \kappa$,
\begin{align*}
 M_{\mu \kappa} := - C \ind{_{\tilde{\mu} {\tilde{\kappa}} \tilde{\kappa} 0}} \, .
\end{align*}
This is non-vanishing by the genericity assumption, and thus the term \eqref{eq-special_term_odd1} is non-vanishing too.

Further, one can check that the components of the Weyl tensor in the diagonal entries $M_{\mu \nu}$ of $\mathbf{M}$ have distinct index structures from those in the remaining entries of $\mathbf{M}$. Hence, we can apply Lemma \ref{lem-det_matrix} to conclude that $\mathbf{M}$ is non-singular, thus establishing conditions \eqref{eq-integrability_connection_components_odd1}

\paragraph{Case $k=4$:}
Assume that the Weyl tensor is a section of $\mcC^4$, so that conditions \eqref{eq-C0_components}, \eqref{eq-C0_components_odd}, \eqref{eq-C1_components}, \eqref{eq-C2_components_odd} and \eqref{eq-C2_components} hold together with
\begin{align} \label{eq-C4_components_odd}
 C \ind{_{\tilde{\kappa} 0 \tilde{\mu} \tilde{\nu}}} & = 0 \, ,
\end{align}
for all $\kappa$, $\mu$, $\nu$. Then equations \eqref{eq-Bianchi-1a}, \eqref{eq-Bianchi-1b}, \eqref{eq-Bianchi-2a}, and \eqref{eq-Bianchi-2b} become
\begin{align*}
0 & =  g \ind{_{\nu \tilde{\kappa}}} A \ind{_{0 0 \tilde{\mu}}} - A \ind{_{\tilde{\kappa} \tilde{\mu} \nu}} =: B_{\tilde{\mu} \nu 0 | \tilde{\kappa} 0} \tag{\ref{eq-Bianchi-1a}} \, , \\
0 &=  - 2 g \ind{_{\lb{\tilde{\mu}} | \lambda}} A \ind{_{\tilde{\kappa} | \rb{\tilde{\nu}} \rho}} + g \ind{_{\rho \tilde{\kappa}}} A \ind{_{\lambda \tilde{\mu} \tilde{\nu}}} 
- \Gamma \ind{_{\rho \lambda} ^{\tilde{\sigma}}} C \ind{_{\tilde{\kappa} {\tilde{\sigma}} \tilde{\mu} \tilde{\nu}}} =: B_{\tilde{\mu} \nu \rho | \tilde{\kappa} \tilde{\lambda}} \tag{\ref{eq-Bianchi-1b}} \, , \\
 0 & =  g \ind{_{\rho \tilde{\kappa}}} A \ind{_{0 \tilde{\mu} \tilde{\nu}}}
- \Gamma \ind{_{\rho 0} ^{\tilde{\sigma}}} C \ind{_{\tilde{\kappa} {\tilde{\sigma}}  \tilde{\mu} \tilde{\nu}}} =: B_{\tilde{\mu} \tilde{\nu} \rho | \tilde{\kappa} 0} \tag{\ref{eq-Bianchi-2a}} \, , \\
0 & =  2 g \ind{_{\lb{\tilde{\mu}} | \lambda}} A \ind{_{\tilde{\kappa} | \rb{\tilde{\nu}} 0}}
- \Gamma \ind{_{0 \lambda} ^{\tilde{\sigma}}} C \ind{_{\tilde{\kappa} {\tilde{\sigma}} \tilde{\mu} \tilde{\nu}}} =: B_{\tilde{\mu} \tilde{\nu} 0 | \tilde{\kappa} \lambda} \tag{\ref{eq-Bianchi-2b}} \, .
\end{align*}
Now, suppose that the Weyl tensor is otherwise generic, and the Cotton-York tensor is a section of $\mcA^{3}$. Referring to the proof of case $k=2$ of Theorem \ref{thm-GS}, equations \eqref{eq-Bianchi-1b} leads immediately to conditions \eqref{eq-integrability_connection_components}.

Next, equations \eqref{eq-Bianchi-2a} now constrain only $\Gamma_{\kappa \mu 0}$ for all $\kappa$, $\mu$. Choose $m$ equations of $B_{\tilde{\mu} \tilde{\nu} \nu | \tilde{\mu} 0}$ and all $m(m-1)$ equations $B_{\tilde{\mu} \tilde{\nu} \mu | \tilde{\mu} 0}$
\begin{align*}
 0  & = 
  \Gamma \ind{_{\nu \nu 0}} C \ind{_{\tilde{\mu} {\tilde{\nu}}  \tilde{\mu} \tilde{\nu}}} 
+ \sum_{\sigma \neq \mu, \nu, \rho} \Gamma \ind{_{\rho \sigma 0}} C \ind{_{\tilde{\mu} {\tilde{\sigma}}  \tilde{\mu} \tilde{\nu}}} \, , &
 0  & = 
 \Gamma \ind{_{\mu \nu 0}} C \ind{_{\tilde{\mu} {\tilde{\nu}}  \tilde{\mu} \tilde{\nu}}} 
+ \sum_{\sigma \neq \mu, \nu, \rho} \Gamma \ind{_{\rho \sigma 0}} C \ind{_{\tilde{\mu} {\tilde{\sigma}}  \tilde{\mu} \tilde{\nu}}} \, ,
\end{align*}
respectively. These can be put into the matrix form \eqref{mat-GS_odd1} by defining
\begin{align*}
 L_\nu & := \begin{cases}
             C \ind{_{\tilde{m} {\tilde{\nu}}  \tilde{m} \tilde{\nu}}} \, , & \mbox{for all $\nu \neq m$,} \\
             C \ind{_{\widetilde{m-1} {\tilde{m}}  \widetilde{m-1} \tilde{m}}} \, , & \mbox{for $\nu = m$,} \\
            \end{cases} \\
 \mathbf{L}_{\mu \nu} & :=
\begin{pmatrix}
  C \ind{_{\tilde{\mu} {\tilde{\nu}}  \tilde{\mu} \tilde{\nu}}} & 0 \\
0 & - C \ind{_{\tilde{\nu} {\tilde{\mu}}  \tilde{\nu} \tilde{\mu}}} 
\end{pmatrix} \, , &  \mbox{for all $\mu < \nu$,}
\end{align*}
respectively, and $\det (\mathbf{L}_{\mu \nu} ) = - ( C \ind{_{\tilde{\mu} {\tilde{\nu}}  \tilde{\mu} \tilde{\nu}}} )^2$. These are clearly non-singular by the genericity assumption, and thus, the term \eqref{eq-special_term_odd1} is non-vanishing.

Further, one can check that the components of the Weyl tensor in the entries of $L_\mu$ and $\mathbf{L}_{\mu \nu}$ have distinct index structures from those in the remaining entries of $\mathbf{L}$. Hence, we can apply Lemma \ref{lem-det_matrix} to conclude that $\mathbf{L}$ is non-singular, thus establishing conditions \eqref{eq-integrability_connection_components_odd1}.

Finally, equations \eqref{eq-Bianchi-2b} now constrain $\Gamma \ind{_{0 \mu \nu}}$ only. Pick all $\tfrac{1}{2}m(m-1)$ equations $B_{\tilde{\mu} \tilde{\nu} 0 | \tilde{\mu} \mu}$
\begin{align*}
 0 & =  
- \Gamma \ind{_{0 \mu \nu}} C \ind{_{\tilde{\mu} {\tilde{\nu}} \tilde{\mu} \tilde{\nu}}} 
- \sum_{\sigma \neq \mu, \nu} \Gamma \ind{_{0 \mu \sigma}} C \ind{_{\tilde{\mu} {\tilde{\sigma}} \tilde{\mu} \tilde{\nu}}} \, ,
\end{align*}
which can be put in matrix form \eqref{mat-GS_odd2} by defining, for all $\mu < \nu$,
\begin{align*}
 M_{\mu \nu} & := - C \ind{_{\tilde{\mu} {\tilde{\nu}} \tilde{\mu} \tilde{\nu}}} \, .
\end{align*}
The genericity assumption tells us that this is non-vanishing non-vanishing, and so, the term \eqref{eq-special_term_odd2} is non-vanishing.

Further, one can check that the components of the Weyl tensor in the entries of $M_{\mu \nu}$ have distinct index structures from those in the remaining entries of $\mathbf{M}$. Hence, we can apply Lemma \ref{lem-det_matrix} to conclude that $\mathbf{M}$ is non-singular, and condition \eqref{eq-integrability_connection_components_odd2} holds true.
\end{proof}

\subsection{Conformal invariance}\label{sec-conformal}
An alternative way to relate the algebraic properties of the Weyl tensor and the Cotton-York tensor, and the integrability of the almost null structure as given by the propositions and theorems above is to consider a conformal change of metric
\begin{align}\label{eq-conformal_change}
 \hat{\bm{g}} = \Omega^2 \bm{g}
\end{align}
for some non-vanishing holomorphic function $\Omega : \mcM \rightarrow \C$. First, it is clear that under such a change, the defining property of the null distribution and its orthogonal complement is invariant, i.e. $\hat{\bm{g}}$ is degenerate on $\mcN$ if and only if $\bm{g}$ is. Further, since the involutivity of these distributions does not depend on the metric, we obtain
\begin{lem}
 The integrability of an almost null structure $\mcN$ is a conformally invariant property.
\end{lem}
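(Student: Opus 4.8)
The plan is to reduce the statement to the Frobenius theorem quoted above, exploiting the fact that involutivity is a purely metric-free notion. First I would observe that the subbundle $\mcN \subset \Tgt \mcM$ is the \emph{same} subbundle whether we work with $\bm{g}$ or with $\hat{\bm{g}} = \Omega^2 \bm{g}$: being totally null means $\bm{g}(\bm{X}, \bm{Y}) = 0$ for all sections $\bm{X}, \bm{Y}$ of $\mcN$, and since $\Omega$ is nowhere vanishing, $\hat{\bm{g}}(\bm{X}, \bm{Y}) = \Omega^2 \bm{g}(\bm{X}, \bm{Y})$ vanishes exactly when $\bm{g}(\bm{X}, \bm{Y})$ does. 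This is precisely the observation already recorded in the paragraph preceding the statement.

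Next I would check that the orthogonal complement is likewise left unchanged. A vector field $\bm{X}$ lies in $\mcN^\perp$ with respect to $\hat{\bm{g}}$ if and only if $\hat{\bm{g}}(\bm{X}, \bm{Z}) = \Omega^2 \bm{g}(\bm{X}, \bm{Z}) = 0$ for all $\bm{Z} \in \Gamma(\mcN)$, which, again because $\Omega \neq 0$, holds if and only if $\bm{X} \in \mcN^\perp$ with respect to $\bm{g}$. Hence $\mcN$ and $\mcN^\perp$ determine one and the same filtration $\mcN \subset \mcN^\perp \subset \Tgt \mcM$ for every metric in the conformal class $[\bm{g}]$.

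With both subbundles identified across the conformal class, the conclusion follows from the Frobenius theorem as stated above: the integrability of $\mcN$ (and, in odd dimensions, of $\mcN^\perp$) is equivalent to involutivity, i.e. to the conditions $[\bm{X}, \bm{Y}] \in \Gamma(\mcN)$ for all $\bm{X}, \bm{Y} \in \Gamma(\mcN)$ and $[\bm{S}, \bm{T}] \in \Gamma(\mcN^\perp)$ for all $\bm{S}, \bm{T} \in \Gamma(\mcN^\perp)$. The Lie bracket of holomorphic vector fields makes no reference to any metric, and the two membership conditions refer only to the subbundles $\mcN$ and $\mcN^\perp$, which the previous two steps show are the same for $\bm{g}$ and for $\hat{\bm{g}}$. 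Therefore $\mcN$ is involutive with respect to $\hat{\bm{g}}$ if and only if it is involutive with respect to $\bm{g}$, and integrability transfers between the two metrics.

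I anticipate essentially no obstacle: the whole content is that \emph{totally null} and \emph{orthogonal complement} are conformally robust while \emph{involutive} is metric-independent, so that no transformation law for the Levi-Civita connection is needed. The only point that calls for a moment's care is the odd-dimensional case, where one must separately verify that $\mcN^\perp$ is genuinely the same subbundle for both metrics before invoking Frobenius on it; but this is immediate from the same $\Omega \neq 0$ argument, which is what keeps the proof clean.
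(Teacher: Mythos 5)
Your proposal is correct and follows exactly the argument the paper gives: the paper observes that the conformal rescaling $\hat{\bm{g}} = \Omega^2 \bm{g}$ with $\Omega$ nowhere vanishing leaves the totally null condition and the orthogonal complement unchanged, and that involutivity of $\mcN$ and $\mcN^\perp$ is a metric-independent (Frobenius) condition. Your extra care with $\mcN^\perp$ in the odd-dimensional case is consistent with, and implicit in, the paper's remark that "the defining property of the null distribution and its orthogonal complement is invariant."
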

It then comes as no surprise that the integrability condition of $\mcN$ given in Proposition \ref{prop-integrability_condition} is itself purely conformal since it only involves the conformally invariant Weyl tensor.

Now define $\bm{\Upsilon} := \bm{g} ( \Omega^{-1} \nabla \Omega )$ and denote by $\hat{\nabla}$ the Levi-Civita connection of $\hat{\bm{g}}$. If $\hat{\bm{A}}$ denotes the Cotton-York tensor of $\hat{\nabla}$, it is well-known (see e.g. \cite{Gover2010} and references therein) that under the conformal change \eqref{eq-conformal_change} the Cotton-York tensor transforms as
\begin{align}\label{eq-CY_conformal}
 \hat{\bm{A}} ( \bm{X}, \bm{Y}, \bm{Z} ) & = \bm{A} ( \bm{X}, \bm{Y}, \bm{Z} ) - \bm{C} ( \bm{\Upsilon} , \bm{X} , \bm{Y} , \bm{Z} )
\end{align}
for all $\bm{X}, \bm{Y}, \bm{Z} \in \Gamma (\Tgt \mcM)$. In four dimensions, similar statements can be madewith regard to the self-dual and anti-self-dual parts of the Weyl and Cotton-York tensors. By Lemma \ref{lem-type_characterisation}, we can now conclude
\begin{lem}\label{lem-CY_conformal}
 Let $(\mcM , \bm{g})$ be a $(2m+\epsilon)$-dimensional complex Riemannian manifold endowed with an almost null structure $\mcN$, where $\epsilon \in \{ 0 , 1 \}$ and $m\geq 2$. Assume $2m+\epsilon \geq 5$. When $\epsilon=0$, respectively, $\epsilon=1$, if the Weyl tensor is a section of $\mcC^k$ for $k \in \{0,1,2 \}$, respectively for $k \in \{0,1,2,3,4 \}$, then the property that the Cotton-York is a section of $\mcA^{k-\tfrac{1}{2}}$, respectively $\mcA^{k-1}$, is conformally invariant.

Assuming $2m+\epsilon = 4$ and $\mcN$ self-dual, if the self-dual part of the Weyl tensor is a section of ${}^+\mcC^k$ for $k \in \{0,1,2 \}$, then the property that the self-dual part of the Cotton-York is a section of ${}^+ \mcA^{k-\tfrac{1}{2}}$ is conformally invariant.
\end{lem}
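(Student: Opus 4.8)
The plan is to combine three ingredients already at hand: the conformal invariance of the Weyl tensor, so that $\bm{C} \in \Gamma(\mcC^k)$ is automatically a condition holding for $\hat{\bm{g}}$ precisely when it holds for $\bm{g}$; the transformation rule \eqref{eq-CY_conformal}, which expresses the difference $\hat{\bm{A}} - \bm{A}$ as the single contraction $-\bm{C}(\bm{\Upsilon}, \cdot, \cdot, \cdot)$ of the (unchanged) Weyl tensor with $\bm{\Upsilon}$; and the tensorial characterisation of the filtration bundles in Lemma \ref{lem-type_characterisation}. Since each $\mcA^\ell$ is a linear subbundle of $\mcA$, it suffices to show that whenever $\bm{C} \in \Gamma(\mcC^k)$ the difference term $\bm{A}' := \bm{C}(\bm{\Upsilon}, \cdot, \cdot, \cdot)$ is a section of $\mcA^{k-\tfrac{1}{2}}$ when $\epsilon = 0$, and of $\mcA^{k-1}$ when $\epsilon = 1$. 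Granting this, $\bm{A} \in \Gamma(\mcA^{k-\tfrac{1}{2}})$ if and only if $\hat{\bm{A}} = \bm{A} - \bm{A}'$ does, and similarly in odd dimensions, which is exactly the asserted conformal invariance.

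First I would verify that $\bm{A}'$ really carries Cotton-York symmetries, i.e. is a section of $\mcA$: it is skew in its last two arguments because $\bm{C}$ is skew in its last pair; its total antisymmetrisation over its three arguments vanishes because the first Bianchi identity makes the antisymmetrisation of $\bm{C}$ over its last three slots vanish, and contracting the remaining slot with $\bm{\Upsilon}$ transmits this to $\bm{A}'$, placing it in the kernel of the wedge map; and it is tracefree because every trace of $\bm{A}'$ reduces to a trace of the tracefree Weyl tensor.

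The heart of the argument is then a filtration-degree count via Lemma \ref{lem-type_characterisation}. Regarding $\bm{\Upsilon}$ (raised by $\bm{g}$) as a vector field, it is an arbitrary section of the whole tangent bundle and so occupies the bottom of the filtration, namely $\mcV^{-\tfrac{1}{2}}$ when $\epsilon=0$ and $\mcV^{-1}$ when $\epsilon=1$. To test $\bm{A}' \in \Gamma(\mcA^{k-\tfrac{1}{2}})$ in even dimensions, Lemma \ref{lem-type_characterisation} demands $\bm{A}'(\bm{X}_{i_1}, \bm{X}_{i_2}, \bm{X}_{i_3}) = 0$ for all $\bm{X}_{i_j} \in \Gamma(\mcV^{i_j})$ with $i_1+i_2+i_3 = \tfrac{3}{2}-k$. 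But $\bm{A}'(\bm{X}_{i_1}, \bm{X}_{i_2}, \bm{X}_{i_3}) = \bm{C}(\bm{\Upsilon}, \bm{X}_{i_1}, \bm{X}_{i_2}, \bm{X}_{i_3})$ evaluates $\bm{C}$ on four arguments whose filtration labels $-\tfrac{1}{2}, i_1, i_2, i_3$ sum to $-\tfrac{1}{2} + (\tfrac{3}{2}-k) = 1-k$, so this vanishes by the characterisation of $\mcC^k$. The odd case is identical: the labels $-1, i_1, i_2, i_3$ sum to $1-k$ precisely when $i_1+i_2+i_3 = 2-k = 1-(k-1)$, so $\bm{C} \in \Gamma(\mcC^k)$ forces $\bm{A}' \in \Gamma(\mcA^{k-1})$. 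In both cases the generic bottom-filtration slot filled by $\bm{\Upsilon}$ is exactly what shifts the Weyl type index $k$ down to the Cotton-York type index $k - \tfrac{1+\epsilon}{2}$.

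Finally, for $2m+\epsilon = 4$ with $\mcN$ self-dual, the transformation \eqref{eq-CY_conformal} restricts to the self-dual parts, the self-dual Weyl tensor contracted with $\bm{\Upsilon}$ producing the self-dual Cotton-York contribution, so the same count applied to the self-dual filtrations \eqref{eq-Weyl_filtration_4} and \eqref{eq-CY_filtration_4} yields the claim for ${}^+\bm{C}$ and ${}^+\bm{A}$. The only point that will demand genuine care is the bookkeeping in the previous paragraph: confirming that an arbitrary $\bm{\Upsilon}$ sits at the minimal filtration degree $-\tfrac{1+\epsilon}{2}$ and that inserting it into the extra slot of $\bm{C}$ produces exactly the index shift matching $\mcC^k \to \mcA^{k-\tfrac{1+\epsilon}{2}}$; once this is secured, the conclusion is immediate from the linearity of the subbundles $\mcA^\ell$.
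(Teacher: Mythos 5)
Your argument is correct and is essentially the paper's own proof: the paper derives the lemma directly from the transformation law \eqref{eq-CY_conformal} together with Lemma \ref{lem-type_characterisation}, which is exactly your filtration-degree count showing that inserting the generic vector $\bm{\Upsilon}$ of degree $-\tfrac{1+\epsilon}{2}$ into the extra slot of a section of $\mcC^k$ produces a section of $\mcA^{k-\tfrac{1+\epsilon}{2}}$. Your additional verification that the correction term carries Cotton-York symmetries is a harmless (and in fact unnecessary, since $\hat{\bm{A}}$ is itself a Cotton-York tensor) elaboration of what the paper leaves implicit.
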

It then follows immediately that Propositions \ref{prop-GS} and \ref{prop-GS_odd}, and Theorems \ref{thm-GS4}, \ref{thm-GS} and \ref{thm-GS_odd} are conformally invariant.

\section{Further degeneracy} \label{sec-GS_degenerate}
\subsection{On the genericity assumption of the Weyl tensor}
In general, imposing additional structures on a complex Riemannian manifold $(\mcM, \bm{g})$ and its almost null structure $\mcN$ will make the Weyl tensor degenerate further, and it is therefore important to keep track of the emerging algebraic relations between the components of the Weyl tensor to check whether Theorems \ref{thm-GS} and \ref{thm-GS_odd} remain valid. If one realises that the proofs fail under stronger assumptions, one may still have the option of making a different choice of components, and succeed in proving the assertion of the theorem, although in some cases \cite{Taghavi-Chabert2011}, no such choice may present itself.

Here, we list a number of reasons leading to further degeneracy to the Weyl tensor:
\begin{itemize}
 \item The basic framework of the results of section \ref{sec-GS} is the filtered vector bundles $(\mcC , \{ \mcC^i \} )$ and $(\mcA , \{ \mcA^i \} )$. However, as pointed out in Remarks \ref{rem-rep} and \ref{rem-refine}, it is possible to refine the classifications of the Weyl tensor and Cotton-York tensor by considering the irreducible $\prb$-modules contained in each of the quotient bundles $\mcC^i/\mcC^{i+1}$ and $\mcA^i/\mcA^{i+1}$. In particular, Propositions \ref{prop-GS} and \ref{prop-GS_odd} can certainly be made more precise. Extensions of Theorems \ref{thm-GS} and \ref{thm-GS_odd} in this setting, however, are less straightforward. Such a generalisation would remain invariant under $P$.
 \item One may consider the algebraic degeneracy of the Weyl tensor with respect to more than one almost null structure, in particular, any of the canonical almost null structures defined in section \ref{sec-null_basis}, in which case the discussion ceases to be $P$-invariant. This is a generalisation of the four-dimensional Petrov type D condition.
 \item The discussion can also be extended in a natural way to real smooth pseudo-Riemannian manifolds, in which case the almost null structure must satisfy certain reality conditions. In fact, Theorem \ref{thm-GS4} was initially stated in Lorentzian geometry.
\end{itemize}

Of course, any combinations of these further degenerate cases can be used. We shall leave the first of these considerations for now, although we shall briefly comment on it in section \ref{sec-Robinson_structure}. Instead, we focus on the last two points.

Before we proceed, some scepticism might be expressed as to whether the rather broad genericity assumption used in the proofs of Theorem \ref{thm-GS} and \ref{thm-GS_odd} is reasonable. In other words, one may ask the question: are there any `interesting' real or complex (pseudo-)Riemannian manifolds, whose conformal curvature does not degenerate so much as to make the claims invalid? But the present work has precisely been motivated by the existence of such explicit examples as the Kerr-NUT-AdS metric \cite{Chen2006}, which is, in fact, endowed with multiple null structures and a reality structure, and a certain class of higher-dimensional Kerr-Schild metrics \cite{Taghavi-Chaberta}. On the other hand, it is worth pointing out that those manifolds that do not fall into the `generic' class may well exhibit other geometric structures of interest -- a five-dimensional Lorentzian class of such manifolds is considered in \cite{Gomez-Lobo2011}.

\begin{rem}
 It would also be instructive to derive the Jordan normal forms of the Weyl tensor, regarded as a section of the bundle of endomorphisms of $\bigwedge^\bullet \Tgt^* \mcM$, corresponding to the each of the degeneracy classes $\mcC^k$. In this way, the genericity assumption could become more transparent. Further, the eigenvalue structure would provide necessary conditions for the existence of a null structure by means of curvature invariants \cite{Coley2010}. This being said, it was shown in \cite{Taghavi-Chabert2011}, at least in five dimensions, Lorentzian signature, that the Jordan normal form alone does not determine the algebraic speciality of the Weyl tensor. In fact, the existence of a certain number of null eigenforms, both simple and non-simple, appears to be a crucial factor in that matter.\footnote{One can already see that if the Weyl tensor is a section of $\mcC^0$, then any simple section of $\bigwedge^2 (\mcV^*)^1$, i.e. scalar multiples of $\tilde{\bm{\theta}}^{\tilde{\mu}} \wedge \tilde{\bm{\theta}}^{\tilde{\nu}}$ for all $\mu$, $\nu$, is an eigenform of $\bm{C}$.}
\end{rem}

\subsection{Degeneracy of the Cotton-York tensor}
The content of Propositions \ref{prop-GS} and \ref{prop-GS_odd} is really that the Cotton-York tensor should be regarded as an obstruction to the integrability of an almost null structure when the Weyl tensor is algebraically special with respect to it. In fact, Theorems \ref{thm-GS} and \ref{thm-GS_odd} do not depend on any genericity assumption on the Cotton-York tensor. Thus, one may apply stronger conditions on the Cotton-York tensor without affecting Theorems \ref{thm-GS} and \ref{thm-GS_odd}, i.e. the almost null structure remains integrable. These will in general no longer be conformally invariant by equation \eqref{eq-CY_conformal}.

An interesting issue that arises as a result of further degeneracy of the Cotton-York tensor, such as the Einstein condition, is whether one can deduce that more connection components vanish, i.e. the null structure enjoy further geometric properties, beside integrability, as determined, e.g. by the differential equations \eqref{eq-null_class_even} or \eqref{eq-null_class_odd}. In four dimensions, we know that this is not the case. In higher dimensions, if the Weyl tensor is a generic section of $\mcC^0$, then the integrability of the almost null structure is also all one can deduce. On the other hand, for the other degeneracy classes $\mcC^k$ for $k>0$, the Einstein condition yields algebraic relations between the Weyl tensor components and the connection components. Viewed as a homogeneous system of linear equations on the connection components, it is an open question as to whether these components must also vanish in high enough dimensions -- in low dimensions such a system is underdetermined. It is worth pointing out, however, \cite{Taghavi-Chabert2011} that additional reality conditions on five-dimensional Einstein manifolds do lead, in some instances, to further degeneracy of the connection components.

\subsection{Multiple null structures}\label{sec-multiple_null}
Recall from section \ref{sec-null_basis} that the normal form of the metric (locally) determines $2^m$ canonical almost null structures, the set of which is denoted $\mcB_S$. It is then pertinent to consider the algebraic properties of the Weyl and Cotton-York tensors with respect to any number of almost null structures in $\mcB_S$, and such an approach is clearly no longer $P$-invariant. Nonetheless, for a chosen almost null structure $\mcN$, one may still refer to the Weyl tensor as a section of $\mcC^k$ \emph{with respect to $\mcN$} for some $k$, and similarly for the Cotton-York tensor. In particular, one could apply Propositions \ref{prop-GS} and \ref{prop-GS_odd} repeatedly for any number of distinct almost null structures.

On the other hand, one has to be a little more cautious if one wishes to generalise Theorems \ref{thm-GS} and \ref{thm-GS_odd} in the present context. Indeed, assuming the algebraic degeneracy of the Weyl tensor with respect to two or more almost null structures will violate the genericity condition on the Weyl tensor. There is not enough space for a full treatment of this problem here. Instead, we focus on a generalisation of the four-dimensional Petrov type D condition in the sense that the self-dual part of the Weyl tensor is algebraically special with respect to two distinct self-dual almost null structures, i.e. it can be viewed as a section of ${}^+\mcC^0$ with respect to each of these. A similar definition can be made regarding the anti-self-dual part of the Weyl tensor.

In higher dimensions, the situation is analogous except for matters of self-duality. For clarity, the algebraic conditions on the Weyl tensor and Cotton-York tensor are given explicitly.
\begin{thm}\label{thm-GS_multi}
 Let $(\mcM, \bm{g})$ be a $(2m+\epsilon)$-dimensional complex Riemannian manifold, where $\epsilon \in \{ 0,1 \}$ and $2m+\epsilon \geq 5$. Let $\mcN$ be an almost null structure on $\mcM$, and let $\mcB$ be a subset of $\mcB_S$, the set of all canonical almost null structures on (some open subset of) $\mcM$ as defined in section \ref{sec-null_basis}. Suppose that the Weyl tensor and the Cotton-York tensor (locally) satisfy
\begin{align} \label{eq-multi-degenerate}
 \bm{C} (\bm{X} , \bm{Y} , \bm{Z} , \cdot ) & = 0 \, , & \bm{A} (\bm{Z} , \bm{X} , \bm{Y} ) & = 0 \, ,
\end{align}
respectively, for all $\bm{X}, \bm{Y} \in \Gamma ( \mcN_{M}^\perp )$, and $\bm{Z} \in \Gamma ( \mcN_{M} )$, for all $\mcN_{M} \in \mcB$. Suppose further that the Weyl tensor is otherwise generic. Then the almost null structures in $\mcB$ are (locally) integrable.
\end{thm}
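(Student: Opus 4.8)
The plan is to reduce the multi-structure statement to repeated application of the single-structure results, the only new ingredient being a check that the determinantal mechanism survives the weakened genericity. First I would translate the invariant hypotheses \eqref{eq-multi-degenerate} into component form: by Lemma \ref{lem-type_characterisation}, the conditions $\bm{C}(\bm{X},\bm{Y},\bm{Z},\cdot)=0$ and $\bm{A}(\bm{Z},\bm{X},\bm{Y})=0$ for $\bm{X},\bm{Y}\in\Gamma(\mcN_M^\perp)$, $\bm{Z}\in\Gamma(\mcN_M)$ are precisely the assertions that the Weyl tensor is a section of $\mcC^0$ and the Cotton--York tensor a section of $\mcA^{-\tfrac{1}{2}}$ (when $\epsilon=0$), respectively $\mcA^{-1}$ (when $\epsilon=1$), \emph{with respect to} $\mcN_M$, for every $\mcN_M\in\mcB$. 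Note that the Cotton--York hypothesis is exactly the one required by Theorems \ref{thm-GS} and \ref{thm-GS_odd}, which impose no genericity on $\bm{A}$. Fixing one $\mcN_M\in\mcB$ and relabelling the null frame so that $\mcN_M=\mcN_{1\ldots m}$, it then suffices to re-run the $k=0$ case of the proof of Theorem \ref{thm-GS} (for $\epsilon=0$) or Theorem \ref{thm-GS_odd} (for $\epsilon=1$) in the frame adapted to $\mcN_M$. The whole difficulty is that the Weyl tensor is no longer generic with respect to $\mcN_M$ alone, since the $\mcC^0$ conditions imposed by the other members of $\mcB$ force additional components to vanish.

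Next I would isolate exactly where genericity entered those proofs. In each $k=0$ case the relevant algebraic part of the Bianchi identity was cast as a homogeneous linear system $\mathbf{K}\mathbf{u}=\mathbf{0}$ (and, in odd dimensions, the further systems $\mathbf{L}\mathbf{v}=\mathbf{0}$, $\mathbf{M}\mathbf{w}=\mathbf{0}$) on the connection components, and genericity was used \emph{only} to invoke Lemma \ref{lem-det_matrix}, i.e. to guarantee that the distinguished diagonal blocks $K_{\mu\nu}$, $\mathbf{K}_{\mu\nu\lambda}$ (and $L_\mu$, $\mathbf{L}_{\mu\nu}$, $M_{\mu\nu}$) are non-singular and that their entries are generically unrelated to the off-diagonal entries. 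The key structural observation I would make is the following. Call a Weyl component \emph{balanced} if, for every index value $\rho\in S$, it carries equally many $\bm{\xi}_\rho$-slots and $\tilde{\bm{\xi}}_{\tilde\rho}$-slots, slots along $\bm{\xi}_0$ being neutral. Each canonical null structure $\mcN_{M'}\in\mcB_S$ differs from $\mcN_{1\ldots m}$ only by the interchanges $\bm{\xi}_\rho\leftrightarrow\tilde{\bm{\xi}}_{\tilde\rho}$ for $\rho\in\widetilde{M'}$, and such an interchange merely swaps the two weights $\pm\tfrac{1}{2}$ that the grading \eqref{eq-null_grading_even} or \eqref{eq-null_grading_odd} attaches to a matched pair; hence a balanced component has weight zero under the grading element of every $\mcN_{M'}\in\mcB_S$.

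Then I would verify, by direct inspection of the explicit diagonal blocks recorded in the $k=0$ cases, that every Weyl component occurring in $K_{\mu\nu}$, $\mathbf{K}_{\mu\nu\lambda}$, $L_\mu$, $\mathbf{L}_{\mu\nu}$ and $M_{\mu\nu}$ is balanced; for instance $C_{\mu\tilde\mu\mu\tilde\mu}$, $C_{\tilde\mu\tilde\nu\mu\nu}$, $C_{\mu\tilde\nu\nu\tilde\mu}$, and, in odd dimensions, $C_{\tilde\mu 0\mu 0}$ are manifestly balanced. By the observation above, none of these has positive weight with respect to any $\mcN_{M'}\in\mcB$, whence none is annihilated by the $\mcC^0$ degeneracy imposed by the other members of $\mcB$, since those conditions kill only components of positive weight relative to some $\mcN_{M'}$. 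Consequently the diagonal blocks survive intact, remain non-singular for generic balanced data, and the surviving off-diagonal entries stay generically unrelated to them; Lemma \ref{lem-det_matrix} therefore applies verbatim, forces $\mathbf{K}$ (and $\mathbf{L}$, $\mathbf{M}$) to be non-singular, and yields the trivial solution \eqref{eq-integrability_connection_components}, together with \eqref{eq-integrability_connection_components_odd1} and \eqref{eq-integrability_connection_components_odd2} when $\epsilon=1$. Since $\mcN_M$ was an arbitrary member of $\mcB$, every almost null structure in $\mcB$ is then integrable.

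The main obstacle I anticipate is the non-singularity of the diagonal blocks in the lowest dimensions, where the tracefree condition on the Weyl tensor couples the balanced components among themselves; this is exactly the point addressed in the footnotes to the proofs of Theorems \ref{thm-GS} and \ref{thm-GS_odd} for $2m+\epsilon=6$ and $5$. Here one must confirm that the extra linear relations forced by the other $\mcN_{M'}\in\mcB$ do not conspire with the trace relations to make some $\det(\mathbf{K}_{\mu\nu\lambda})$, $\det(\mathbf{L}_{\mu\nu})$ or $M_{\mu\nu}$ vanish identically. Because the balanced components are themselves never constrained by the $\mcC^0$ conditions, I expect the same explicit low-dimensional evaluations to carry over, now with ``generic'' read as generic among the balanced components; verifying this, and checking that the partition of index structures into the sets $A$ and $B$ of Lemma \ref{lem-det_matrix} remains valid after the relabellings identifying the different $\mcN_M$, is the one place demanding genuine case-by-case care.
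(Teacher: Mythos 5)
Your proposal is correct and follows essentially the same route as the paper: reduce to the $k=0$ cases of Theorems \ref{thm-GS} and \ref{thm-GS_odd} for each $\mcN_M\in\mcB$ and verify that the extra vanishing conditions imposed by the other members of $\mcB$ never touch the Weyl components entering the diagonal blocks, which the paper identifies explicitly as the components \eqref{eq-intersection_components} and checks by inspection. Your ``balanced component'' criterion is a tidy systematization of that inspection and of the paper's $\mu_i\leftrightarrow\tilde{\mu}_i$ symmetry argument for transferring integrability from $\mcN$ to $\mcN_M$, but it is the same proof.
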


\begin{proof}
 For definiteness, we treat the odd-dimensional case only.  With no loss of generality, we can assume that $\mcN = \mcN_{1 2 \ldots m} \in \mcB$. Let $\mcN_M$ be a canonical almost null structure in $\mcB$ distinct from $\mcN$. In particular, $M = \{ \mu_1, \ldots, \mu_p \} \subset S = \{ 1, 2, \ldots , m \}$, where $\mu_i \neq \mu_j$ for all $i \neq j$. Let $\widetilde{M} = S \setminus M$. Suppose that the Weyl tensor and Cotton-York tensors satisfy conditions \eqref{eq-multi-degenerate} with respect to both $\mcN$ and $\mcN_M$. In particular, in the latter case, we have conditions on the components of the Weyl tensor given by
\begin{align}
 C_{\mu_i \mu_j \mu_k \kappa} =
 C_{\mu_i 0 \mu_k \kappa} =
 C_{\tilde{\nu}_i \mu_j \mu_k \kappa} = C_{\mu_i \mu_j \mu_k \tilde{\kappa}} = C_{\mu_i 0 \mu_k 0} =
 C_{\tilde{\nu}_i 0 \mu_k \kappa} = C_{\mu_i 0 \mu_k \tilde{\kappa}} =
 C_{\tilde{\nu}_i \mu_j \mu_k 0} & = 0 \, , \label{eq-Weyl_condition0} \\
 C_{\tilde{\nu}_i \mu_j \mu_k \tilde{\kappa}} = C_{\mu_i \tilde{\nu}_j \tilde{\nu}_k \kappa} = C_{\tilde{\nu}_i 0 \mu_k 0} & = 0 \, ,
\label{eq-Weyl_condition1}   \\
 C_{\tilde{\nu}_i 0 \mu_k \tilde{\kappa}} = C_{\mu_i 0 \tilde{\nu}_k \kappa} =  C_{\mu_i \tilde{\nu}_j \tilde{\nu}_k 0} =
 C_{\mu_i \tilde{\nu}_j \tilde{\nu}_k \tilde{\kappa}} = C_{\tilde{\nu}_i \tilde{\nu}_j \tilde{\nu}_k \kappa} = C_{\tilde{\nu}_i 0 \tilde{\nu}_k 0} =
 C_{\tilde{\nu}_i 0 \tilde{\nu}_k \tilde{\kappa}} =
C_{\tilde{\nu}_i \tilde{\nu}_j \tilde{\nu}_k \tilde{\kappa}} & = 0 \, , \label{eq-Weyl_condition2}
\end{align}
for all $\mu_i \in M$, and $\tilde{\mu}_j \in \widetilde{M}$, and all $\kappa$. We note that there is some redundancy in the sense that conditions \eqref{eq-Weyl_condition0} are also satisfied by virtue of the algebraic degeneracy of the Weyl tensor with respect to $\mcN$. Further, conditions \eqref{eq-Weyl_condition2} are absent in the proof of Theorem \ref{thm-GS_odd} (case $k=0$). So the only issue that might arise concerns conditions \eqref{eq-Weyl_condition1}. Now, recall that the entries of the matrices $\mathbf{K}_{\mu \nu}$, $\mathbf{K}_{\kappa \mu \nu}$, $L_\mu$, $\mathbf{L}_{\mu \nu}$ and $M_{\mu \nu}$ in the proofs of Theorems \ref{thm-GS} and \ref{thm-GS_odd} are linear combinations of the components
\begin{align} \label{eq-intersection_components}
C_{\mu \nu \tilde{\mu} \tilde{\nu}} \, , & & C_{\mu \tilde{\mu} \nu \tilde{\nu}} \, , & & \mbox{and} & & C_{\mu 0 \tilde{\mu} 0} \, ,
\end{align}
for all $\mu$, $\nu$. By inspection, it is then clear that none of the conditions \eqref{eq-Weyl_condition1} violate the genericity assumption on components \eqref{eq-intersection_components}. Hence, Theorems \ref{thm-GS} and \ref{thm-GS_odd} apply to $\mcN$, i.e. $\mcN$ is integrable.

It now remains to show that $\mcN_M$ is also integrable.\footnote{In general, the fact that $\mcN$ is already integrable will imply that some of connection components obstructing the integrability of $\mcN_M$ will vanish, but this does not affect the argument.} To this end, we note that the two almost null structures $\mcN$ and $\mcN_M$ are interchanged by the symmetry
\begin{align*}
 \mu_i \leftrightarrow \tilde{\mu}_i \, ,
\end{align*}
for all $\mu_i \in \widetilde{M}$. In particular, the components \eqref{eq-intersection_components} remain invariant under this symmetry, and thus, the genericity assumption on these is not violated. We can therefore apply Theorems \ref{thm-GS} and \ref{thm-GS_odd} to conclude that $\mcN_M$ is integrable.

At this stage, since $\mcN_M \in \mcB$ was arbitrary, we can extend the above argument to any number of canonical almost null structures in $\mcB$, which proves the claim of the theorem.
\end{proof}

\begin{rem}
It is well-known that in four dimensions, the maximum number of null structures on a non-conformally flat complex Riemannian manifold is four -- both self-dual and anti-self-dual part of the Weyl tensors are then of type D. One may conjecture whether this upper bound is $2^m$ for a $(2m+\epsilon)$-dimensional manifold, where $\epsilon \in \{ 0 , 1 \}$. In \cite{Taghavi-Chabert2011}, however, a counterexample to the conjecture is presented in five dimensions -- the Myers-Perry black hole with one rotation coefficient has eight null structures. But it is not clear whether this is a feature of odd dimensions only.
\end{rem}

\begin{rem}
 In \cite{Mason2010}, it is shown that the integrability condition for the existence of a conformal Killing-Yano $2$-form $\bm{\phi}$ in normal form with distinct eigenvalues on $(\mcM, \bm{g} )$ is precisely that the Weyl tensor satisfies condition \eqref{eq-multi-degenerate} where $\mcB = \mcB_S$. Further, if the exterior derivative of $\bm{\phi}$ satisfies
\begin{align*}
 \dd \bm{\phi} (\bm{X} , \bm{Y} , \bm{Z} ) & = 0 \, ,
\end{align*}
for all $\bm{X}, \bm{Y} \in \Gamma ( \mcN_M^\perp )$, and $\bm{Z} \in \Gamma ( \mcN_M )$, for all $\mcN_M \in \mcB_S$, then $(\mcM , \bm{g} )$ locally admits $2^m$ null structures. We note that this result makes no assumption on the genericity of the Weyl tensor, and indeed, it is certainly true in the conformally flat case. On the other hand, this suggests that Theorem \ref{thm-GS_multi} together with some additional conditions on the non-vanishing components of the Weyl tensor could provide sufficient conditions for the existence of such a conformal Killing-Yano $2$-form, as in the four-dimensional case \cites{Walker1970,Penrose1986}.
\end{rem}

\subsection{Real versions}
Let $(\mcM , \bm{g})$ be a real $(2m+\epsilon)$-dimensional orientable pseudo-Riemannian smooth manifold, where $\epsilon \in \{ 0,1\}$ and $2m+\epsilon \geq 5$. We now work in the smooth real category. Thus, the tangent bundle $\Tgt \mcM$, the cotangent bundle $\Tgt^* \mcM$, and tensor products thereof, such as the bundle $\mcC$ of Weyl tensors, and the bundle $\mcA$ of Cotton-York tensors, are all smooth real vector bundles.

An almost null structure on $( \mcM , \bm{g} )$ can then be defined as a complex subbundle $\mcN$ of the complexified tangent bundle $\C \otimes \Tgt \mcM$, which is totally null with respect to the complexified metric, and of rank $m$, i.e.
\begin{align}\label{eq-null_structure_real}
 \mcN \subset \mcN^\perp \subset \C \otimes \Tgt \mcM \, .
\end{align}
To clarify the following discussion, we recall that the orthogonal complement of a real subbundle of $\Tgt \mcM$, respectively, a complex subbundle of $\C \otimes \Tgt \mcM$, is taken with respect to the real metric, respectively, the complexified metric. In both cases, it is denoted by a $\cdot ^\perp$.

The complexified tangent bundle is naturally equipped with a reality structure, induced from an involutory complex-conjugation operation $\bar{}: \C \otimes \Tgt \mcM \rightarrow \C \otimes \Tgt \mcM$, which preserves the real metric. This motivates the following definition.

\begin{defn}[\cite{Kopczy'nski1992}] Let $\mcN$ be an almost null structure on $( \mcM , \bm{g} )$. The \emph{real index} $r_p$ of the fiber $\mcN_p$ over a point $p \in \mcM$ is the dimension of the intersection of $\mcN$ and its complex conjugate $\overline{\mcN}$, i.e. $r_p = \dim \mcN_p \cap \overline{\mcN}_p$. If $r_p = r_q$ for any points $p, q$ in some open subset $\mcU$ of $\mcM$, we say that $\mcN$ has (constant) real index $r$ in $\mcU$.
\end{defn}

The signature of the metric imposes restrictions on the possible values of the real index $r$ as made precise by the next lemma.

\begin{lem}[\cite{Kopczy'nski1992}]\label{lem-real_index}
 Let $\mcN$ be an almost null structure on a pseudo-Riemannian manifold $(\mcM, \bm{g})$ where $\bm{g}$ has signature $(k, \ell)$, i.e. ($k$ positive eigenvalues, $\ell$ negative eigenvalues), with $k + \ell = 2m + \epsilon$. Then at any given point $p \in \mcM$, the real index $r_p$ of the fiber $\mcN_p$ must be a non-negative integer such that $r_p \leq \min \{k, \ell\}$ and
\begin{itemize}
 \item $r_p \in \left\{ \min \{k, \ell\} \mod 2 \right\}$ when $\epsilon=0$, and
 \item $r_p \in \left\{ \min \{k, \ell\} \mod 1 \right\}$ when $\epsilon=1$.
\end{itemize}
\end{lem}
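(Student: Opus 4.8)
The plan is to argue fiberwise at the point $p$, writing $V = \Tgt_p \mcM$ for the real tangent space with its metric $\bm{g}$ of signature $(k,\ell)$, $N = \mcN_p \subset \C \otimes V$ for the maximal totally null subspace, and $\bar{\cdot}$ for the complex conjugation fixing $V$. First I would study the intersection $S := N \cap \overline{N}$. Since $\overline{S} = \overline{N} \cap N = S$, the subspace $S$ is conjugation-invariant, hence $S = \C \otimes W$ for the real subspace $W := S \cap V$, with $\dim_\R W = \dim_\C S = r$. As $W \subseteq N$ is totally null for the complex-bilinear extension of $\bm{g}$, and $\bm{g}$ restricts to the real metric on real vectors, $W$ is a real totally isotropic subspace of $(V,\bm{g})$. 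The maximal dimension of such a subspace is the Witt index $\min\{k,\ell\}$, which gives at once $0 \le r \le \min\{k,\ell\}$. In the odd case $\epsilon = 1$ this already settles the lemma, since the stated congruence modulo $1$ is vacuous.

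The remaining content is the parity statement for $\epsilon = 0$, and the idea is to reduce to the case of vanishing real index on a smaller space. Because $W$ is real totally null of dimension $r$, choosing a complementary totally null real subspace exhibits a $\bm{g}$-orthogonal splitting of $V$, so the screen space $V' := W^\perp / W$ carries a non-degenerate metric of signature $(k-r,\ell-r)$ and dimension $2(m-r)$. Since $S \subseteq N \subseteq S^\perp$, the image $\hat N := N/S \subseteq S^\perp/S = \C \otimes V'$ is a maximal totally null subspace of $\C \otimes V'$; and as $S$ is conjugation-invariant we have $\overline{\hat N} = \overline{N}/S$, whence $\hat N \cap \overline{\hat N} = (N \cap \overline{N})/S = 0$. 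Thus $\hat N$ has real index zero, and counting dimensions forces $\C \otimes V' = \hat N \oplus \overline{\hat N}$.

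From this conjugation-invariant splitting I would build a compatible complex structure: let $J$ act as $+i$ on $\hat N$ and $-i$ on $\overline{\hat N}$. Conjugation interchanges the two summands and reverses the eigenvalues, so $J$ commutes with $\bar{\cdot}$ and descends to a real endomorphism $J : V' \to V'$ with $J^2 = -\mathrm{id}$. The nullity of both $\hat N$ and $\overline{\hat N}$ gives $\bm{g}(Ju,Jv) = \bm{g}(u,v)$ on spanning vectors—on each eigenspace both sides vanish, and on the cross terms the factor $(+i)(-i)=1$ appears—so $J$ is $\bm{g}$-orthogonal and $(V',\bm{g},J)$ is pseudo-Hermitian. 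A non-degenerate $J$-invariant metric decomposes into $J$-invariant definite $2$-planes, hence has signature of the form $(2a,2b)$; therefore $k-r$ and $\ell-r$ are both even. Since $k+\ell = 2m$ forces $k \equiv \ell \equiv \min\{k,\ell\} \pmod 2$, this yields $r \equiv \min\{k,\ell\} \pmod 2$, completing the even case.

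The main obstacle, and the step I would treat most carefully, is the verification that the induced $J$ is genuinely $\bm{g}$-orthogonal together with the conclusion that a pseudo-Hermitian metric has even signature on each definite factor; once the compatibility is checked case by case on $\hat N \times \hat N$, $\overline{\hat N} \times \overline{\hat N}$, and the mixed terms, the signature decomposition is an induction over $J$-invariant planes. By contrast, the reduction to real index zero is routine once the conjugation-invariance of $S$ and the screen-space signature $(k-r,\ell-r)$ have been recorded, and the bound $r \le \min\{k,\ell\}$ follows directly from the Witt index, handling the odd case with no further work.
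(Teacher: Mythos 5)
Your argument is correct. The paper itself offers no proof of this lemma --- it is imported verbatim from Kopczy\'nski--Trautman --- so there is nothing to compare against except the standard argument, which is essentially what you give: the intersection $N\cap\overline{N}$ is conjugation-invariant, hence the complexification of a real totally isotropic subspace $W$, giving $r\leq\min\{k,\ell\}$ via the Witt index; and in the even case the quotient $\hat N=N/S$ of real index zero induces a $\bm{g}$-orthogonal complex structure on the screen space $W^\perp/W$, forcing its signature $(k-r,\ell-r)$ to have both entries even. All the individual steps check out: the identification $\dim_\R W=\dim_\C S$, the inclusion $N\subseteq S^\perp$, the computation $\hat N\cap\overline{\hat N}=(N\cap\overline N)/S=0$, the compatibility of $J$ with conjugation and with $\bm{g}$, and the inductive decomposition into $J$-invariant definite $2$-planes. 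Your observation that the congruence modulo $1$ renders the odd case vacuous beyond the Witt bound is also the right reading of the statement.
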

Assuming the real index $r$ to be constant in some open subset of $\mcM$, the intersection $\mcN \cap \overline{\mcN}$ gives rise to a complexified \emph{real} totally null subbundle $\mcR$ of the tangent bundle of rank $r$.

In section \ref{sec-geometry}, we have distinguished the concept of integrability and involutivity (or formal integrability), and the Frobenius theorem tells us that these are essentially equivalent. While it may seem that this distinction is thus superfluous in the holomorphic category, in the smooth category, it becomes somewhat ambiguous. If the distributions $\mcN$, $\overline{\mcN}$, $\mcN^\perp$ and $\overline{\mcN}^\perp$ are involutive, so are the real spans of the intersections $\mcN \cap \overline{\mcN}$ and $\mcN^\perp \cap \overline{\mcN}^\perp$. Hence, by the Frobenius theorem, $\mcR$ is integrable, and, following the same arguments of Lemma \eqref{lem-integrability_connection}, is tangent to totally null and geodetic \emph{real} submanifolds of dimension $r$. In addition, each fiber of the vector bundle $\mcN + \overline{\mcN} /\mcN \cap \overline{\mcN}$ is naturally equipped with a complex structure, and the quotient manifold $\mcM / \mcR$ thus acquires the structure of a CR manifold of codimension $r+\epsilon$. However, this CR manifold is in general not embeddable, i.e. its underlying complex structure is involutive, but not integrable. Whether this CR manifold is embeddable or not, we shall nonetheless refer to such a null structure as being integrable. In the real-analytic category, on the other hand, one can simply complexify $\mcM$ and work in the holomorphic category, in which case the embeddability of the CR structure will follow. The involutivity of the complex conjugate pair of almost null structures gives rise to two holomorphic foliations of the complexified manifold, and the intersection of the leaves of these foliations are the complexification of totally null and geodetic leaves of a real foliation of the original real manifold. These analytical issues for even-dimensional Lorentzian manifolds are discussed in \cite{Trautman2002}. 

An almost null structure of constant real index on $(\mcM, \bm{g})$ is equivalent to the reduction of the structure group of the frame bundle to a real Lie group, ${}^\mcR P$ say, of $\SO(k,\ell)$. Its complexification can be viewed as the intersection of the complex parabolic subgroups ${}^\mcN P$ and ${}^{\overline{\mcN}} P$, preserving the almost null structures $\mcN$ and $\overline{\mcN}$ respectively. A description of such real Lie groups can be found at the infinitesimal level in \cite{Kopczy'nski1992}. In this context, the classification of the curvature tensors should be carried out in terms of an ${}^\mcR P$-invariant decomposition of the irreducible $\SO(k,\ell)$-modules $\mcC$ and $\mcA$. We can however bypass these representation-theoretic arguments by noting that the filtration
\eqref{eq-null_structure_real} and its complex conjugate induce two filtrations $\{{}^\mcN \mcC^i \}$ and $\{{}^{\overline{\mcN}} \mcC^i\}$ on $\C \otimes \mcC$, preserved by ${}^\mcN P$ and ${}^{\overline{\mcN}} P$ respectively. Then, for each $i$, we can consider the real span of the intersection ${}^\mcN \mcC^i_p \cap {}^{\overline{\mcN}} \mcC^i_p$ at every point $p$. This gives rise to an ${}^\mcR P$-invariant subbundle of $\mcC$, which we may reasonably describe as a real analogue of the complex subbundles ${}^\mcN \mcC^i$ and ${}^{\overline{\mcN}} \mcC^i$, i.e. it defines algebraic classes of the Weyl tensor with respect to both $\mcN$ and $\overline{\mcN}$. However, depending on the real index of $\mcN$, the fibers of ${}^\mcN \mcC^i$ and ${}^{\overline{\mcN}} \mcC^i$ will intersect trivially for some values of $i$, which precludes the existence of certain algebraic classes of Weyl tensors with respect to both $\mcN$ and $\overline{\mcN}$. The same argument applies regarding the Cotton-York tensor.

With these considerations in mind, we \emph{might} be able to apply Theorems \ref{thm-GS} and \ref{thm-GS_odd} in this real setting. Since the reality conditions on the Weyl tensor clearly violate the genericity assumption, we need to go back to the proofs of these theorems, and check whether these new assumptions undermine them. To facilitate the analysis, these reality conditions are described explicitly in the remark below.

\begin{rem}\label{rem-real_cond}
One may choose a (local) complexified frame $\{ \bm{\xi}_\mu, \tilde{\bm{\xi}}_{\tilde{\mu}}, \epsilon \bm{\xi}_0 \}$ adapted to the almost null structure $\mcN$, i.e. such that $\left\{ \bm{\xi}_\mu : \mu=1, \ldots, m \right\}$ span $\mcN$, as already described in section \ref{sec-null_basis}. This null frame will now be subject to reality conditions depending on the real index $r$ of $\mcN$ and metric signature $(k, \ell)$. For specificity, assume $k > \ell$. In $2m$ dimensions, the spanning vector fields of $\mcN$ can be chosen in such way that the complex conjugation acts as
\begin{align*}
\bar{} : ( \bm{\xi}_1 , \ldots , \bm{\xi}_r , \bm{\xi}_{r+1} , \ldots , \bm{\xi}_s , \bm{\xi}_{s+1} , \ldots , \bm{\xi}_m ) & \mapsto ( \bm{\xi}_1 , \ldots , \bm{\xi}_r , \tilde{\bm{\xi}}_{\widetilde{r+1}} , \ldots , \tilde{\bm{\xi}}_{\tilde{s}} , - \tilde{\bm{\xi}}_{\widetilde{s+1}} , \ldots , - \tilde{\bm{\xi}}_{\tilde{m}} ) \, ,
\end{align*}
where $s = \tfrac{k+r}{2}$ (this must be an integer by Lemma \ref{lem-real_index}). Here, the vector fields $\left\{ \bm{\xi}_\mu : \mu=1, \ldots, r \right\}$ span the real part of $\mcN \cap \overline{\mcN}$.

Similarly, in $2m+1$ dimensions, when $r-\ell$ is odd, we have
\begin{align*}
\bar{} : ( \bm{\xi}_1 , \ldots , \bm{\xi}_r , \bm{\xi}_{r+1} , \ldots , \bm{\xi}_s , \bm{\xi}_{s+1} , \ldots , \bm{\xi}_m , \bm{\xi}_0 ) & \mapsto ( \bm{\xi}_1 , \ldots , \bm{\xi}_r , \tilde{\bm{\xi}}_{\widetilde{r+1}} , \ldots , \tilde{\bm{\xi}}_{\tilde{s}} , - \tilde{\bm{\xi}}_{\widetilde{s+1}} , \ldots , - \tilde{\bm{\xi}}_{\tilde{m}} , - \bm{\xi}_0  ) \, ,
\end{align*}
where $s = \tfrac{k+r}{2}$. When $r-\ell$ is even, we have
\begin{align*}
\bar{} : ( \bm{\xi}_1 , \ldots , \bm{\xi}_r , \bm{\xi}_{r+1} , \ldots , \bm{\xi}_s , \bm{\xi}_{s+1} , \ldots , \bm{\xi}_m , \bm{\xi}_0 ) & \mapsto ( \bm{\xi}_1 , \ldots , \bm{\xi}_r , \tilde{\bm{\xi}}_{\widetilde{r+1}} , \ldots , \tilde{\bm{\xi}}_{\tilde{s}} , - \tilde{\bm{\xi}}_{\widetilde{s+1}} , \ldots , - \tilde{\bm{\xi}}_{\tilde{m}} , \bm{\xi}_0  ) \, , 
\end{align*}
where $s = \tfrac{k+r-1}{2}$.

In all three cases, the remaining vector fields $\left\{ \tilde{\bm{\xi}}_{\tilde{\mu}} : \mu=1, \ldots, r \right\}$ of the frame must evidently be real since the metric is real.

As in section \ref{sec-null_basis}, we can also consider the set $\mcB_S$ of all canonical almost null structures on some open set, each of which inherits the real index of the `primary' almost null structure $\mcN$. It is then more appropriate to take the quotient of  $\mcB_S$ by the equivalence relation
\begin{align*}
 \mcN_M & \sim \mcN_N & \Leftrightarrow & & \overline{\mcN_M} & = \mcN_N \, ,
\end{align*}
where $\mcN_M, \mcN_N \in \mcB_S$. This quotient will be denoted $\mcB_S / \sim$.
\end{rem}

We are now in the position of extending Theorem \ref{thm-GS_multi} to the real smooth category. Indeed, the theorem provides the right setting for the case at hand since if the Weyl tensor is degenerate with respect to an almost null structure $\mcN$, so must it be with respect to its complex conjugate $\overline{\mcN}$. Also, the complex conjugate pair $\mcN$ and $\overline{\mcN}$ will just be two of the canonical almost null structures given in section \ref{sec-null_basis}. In fact, we can consider any number of complex conjugate pairs of these, or in the above notation, any number of almost null structures in $\mcB_S / \sim$. Further, the reality conditions described in Remark \ref{rem-real_cond} on the non-vanishing components of the Weyl tensor is of no serious consequence on the genericity assumption. Finally, while Theorem \ref{thm-GS_multi} is formulated in the holomorphic category, real analyticity need not be imposed if one now regards the components of the connection and curvature, in the proofs of Theorems \ref{thm-GS}, \ref{thm-GS_odd} and \ref{thm-GS_multi}, as being complex-valued smooth functions on an open set. From these considerations, we can conclude

\begin{thm}\label{thm-real_GS_0}
 Let $(\mcM, \bm{g})$ be a $(2m+\epsilon)$-dimensional pseudo-Riemannian smooth manifold of arbitrary signature, where $\epsilon \in \{ 0,1 \}$ and $2m + \epsilon \geq 5$. Let $\mcN$ be an almost null structure on $\mcM$ of any real index allowable by Lemma \ref{lem-real_index}. Let $\mcB_S/\sim$ be the set of all canonical almost null structures on (some open subset of) $\mcM$ modulo complex conjugation as defined in Remark \ref{rem-real_cond}. Let $\mcB \subset \mcB_S / \sim$. Suppose that the Weyl and Cotton-York tensors (locally) satisfy
\begin{align*}
 \bm{C} (\bm{X} , \bm{Y} , \bm{Z} , \cdot ) & = 0 \, , & \bm{A} (\bm{Z} , \bm{X} , \bm{Y} ) & = 0 \, ,
\end{align*}
respectively, for all $\bm{X}, \bm{Y} \in \Gamma ( \mcN_{M}^\perp )$, and $\bm{Z} \in \Gamma ( \mcN_{M} )$, for all $\mcN_{M} \in \mcB$.
Assume further that the Weyl tensor is otherwise generic. Then the almost null structures in $\mcB$ are (locally) integrable.
\end{thm}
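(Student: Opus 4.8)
\emph{Proof proposal.} The plan is to reduce the assertion to the holomorphic multiple-null-structure result, Theorem~\ref{thm-GS_multi}, by using the reality structure on $\C\otimes\Tgt\mcM$ to supply the conjugate structures, and then to re-read the purely local computations of Theorems~\ref{thm-GS} and~\ref{thm-GS_odd} with all connection and curvature components treated as complex-valued smooth functions on an open set rather than as holomorphic functions. First I would lift the conjugation classes $\mcB\subset\mcB_S/\sim$ to a conjugation-closed collection $\widehat{\mcB}\subset\mcB_S$ of canonical almost null structures. Since $\bm{g}$ is real, complex conjugation preserves the complexified metric, so for each $\mcN_M\in\widehat{\mcB}$ the conjugate $\overline{\mcN_M}$ is again a totally null rank-$m$ subbundle, hence an almost null structure, and --- after the frame normalisation of Remark~\ref{rem-real_cond} --- itself one of the canonical structures in $\widehat{\mcB}$. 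Because $\bm{C}$ and $\bm{A}$ are real tensors, applying complex conjugation to the hypotheses shows that the degeneracy conditions hold with respect to $\overline{\mcN_M}$ whenever they hold for $\mcN_M$; thus the hypotheses of Theorem~\ref{thm-GS_multi} are met for the whole of $\widehat{\mcB}$.

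Next I would observe that the proofs of Theorems~\ref{thm-GS}, \ref{thm-GS_odd} and~\ref{thm-GS_multi} are entirely local and algebraic: from the Bianchi identity~\eqref{eq-Bianchi_identity} one extracts homogeneous linear systems $\mathbf{K}\mathbf{u}=\mathbf{0}$, and in odd dimensions also $\mathbf{L}\mathbf{v}=\mathbf{0}$ and $\mathbf{M}\mathbf{w}=\mathbf{0}$, whose coefficients are constant linear combinations of components of the Weyl tensor and whose unknowns are the connection coefficients $\Gamma_{\kappa\mu\nu}$, $\Gamma_{\kappa\mu 0}$ and $\Gamma_{0\mu\nu}$. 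Holomorphicity is nowhere used beyond manipulating these quantities as functions on an open subset, so the identical argument goes through verbatim in the smooth (not necessarily real-analytic) setting, and Lemma~\ref{lem-det_matrix} again forces the trivial solution. By Lemma~\ref{lem-integrability_connection} this is precisely the integrability of each member of $\widehat{\mcB}$, and hence of $\mcB$.

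The main obstacle --- and the only step demanding genuine verification --- is to confirm that the reality conditions catalogued in Remark~\ref{rem-real_cond} do not invalidate the genericity assumption on which Lemma~\ref{lem-det_matrix} depends. Concretely, I must check that the non-vanishing of the determinantal terms~\eqref{eq-special_term}, \eqref{eq-special_term_odd1} and~\eqref{eq-special_term_odd2} survives the relations imposed by conjugation. These determinants are built from the intersection components~\eqref{eq-intersection_components}, namely those of the form $C_{\mu\nu\tilde{\mu}\tilde{\nu}}$, $C_{\mu\tilde{\mu}\nu\tilde{\nu}}$ and $C_{\mu 0\tilde{\mu}0}$. The key point is that conjugation acts by the index substitution $\mu\leftrightarrow\tilde{\mu}$ up to sign, under which each of these three families is permuted among itself and sent to its complex conjugate; conjugation therefore only relates components already lying in the same class of the bipartition $(A,B)$ used in Lemma~\ref{lem-det_matrix}, never a diagonal-block function to an off-diagonal one. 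Consequently neither the non-singularity of the diagonal blocks $K_{\mu\nu}$, $\mathbf{K}_{\mu\nu\lambda}$, $L_\mu$, $\mathbf{L}_{\mu\nu}$, $M_{\mu\nu}$ nor the hypothesis that the two families of functions be generically unrelated is destroyed, so the genericity of the Weyl tensor may be invoked exactly as in the complex case. With this compatibility check completed, the reduction to Theorem~\ref{thm-GS_multi} is complete and the almost null structures in $\mcB$ are locally integrable.
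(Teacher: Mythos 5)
Your proposal is correct and follows essentially the same route as the paper: reduce to Theorem \ref{thm-GS_multi} by noting that reality of $\bm{C}$ and $\bm{A}$ forces the degeneracy conditions to hold for each conjugate $\overline{\mcN_M}$ (itself a canonical almost null structure), check that the reality conditions of Remark \ref{rem-real_cond} do not disturb the genericity of the intersection components \eqref{eq-intersection_components}, and observe that the proofs of Theorems \ref{thm-GS}, \ref{thm-GS_odd} and \ref{thm-GS_multi} are local algebraic computations valid for complex-valued smooth functions, so real analyticity is not needed. Your write-up is in fact somewhat more explicit than the paper's own (largely informal) justification, particularly on why conjugation never relates a diagonal-block entry to an off-diagonal one in Lemma \ref{lem-det_matrix}.
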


\begin{rem}
Incidentally, from its signature-independent formulation, this theorem may, in some instances, be regarded as a criterion as to whether a pseudo-Riemannian manifold of a given signature can be Wick-transformed to a different signature. Indeed, the Kerr-NUT-(A)dS metric, which has been presented in Euclidean, Lorentzian, and split signatures \cites{Chen2006,Chen2008}, is known \cite{Mason2010} to satisfy the algebraic degeneracy of Theorem \ref{thm-real_GS_0} where $\mcB = \mcB_S / \sim$.
\end{rem}

The full extensions of Theorems \ref{thm-GS} and \ref{thm-GS_odd} to the real category deserve separate treatments specific to each real index, and as they stand, the proofs must be adapted to the underlying real structure. Nonetheless, it must be emphasised that this is no tragedy. In fact, the arguments are greatly simplified by the fact that more components of the Weyl tensor vanish.

In the next three sections, we comment briefly on the Euclidean, split signature and Lorentzian cases.

\subsubsection{Hermitian structures}
We now assume that $\bm{g}$ is positive definite, i.e. $k = 2m+\epsilon$, $\ell = 0$. Then, the real index of an almost null structure must have constant real index $r=0$, and
the complexified tangent bundle splits according to the direct sum
\begin{align}\label{eq-Hermitian_grading}
 \C \otimes \Tgt \mcM & = \mcN \oplus \overline{\mcN} \oplus \epsilon ( \mcN^\perp \cap \overline{\mcN}^\perp ) \, .
\end{align}
When $\epsilon = 0$, $\mcN$ defines a metric compatible almost complex structure, and when $\epsilon = 1$, a metric compatible almost CR structure. In both cases, $\mcN$ and $\overline{\mcN}$ define the distributions of $(0,1)$-vectors and $(1,0)$-vectors respectively, or in other words, the $+\ii$- and $-\ii$-eigensubbundles of an almost complex structure of the complexified tangent bundle $\C \otimes \Tgt \mcM$. For specificity, we assume $\epsilon = 0$, in which case the structure of the frame bundle reduces from $\SO(2m)$ to the unitary group $\UU(m)$. In this case, we remark that the question of the integrability of the almost Hermitian structure does not,  by the Newlander-Niremberg theorem, necessitate real analyticity. 

Now, the direct sum \eqref{eq-Hermitian_grading} yields decompositions of the bundles $\mcC$ and $\mcA$ into irreducible $\UU(m)$-modules, and we recover the classification given in references \cites{Falcitelli1994,Tricerri1981}. In the present context, it suffices to note that in the complexification $\gr_i ( {}^\mcN \mcC ) \cong \overline{\gr_{-i} ( {}^\mcN \mcC )} \cong \gr_{-i} ( {}^{\overline{\mcN}} \mcC )$ for $i=0,1,2$, and $\gr_i ( {}^\mcN \mcA ) \cong \overline{\gr_{-i} ( {}^\mcN \mcA )} \cong \gr_{-i} ( {}^{\overline{\mcN}} \mcA )$, for  $i=\tfrac{1}{2},\tfrac{3}{2}$. One can then write $\mcC = \mcC_0 \oplus \mcC_1 \oplus \mcC_2$ and $\mcA = \mcA_{\tfrac{1}{2}} \oplus \mcA_{\tfrac{3}{2}}$, where each of the (not necessarily irreducible) $\UU(m)$-modules $\mcC_i$ and $\mcA_i$ can be identified with the real span of $\gr_i ( {}^\mcN \mcC )  \oplus \gr_{-i} ( {}^{\mcN} \mcC )$ and $\gr_i ( {}^\mcN \mcA )  \oplus \gr_{-i} ( {}^{\mcN} \mcA )$ respectively. A similar analysis can be done when $\epsilon = 1$. Thus, Theorem \ref{thm-real_GS_0} covers all possible algebraic special classes of the Weyl tensor with respect to one or more canonical almost null structures.

\subsubsection{Real null structures on pseudo-Riemannian manifolds of split signature}
The other extreme is the case where $\bm{g}$ has signature $(m,m+\epsilon)$ and the almost null structure has real index $m$. In this case, we literally have a real version of the classification of the Weyl tensor and Cotton-York tensor, and Theorems \ref{thm-GS}, \ref{thm-GS_odd} and \ref{thm-GS_multi} all apply. When integrable, the almost null structure gives rise to a foliation of $\mcM$ by $m$-dimensional totally null and geodetic leaves.

\subsubsection{Robinson structures}\label{sec-Robinson_structure}
In Lorenzian signature, i.e. $k = 2m-1 +\epsilon$, $\ell = 1$, one needs to distinguish between the cases $\epsilon = 0$ and $\epsilon=1$. When $\epsilon=0$, the real index of an almost null structure $\mcN$ must have constant real index $r=1$, and $\mcN$ defines an \emph{almost Robinson structure $(\mcN , \mcK)$} on $(\mcM , \bm{g})$, where $\mcK$ is a real null line bundle whose complexification is the intersection of $\mcN$ and its complex conjugate $\overline{\mcN}$. In particular, we have a filtration of vector bundles
\begin{align}\label{eq-K_filtration}
 \mcK \subset \mcK^\perp \subset \Tgt \mcM \, ,
\end{align}
where $\mcK^\perp$ is the orthogonal complement of $\mcK$ with respect to the \emph{real} metric. When $\mcN$ is integrable, the integral curves of the generators of $\mcK$ are null geodesics. At every point $p$, the fiber of the \emph{screen space} $\mcK^\perp/\mcK$ is naturally equipped with a complex structure, which is preserved along the flow of $\mcK$. Further, the quotient manifold $\mcM/\mcK$ acquires the structure of a CR manifold.

In the odd-dimensional case ($\epsilon=1$), the real index of an almost structure can be either $0$ or $1$. In fact, it may not even be constant throughout the manifold: an example is afforded by the five-dimensional black ring \cite{Taghavi-Chabert2011}. Nonetheless, in a small enough open set, the real index will remain constant. In the case $r=0$, the almost null structure defines an almost CR structure. On the other hand, when $r=1$, if $\mcN$ and $\mcN^\perp$ are integrable, the real null line bundle $\mcK$ arising from the intersection of the null distributions generates a congruence of null geodesics. Each fiber of the screen space $\mcK^\perp/\mcK$ is endowed with a CR structure, and the quotient manifold $\mcM/\mcK$ acquires the structure of a CR manifold of codimension $2$. The remaining part of the discussion focuses on Robinson structures.

Geometrically, the existence of a prefered null direction is equivalent to a reduction of the structure group of the frame bundle to the group $\Sim(2m-2+\epsilon)$, which preserves the filtration \eqref{eq-K_filtration}, and which has Lie algebra $\simalg(2m-2+\epsilon):=(\R \oplus \so (2m-2+\epsilon) ) \oplus \R^{2m-2 + \epsilon}$, a parabolic Lie subalgebra of $\so(1,2m-1+\epsilon)$. In the language of relativity, the summands of $\simalg(2m-2+\epsilon)$ generate \emph{boosts}, \emph{screen space rotations}, and \emph{null rotations} respectively. Clearly, the center $\mathfrak{z}$ of $\simalg(2m-2+\epsilon)$ lies in its $\R$-summand, and the grading element $\mathbf{E} \in \mfz$ induces a $|1|$-grading on both $\so(1,2m-1+\epsilon)$ and its standard representation. Thus, the tangent bundle (locally) admits the grading $\Tgt \mcM = \mcK_1 \oplus \mcK_0 \oplus \mcK_{-1}$ where $\mcK_1 \cong \mcK$, $\mcK_0 \cong \mcK^\perp / \mcK$, and $\mcK_{-1} \cong \Tgt \mcM / \mcK^\perp$. In the relativity literature \cite{Coley2004}, sections of $\mcK_i$ are said to be of \emph{boost weight $i$}. The bundles $\mcC$ and $\mcA$ now admit $\Sim(2m-2+\epsilon)$-invariant filtrations of vector bundles
\begin{align} \label{eq-Sim_filtrations}
 {}^\mcK \mcC^2 \subset {}^\mcK \mcC^1 \subset {}^\mcK \mcC^0 \subset {}^\mcK \mcC^{-1} \subset {}^\mcK \mcC^{-2} = \mcC \, , & &
 {}^\mcK \mcA^2 \subset {}^\mcK \mcA^1 \subset {}^\mcK \mcA^0 \subset {}^\mcK \mcA^{-1} \subset {}^\mcK \mcA^{-2} = \mcA \, ,
\end{align}
respectively, and each of the quotient bundles ${}^\mcK \mcC^i / {}^\mcK \mcC^{i+1}$ and ${}^\mcK \mcA^i / {}^\mcK \mcA^{i+1}$ is a completely reducible $\so(2m-2+\epsilon)$-module.

The existence of an almost Robinson structure is equivalent to a reduction of the structure group to the Lie group ${}^{\mcK} P$ with Lie algebra $(\R \oplus \uu (m-1) ) \oplus \R^{2m-2 + \epsilon} \subset \simalg(2m-2+\epsilon)$. This reduction induces further splitting of each of the $\so(2m-2+\epsilon)$-irreducible components of the associated graded bundle of any $\Sim(2m-2+\epsilon)$-invariant filtrations. In particular, in even dimensions, the complexification of the screen space splits as a direct sum
\begin{align*}
\C \otimes \mcK^\perp / \mcK & = 
(\mcK^\perp / \mcK )^{1,0} 
 \oplus 
(\mcK^\perp / \mcK )^{0,1} \oplus \C \, ,
\end{align*}
where $(\mcK^\perp / \mcK )^{1,0}$ and $(\mcK^\perp / \mcK )^{0,1}$ denote the $+\ii$- and $-\ii$-eigenbundles of the center $\mfz (\uu (m-1))$ respectively, and one can identify the null distribution $\mcN$ with $(\C \otimes \mcK )\oplus ( \mcK^\perp / \mcK )^{0,1}$.

The complexification of each of the quotient bundles ${}^\mcK \mcC^i / {}^\mcK \mcC^{i+1}$ is now a completely reducible $\uu(m-1)$-module. The $\Sim(2m-2+\epsilon)$-invariant filtrations \eqref{eq-Sim_filtrations}  decompose further into ${}^{\mcK} P$-invariant subfiltrations, and it is these filtrations that are relevant in the Lorentzian version of the Goldberg-Sachs theorem. The precise details will be given in a future publication. At this stage, it suffices to say that the curvature conditions for the existence of an Robinson structure can be read off from Lemma \ref{lem-type_characterisation}, provided that appropriate reality conditions are imposed. It is straightforward to check that under these reality conditions, the complex subbundles ${}^{\mcN} \mcC^2$ and ${}^{\overline{\mcN}} \mcC^2$ of $\C \otimes \mcC$ intersect trivially. Similarly, in odd dimensions, the bundles ${}^{\mcN} \mcC^k$ and ${}^{\overline{\mcN}} \mcC^k$ intersect trivially when $k=3,4$. In particular, in the light of Remark \ref{rem-Petrov_types}, there are no higher-dimensional Lorentzian analogue of the Petrov type N condition in this context.

In four dimensions, since $\so(2) \cong \uu(1)$, it is clear that there is no further reduction of the structure group. Geometrically, it simply means that singling out a null direction is equivalent to singling out an almost Robinson structure. Further, the isomorphism $\so(1,3) \cong \slie(2,\C)$ tells us that the complex conjugate of a self-dual almost null structures is anti-self-dual. In particular, the self-dual part of the Weyl tensor and an anti-self-dual part of the Weyl tensor are now complex conjugate of one another. Thus, the algebraic degeneracy of the self-dual part of the Weyl tensor is always mirrored by that of the anti-self-dual part of the Weyl tensor via complex conjugation. As a result, each of the self-dual complex Petrov types has a (non-trivial) real Lorentzian counterpart.

Theorem \ref{thm-real_GS_0} already gives a (partial) Lorentzian version of the Goldberg-Sachs theorem. In fact, it tells us more. One may have \emph{multiple Robinson structures}, i.e. a Lorentzian analogue of multiple null structures, whereby at most two distinct null directions are distinguished, each having up to $2^{m-1}$ complex structures associated to its screen space. For other degeneracy classes, however, one must alter the proofs of Theorems \ref{thm-GS} and \ref{thm-GS_odd} in order to demonstrate the integrability of the almost Robinson structure.\footnote{For a five-dimensional Einstein Lorentzian manifold, computations have shown that if the Weyl tensor degenerates to a generic section of the real span of ${}^{\mcN} \mcC^k \cap {}^{\overline{\mcN}} \mcC^k$, for $k=1,2$, then $(\mcN, \mcK)$ is integrable. The latter case is presented in \cites{Godazgar2010,Taghavi-Chabert2011}, and corresponds to the Weyl tensor being determined solely by a spinor field of real index $1$.} The full analysis in higher dimensions will be presented elsewhere.

Finally, as in the holomorphic category, one may wish to find refinements of the Goldberg-Sachs theorem in terms of irreducible $\uu(m-1)$-modules of the bundle ${}^\mcK \mcC$. However, it is pointed out in section 3.4.2 of reference \cite{Taghavi-Chabert2011} that certain algebraic classes of the Weyl tensor do not necessarily lead to the integrability of the underlying almost Robinson structure, and these classes are in fact defined by irreducible $\uu(m-1)$-modules.\footnote{To see this, we note that in five dimensions, Lorentzian signature, each of the irreducible ${}^{\mcK} P$-modules of the graded vector bundle $\gr(\mcC)$ is either one-real-dimensional or one-complex-dimensional, and each can be identified with a real or complex independent component of the Weyl tensor in a (spinor) frame adapted to $(\mcN,\mcK)$. Thus, the vanishing of one such component -- modulo ${}^{\mcK} P$-gauge transformations -- is tantamount to the projection of the Weyl tensor to the corresponding irreducible ${}^{\mcK} P$-module being zero.} This seems to indicate that the ${}^\mcN P$-invariant filtration on the complexification $\C \otimes \mcC$ is more relevant than the ${}^{\mcK} P$-irreducible modules of the real bundle $\mcC$ in determining the integrability of $(\mcN, \mcK)$.

\section{Conclusion and outlook}
The main thesis of this paper was to deduce the integrability of a given holomorphic maximal totally null distribution on a complex Riemannian manifold with prescribed Weyl and Cotton-York tensors in arbitrary dimensions. This can be viewed as a generalisation of what is known as the complex Goldberg-Sachs theorem. For this purpose, we introduced a higher-dimensional generalisation of the complex Petrov classification of the Weyl tensor. We also gave an extension of these results to the case of multiple almost null structures, which were then applied to the category of real smooth pseudo-Riemannian manifolds.

The discussion used a minimal amount of theory. But at its core was the parabolic Lie algebra stabilising the almost null structure.
There is another parabolic Lie algebra in the story, and it is related to conformal geometry. Indeed it was pointed out in section \ref{sec-conformal} that the Goldberg-Sachs theorem in any dimensions is a conformally invariant statement. This strongly suggests that the present results should be cast in the elegant language of parabolic geometry: the conditions on the Weyl tensor and Cotton-York tensor with respect to an almost null structure on a base conformal manifold can be re-expressed as conditions on the curvature of the projective pure spinor bundle fibered over it, and a section thereof. The content of the Goldberg-Sachs theorem is that certain curvature prescriptions imply a differential condition on this section, which can ultimately be translated by a foliation by maximal totally null leaves on the base manifold.

\appendix
\section{The Bianchi identity} \label{sec-Bianchi_identity}
In this appendix, we give the Bianchi identity in component form in the null basis $\{ \bm{\xi}_\mu , \tilde{\bm{\xi}}_{\tilde{\mu}} , \epsilon \bm{\xi}_\mu \}$ introduced in section \ref{sec-null_basis} with the following conventions:
\begin{itemize}
 \item the directional derivatives with respect to this frame are denoted
\begin{align*}
 \partial_\mu f & := \bm{\xi}_\mu f \, , & \partial_{\tilde{\mu}} f & := \tilde{\bm{\xi}}_{\tilde{\mu}} f \, , &  \partial_0 f & := \bm{\xi}_0 f \, , 
\end{align*}
for all $\mu$, and any for any holomorphic function $f$;
 \item the index notation follows the convention of \cite{Penrose1984}. In particular, the Einstein summation convention is used throughout, and square brackets around a set of indices denotes skew symmetrisation;
 \item only the Bianchi equations in odd dimensions are given -- the even-dimensional case can be recovered by ignoring any term containing an index $0$.
\end{itemize}

\begin{multline} \label{eq-Bianchi5}
\partial \ind{_{\lb{\mu}}} C \ind{_{\nu \rb{\rho} \kappa \lambda}}  = 
- 2 \Gamma \ind{_{\lb{\mu} \nu} ^\sigma} C \ind{_{\rb{\rho} \sigma \kappa \lambda}}
- 2 \Gamma \ind{_{\lb{\mu} \nu} ^{\tilde{\sigma}}} C \ind{_{\rb{\rho} \tilde{\sigma} \kappa \lambda}}
- 2 \Gamma \ind{_{\lb{\mu} \nu} ^0} C \ind{_{\rb{\rho} 0 \kappa \lambda}} \\
- 2 \Gamma \ind{_{\lb{\mu}| \lb{\kappa}} ^\sigma} C \ind{_{\rb{\lambda}|\sigma| \nu \rb{\rho}}} 
- 2 \Gamma \ind{_{\lb{\mu}| \lb{\kappa}} ^{\tilde{\sigma}}} C \ind{_{\rb{\lambda}|\tilde{\sigma}| \nu \rb{\rho}}}
- 2 \Gamma \ind{_{\lb{\mu}| \lb{\kappa}} ^0} C \ind{_{\rb{\lambda}|0| \nu \rb{\rho}}} \, ,
\end{multline}
\begin{multline}  \label{eq-Bianchi4}
2 \partial \ind{_{\lb{\mu}}} C \ind{_{\rb{\nu} 0 \kappa \lambda}} + \partial \ind{_{0}} C \ind{_{\mu \nu \kappa \lambda}} =  
 - 2 \Gamma \ind{_{[\mu \nu]} ^\sigma} C \ind{_{0 \sigma \kappa \lambda}}
- 2 \Gamma \ind{_{[\mu \nu]} ^{\tilde{\sigma}}} C \ind{_{0 {\tilde{\sigma}} \kappa \lambda}}
- 2 \Gamma \ind{_{0 \lb{\mu}} ^\sigma} C \ind{_{\rb{\nu} \sigma \kappa \lambda}}
- 2 \Gamma \ind{_{0 \lb{\mu}} ^{\tilde{\sigma}}} C \ind{_{\rb{\nu} {\tilde{\sigma}} \kappa \lambda}}
- 2 \Gamma \ind{_{0 \lb{\mu}} ^0} C \ind{_{\rb{\nu} 0 \kappa \lambda}} \\
- 2 \Gamma \ind{_{\lb{\nu}| 0|} ^\sigma} C \ind{_{\rb{\mu} \sigma \kappa \lambda}}
- 2 \Gamma \ind{_{\lb{\nu}| 0|} ^{\tilde{\sigma}}} C \ind{_{\rb{\mu} {\tilde{\sigma}} \kappa \lambda}}
- 4 \Gamma \ind{_{\lb{\mu}| \lb{\kappa}} ^\sigma} C \ind{_{\rb{\lambda} | \sigma | \rb{\nu} 0}}
- 4 \Gamma \ind{_{\lb{\mu}| \lb{\kappa}} ^{\tilde{\sigma}}} C \ind{_{\rb{\lambda}|{\tilde{\sigma}}| \rb{\nu} 0}}
- 4 \Gamma \ind{_{\lb{\mu}| \lb{\kappa}} ^0} C \ind{_{\rb{\lambda}|0| \rb{\nu} 0}} \\
- 2 \Gamma \ind{_{0 \lb{\kappa}} ^\sigma} C \ind{_{\rb{\lambda} \sigma \mu \nu}}
- 2 \Gamma \ind{_{0 \lb{\kappa}} ^{\tilde{\sigma}}} C \ind{_{\rb{\lambda} {\tilde{\sigma}}  \mu \nu}} 
- 2 \Gamma \ind{_{0 \lb{\kappa}} ^0} C \ind{_{\rb{\lambda} 0  \mu \nu}} 
\, ,
\end{multline}
\begin{multline}  \label{eq-Bianchi3a}
2 \partial \ind{_{\lb{\mu}}} C \ind{_{\rb{\nu} \tilde{\rho} \kappa \lambda}} + \partial \ind{_{\tilde{\rho}}} C \ind{_{\mu \nu \kappa \lambda}} = - 2 g \ind{_{\tilde{\rho} \lb{\kappa}}} A \ind{_{\rb{\lambda} \mu \nu}} - 2 \Gamma \ind{_{[\mu \nu]} ^\sigma} C \ind{_{\tilde{\rho} \sigma \kappa \lambda}} - 2 \Gamma \ind{_{[\mu \nu]} ^{\tilde{\sigma}}} C \ind{_{\tilde{\rho} {\tilde{\sigma}} \kappa \lambda}} - 2 \Gamma \ind{_{[\mu \nu]} ^0} C \ind{_{\tilde{\rho} 0 \kappa \lambda}} \\
- 2 \Gamma \ind{_{\tilde{\rho} \lb{\mu}} ^\sigma} C \ind{_{\rb{\nu} \sigma \kappa \lambda}} - 2 \Gamma \ind{_{\tilde{\rho} \lb{\mu}} ^{\tilde{\sigma}}} C \ind{_{\rb{\nu} {\tilde{\sigma}} \kappa \lambda}} - 2 \Gamma \ind{_{\tilde{\rho} \lb{\mu}} ^0} C \ind{_{\rb{\nu} 0 \kappa \lambda}} - 2 \Gamma \ind{_{\lb{\nu}| \tilde{\rho}|} ^\sigma} C \ind{_{\rb{\mu} \sigma \kappa \lambda}} - 2 \Gamma \ind{_{\lb{\nu}| \tilde{\rho}|} ^{\tilde{\sigma}}} C \ind{_{\rb{\mu} {\tilde{\sigma}} \kappa \lambda}} - 2 \Gamma \ind{_{\lb{\nu}| \tilde{\rho}|} ^0} C \ind{_{\rb{\mu} 0 \kappa \lambda}} \\ - 4 \Gamma \ind{_{\lb{\mu}| \lb{\kappa}} ^\sigma} C \ind{_{\rb{\lambda}|\sigma| \rb{\nu} \tilde{\rho}}} - 4 \Gamma \ind{_{\lb{\mu}| \lb{\kappa}} ^{\tilde{\sigma}}} C \ind{_{\rb{\lambda}|{\tilde{\sigma}}| \rb{\nu} \tilde{\rho}}} - 4 \Gamma \ind{_{\lb{\mu}| \lb{\kappa}} ^0} C \ind{_{\rb{\lambda}| 0 | \rb{\nu} \tilde{\rho}}} - 2 \Gamma \ind{_{\tilde{\rho} \lb{\kappa}} ^\sigma} C \ind{_{\rb{\lambda} \sigma \mu \nu}} - 2 \Gamma \ind{_{\tilde{\rho} \lb{\kappa}} ^{\tilde{\sigma}}} C \ind{_{\rb{\lambda} {\tilde{\sigma}} \mu \nu}} - 2 \Gamma \ind{_{\tilde{\rho} \lb{\kappa}} ^0} C \ind{_{\rb{\lambda}  0 \mu \nu}} \, ,
\end{multline}
\begin{multline} \label{eq-Bianchi3b}
2 \partial \ind{_{\lb{\mu}}} C \ind{_{\rb{\nu} 0 \kappa 0}} + \partial \ind{_{0}} C \ind{_{\mu \nu \kappa 0}} = A \ind{_{\kappa \mu \nu}} - 2 \Gamma \ind{_{[\mu \nu]} ^\sigma} C \ind{_{0 \sigma \kappa 0}} - 2 \Gamma \ind{_{[\mu \nu]} ^{\tilde{\sigma}}} C \ind{_{0 {\tilde{\sigma}} \kappa 0}} \\
- 2 \Gamma \ind{_{0 \lb{\mu}} ^\sigma} C \ind{_{\rb{\nu} \sigma \kappa 0}} - 2 \Gamma \ind{_{0 \lb{\mu}} ^{\tilde{\sigma}}} C \ind{_{\rb{\nu} {\tilde{\sigma}} \kappa 0}} - 2 \Gamma \ind{_{0 \lb{\mu}} ^0} C \ind{_{\rb{\nu} 0 \kappa 0}} - 2 \Gamma \ind{_{\lb{\nu}| 0|} ^\sigma} C \ind{_{\rb{\mu} \sigma \kappa 0}} - 2 \Gamma \ind{_{\lb{\nu}| 0|} ^{\tilde{\sigma}}} C \ind{_{\rb{\mu} {\tilde{\sigma}} \kappa 0}}  \\ - 2 \Gamma \ind{_{\lb{\mu}| \kappa} ^\sigma} C \ind{_{0 \sigma| \rb{\nu} 0}}  + 2 \Gamma \ind{_{\lb{\mu}| 0} ^\sigma} C \ind{_{\kappa \sigma| \rb{\nu} 0}} - 2 \Gamma \ind{_{\lb{\mu}| \kappa} ^{\tilde{\sigma}}} C \ind{_{0 {\tilde{\sigma}}| \rb{\nu} 0}} + 2 \Gamma \ind{_{\lb{\mu}| 0} ^{\tilde{\sigma}}} C \ind{_{\kappa {\tilde{\sigma}}| \rb{\nu} 0}}  \\ - \Gamma \ind{_{0 \kappa} ^\sigma} C \ind{_{0 \sigma \mu \nu}} + \Gamma \ind{_{0 0} ^\sigma} C \ind{_{\kappa \sigma \mu \nu}} -  \Gamma \ind{_{0 \kappa} ^{\tilde{\sigma}}} C \ind{_{0 {\tilde{\sigma}} \mu \nu}}  +  \Gamma \ind{_{0 0} ^{\tilde{\sigma}}} C \ind{_{\kappa {\tilde{\sigma}} \mu \nu}}  \, ,
\end{multline}
\begin{multline} \label{eq-Bianchi2a}
2 \partial \ind{_{\lb{\mu}}} C \ind{_{\rb{\nu} \tilde{\rho} \kappa 0}} + \partial \ind{_{\tilde{\rho}}} C \ind{_{\mu \nu \kappa 0}} = - g \ind{_{\tilde{\rho} \kappa}} A \ind{_{0 \mu \nu}} - 2 \Gamma \ind{_{[\mu \nu]} ^\sigma} C \ind{_{\tilde{\rho} \sigma \kappa 0}} - 2 \Gamma \ind{_{[\mu \nu]} ^{\tilde{\sigma}}} C \ind{_{\tilde{\rho} {\tilde{\sigma}} \kappa 0}} - 2 \Gamma \ind{_{[\mu \nu]} ^0} C \ind{_{\tilde{\rho} 0 \kappa 0}} \\
- 2 \Gamma \ind{_{\tilde{\rho} \lb{\mu}} ^\sigma} C \ind{_{\rb{\nu} \sigma \kappa 0}} - 2 \Gamma \ind{_{\tilde{\rho} \lb{\mu}} ^{\tilde{\sigma}}} C \ind{_{\rb{\nu} {\tilde{\sigma}} \kappa 0}} - 2 \Gamma \ind{_{\tilde{\rho} \lb{\mu}} ^0} C \ind{_{\rb{\nu} 0 \kappa 0}} - 2 \Gamma \ind{_{\lb{\nu}| \tilde{\rho}|} ^\sigma} C \ind{_{\rb{\mu} \sigma \kappa 0}} - 2 \Gamma \ind{_{\lb{\nu}| \tilde{\rho}|} ^{\tilde{\sigma}}} C \ind{_{\rb{\mu} {\tilde{\sigma}} \kappa 0}} - 2 \Gamma \ind{_{\lb{\nu}| \tilde{\rho}|} ^0} C \ind{_{\rb{\mu} 0 \kappa 0}} \\ - 2 \Gamma \ind{_{\lb{\mu}| \kappa} ^\sigma} C \ind{_{0 \sigma | \rb{\nu} \tilde{\rho}}} + 2 \Gamma \ind{_{\lb{\mu}| 0} ^\sigma} C \ind{_{\kappa \sigma | \rb{\nu} \tilde{\rho}}} - 2 \Gamma \ind{_{\lb{\mu}| \kappa} ^{\tilde{\sigma}}} C \ind{_{0  {\tilde{\sigma}} | \rb{\nu} \tilde{\rho}}} + 2 \Gamma \ind{_{\lb{\mu}| 0} ^{\tilde{\sigma}}} C \ind{_{\kappa {\tilde{\sigma}} | \rb{\nu} \tilde{\rho}}} \\
- \Gamma \ind{_{\tilde{\rho} \kappa} ^\sigma} C \ind{_{0 \sigma \mu \nu}} + \Gamma \ind{_{\tilde{\rho} 0} ^\sigma} C \ind{_{\kappa \sigma \mu \nu}} - \Gamma \ind{_{\tilde{\rho} \kappa} ^{\tilde{\sigma}}} C \ind{_{0 {\tilde{\sigma}}  \mu \nu}} + \Gamma \ind{_{\tilde{\rho} 0} ^{\tilde{\sigma}}} C \ind{_{\kappa {\tilde{\sigma}}  \mu \nu}} \, ,
\end{multline}
\begin{multline} \label{eq-Bianchi2b}
2 \partial \ind{_{\lb{\mu}}} C \ind{_{\rb{\nu} 0 \kappa \tilde{\lambda}}} + \partial \ind{_{0}} C \ind{_{\mu \nu \kappa \tilde{\lambda}}} = - 2 g \ind{_{\lb{\mu} | \tilde{\lambda}}} A \ind{_{\kappa | \rb{\nu} 0}} - 2 \Gamma \ind{_{[\mu \nu]} ^\sigma} C \ind{_{0 \sigma \kappa \tilde{\lambda}}} - 2 \Gamma \ind{_{[\mu \nu]} ^{\tilde{\sigma}}} C \ind{_{0 {\tilde{\sigma}} \kappa \tilde{\lambda}}} \\
- 2 \Gamma \ind{_{0 \lb{\mu}} ^\sigma} C \ind{_{\rb{\nu} \sigma \kappa \tilde{\lambda}}} - 2 \Gamma \ind{_{0 \lb{\mu}} ^{\tilde{\sigma}}} C \ind{_{\rb{\nu} {\tilde{\sigma}} \kappa \tilde{\lambda}}} - 2 \Gamma \ind{_{0 \lb{\mu}} ^0} C \ind{_{\rb{\nu} 0 \kappa \tilde{\lambda}}} - 2 \Gamma \ind{_{\lb{\nu}| 0|} ^\sigma} C \ind{_{\rb{\mu} \sigma \kappa \tilde{\lambda}}} - 2 \Gamma \ind{_{\lb{\nu}| 0|} ^{\tilde{\sigma}}} C \ind{_{\rb{\mu} {\tilde{\sigma}} \kappa \tilde{\lambda}}} \\
- 2 \Gamma \ind{_{\lb{\mu}| \kappa} ^\sigma} C \ind{_{\tilde{\lambda} \sigma| \rb{\nu} 0}} + 2 \Gamma \ind{_{\lb{\mu}| \tilde{\lambda}} ^\sigma} C \ind{_{\kappa \sigma| \rb{\nu} 0}} - 2 \Gamma \ind{_{\lb{\mu} | \kappa} ^{\tilde{\sigma}}} C \ind{_{\tilde{\lambda}{\tilde{\sigma}}| \rb{\nu} 0}} + 2 \Gamma \ind{_{\lb{\mu} | \tilde{\lambda}} ^{\tilde{\sigma}}} C \ind{_{\kappa {\tilde{\sigma}}| \rb{\nu} 0}} - 2 \Gamma \ind{_{\lb{\mu}| \kappa} ^0} C \ind{_{\tilde{\lambda} 0 | \rb{\nu} 0}} + 2 \Gamma \ind{_{\lb{\mu}| \tilde{\lambda}} ^0} C \ind{_{\kappa 0 | \rb{\nu} 0}} \\ - \Gamma \ind{_{0 \kappa} ^\sigma} C \ind{_{\tilde{\lambda} \sigma \mu \nu}} + \Gamma \ind{_{0 \tilde{\lambda}} ^\sigma} C \ind{_{\kappa \sigma \mu \nu}} - \Gamma \ind{_{0 \kappa} ^{\tilde{\sigma}}} C \ind{_{\tilde{\lambda} {\tilde{\sigma}} \mu \nu}} + \Gamma \ind{_{0 \tilde{\lambda}} ^{\tilde{\sigma}}} C \ind{_{\kappa {\tilde{\sigma}} \mu \nu}} - \Gamma \ind{_{0 \kappa} ^0} C \ind{_{\tilde{\lambda}  0 \mu \nu}} + \Gamma \ind{_{0 \tilde{\lambda}} ^0} C \ind{_{\kappa 0 \mu \nu}} \, ,
\end{multline}
\begin{multline} \label{eq-Bianchi1a}
2 \partial \ind{_{\lb{\mu}}} C \ind{_{\rb{\nu} \tilde{\rho} \kappa \tilde{\lambda}}} + \partial \ind{_{\tilde{\rho}}} C \ind{_{\mu \nu \kappa \tilde{\lambda}}} =  2 g \ind{_{\lb{\mu} | \tilde{\lambda}}} A \ind{_{\kappa | \rb{\nu} \tilde{\rho}}} - g \ind{_{\tilde{\rho} \kappa}} A \ind{_{\tilde{\lambda} \mu \nu}} \\
- 2 \Gamma \ind{_{[\mu \nu]} ^\sigma} C \ind{_{\tilde{\rho} \sigma \kappa \tilde{\lambda}}}
- 2 \Gamma \ind{_{[\mu \nu]} ^{\tilde{\sigma}}} C \ind{_{\tilde{\rho} {\tilde{\sigma}} \kappa \tilde{\lambda}}}
- 2 \Gamma \ind{_{[\mu \nu]} ^0} C \ind{_{\tilde{\rho} 0 \kappa \tilde{\lambda}}} 
- 2 \Gamma \ind{_{\tilde{\rho} \lb{\mu}} ^\sigma} C \ind{_{\rb{\nu} \sigma \kappa \tilde{\lambda}}}
- 2 \Gamma \ind{_{\tilde{\rho} \lb{\mu}} ^{\tilde{\sigma}}} C \ind{_{\rb{\nu} {\tilde{\sigma}} \kappa \tilde{\lambda}}} 
- 2 \Gamma \ind{_{\tilde{\rho} \lb{\mu}} ^0} C \ind{_{\rb{\nu} 0 \kappa \tilde{\lambda}}} \\
- 2 \Gamma \ind{_{\lb{\nu}| \tilde{\rho}|} ^\sigma} C \ind{_{\rb{\mu} \sigma \kappa \tilde{\lambda}}}
- 2 \Gamma \ind{_{\lb{\nu}| \tilde{\rho}|} ^{\tilde{\sigma}}} C \ind{_{\rb{\mu} {\tilde{\sigma}} \kappa \tilde{\lambda}}} 
- 2 \Gamma \ind{_{\lb{\nu}| \tilde{\rho}|} ^0} C \ind{_{\rb{\mu} 0 \kappa \tilde{\lambda}}}
- 2 \Gamma \ind{_{\lb{\mu}| \kappa} ^\sigma} C \ind{_{\tilde{\lambda} |\sigma| \rb{\nu} \tilde{\rho}}}
- 2 \Gamma \ind{_{\lb{\mu}| \kappa} ^{\tilde{\sigma}}} C \ind{_{\tilde{\lambda}|{\tilde{\sigma}}| \rb{\nu} \tilde{\rho}}}
- 2 \Gamma \ind{_{\lb{\mu}| \kappa} ^0} C \ind{_{\tilde{\lambda}|0| \rb{\nu} \tilde{\rho}}} \\
+ 2 \Gamma \ind{_{\lb{\mu}| \tilde{\lambda}} ^\sigma} C \ind{_{\kappa |\sigma| \rb{\nu} \tilde{\rho}}}
+ 2 \Gamma \ind{_{\lb{\mu}| \tilde{\lambda}} ^{\tilde{\sigma}}} C \ind{_{\kappa|{\tilde{\sigma}}| \rb{\nu} \tilde{\rho}}}
+ 2 \Gamma \ind{_{\lb{\mu}| \tilde{\lambda}} ^0} C \ind{_{\kappa |0| \rb{\nu} \tilde{\rho}}}
- \Gamma \ind{_{\tilde{\rho} \kappa} ^\sigma} C \ind{_{\tilde{\lambda} \sigma \mu \nu}}
- \Gamma \ind{_{\tilde{\rho} \kappa} ^{\tilde{\sigma}}} C \ind{_{\tilde{\lambda} {\tilde{\sigma}} \mu \nu}} 
- \Gamma \ind{_{\tilde{\rho} \kappa} ^0} C \ind{_{\tilde{\lambda} 0 \mu \nu}} \\
+ \Gamma \ind{_{\tilde{\rho} \tilde{\lambda}} ^\sigma} C \ind{_{\kappa \sigma \mu \nu}}
+ \Gamma \ind{_{\tilde{\rho} \tilde{\lambda}} ^{\tilde{\sigma}}} C \ind{_{\kappa {\tilde{\sigma}} \mu \nu}}
+ \Gamma \ind{_{\tilde{\rho} \tilde{\lambda}} ^0} C \ind{_{\kappa 0 \mu \nu}} \, ,
\end{multline}
\begin{multline} \label{eq-Bianchi1b}
\partial \ind{_{\mu}} C \ind{_{\tilde{\nu} 0 \kappa 0}} + \partial \ind{_{\tilde{\nu}}} C \ind{_{0 \mu \kappa 0}} + \partial \ind{_{0}} C \ind{_{\mu \tilde{\nu} \kappa 0}} =  - g \ind{_{\tilde{\nu} \kappa}} A \ind{_{0 0 \mu}} + A \ind{_{\kappa \mu \tilde{\nu}}} \\
- \Gamma \ind{_{\mu \tilde{\nu}} ^\sigma} C \ind{_{0 \sigma \kappa 0}}
- \Gamma \ind{_{\mu \tilde{\nu}} ^{\tilde{\sigma}}} C \ind{_{0 {\tilde{\sigma}} \kappa 0}} 
+ \Gamma \ind{_{\tilde{\nu} \mu} ^\sigma} C \ind{_{0 \sigma \kappa 0}}
+ \Gamma \ind{_{\tilde{\nu} \mu} ^{\tilde{\sigma}}} C \ind{_{0 {\tilde{\sigma}} \kappa 0}} 
- \Gamma \ind{_{\tilde{\nu} 0} ^\sigma} C \ind{_{\mu \sigma \kappa 0}}
- \Gamma \ind{_{\tilde{\nu} 0} ^{\tilde{\sigma}}} C \ind{_{\mu {\tilde{\sigma}} \kappa 0}} \\
+ \Gamma \ind{_{0 \tilde{\nu}} ^\sigma} C \ind{_{\mu \sigma \kappa 0}}
+ \Gamma \ind{_{0 \tilde{\nu}} ^{\tilde{\sigma}}} C \ind{_{\mu {\tilde{\sigma}} \kappa 0}} 
+ \Gamma \ind{_{0 \tilde{\nu}} ^0} C \ind{_{\mu 0 \kappa 0}} 
- \Gamma \ind{_{0 \mu} ^\sigma} C \ind{_{\tilde{\nu} \sigma \kappa 0}}
- \Gamma \ind{_{0 \mu} ^{\tilde{\sigma}}} C \ind{_{\tilde{\nu} {\tilde{\sigma}} \kappa 0}} 
- \Gamma \ind{_{0 \mu} ^0} C \ind{_{\tilde{\nu} 0 \kappa 0}} \\
+ \Gamma \ind{_{\mu 0} ^\sigma} C \ind{_{\tilde{\nu} \sigma \kappa 0}}
+ \Gamma \ind{_{\mu 0} ^{\tilde{\sigma}}} C \ind{_{\tilde{\nu} {\tilde{\sigma}} \kappa 0}} 
- \Gamma \ind{_{\mu \kappa} ^\sigma} C \ind{_{0 \sigma \tilde{\nu} 0}}
- \Gamma \ind{_{\mu \kappa} ^{\tilde{\sigma}}} C \ind{_{0 {\tilde{\sigma}} \tilde{\nu} 0}} 
+ \Gamma \ind{_{\mu 0} ^\sigma} C \ind{_{\kappa \sigma \tilde{\nu} 0}} 
+ \Gamma \ind{_{\mu 0} ^{\tilde{\sigma}}} C \ind{_{\kappa {\tilde{\sigma}} \tilde{\nu} 0}} \\
- \Gamma \ind{_{\tilde{\nu} \kappa} ^\sigma} C \ind{_{0 \sigma 0 \mu}}
- \Gamma \ind{_{\tilde{\nu} \kappa} ^{\tilde{\sigma}}} C \ind{_{0 {\tilde{\sigma}} 0 \mu}}
+ \Gamma \ind{_{\tilde{\nu} 0} ^\sigma} C \ind{_{\kappa \sigma 0 \mu}} 
+ \Gamma \ind{_{\tilde{\nu} 0} ^{\tilde{\sigma}}} C \ind{_{\kappa {\tilde{\sigma}} 0 \mu}}  
- \Gamma \ind{_{0 \kappa} ^\sigma} C \ind{_{0 \sigma \mu \tilde{\nu}}}
- \Gamma \ind{_{0 \kappa} ^{\tilde{\sigma}}} C \ind{_{0 {\tilde{\sigma}} \mu \tilde{\nu}}} \\
+ \Gamma \ind{_{0 0} ^\sigma} C \ind{_{\kappa \sigma \mu \tilde{\nu}}}
+ \Gamma \ind{_{0 0} ^{\tilde{\sigma}}} C \ind{_{\kappa {\tilde{\sigma}} \mu \tilde{\nu}}} 
   \, ,
\end{multline}
\begin{multline} \label{eq-Bianchi0a}
2 \partial \ind{_{\lb{\tilde{\mu}}}} C \ind{_{\rb{\tilde{\nu}} \rho \kappa 0}} + \partial \ind{_{\rho}} C \ind{_{\tilde{\mu} \tilde{\nu} \kappa 0}} =  - 2 g \ind{_{\lb{\tilde{\mu}} | \kappa}} A \ind{_{0 | \rb{\tilde{\nu}} \rho}}   
- 2 \Gamma \ind{_{[\tilde{\mu} \tilde{\nu}]} ^\sigma} C \ind{_{\rho \sigma \kappa 0}}
- 2 \Gamma \ind{_{[\tilde{\mu} \tilde{\nu}]} ^{\tilde{\sigma}}} C \ind{_{\rho {\tilde{\sigma}} \kappa 0}} 
- 2 \Gamma \ind{_{[\tilde{\mu} \tilde{\nu}]} ^0} C \ind{_{\rho 0 \kappa 0}} \\
- 2 \Gamma \ind{_{\rho \lb{\tilde{\mu}}} ^\sigma} C \ind{_{\rb{\tilde{\nu}} \sigma \kappa 0}}
- 2 \Gamma \ind{_{\rho \lb{\tilde{\mu}}} ^{\tilde{\sigma}}} C \ind{_{\rb{\tilde{\nu}} {\tilde{\sigma}} \kappa 0}} 
- 2 \Gamma \ind{_{\rho \lb{\tilde{\mu}}} ^0} C \ind{_{\rb{\tilde{\nu}} 0 \kappa 0}} 
- 2 \Gamma \ind{_{\lb{\tilde{\nu}}| \rho|} ^\sigma} C \ind{_{\rb{\tilde{\mu}} \sigma \kappa 0}}
- 2 \Gamma \ind{_{\lb{\tilde{\nu}}| \rho|} ^{\tilde{\sigma}}} C \ind{_{\rb{\tilde{\mu}} {\tilde{\sigma}} \kappa 0}} 
- 2 \Gamma \ind{_{\lb{\tilde{\nu}}| \rho|} ^0} C \ind{_{\rb{\tilde{\mu}} 0 \kappa 0}} \\
- 2 \Gamma \ind{_{\lb{\tilde{\mu}}| \kappa} ^\sigma} C \ind{_{0|\sigma| \rb{\tilde{\nu}} \rho}}
+ 2 \Gamma \ind{_{\lb{\tilde{\mu}}| 0} ^\sigma} C \ind{_{\kappa|\sigma| \rb{\tilde{\nu}} \rho}}
- 2 \Gamma \ind{_{\lb{\tilde{\mu}}| \kappa} ^{\tilde{\sigma}}} C \ind{_{0|{\tilde{\sigma}}| \rb{\tilde{\nu}} \rho}}
+ 2 \Gamma \ind{_{\lb{\tilde{\mu}}| 0} ^{\tilde{\sigma}}} C \ind{_{\kappa|{\tilde{\sigma}}| \rb{\tilde{\nu}} \rho}}  
\\
- \Gamma \ind{_{\rho \kappa} ^\sigma} C \ind{_{0 \sigma \tilde{\mu} \tilde{\nu}}}
+ \Gamma \ind{_{\rho 0} ^\sigma} C \ind{_{\kappa \sigma \tilde{\mu} \tilde{\nu}}}  
- \Gamma \ind{_{\rho \kappa} ^{\tilde{\sigma}}} C \ind{_{0 {\tilde{\sigma}} \tilde{\mu} \tilde{\nu}}}
+ \Gamma \ind{_{\rho 0} ^{\tilde{\sigma}}} C \ind{_{\kappa {\tilde{\sigma}} \tilde{\mu} \tilde{\nu}}}
\end{multline}
\begin{multline} \label{eq-Bianchi0b}
2 \partial \ind{_{\lb{\tilde{\mu}}}} C \ind{_{\rb{\tilde{\nu}} 0 \kappa \lambda}} + \partial \ind{_{0}} C \ind{_{\tilde{\mu} \tilde{\nu} \kappa \lambda}} =  - 4 g \ind{_{\lb{\tilde{\mu}} | \lb{\kappa}}} A \ind{_{\rb{\lambda} | \rb{\tilde{\nu}} 0}}  
- 2 \Gamma \ind{_{[\tilde{\mu} \tilde{\nu}]} ^\sigma} C \ind{_{0 \sigma \kappa \lambda}}
- 2 \Gamma \ind{_{[\tilde{\mu} \tilde{\nu}]} ^{\tilde{\sigma}}} C \ind{_{0 {\tilde{\sigma}} \kappa \lambda}} 
\\
- 2 \Gamma \ind{_{0 \lb{\tilde{\mu}}} ^\sigma} C \ind{_{\rb{\tilde{\nu}} \sigma \kappa \lambda}}
- 2 \Gamma \ind{_{0 \lb{\tilde{\mu}}} ^{\tilde{\sigma}}} C \ind{_{\rb{\tilde{\nu}} {\tilde{\sigma}} \kappa \lambda}} 
- 2 \Gamma \ind{_{0 \lb{\tilde{\mu}}} ^0} C \ind{_{\rb{\tilde{\nu}} 0 \kappa \lambda}} 
- 2 \Gamma \ind{_{\lb{\tilde{\nu}}| 0|} ^\sigma} C \ind{_{\rb{\tilde{\mu}} \sigma \kappa \lambda}}
- 2 \Gamma \ind{_{\lb{\tilde{\nu}}| 0|} ^{\tilde{\sigma}}} C \ind{_{\rb{\tilde{\mu}} {\tilde{\sigma}} \kappa \lambda}} 
\\
- 4 \Gamma \ind{_{\lb{\tilde{\mu}}| \lb{\kappa}} ^\sigma} C \ind{_{\rb{\lambda}|\sigma| \rb{\tilde{\nu}} 0}}
- 4 \Gamma \ind{_{\lb{\tilde{\mu}}| \lb{\kappa}} ^{\tilde{\sigma}}} C \ind{_{\rb{\lambda}|{\tilde{\sigma}}| \rb{\tilde{\nu}} 0}} 
- 4 \Gamma \ind{_{\lb{\tilde{\mu}}| \lb{\kappa}} ^0} C \ind{_{\rb{\lambda}|0| \rb{\tilde{\nu}} 0}} \\
- 2 \Gamma \ind{_{0 \lb{\kappa}} ^\sigma} C \ind{_{\rb{\lambda} \sigma \tilde{\mu} \tilde{\nu}}} 
- 2 \Gamma \ind{_{0 \lb{\kappa}} ^{\tilde{\sigma}}} C \ind{_{\rb{\lambda} {\tilde{\sigma}} \tilde{\mu} \tilde{\nu}}}
- 2 \Gamma \ind{_{0 \lb{\kappa}} ^0} C \ind{_{\rb{\lambda} 0 \tilde{\mu} \tilde{\nu}}} \, ,
\end{multline}
\begin{multline}  \label{eq-Bianchi0c}
2 \partial \ind{_{\lb{\mu}}} C \ind{_{\rb{\nu} 0 \tilde{\kappa} \tilde{\lambda}}} + \partial \ind{_{0}} C \ind{_{\mu \nu \tilde{\kappa} \tilde{\lambda}}} =  - 4 g \ind{_{\lb{\mu} | \lb{\tilde{\kappa}}}} A \ind{_{\rb{\tilde{\lambda}} | \rb{\nu} 0}} 
- 2 \Gamma \ind{_{[\mu \nu]} ^\sigma} C \ind{_{0 \sigma \tilde{\kappa} \tilde{\lambda}}}
- 2 \Gamma \ind{_{[\mu \nu]} ^{\tilde{\sigma}}} C \ind{_{0 {\tilde{\sigma}} \tilde{\kappa} \tilde{\lambda}}} 
\\
- 2 \Gamma \ind{_{0 \lb{\mu}} ^\sigma} C \ind{_{\rb{\nu} \sigma \tilde{\kappa} \tilde{\lambda}}}
- 2 \Gamma \ind{_{0 \lb{\mu}} ^{\tilde{\sigma}}} C \ind{_{\rb{\nu} {\tilde{\sigma}} \tilde{\kappa} \tilde{\lambda}}} 
- 2 \Gamma \ind{_{0 \lb{\mu}} ^0} C \ind{_{\rb{\nu} 0 \tilde{\kappa} \tilde{\lambda}}} 
- 2 \Gamma \ind{_{\lb{\nu}| 0|} ^\sigma} C \ind{_{\rb{\mu} \sigma \tilde{\kappa} \tilde{\lambda}}}
- 2 \Gamma \ind{_{\lb{\nu}| 0|} ^{\tilde{\sigma}}} C \ind{_{\rb{\mu} {\tilde{\sigma}} \tilde{\kappa} \tilde{\lambda}}} 
\\
- 4 \Gamma \ind{_{\lb{\mu}| \lb{\tilde{\kappa}}} ^\sigma} C \ind{_{\rb{\tilde{\lambda}}|\sigma| \rb{\nu} 0}}
- 4 \Gamma \ind{_{\lb{\mu}| \lb{\tilde{\kappa}}} ^{\tilde{\sigma}}} C \ind{_{\rb{\tilde{\lambda}}|{\tilde{\sigma}}| \rb{\nu} 0}} 
- 4 \Gamma \ind{_{\lb{\mu}| \lb{\tilde{\kappa}}} ^0} C \ind{_{\rb{\tilde{\lambda}}|0| \rb{\nu} 0}} \\
- 2 \Gamma \ind{_{0 \lb{\tilde{\kappa}}} ^\sigma} C \ind{_{\rb{\tilde{\lambda}} \sigma \mu \nu}} 
- 2 \Gamma \ind{_{0 \lb{\tilde{\kappa}}} ^{\tilde{\sigma}}} C \ind{_{\rb{\tilde{\lambda}} {\tilde{\sigma}} \mu \nu}}
- 2 \Gamma \ind{_{0 \lb{\tilde{\kappa}}} ^0} C \ind{_{\rb{\tilde{\lambda}} 0 \mu \nu}} \, ,
\end{multline}
\begin{multline}  \label{eq-Bianchi0d}
2 \partial \ind{_{\lb{\mu}}} C \ind{_{\rb{\nu} \tilde{\rho} \tilde{\kappa} 0}} + \partial \ind{_{\tilde{\rho}}} C \ind{_{\mu \nu \tilde{\kappa} 0}} =  - 2 g \ind{_{\lb{\mu} | \tilde{\kappa}}} A \ind{_{0 | \rb{\nu} \tilde{\rho}}}  
- 2 \Gamma \ind{_{[\mu \nu]} ^\sigma} C \ind{_{\tilde{\rho} \sigma \tilde{\kappa} 0}}
- 2 \Gamma \ind{_{[\mu \nu]} ^{\tilde{\sigma}}} C \ind{_{\tilde{\rho} {\tilde{\sigma}} \tilde{\kappa} 0}} 
- 2 \Gamma \ind{_{[\mu \nu]} ^0} C \ind{_{\tilde{\rho} 0 \tilde{\kappa} 0}} \\
- 2 \Gamma \ind{_{\tilde{\rho} \lb{\mu}} ^\sigma} C \ind{_{\rb{\nu} \sigma \tilde{\kappa} 0}}
- 2 \Gamma \ind{_{\tilde{\rho} \lb{\mu}} ^{\tilde{\sigma}}} C \ind{_{\rb{\nu} {\tilde{\sigma}} \tilde{\kappa} 0}} 
- 2 \Gamma \ind{_{\tilde{\rho} \lb{\mu}} ^0} C \ind{_{\rb{\nu} 0 \tilde{\kappa} 0}} 
- 2 \Gamma \ind{_{\lb{\nu}| \tilde{\rho}|} ^\sigma} C \ind{_{\rb{\mu} \sigma \tilde{\kappa} 0}}
- 2 \Gamma \ind{_{\lb{\nu}| \tilde{\rho}|} ^{\tilde{\sigma}}} C \ind{_{\rb{\mu} {\tilde{\sigma}} \tilde{\kappa} 0}} 
- 2 \Gamma \ind{_{\lb{\nu}| \tilde{\rho}|} ^0} C \ind{_{\rb{\mu} 0 \tilde{\kappa} 0}} \\
- 2 \Gamma \ind{_{\lb{\mu}| \tilde{\kappa}} ^\sigma} C \ind{_{0|\sigma| \rb{\nu} \tilde{\rho}}}
+ 2 \Gamma \ind{_{\lb{\mu}| 0} ^\sigma} C \ind{_{\tilde{\kappa}|\sigma| \rb{\nu} \tilde{\rho}}}
- 2 \Gamma \ind{_{\lb{\mu}| \tilde{\kappa}} ^{\tilde{\sigma}}} C \ind{_{0|{\tilde{\sigma}}| \rb{\nu} \tilde{\rho}}}
+ 2 \Gamma \ind{_{\lb{\mu}| 0} ^{\tilde{\sigma}}} C \ind{_{\tilde{\kappa}|{\tilde{\sigma}}| \rb{\nu} \tilde{\rho}}}  
\\
- \Gamma \ind{_{\tilde{\rho} \tilde{\kappa}} ^\sigma} C \ind{_{0 \sigma \mu \nu}}
+ \Gamma \ind{_{\tilde{\rho} 0} ^\sigma} C \ind{_{\tilde{\kappa} \sigma \mu \nu}}  
- \Gamma \ind{_{\tilde{\rho} \tilde{\kappa}} ^{\tilde{\sigma}}} C \ind{_{0 {\tilde{\sigma}} \mu \nu}}
+ \Gamma \ind{_{\tilde{\rho} 0} ^{\tilde{\sigma}}} C \ind{_{\tilde{\kappa} {\tilde{\sigma}} \mu \nu}}
\end{multline}
\begin{multline} \label{eq-Bianchi-1a}
2 \partial \ind{_{\lb{\tilde{\mu}}}} C \ind{_{\rb{\tilde{\nu}} \rho \tilde{\kappa} \lambda}} + \partial \ind{_{\rho}} C \ind{_{\tilde{\mu} \tilde{\nu} \tilde{\kappa} \lambda}} =  2 g \ind{_{\lb{\tilde{\mu}} | \lambda}} A \ind{_{\tilde{\kappa} | \rb{\tilde{\nu}} \rho}} - g \ind{_{\rho \tilde{\kappa}}} A \ind{_{\lambda \tilde{\mu} \tilde{\nu}}}  
- 2 \Gamma \ind{_{[\tilde{\mu} \tilde{\nu}]} ^\sigma} C \ind{_{\rho \sigma \tilde{\kappa} \lambda}}
- 2 \Gamma \ind{_{[\tilde{\mu} \tilde{\nu}]} ^{\tilde{\sigma}}} C \ind{_{\rho {\tilde{\sigma}} \tilde{\kappa} \lambda}}
- 2 \Gamma \ind{_{[\tilde{\mu} \tilde{\nu}]} ^0} C \ind{_{\rho 0 \tilde{\kappa} \lambda}} \\
- 2 \Gamma \ind{_{\rho \lb{\tilde{\mu}}} ^\sigma} C \ind{_{\rb{\tilde{\nu}} \sigma \tilde{\kappa} \lambda}}
- 2 \Gamma \ind{_{\rho \lb{\tilde{\mu}}} ^{\tilde{\sigma}}} C \ind{_{\rb{\tilde{\nu}} {\tilde{\sigma}} \tilde{\kappa} \lambda}} 
- 2 \Gamma \ind{_{\rho \lb{\tilde{\mu}}} ^0} C \ind{_{\rb{\tilde{\nu}} 0 \tilde{\kappa} \lambda}}  
- 2 \Gamma \ind{_{\lb{\tilde{\nu}}| \rho|} ^\sigma} C \ind{_{\rb{\tilde{\mu}} \sigma \tilde{\kappa} \lambda}}
- 2 \Gamma \ind{_{\lb{\tilde{\nu}}| \rho|} ^{\tilde{\sigma}}} C \ind{_{\rb{\tilde{\mu}} {\tilde{\sigma}} \tilde{\kappa} \lambda}} 
- 2 \Gamma \ind{_{\lb{\tilde{\nu}}| \rho|} ^0} C \ind{_{\rb{\tilde{\mu}} 0 \tilde{\kappa} \lambda}} \\
- 2 \Gamma \ind{_{\lb{\tilde{\mu}}| \tilde{\kappa}} ^\sigma} C \ind{_{\lambda |\sigma| \rb{\tilde{\nu}} \rho}}
- 2 \Gamma \ind{_{\lb{\tilde{\mu}}| \tilde{\kappa}} ^{\tilde{\sigma}}} C \ind{_{\lambda|{\tilde{\sigma}}| \rb{\tilde{\nu}} \rho}}
- 2 \Gamma \ind{_{\lb{\tilde{\mu}}| \tilde{\kappa}} ^0} C \ind{_{\lambda|0| \rb{\tilde{\nu}} \rho}} 
+ 2 \Gamma \ind{_{\lb{\tilde{\mu}}| \lambda} ^\sigma} C \ind{_{\tilde{\kappa} |\sigma| \rb{\tilde{\nu}} \rho}}
+ 2 \Gamma \ind{_{\lb{\tilde{\mu}}| \lambda} ^{\tilde{\sigma}}} C \ind{_{\tilde{\kappa}|{\tilde{\sigma}}| \rb{\tilde{\nu}} \rho}}
+ 2 \Gamma \ind{_{\lb{\tilde{\mu}}| \lambda} ^0} C \ind{_{\tilde{\kappa} |0| \rb{\tilde{\nu}} \rho}} \\
- \Gamma \ind{_{\rho \tilde{\kappa}} ^\sigma} C \ind{_{\lambda \sigma \tilde{\mu} \tilde{\nu}}}
- \Gamma \ind{_{\rho \tilde{\kappa}} ^{\tilde{\sigma}}} C \ind{_{\lambda {\tilde{\sigma}} \tilde{\mu} \tilde{\nu}}} 
- \Gamma \ind{_{\rho \tilde{\kappa}} ^0} C \ind{_{\lambda 0 \tilde{\mu} \tilde{\nu}}} 
+ \Gamma \ind{_{\rho \lambda} ^\sigma} C \ind{_{\tilde{\kappa} \sigma \tilde{\mu} \tilde{\nu}}}
+ \Gamma \ind{_{\rho \lambda} ^{\tilde{\sigma}}} C \ind{_{\tilde{\kappa} {\tilde{\sigma}} \tilde{\mu} \tilde{\nu}}}
+ \Gamma \ind{_{\rho \lambda} ^0} C \ind{_{\tilde{\kappa} 0 \tilde{\mu} \tilde{\nu}}} \, ,
\end{multline}
\begin{multline}  \label{eq-Bianchi-1b}
\partial \ind{_{\tilde{\mu}}} C \ind{_{\nu 0 \tilde{\kappa} 0}} + \partial \ind{_{\nu}} C \ind{_{0 \tilde{\mu} \tilde{\kappa} 0}} + \partial \ind{_{0}} C \ind{_{\tilde{\mu} \nu \tilde{\kappa} 0}} =  - g \ind{_{\nu \tilde{\kappa}}} A \ind{_{0 0 \tilde{\mu}}} + A \ind{_{\tilde{\kappa} \tilde{\mu} \nu}} 
- \Gamma \ind{_{\tilde{\mu} \nu} ^\sigma} C \ind{_{0 \sigma \tilde{\kappa} 0}}
- \Gamma \ind{_{\tilde{\mu} \nu} ^{\tilde{\sigma}}} C \ind{_{0 {\tilde{\sigma}} \tilde{\kappa} 0}} 
+ \Gamma \ind{_{\nu \tilde{\mu}} ^\sigma} C \ind{_{0 \sigma \tilde{\kappa} 0}}
+ \Gamma \ind{_{\nu \tilde{\mu}} ^{\tilde{\sigma}}} C \ind{_{0 {\tilde{\sigma}} \tilde{\kappa} 0}} \\
- \Gamma \ind{_{\nu 0} ^\sigma} C \ind{_{\tilde{\mu} \sigma \tilde{\kappa} 0}}
- \Gamma \ind{_{\nu 0} ^{\tilde{\sigma}}} C \ind{_{\tilde{\mu} {\tilde{\sigma}} \tilde{\kappa} 0}} 
+ \Gamma \ind{_{0 \nu} ^\sigma} C \ind{_{\tilde{\mu} \sigma \tilde{\kappa} 0}}
+ \Gamma \ind{_{0 \nu} ^{\tilde{\sigma}}} C \ind{_{\tilde{\mu} {\tilde{\sigma}} \tilde{\kappa} 0}} 
+ \Gamma \ind{_{0 \nu} ^0} C \ind{_{\tilde{\mu} 0 \tilde{\kappa} 0}} 
- \Gamma \ind{_{0 \tilde{\mu}} ^\sigma} C \ind{_{\nu \sigma \tilde{\kappa} 0}}
- \Gamma \ind{_{0 \tilde{\mu}} ^{\tilde{\sigma}}} C \ind{_{\nu {\tilde{\sigma}} \tilde{\kappa} 0}} 
- \Gamma \ind{_{0 \tilde{\mu}} ^0} C \ind{_{\nu 0 \tilde{\kappa} 0}} \\
+ \Gamma \ind{_{\tilde{\mu} 0} ^\sigma} C \ind{_{\nu \sigma \tilde{\kappa} 0}}
+ \Gamma \ind{_{\tilde{\mu} 0} ^{\tilde{\sigma}}} C \ind{_{\nu {\tilde{\sigma}} \tilde{\kappa} 0}} 
- \Gamma \ind{_{\tilde{\mu} \tilde{\kappa}} ^\sigma} C \ind{_{0 \sigma \nu 0}}
- \Gamma \ind{_{\tilde{\mu} \tilde{\kappa}} ^{\tilde{\sigma}}} C \ind{_{0 {\tilde{\sigma}} \nu 0}} 
+ \Gamma \ind{_{\tilde{\mu} 0} ^\sigma} C \ind{_{\tilde{\kappa} \sigma \nu 0}} 
+ \Gamma \ind{_{\tilde{\mu} 0} ^{\tilde{\sigma}}} C \ind{_{\tilde{\kappa} {\tilde{\sigma}} \nu 0}} \\
- \Gamma \ind{_{\nu \tilde{\kappa}} ^\sigma} C \ind{_{0 \sigma 0 \tilde{\mu}}}
- \Gamma \ind{_{\nu \tilde{\kappa}} ^{\tilde{\sigma}}} C \ind{_{0 {\tilde{\sigma}} 0 \tilde{\mu}}}
+ \Gamma \ind{_{\nu 0} ^\sigma} C \ind{_{\tilde{\kappa} \sigma 0 \tilde{\mu}}} 
+ \Gamma \ind{_{\nu 0} ^{\tilde{\sigma}}} C \ind{_{\tilde{\kappa} {\tilde{\sigma}} 0 \tilde{\mu}}}  
- \Gamma \ind{_{0 \tilde{\kappa}} ^\sigma} C \ind{_{0 \sigma \tilde{\mu} \nu}}
- \Gamma \ind{_{0 \tilde{\kappa}} ^{\tilde{\sigma}}} C \ind{_{0 {\tilde{\sigma}} \tilde{\mu} \nu}} \\
+ \Gamma \ind{_{0 0} ^\sigma} C \ind{_{\tilde{\kappa} \sigma \tilde{\mu} \nu}}
+ \Gamma \ind{_{0 0} ^{\tilde{\sigma}}} C \ind{_{\tilde{\kappa} {\tilde{\sigma}} \tilde{\mu} \nu}} 
   \, ,
\end{multline}
\begin{multline}  \label{eq-Bianchi-2a}
2 \partial \ind{_{\lb{\tilde{\mu}}}} C \ind{_{\rb{\tilde{\nu}} \rho \tilde{\kappa} 0}} + \partial \ind{_{\rho}} C \ind{_{\tilde{\mu} \tilde{\nu} \tilde{\kappa} 0}} = - g \ind{_{\rho \tilde{\kappa}}} A \ind{_{0 \tilde{\mu} \tilde{\nu}}} 
- 2 \Gamma \ind{_{[\tilde{\mu} \tilde{\nu}]} ^\sigma} C \ind{_{\rho \sigma \tilde{\kappa} 0}}
- 2 \Gamma \ind{_{[\tilde{\mu} \tilde{\nu}]} ^{\tilde{\sigma}}} C \ind{_{\rho {\tilde{\sigma}} \tilde{\kappa} 0}}
- 2 \Gamma \ind{_{[\tilde{\mu} \tilde{\nu}]} ^0} C \ind{_{\rho 0 \tilde{\kappa} 0}} \\
- 2 \Gamma \ind{_{\rho \lb{\tilde{\mu}}} ^\sigma} C \ind{_{\rb{\tilde{\nu}} \sigma \tilde{\kappa} 0}} - 2 \Gamma \ind{_{\rho \lb{\tilde{\mu}}} ^{\tilde{\sigma}}} C \ind{_{\rb{\tilde{\nu}} {\tilde{\sigma}} \tilde{\kappa} 0}} - 2 \Gamma \ind{_{\rho \lb{\tilde{\mu}}} ^0} C \ind{_{\rb{\tilde{\nu}} 0 \tilde{\kappa} 0}} - 2 \Gamma \ind{_{\lb{\tilde{\nu}}| \rho|} ^\sigma} C \ind{_{\rb{\tilde{\mu}} \sigma \tilde{\kappa} 0}} - 2 \Gamma \ind{_{\lb{\tilde{\nu}}| \rho|} ^{\tilde{\sigma}}} C \ind{_{\rb{\tilde{\mu}} {\tilde{\sigma}} \tilde{\kappa} 0}} - 2 \Gamma \ind{_{\lb{\tilde{\nu}}| \rho|} ^0} C \ind{_{\rb{\tilde{\mu}} 0 \tilde{\kappa} 0}} \\ - 2 \Gamma \ind{_{\lb{\tilde{\mu}}| \tilde{\kappa}} ^\sigma} C \ind{_{0 \sigma | \rb{\tilde{\nu}} \rho}} + 2 \Gamma \ind{_{\lb{\tilde{\mu}}| 0} ^\sigma} C \ind{_{\tilde{\kappa} \sigma | \rb{\tilde{\nu}} \rho}} - 2 \Gamma \ind{_{\lb{\tilde{\mu}}| \tilde{\kappa}} ^{\tilde{\sigma}}} C \ind{_{0  {\tilde{\sigma}} | \rb{\tilde{\nu}} \rho}} + 2 \Gamma \ind{_{\lb{\tilde{\mu}}| 0} ^{\tilde{\sigma}}} C \ind{_{\tilde{\kappa} {\tilde{\sigma}} | \rb{\tilde{\nu}} \rho}} \\
- \Gamma \ind{_{\rho \tilde{\kappa}} ^\sigma} C \ind{_{0 \sigma \tilde{\mu} \tilde{\nu}}} + \Gamma \ind{_{\rho 0} ^\sigma} C \ind{_{\tilde{\kappa} \sigma \tilde{\mu} \tilde{\nu}}} - \Gamma \ind{_{\rho \tilde{\kappa}} ^{\tilde{\sigma}}} C \ind{_{0 {\tilde{\sigma}}  \tilde{\mu} \tilde{\nu}}} + \Gamma \ind{_{\rho 0} ^{\tilde{\sigma}}} C \ind{_{\tilde{\kappa} {\tilde{\sigma}}  \tilde{\mu} \tilde{\nu}}} \, ,
\end{multline}
\begin{multline} \label{eq-Bianchi-2b}
2 \partial \ind{_{\lb{\tilde{\mu}}}} C \ind{_{\rb{\tilde{\nu}} 0 \tilde{\kappa} \lambda}} + \partial \ind{_{0}} C \ind{_{\tilde{\mu} \tilde{\nu} \tilde{\kappa} \lambda}} = - 2 g \ind{_{\lb{\tilde{\mu}} | \lambda}} A \ind{_{\tilde{\kappa} | \rb{\tilde{\nu}} 0}} - 2 \Gamma \ind{_{[\tilde{\mu} \tilde{\nu}]} ^\sigma} C \ind{_{0 \sigma \tilde{\kappa} \lambda}} - 2 \Gamma \ind{_{[\tilde{\mu} \tilde{\nu}]} ^{\tilde{\sigma}}} C \ind{_{0 {\tilde{\sigma}} \tilde{\kappa} \lambda}} \\
- 2 \Gamma \ind{_{0 \lb{\tilde{\mu}}} ^\sigma} C \ind{_{\rb{\tilde{\nu}} \sigma \tilde{\kappa} \lambda}} - 2 \Gamma \ind{_{0 \lb{\tilde{\mu}}} ^{\tilde{\sigma}}} C \ind{_{\rb{\tilde{\nu}} {\tilde{\sigma}} \tilde{\kappa} \lambda}} - 2 \Gamma \ind{_{0 \lb{\tilde{\mu}}} ^0} C \ind{_{\rb{\tilde{\nu}} 0 \tilde{\kappa} \lambda}} - 2 \Gamma \ind{_{\lb{\tilde{\nu}}| 0|} ^\sigma} C \ind{_{\rb{\tilde{\mu}} \sigma \tilde{\kappa} \lambda}} - 2 \Gamma \ind{_{\lb{\tilde{\nu}}| 0|} ^{\tilde{\sigma}}} C \ind{_{\rb{\tilde{\mu}} {\tilde{\sigma}} \tilde{\kappa} \lambda}} \\
- 2 \Gamma \ind{_{\lb{\tilde{\mu}}| \tilde{\kappa}} ^\sigma} C \ind{_{\lambda \sigma| \rb{\tilde{\nu}} 0}} + 2 \Gamma \ind{_{\lb{\tilde{\mu}}| \lambda} ^\sigma} C \ind{_{\tilde{\kappa} \sigma| \rb{\tilde{\nu}} 0}} - 2 \Gamma \ind{_{\lb{\tilde{\mu}} | \tilde{\kappa}} ^{\tilde{\sigma}}} C \ind{_{\lambda{\tilde{\sigma}}| \rb{\tilde{\nu}} 0}} + 2 \Gamma \ind{_{\lb{\tilde{\mu}} | \lambda} ^{\tilde{\sigma}}} C \ind{_{\tilde{\kappa} {\tilde{\sigma}}| \rb{\tilde{\nu}} 0}} - 2 \Gamma \ind{_{\lb{\tilde{\mu}}| \tilde{\kappa}} ^0} C \ind{_{\lambda 0 | \rb{\tilde{\nu}} 0}} + 2 \Gamma \ind{_{\lb{\tilde{\mu}}| \lambda} ^0} C \ind{_{\tilde{\kappa} 0 | \rb{\tilde{\nu}} 0}} \\ - \Gamma \ind{_{0 \tilde{\kappa}} ^\sigma} C \ind{_{\lambda \sigma \tilde{\mu} \tilde{\nu}}} + \Gamma \ind{_{0 \lambda} ^\sigma} C \ind{_{\tilde{\kappa} \sigma \tilde{\mu} \tilde{\nu}}} - \Gamma \ind{_{0 \tilde{\kappa}} ^{\tilde{\sigma}}} C \ind{_{\lambda {\tilde{\sigma}} \tilde{\mu} \tilde{\nu}}} + \Gamma \ind{_{0 \lambda} ^{\tilde{\sigma}}} C \ind{_{\tilde{\kappa} {\tilde{\sigma}} \tilde{\mu} \tilde{\nu}}} - \Gamma \ind{_{0 \tilde{\kappa}} ^0} C \ind{_{\lambda  0 \tilde{\mu} \tilde{\nu}}} + \Gamma \ind{_{0 \lambda} ^0} C \ind{_{\tilde{\kappa} 0 \tilde{\mu} \tilde{\nu}}} \, ,
\end{multline}
\begin{multline} \label{eq-Bianchi-3a}
2 \partial \ind{_{\lb{\tilde{\mu}}}} C \ind{_{\rb{\tilde{\nu}} \rho \tilde{\kappa} \tilde{\lambda}}} + \partial \ind{_{\rho}} C \ind{_{\tilde{\mu} \tilde{\nu} \tilde{\kappa} \tilde{\lambda}}} = - 2 g \ind{_{\rho \lb{\tilde{\kappa}}}} A \ind{_{\rb{\tilde{\lambda}} \tilde{\mu} \tilde{\nu}}} - 2 \Gamma \ind{_{[\tilde{\mu} \tilde{\nu}]} ^\sigma} C \ind{_{\rho \sigma \tilde{\kappa} \tilde{\lambda}}} - 2 \Gamma \ind{_{[\tilde{\mu} \tilde{\nu}]} ^{\tilde{\sigma}}} C \ind{_{\rho {\tilde{\sigma}} \tilde{\kappa} \tilde{\lambda}}} - 2 \Gamma \ind{_{[\tilde{\mu} \tilde{\nu}]} ^0} C \ind{_{\rho 0 \tilde{\kappa} \tilde{\lambda}}} \\
- 2 \Gamma \ind{_{\rho \lb{\tilde{\mu}}} ^\sigma} C \ind{_{\rb{\tilde{\nu}} \sigma \tilde{\kappa} \tilde{\lambda}}} - 2 \Gamma \ind{_{\rho \lb{\tilde{\mu}}} ^{\tilde{\sigma}}} C \ind{_{\rb{\tilde{\nu}} {\tilde{\sigma}} \tilde{\kappa} \tilde{\lambda}}} - 2 \Gamma \ind{_{\rho \lb{\tilde{\mu}}} ^0} C \ind{_{\rb{\tilde{\nu}} 0 \tilde{\kappa} \tilde{\lambda}}} - 2 \Gamma \ind{_{\lb{\tilde{\nu}}| \rho|} ^\sigma} C \ind{_{\rb{\tilde{\mu}} \sigma \tilde{\kappa} \tilde{\lambda}}} - 2 \Gamma \ind{_{\lb{\tilde{\nu}}| \rho|} ^{\tilde{\sigma}}} C \ind{_{\rb{\tilde{\mu}} {\tilde{\sigma}} \tilde{\kappa} \tilde{\lambda}}} - 2 \Gamma \ind{_{\lb{\tilde{\nu}}| \rho|} ^0} C \ind{_{\rb{\tilde{\mu}} 0 \tilde{\kappa} \tilde{\lambda}}} \\ - 4 \Gamma \ind{_{\lb{\tilde{\mu}}| \lb{\tilde{\kappa}}} ^\sigma} C \ind{_{\rb{\tilde{\lambda}}|\sigma| \rb{\tilde{\nu}} \rho}} - 4 \Gamma \ind{_{\lb{\tilde{\mu}}| \lb{\tilde{\kappa}}} ^{\tilde{\sigma}}} C \ind{_{\rb{\tilde{\lambda}}|{\tilde{\sigma}}| \rb{\tilde{\nu}} \rho}} - 4 \Gamma \ind{_{\lb{\tilde{\mu}}| \lb{\tilde{\kappa}}} ^0} C \ind{_{\rb{\tilde{\lambda}}| 0 | \rb{\tilde{\nu}} \rho}} - 2 \Gamma \ind{_{\rho \lb{\tilde{\kappa}}} ^\sigma} C \ind{_{\rb{\tilde{\lambda}} \sigma \tilde{\mu} \tilde{\nu}}} - 2 \Gamma \ind{_{\rho \lb{\tilde{\kappa}}} ^{\tilde{\sigma}}} C \ind{_{\rb{\tilde{\lambda}} {\tilde{\sigma}} \tilde{\mu} \tilde{\nu}}} - 2 \Gamma \ind{_{\rho \lb{\tilde{\kappa}}} ^0} C \ind{_{\rb{\tilde{\lambda}}  0 \tilde{\mu} \tilde{\nu}}} \, ,
\end{multline}
\begin{multline} \label{eq-Bianchi-3b}
2 \partial \ind{_{\lb{\tilde{\mu}}}} C \ind{_{\rb{\tilde{\nu}} 0 \tilde{\kappa} 0}} + \partial \ind{_{0}} C \ind{_{\tilde{\mu} \tilde{\nu} \tilde{\kappa} 0}} = A \ind{_{\tilde{\kappa} \tilde{\mu} \tilde{\nu}}} - 2 \Gamma \ind{_{[\tilde{\mu} \tilde{\nu}]} ^\sigma} C \ind{_{0 \sigma \tilde{\kappa} 0}} - 2 \Gamma \ind{_{[\tilde{\mu} \tilde{\nu}]} ^{\tilde{\sigma}}} C \ind{_{0 {\tilde{\sigma}} \tilde{\kappa} 0}} \\
- 2 \Gamma \ind{_{0 \lb{\tilde{\mu}}} ^\sigma} C \ind{_{\rb{\tilde{\nu}} \sigma \tilde{\kappa} 0}} - 2 \Gamma \ind{_{0 \lb{\tilde{\mu}}} ^{\tilde{\sigma}}} C \ind{_{\rb{\tilde{\nu}} {\tilde{\sigma}} \tilde{\kappa} 0}} - 2 \Gamma \ind{_{0 \lb{\tilde{\mu}}} ^0} C \ind{_{\rb{\tilde{\nu}} 0 \tilde{\kappa} 0}} - 2 \Gamma \ind{_{\lb{\tilde{\nu}}| 0|} ^\sigma} C \ind{_{\rb{\tilde{\mu}} \sigma \tilde{\kappa} 0}} - 2 \Gamma \ind{_{\lb{\tilde{\nu}}| 0|} ^{\tilde{\sigma}}} C \ind{_{\rb{\tilde{\mu}} {\tilde{\sigma}} \tilde{\kappa} 0}}  \\ - 2 \Gamma \ind{_{\lb{\tilde{\mu}}| \tilde{\kappa}} ^\sigma} C \ind{_{0 \sigma| \rb{\tilde{\nu}} 0}}  + 2 \Gamma \ind{_{\lb{\tilde{\mu}}| 0} ^\sigma} C \ind{_{\tilde{\kappa} \sigma| \rb{\tilde{\nu}} 0}} - 2 \Gamma \ind{_{\lb{\tilde{\mu}}| \tilde{\kappa}} ^{\tilde{\sigma}}} C \ind{_{0 {\tilde{\sigma}}| \rb{\tilde{\nu}} 0}} + 2 \Gamma \ind{_{\lb{\tilde{\mu}}| 0} ^{\tilde{\sigma}}} C \ind{_{\tilde{\kappa} {\tilde{\sigma}}| \rb{\tilde{\nu}} 0}}  \\ - \Gamma \ind{_{0 \tilde{\kappa}} ^\sigma} C \ind{_{0 \sigma \tilde{\mu} \tilde{\nu}}} + \Gamma \ind{_{0 0} ^\sigma} C \ind{_{\tilde{\kappa} \sigma \tilde{\mu} \tilde{\nu}}} -  \Gamma \ind{_{0 \tilde{\kappa}} ^{\tilde{\sigma}}} C \ind{_{0 {\tilde{\sigma}} \tilde{\mu} \tilde{\nu}}}  +  \Gamma \ind{_{0 0} ^{\tilde{\sigma}}} C \ind{_{\tilde{\kappa} {\tilde{\sigma}} \tilde{\mu} \tilde{\nu}}}  \, ,
\end{multline}
\begin{multline} \label{eq-Bianchi-4}
2 \partial \ind{_{\lb{\tilde{\mu}}}} C \ind{_{\rb{\tilde{\nu}} 0 \tilde{\kappa} \tilde{\lambda}}} + \partial \ind{_{0}} C \ind{_{\tilde{\mu} \tilde{\nu} \tilde{\kappa} \tilde{\lambda}}} =  
 - 2 \Gamma \ind{_{[\tilde{\mu} \tilde{\nu}]} ^\sigma} C \ind{_{0 \sigma \tilde{\kappa} \tilde{\lambda}}}
- 2 \Gamma \ind{_{[\tilde{\mu} \tilde{\nu}]} ^{\tilde{\sigma}}} C \ind{_{0 {\tilde{\sigma}} \tilde{\kappa} \tilde{\lambda}}}
- 2 \Gamma \ind{_{0 \lb{\tilde{\mu}}} ^\sigma} C \ind{_{\rb{\tilde{\nu}} \sigma \tilde{\kappa} \tilde{\lambda}}}
- 2 \Gamma \ind{_{0 \lb{\tilde{\mu}}} ^{\tilde{\sigma}}} C \ind{_{\rb{\tilde{\nu}} {\tilde{\sigma}} \tilde{\kappa} \tilde{\lambda}}}
- 2 \Gamma \ind{_{0 \lb{\tilde{\mu}}} ^0} C \ind{_{\rb{\tilde{\nu}} 0 \tilde{\kappa} \tilde{\lambda}}} \\
- 2 \Gamma \ind{_{\lb{\tilde{\nu}}| 0|} ^\sigma} C \ind{_{\rb{\tilde{\mu}} \sigma \tilde{\kappa} \tilde{\lambda}}}
- 2 \Gamma \ind{_{\lb{\tilde{\nu}}| 0|} ^{\tilde{\sigma}}} C \ind{_{\rb{\tilde{\mu}} {\tilde{\sigma}} \tilde{\kappa} \tilde{\lambda}}}
- 4 \Gamma \ind{_{\lb{\tilde{\mu}}| \lb{\tilde{\kappa}}} ^\sigma} C \ind{_{\rb{\tilde{\lambda}}|\sigma| \rb{\tilde{\nu}} 0}}
- 4 \Gamma \ind{_{\lb{\tilde{\mu}}| \lb{\tilde{\kappa}}} ^{\tilde{\sigma}}} C \ind{_{\rb{\tilde{\lambda}}|{\tilde{\sigma}}| \rb{\tilde{\nu}} 0}}
- 4 \Gamma \ind{_{\lb{\tilde{\mu}}| \lb{\tilde{\kappa}}} ^0} C \ind{_{\rb{\tilde{\lambda}}|0| \rb{\tilde{\nu}} 0}} \\
- 2 \Gamma \ind{_{0 \lb{\tilde{\kappa}}} ^\sigma} C \ind{_{\rb{\tilde{\lambda}}|\sigma| \tilde{\mu} \tilde{\nu}}}
- 2 \Gamma \ind{_{0 \lb{\tilde{\kappa}}} ^{\tilde{\sigma}}} C \ind{_{\rb{\tilde{\lambda}}|{\tilde{\sigma}}| \tilde{\mu} \tilde{\nu}}} 
- 2 \Gamma \ind{_{0 \lb{\tilde{\kappa}}} ^0} C \ind{_{\rb{\tilde{\lambda}}|0| \tilde{\mu} \tilde{\nu}}} 
\, ,
\end{multline}
\begin{multline}  \label{eq-Bianchi-5}
\partial \ind{_{\lb{\tilde{\mu}}}} C \ind{_{\tilde{\nu} \rb{\tilde{\rho}} \tilde{\kappa} \tilde{\lambda}}}  = 
- 2 \Gamma \ind{_{\lb{\tilde{\mu}} \tilde{\nu}} ^\sigma} C \ind{_{\rb{\tilde{\rho}} \sigma \tilde{\kappa} \tilde{\lambda}}}
- 2 \Gamma \ind{_{\lb{\tilde{\mu}} \tilde{\nu}} ^{\tilde{\sigma}}} C \ind{_{\rb{\tilde{\rho}} \tilde{\sigma} \tilde{\kappa} \tilde{\lambda}}}
- 2 \Gamma \ind{_{\lb{\tilde{\mu}} \tilde{\nu}} ^0} C \ind{_{\rb{\tilde{\rho}} 0 \tilde{\kappa} \tilde{\lambda}}} \\
- 2 \Gamma \ind{_{\lb{\tilde{\mu}}| \lb{\tilde{\kappa}}} ^\sigma} C \ind{_{\rb{\tilde{\lambda}}|\sigma| \tilde{\nu} \rb{\tilde{\rho}}}} 
- 2 \Gamma \ind{_{\lb{\tilde{\mu}}| \lb{\tilde{\kappa}}} ^{\tilde{\sigma}}} C \ind{_{\rb{\tilde{\lambda}}|\tilde{\sigma}| \tilde{\nu} \rb{\tilde{\rho}}}}
- 2 \Gamma \ind{_{\lb{\tilde{\mu}}| \lb{\tilde{\kappa}}} ^0} C \ind{_{\rb{\tilde{\lambda}}|0| \tilde{\nu} \rb{\tilde{\rho}}}} \, ,
\end{multline}

\bibliography{biblio}

\end{document}